\def\R{\mathbb{R}}
\def\N{\mathbb{N}}
\def\Q{\mathbb{Q}}
\def\Z{\mathbb{Z}}
\def\C{\mathbb{C}}
\newcommand{\Op}{\mathrm{Op}}
\newcommand{\Lie}{\mathcal{L}}
\newcommand{\trianglesR}{\triangle_{\mathrm{sR}}}
\newcommand{\dsR}{\mathrm{d}_{\mathrm{sR}}}
\newcommand{\BsR}{\mathrm{B}_{\mathrm{sR}}}
\newcommand{\hatdsR}{\widehat{\mathrm{d}}_{\mathrm{sR}}}
\newcommand{\hatBsR}{\widehat{\mathrm{B}}_{\mathrm{sR}}}
\newcommand{\supp}{\mathrm{supp}}
\newcommand{\Cst}{\mathrm{Cst}}
\renewcommand{\geq}{\geqslant}
\renewcommand{\leq}{\leqslant}
\newtheorem{theorem}{Theorem}[section]
\newtheorem{proposition}{Proposition}[subsection]
\newtheorem{corollary}{Corollary}[subsection]
\newtheorem{definition}{Definition}[subsection]
\newtheorem{lemma}{Lemma}[subsection]
\theoremstyle{definition}\newtheorem{example}{Example}[subsection]
\theoremstyle{definition}\newtheorem{remark}{Remark}[subsection]
\newenvironment{customthm}[1]
{\innercustomthm}
{\endinnercustomthm}
\newcommand{\Qeq}{\mathcal{Q}_\mathrm{eq}}
\title{Small-time asymptotics of hypoelliptic heat kernels near the diagonal, nilpotentization and related results}
\author{Yves Colin de Verdi\`ere\footnote{Institut Fourier, Universit\'e Grenoble Alpes, 100 rue des Math\'ematiques, 38610 Gi\`eres, France (\texttt{yves.colin-de-verdiere@univ-grenoble-alpes.fr}).}
\and
Luc Hillairet\footnote{Institut Denis Poisson, Universit\'e d'Orl\'eans, route de Chartres, 45067 Orl\'eans Cedex 2, France (\texttt{luc.hillairet@univ-orleans.fr}).}
\and
Emmanuel Tr\'elat\footnote{Sorbonne Universit\'e, CNRS, Universit\'e de Paris, Inria, Laboratoire Jacques-Louis Lions (LJLL), F-75005 Paris, France (\texttt{emmanuel.trelat@sorbonne-universite.fr}).}
}
\begin{document}

\maketitle

\begin{abstract}
We establish small-time asymptotic expansions for heat kernels of hypoelliptic H\"ormander operators in a neighborhood of the diagonal, generalizing former results obtained in particular by M\'etivier and by Ben Arous. The coefficients of our expansions are identified in terms of the nilpotentization of the underlying sub-Riemannian structure. Our approach is purely analytic and relies in particular on local and global subelliptic estimates as well as on the local nature of small-time asymptotics of heat kernels. The fact that our expansions are valid not only along the diagonal but in an asymptotic neighborhood of the diagonal is the main novelty, useful in view of deriving Weyl laws for subelliptic Laplacians.
In turn, we establish a number of other results on hypoelliptic heat kernels that are interesting in themselves, such as Kac's principle of not feeling the boundary, asymptotic results for singular perturbations of hypoelliptic operators, global smoothing properties for selfadjoint heat semigroups. 
\end{abstract}

\newpage
\tableofcontents
\newpage


\section{Introduction and main result}
\subsection{Framework}\label{sec_intro}
Let $n$ and $m$ be nontrivial integers. Let $M$ be a smooth connected manifold of dimension $n$. 
Let $X_0,X_1,\ldots,X_m$ be smooth vector fields on $M$ and let $\mathbb{V}$ be a smooth function (potential) on $M$ that is bounded below.
Setting $X=(X_0,X_1,\ldots,X_m)$, we define the H\"ormander operator
\begin{equation}\label{def_DeltaX}
\triangle = \sum_{i=1}^m X_i^2 + X_0 - \mathbb{V} 
\end{equation}
where $X_i$ is seen as a derivation operator and $\mathbb{V}$ is the multiplication by the potential.
In view of involving the case of magnetic fields, the drift vector field $X_0$ can even be assumed to take complex values.\footnote{This requires some obvious slight changes for instance when considering a scalar product.}

Let $\mu$ be an arbitrary smooth (Borel) measure on $M$. 
We assume that the operator $\triangle$ on $L^2(M,\mu)$, of domain $D(\triangle)\subset L^2(M,\mu)$ encoding some possible boundary conditions whenever $M$ has a boundary,
%
generates a strongly continuous semigroup $(e^{t\triangle})_{t\geq 0}$ on $L^2(M,\mu)$ (see Lemma \ref{lem_semigroup} in  Section \ref{sec_kernel} for some sufficient conditions).
We denote by $e_{\triangle,\mu}$ the corresponding heat kernel defined on $(0,+\infty)\times M\times M$, associated with the operator $\triangle$ and with the measure $\mu$ (see Appendix \ref{appendix_Schwartz}).

We set $D=\mathrm{Span}(X_1,\ldots,X_m)$. 
Under the strong H\"ormander condition
\begin{equation}\label{Hormander_condition}
\mathrm{Lie}_q(D)=\mathrm{Lie}_q(X_1,\ldots,X_m)=T_qM\qquad\forall q\in M
\end{equation}
the operator $\triangle$ is subelliptic\footnote{Actually, the weaker assumption $\mathrm{Lie}_q(X_0,X_1,\ldots,X_m)=T_qM$ for every $q\in M$ (called weak H\"ormander condition) is sufficient to ensure subellipticity. The stronger assumption \eqref{Hormander_condition} is however required to derive our main result.} (see \cite{Ho-67}) and the heat kernel $e_{\triangle,\mu}$ is smooth on $(0,+\infty)\times M\times M$.

The objective of this paper is to establish a small-time asymptotic expansion of the heat kernel $e_{\triangle,\mu}$ at any order \emph{near} the diagonal.
Our study is in the line of well known results \cite{BenArous_AIF1989, Metivier1976} establishing such expansions \emph{along} the diagonal. The first main novelty here is that our expansion is valid, not only along the diagonal, but in an asymptotic neighborhood of the diagonal. This fact is actually instrumental in view of deriving local and microlocal Weyl laws for general subelliptic Laplacians, which will be done in the forthcoming papers \cite{CHT-II,CHT-III}.

The second main novelty is that we identify the functions in the small-time asymptotic expansion of the heat kernel in terms of the so-called nilpotentization of the sub-Riemannian structure $(M,D,g)$ where $g$ is a metric on $D$ defined thanks to the vector fields $X_1,\ldots,X_m$.

Compared with the approach of \cite{BenArous_AIF1989} which is probabilistic (Malliavin calculus), our proof (done in Part \ref{part_complete}) is purely analytic and relies in particular on local and global subelliptic estimates, on local and global smoothing properties of heat kernels, on the finite propagation speed property for sR waves and on the Kannai transform, and on the local nature of hypoelliptic heat kernels (Kac's principle). Our paper is entirely selfcontained: even for the several tools (such as uniform local subelliptic estimates) that are straightforward extensions of known results, we provide at least a sketch of proof.

\paragraph{Sub-Riemannian structure.}
Complete reminders on sub-Riemannian (sR) geometry are given in Section \ref{sec_sR}. 
Attached with the $m$-tuple of vector fields $(X_1,\ldots,X_m)$, there is a canonical sR structure $(M,D,g)$, where $D=\mathrm{Span}(X_1,\ldots,X_m)$ and $g$ is a positive definite quadratic form on $D$. Given any $q\in M$, the sR flag at $q$ is the sequence of nested vector subspaces $\{0\}=D^0_q\subset D_q=D^1_q\subset D^2_q\subset \ldots\subset D^{r(q)-1}_q\subsetneq D^{r(q)}_q=T_qM$ defined in terms of successive Lie brackets, and $r(q)$ is the degree of nonholonomy at $q$. Setting $n_i(q) = \dim D^i_q$, the integer $\mathcal{Q}(q) = \sum_{i=1}^r i ( n_i(q)-n_{i-1}(q) )$ coincides with the Hausdorff dimension when $q$ is regular, i.e., when the integers $n_i(\cdot)$ are constant in an open neighborhood of $q$. The point $q$ is said singular when it is not regular.

The nilpotentization of $(M,D,g)$ at $q$ is the sR structure $(\widehat{M}^{q},\widehat{D}^{q},\widehat{g}^{q})$ defined as the metric tangent space of $M$ (endowed with the sR distance) in the sense of Gromov-Hausdorff. In a local chart of privileged coordinates around $q$, $\widehat{M}^{q}$ is a homogeneous space identified to $\R^n$ (with a sR isometry), endowed with dilations $\delta_\varepsilon(x) = \left( \varepsilon^{w_1(q)} x_1,\ldots, \varepsilon^{w_n(q)} x_n \right)$, for $\varepsilon\in\R$ and $x\in\R^n$, where $w_i(q)=i(n_i(q)-n_{i-1}(q))$ (sR weights). 
We have $\widehat{D}^{q}=\mathrm{Span}(\widehat{X}_1^{q},\ldots,\widehat{X}_m^{q})$ where $\widehat{X}_i^{q}$ is the limit of $\varepsilon\delta_\varepsilon^*X_i$ in $C^\infty$ topology as $\varepsilon\rightarrow 0$. We also define the (constant) nilpotentized measure $\widehat{\mu}^{q}$ on $\R^n$ as the limit of $\frac{1}{\vert\varepsilon\vert^{\mathcal{Q}(q)}} \delta_\varepsilon^* \mu$ as $\varepsilon\rightarrow 0$. Finally, we define the sR Laplacian $\widehat{\triangle}^{q} = \sum_{i=1}^m (\widehat{X}^{q}_i)^2$ and we denote by $\widehat{e}^{q} = e_{\widehat{\triangle}^{q},\widehat{\mu}^{q}}$ the heat kernel generated by $\widehat{\triangle}^{q}:D(\widehat{\triangle}^{q})\rightarrow L^2(\widehat M^{q},\widehat\mu^{q})$.

\paragraph{$C^\infty$ topology.}
We recall that the set $C^\infty(\R^n)$ of smooth functions on $\R^n$ is a Montel space, i.e., a Fr\'echet space enjoying the Heine-Borel property (meaning that closed bounded subsets are compact), for the topology defined by the seminorms $p_{i,j}(f) = \max \{ \vert \partial^\alpha f(x)\vert\ \mid\ \vert x\vert\leq i, \vert\alpha\vert\leq j\}$, $(i,j)\in\N^2$. We speak of the $C^\infty(\R^n)$ topology.

Let $W$ be an arbitrary open subset of the manifold $M$. Covering $W$ with charts, the set $C^\infty(W)$ of smooth functions on $W$ is endowed with the $C^\infty$ topology (in charts), making it a Montel space.

We say that a sequence $(f_k)_{k\in\N}$ in $C^\infty(W)$ converges to $0$ in $C^\infty$ topology if $f_k$ converges uniformly to $0$ on any compact subset of $W$, as well as all its derivatives. A sequence $(X_k)_{k\in\N}$ of smooth vector fields on $W$ converges to $0$ in $C^\infty$ topology if all its coefficients (in charts) converge to $0$ in $C^\infty$ topology.

Throughout, we denote by $C_c^\infty(W)$ the set of smooth functions of compact support on $W$.

\paragraph{Notations.}
Throughout the paper, we use in the various estimates the notation $\Cst(\cdot)$, standing for a generic positive constant depending on the parameters indicated in the parenthesis.

The integral of an integrable function $f$ on $M$ with respect to the smooth measure $\mu$ is denoted by $\int_M f(q)\, d\mu(q)$.

\subsection{Main result}

\begin{customthm}{A}\label{main_thm}
{\it 
Let $q\in M$ be arbitrary (regular or not). 
Let $\psi_{q}:U\rightarrow V$ be a chart of privileged coordinates at $q$ such that $\psi_{q}(q)=0$, where $U$ is an open connected neighborhood of $q$ in $M$ and $V$ is an open neighborhood of $0$ in $\R^n$.
We assume that $X_0$ is a smooth section of $D$ over $M$. 
We also assume that $\sup_{q'\in U} r(q')<+\infty$ (this is always satisfied if $\overline U$ is compact).

Then, given any $N\in\N^*$, in the chart\footnote{This means that the left-hand side of \eqref{complete_expansion} is $\vert\varepsilon\vert^{\mathcal{Q}(q)}\, e_{\triangle,\mu}(\varepsilon^2 \tau,\psi_{q}^{-1}(\delta_\varepsilon(x)),\psi_{q}^{-1}(\delta_\varepsilon(x')))$.} we have the asymptotic expansion in $C^\infty((0,+\infty)\times V\times V)$
\begin{equation}\label{complete_expansion}
\vert\varepsilon\vert^{\mathcal{Q}(q)}\, e_{\triangle,\mu}(\varepsilon^2 \tau,\delta_\varepsilon(x),\delta_\varepsilon(x')) \\
= \widehat{e}^{q}(\tau,x,x') + \sum_{i=1}^N \varepsilon^i f^{q}_i(\tau,x,x') + \mathrm{o}\big(\vert\varepsilon\vert^N\big) 
\end{equation}
as $\varepsilon\rightarrow 0$, $\varepsilon\neq 0$, 
where the functions $f^{q}_i$ are smooth and satisfy the homogeneity property
$$
f^{q}_i(\tau,x,x') = \varepsilon^{-i} \vert\varepsilon\vert^{\mathcal{Q}(q)} f^{q}_i(\varepsilon^2 \tau,\delta_\varepsilon(x),\delta_\varepsilon(x')) 
$$
for all $(\tau,x,x')\in(0,+\infty)\times\R^n\times\R^n$ and for every $\varepsilon\neq 0$.

Taking $\tau=1$, $\varepsilon=\sqrt{t}$ and setting $a_i^{q}(x,x')=f^{q}_i(1,x,x')$, it follows that, given any $N\in\N$, in the chart we have the asymptotic expansion in $C^\infty(V\times V)$
\begin{equation}\label{complete_expansion_1}
t^{\mathcal{Q}(q)/2}\, e_{\triangle,\mu}(t,\delta_{\sqrt{t}}(x),\delta_{\sqrt{t}}(x')) \\
= \widehat{e}^{q}(1,x,x') + \sum_{i=1}^N t^{i/2} a_i^{q}(x,x')  + \mathrm{o}(t^{N/2})  
\end{equation}
as $t\rightarrow 0$, $t>0$, where the functions $a_i^{q}$ are smooth and satisfy $a_{2j-1}^{q}(0,0)=0$ for every $j\in\N^*$.

Moreover, if $q$ is regular, then the above convergence and asymptotic expansion are also locally uniform with respect to $q$, and the functions $\widehat{e}^{q}$, $f^{q}_i$ and $a_i^{q}$ depend smoothly (in $C^\infty$ topology) on $q$ in any open neighborhood of $q$ consisting of regular points. If the manifold $M$ is Whitney stratifiable with strata defined according to the sR flag (i.e., the growth vector $(n_1(q),\ldots, n_{r(q)}(q))$ is constant along each stratum) then the latter property is satisfied along strata.
}
\end{customthm}

The fact that the asymptotic expansion \eqref{complete_expansion} is in $C^\infty((0,+\infty)\times V\times V)$ means that the asymptotics are uniform with respect to $(\tau,x,x')$ on any compact subset of $(0,+\infty)\times V\times V$ as well as for all derivatives. In particular, for all $k\in\N$, $\alpha=(\alpha_1,\ldots,\alpha_n)\in\N^n$ and $\beta=(\beta_1,\ldots,\beta_n)\in\N^n$, we have
\begin{equation*}
\lim_{\varepsilon\rightarrow 0\atop \varepsilon\neq 0} \vert\varepsilon\vert^{\mathcal{Q}(q)+2k} \varepsilon^{\sum_{i=1}^n (\alpha_i+\beta_i)w_i(q)}\, (\partial_1^k \partial_2^\alpha \partial_{3}^\beta \, e_{\triangle,\mu}) 
(\varepsilon^2 \tau,\delta_\varepsilon(x),\delta_\varepsilon(x')) 
= (\partial_1^k \partial_2^\alpha \partial_{3}^\beta \, \widehat{e}^{q})  (\tau,x,x')  
\end{equation*}
uniformly with respect to $(\tau,x,x')$ on any compact subset of $(0,+\infty)\times V\times V$.
Here, given a function $e$ depending on three variables $(\tau,y,y')$, the notation $\partial_1$ (resp., $\partial_2$, $\partial_3$) denotes the partial derivative with respect to $\tau$ (resp., to $y$, to $y'$).

\medskip

As a particular case, take $x=x'=0$ in the expansion \eqref{complete_expansion_1} given in Theorem \ref{main_thm} and set $c_j(q) = a_{2j}^{q}(0,0)$.
Since $a_{2j-1}^{q}(0,0)=0$ and $\psi_{q}^{-1}(0)=q$, 
we obtain the following corollary.

\begin{corollary}
Given any $N\in\N$, for every $q\in M$,
$$
t^{\mathcal{Q}(q)/2}\, e_{\triangle,\mu}(t,q,q) \\
= \widehat{e}^{q}(1,0,0) + c_1(q) t + \cdots + c_N(q) t^N + \mathrm{o}(t^{N})  
$$
as $t\rightarrow 0$, $t>0$. Moreover if $q$ is regular then the functions $c_j$ are smooth locally around $q$.
\end{corollary}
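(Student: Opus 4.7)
The plan is to specialize Theorem \ref{main_thm} to $x = x' = 0$; the corollary is essentially a matter of unwrapping the notation. First I would observe that, since each dilation $\delta_\varepsilon$ acts diagonally on $\R^n$ by positive weights, it fixes the origin: $\delta_{\sqrt{t}}(0) = 0$ for every $t>0$. Combined with $\psi_{q}^{-1}(0)=q$, which is built into the definition of a privileged chart centered at $q$, this gives
\[
e_{\triangle,\mu}\bigl(t,\psi_{q}^{-1}(\delta_{\sqrt{t}}(0)),\psi_{q}^{-1}(\delta_{\sqrt{t}}(0))\bigr) = e_{\triangle,\mu}(t,q,q).
\]

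Next I would apply \eqref{complete_expansion_1} at the order $2N$ with $x = x' = 0$ in the privileged chart $\psi_{q}$. Footnote-style interpretation of the left-hand side of \eqref{complete_expansion_1} (via the same footnote that clarifies \eqref{complete_expansion}) combined with the previous identity yields
\[
t^{\mathcal{Q}(q)/2}\, e_{\triangle,\mu}(t,q,q) = \widehat{e}^{q}(1,0,0) + \sum_{i=1}^{2N} t^{i/2}\, a^{q}_i(0,0) + \mathrm{o}(t^{N})
\]
as $t\to 0^+$. By the vanishing statement of Theorem \ref{main_thm}, namely $a_{2j-1}^{q}(0,0)=0$ for every $j\in\N^*$, every odd-indexed term disappears. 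Setting $c_j(q) := a_{2j}^{q}(0,0)$ collapses the surviving terms into $c_1(q)\,t + \cdots + c_N(q)\,t^N$, which is exactly the expansion claimed.

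For the smoothness statement, I would invoke the last sentence of Theorem \ref{main_thm}: when $q$ is regular, the functions $a_i^{q}$ depend smoothly on $q$ in $C^\infty$ topology on any open neighborhood of $q$ consisting of regular points. Evaluating at the single point $(0,0)\in V\times V$ is continuous for $C^\infty$ topology, hence $q \mapsto a_{2j}^{q}(0,0) = c_j(q)$ is smooth on such a neighborhood. No obstacle arises in this argument; the entire content is already packaged in Theorem \ref{main_thm}, and the corollary merely records the on-diagonal consequence.
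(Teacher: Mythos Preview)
Your proof is correct and takes essentially the same approach as the paper: the paper simply sets $x=x'=0$ in \eqref{complete_expansion_1}, uses $\psi_q^{-1}(0)=q$ and $a_{2j-1}^q(0,0)=0$, and defines $c_j(q)=a_{2j}^q(0,0)$. Your only addition is the (harmless) explicit mention of applying the expansion at order $2N$ and of the continuity of evaluation for the smoothness claim.
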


We thus recover the main result of \cite{BenArous_AIF1989} (see also \cite{Metivier1976}), which is a small-time expansion of the heat kernel along the diagonal. Here, additionally to those well known results, we provide a geometric interpretation of the coefficients of this expansion, in function of the nilpotentization at $q$: the main coefficient is $\widehat{e}^{q}(1,0,0) >0$, but the other coefficients $c_j(q)$ are also given by convolutions of the heat kernel $\widehat{e}^{q}=e_{\widehat{\triangle}^{q},\widehat{\mu}^{q}}$, as made precise in the proof of the theorem in Part \ref{part_complete} (see in particular Proposition \ref{prop_expansion_semigroup_epsgam} in Section \ref{sec:asymptotic_expansion_epsilongamma}, and see Section \ref{taking_Skernels}).

Moreover, the expansion stated in Theorem \ref{main_thm} is established in an asymptotic neighborhood of the diagonal, which is instrumental to derive the microlocal Weyl law for general equiregular sR structures, or to establish local Weyl laws for singular sR structures (see Section \ref{sec_useful}). 

\begin{remark}\label{rem_X0_length2}
In Theorem \ref{main_thm}, we have assumed that $X_0$ is a smooth section of $D$ over $M$. If $X_0(q)\in D_q$ for every $q\in M$ but cannot be written as a combination of the vector fields $X_i$, $i=1,\ldots,m$, with smooth functions $a_i$, then the asymptotics of the heat kernel in small time along the diagonal may degenerate and be exponentially decreasing (see \cite{BenArousLeandre_PTRF1991_1,BenArousLeandre_PTRF1991_2}).

We have the following generalization if $X_0$ is not a smooth section of $D$.
We assume that $X_0$ is a smooth section of $D^2$ over $M$, meaning that
\begin{equation}\label{hyp_X0_length2}
X_0(q) = \sum_{i=1}^m a_i(q) X_i(q) + \sum_{i,j=1}^m b_{ij}(q) [X_i,X_j](q)\qquad \forall q\in M
\end{equation}
where the $a_i$ and $b_{ij}$ are smooth functions on $M$. In local privileged coordinates around $q$, it is then possible to write $X_0 = X_0^{(-2)} + X_0^{(-1)} + X_0^{(0)} + \cdots$, where $X_0^{(k)}$ is homogeneous of degree $k$ (see Section \ref{sec_nilp_smoothsection}). Therefore, defining on $\R^n$ the vector field $\widehat{X}_0^{q}=X_0^{(-2)}$ (in the local coordinates), which is homogeneous of order $-2$ (and not of order $-1$), we have $\varepsilon^2\delta_\varepsilon^* X_0 = \widehat{X}_0^{q} + \varepsilon X_0^{(-1)} + \varepsilon^2 X_0^{(0)} + \cdots + \varepsilon^NX_0^{(N-2)} + \mathrm{o}(\vert\varepsilon\vert^N)$ at any order $N$, in $C^\infty$ topology.
Then Theorem \ref{main_thm} is still valid provided that $\widehat{\triangle}^{q}$ be replaced with $\widehat{\triangle}^{q}+\widehat{X}_0^{q}$.
In other words, under the assumption \eqref{hyp_X0_length2}, the asymptotics of the heat kernel of $\triangle$ is given by the heat kernel of the operator $\widehat{\triangle}^{q}+\widehat{X}_0^{q}$, which is a first-order perturbation of $\widehat{\triangle}^{q}$, homogeneous of order $-2$.
This is in accordance with results of \cite{BenArousLeandre_PTRF1991_2}.

In contrast, if $X_0$ is not a smooth section of $D^2$ 
then the asymptotics in small time may be completely different, and the heat kernel along the diagonal may even decrease exponentially as $\varepsilon\rightarrow 0$ (see \cite{BenArousLeandre_PTRF1991_2}).
\end{remark}

\begin{remark}
We have stated Theorem \ref{main_thm} in terms of the heat kernels $e_{\triangle,\mu}$ and $\widehat{e}^{q} = e_{\widehat{\triangle}^{q},\widehat{\mu}^{q}} $. 
But, as explained in Appendix \ref{appendix_Schwartz}, the smooth measure $e_{\triangle,\mu}(t,q,q')\, d\mu(q')$ on $M$ does not depend on $\mu$, but only on the operator $\triangle$. The same remark holds for the smooth measure $\widehat{e}^{q}(t,x,x')\, d\widehat{\mu}^{q}(x')$ on $\widehat{M}^q\sim\R^n$.
It would therefore be more natural to express Theorem \ref{main_thm} in terms of Schwartz kernels.
We keep however the statement in this form, because the concept of heat kernel is familiar and is probably the most standard in the literature.

Anyway, it is useful to note that using Schwartz kernels would avoid nilpotentizations of measures.
Moreover, this explains why small-time expansions of heat kernels along the diagonal have no interesting meaning in singular sR cases: what has to be considered, there, is the small-time asymptotics of $e_{\triangle,\mu}(t,q,q)\, d\mu(q)$, which is related to the trace of $e^{t\triangle}f$ (see Appendix \ref{appendix_Schwartz}) and does not depend on $\mu$ nor on the nilpotentization of $\mu$ at the point $q$.
\end{remark}

\subsection{Using Theorem \ref{main_thm} to obtain Weyl laws}\label{sec_useful}
In the forthcoming papers \cite{CHT-II,CHT-III}, we will establish local and microlocal Weyl laws for sR Laplacians in regular and in singular cases.
Let us give a flavor of these results, thus explaining how Theorem \ref{main_thm} can be used to reach this objective.

Considering the general framework of Section \ref{sec_intro}, we assume here moreover that $M$ is compact without boundary
and that $\triangle = \triangle_{sR}$ is selfadjoint (it is a sR Laplacian: see Sections \ref{sec_sRLaplacian} and \ref{sec_rem_Horm}).

Since $\mathrm{Lie}(D) = TM$, the operator $\triangle$ is subelliptic, has a compact resolvent and thus has a discrete spectrum $0=\lambda_1\leq\lambda_2\leq\cdots\leq\lambda_k\cdots\rightarrow+\infty$.
Let $(\phi_k)_{k\in\N^*}$ be an orthonormal eigenbasis of $L^2(M,\mu)$ corresponding to these ordered eigenvalues. The spectral counting function is defined by $N(\lambda ) = \# \{k\in\N^* \mid \lambda_k \leq \lambda \}$ for every $\lambda\in\R$.

\paragraph{Equiregular cases.} We prove in \cite{CHT-II} that, if $(M,D,g)$ is equiregular, i.e., 
if every point of $M$ is regular, then
$$
\int_M e_{\triangle,\mu}(t,q,q)f(q) \, d\mu (q) = \frac{\int _M \widehat{e}^{q}(1,0,0) f(q) \, d\mu(q)}{t^{\mathcal{Q}/2}}  + \mathrm{O}\left( \frac{1}{t^{\mathcal{Q}/2-1}} \right)  
$$
as $t\rightarrow 0$, $t>0$, for every continuous function $f$ on $M$, where 
$\mathcal{Q}$ is the Hausdorff dimension of $M$ (local Weyl law).
Actually, we establish a small-time expansion at any order, by using the complete statement of Theorem \ref{main_thm} (asymptotic expansion at any order).
In particular, the spectral counting function has the asymptotics
$$
N(\lambda) \underset{\ \lambda\rightarrow+\infty}{\sim} \frac{\int_M \widehat{e}^{q}(1,0,0) \, d\mu(q)}{\Gamma(\mathcal{Q}/2+1)} \lambda^{\mathcal{Q}/2} .
$$
The above limits are an easy consequence of Theorem \ref{main_thm} along the diagonal (and thus,  already follow from \cite{BenArous_AIF1989,Metivier1976}). Indeed, taking $x=x'=0$ and $\varepsilon=\sqrt{t}$ in Theorem \ref{main_thm}, we obtain 
\begin{equation*}
e_{\triangle,\mu}(t,q,q)\, d\mu(q) \underset{\ t\rightarrow 0\atop t>0}{\sim} \frac{\widehat{e}^{q}(1,0,0)}{t^{\mathcal{Q}(q)/2}}\, d\mu(q) 
=  \widehat{e}^{q}(t,0,0) \, d\mu(q)  \qquad \forall q\in M .
\end{equation*}
The result follows by dominated convergence.

In turn, this result puts in evidence an intrinsic sR measure that we call the \emph{Weyl measure}, of which there exists a local and a microlocal version. The \emph{local Weyl measure} $w_{\triangle}$ is the probability measure on $M$ defined by
$$
\int _M f \, dw_{\triangle} = \lim _{\lambda \rightarrow +\infty }\frac{1}{N(\lambda)}\sum _{\lambda_k \leq \lambda } \int_M f |\phi_k |^2 \, d\mu \qquad \forall f\in C^0(M)
$$
whenever the limit exists for all continuous functions $f$, i.e., $w_{\triangle}$ is the weak limit of the sequence of probability measures $\frac{1}{N(\lambda)}\sum _{\lambda_k \leq \lambda } |\phi_k |^2 \, \mu $ (Ces\`aro mean) as $\lambda \rightarrow +\infty$.
The above argument shows that, in the equiregular case, the local Weyl measure exists, does not depend on $\mu$, is a smooth measure on $M$ and its density with respect to $\mu$ is 
$$
dw_{\triangle}(q) = \frac{\widehat{e}^{q}(1,0,0)}{\int_M \widehat{e}^{q'}(1,0,0)\, d\mu(q')} \, d\mu(q) .
$$
Accordingly, the \emph{microlocal Weyl law} $W_\triangle$ is the probability measure defined on the co-sphere bundle $S^\star M$ by
$$
\int _{S^\star M} a \, dW_{\triangle} = \lim _{\lambda \rightarrow +\infty }\frac{1}{N(\lambda)}\sum _{\lambda_k \leq \lambda } \langle \Op (a)\phi_k , \phi_k \rangle_{L^2(M,\mu)} 
$$
for every symbol $a$ of order $0$, whenever the limit exists for all symbols $a$ of order $0$ (here, $\Op$ denotes any quantization operator).
We prove in \cite{CHT-II} that, in the equiregular case, the microlocal Weyl law exists and we provide its explicit expression, showing in particular that $W_\triangle$ is supported on $S(D^{r-1})^\perp$ where $r$ is the degree of nonholonomy.
This generalizes to equiregular cases a result obtained in \cite{CHT-I} in the three-dimensional contact case.

Establishing this result instrumentally relies on the fact that, taking $x'=0$ in Theorem \ref{main_thm}, we obtain that, for every $q\in M$, in the chart where $\psi_{q}(q)=0$,
\begin{equation*}
\vert\varepsilon\vert^{\mathcal{Q}(q)}\, e_{\triangle,\mu}(\varepsilon^2,\delta_\varepsilon(x),0) \, d\widehat{\mu}^{q}(x) 
\underset{\varepsilon\rightarrow 0\atop \varepsilon\neq 0}{\sim} \widehat{e}^{q}(1,x,0) \, d\widehat{\mu}^{q}(x) 
\end{equation*}
uniformly with respect to $x$ on any compact subset of $\psi_{q}(U)$.

\paragraph{Singular cases.}
Famous singular sR structures are given by the Grushin case in dimension two or by the Martinet case in dimension three. They are chiefly studied in \cite{CHT-II}. One of the main tools is the fact that, taking $x=x'$ in Theorem \ref{main_thm}, we obtain that, for every $q\in M$, in the chart where $\psi_{q}(q)=0$,
\begin{equation*}
\lim_{\varepsilon\rightarrow 0\atop \varepsilon\neq 0} \vert\varepsilon\vert^{\mathcal{Q}(q)}\, e_{\triangle,\mu}(\varepsilon^2,\delta_\varepsilon(x),\delta_\varepsilon(x))
= \widehat{e}^{q}(1,x,x) 
\end{equation*}
uniformly with respect to $x$ on any compact subset of $\psi_{q}(U)$.
For instance, in the Grushin (resp., Martinet) case the asymptotics of the spectral counting function is $\Cst\, \lambda\ln\lambda$ (resp., $\Cst\,\lambda^2\ln\lambda$) as $\lambda\rightarrow+\infty$; this fact is known for the Grushin case (see \cite{Me-Sj-78}) but is new for the Martinet case.
We even obtain two-terms asymptotic expansions of the local Weyl law, with intrinsic coefficients, by using the complete statement of Theorem \ref{main_thm} (asymptotic expansion). 
In \cite{CHT-III} we prove that, in some sense, the occurence of a (power of a) logarithm in the spectral counting function is the highest possible complexity. More precisely, we establish that, given any sR structure whose singular set is Whitney stratifiable, 
$$
\int_M e_{\triangle,\mu}(t,q,q)f(q) \, d\mu (q) \sim \mathrm{Cst} \frac{\vert\ln^k t\vert}{t^\gamma} 
$$
as $t\rightarrow 0$, $t>0$, for every continuous function $f$ on $M$, with $k\in\{0,1,\ldots,n\}$ and $\gamma\in\Q$ such that $\gamma\geq \frac{\Qeq}{2}$, where $\Qeq$ is the Hausdorff dimension of the equiregular region of $M$. Moreover if $k=n$ then $\gamma=\frac{\Qeq}{2}$.
As a consequence the asymptotics of $N(\lambda)$ is $\Cst\,\lambda^\gamma\ln^k\lambda$ as $\lambda\rightarrow +\infty$: this gives the maximal complexity, for instance no term $\ln\ln\lambda$ appears in the asymptotics.

\subsection{Structure of the paper}
The paper is structured as follows.

\medskip

We provide in Section \ref{sec_sR} some reminders in sub-Riemannian (sR) geometry. In particular, we recall the instrumental concept of nilpotentization, much relying on sR dilations that are used in our main result. Also, for the sake of completeness, in Section \ref{sec_kernel} we recall sufficient assumptions ensuring existence of the hypoelliptic heat kernel.

\medskip

The paper is then split into two parts. The reason is the following.

While our main result, Theorem \ref{main_thm}, states a small-time asymptotic expansion of the heat kernel near the diagonal at any order, obtaining only the limit, i.e., only the first term, is much less difficult than obtaining the complete expansion.

Actually, the mathematical techniques and results that are required to obtain the limit are purely of a \emph{local} nature, and thus, do not require lengthy developments.
Since the result is already interesting (in particular, for obtaining Weyl laws), in Part \ref{part_limit} we establish Theorem \ref{main_thm_weak}, which is Theorem \ref{main_thm} at the order zero.
Part \ref{part_complete} is then devoted to establishing the complete statement of Theorem \ref{main_thm}, which is surprisingly much more difficult and requires results of a \emph{global} nature, as explained in detail at the beginning of Part \ref{part_complete}.

The two parts, as well as the sections therein, are redacted independently enough one from each other, in order to allow several possible levels of reading. For instance, the reader only interested in the limit result may read only Part \ref{part_limit} (and Appendix \ref{sec:local_estimates} on local subelliptic estimates).
Besides, a reader, even though not interested in the complete asymptotic expansion, may however find in Part \ref{part_complete} (and in Appendices \ref{sec:global_estimates} and \ref{sec:global_sR} on global subelliptic estimates) a number of tools of interest to deal with global issues.

\paragraph{Structure of Part \ref{part_limit}.}
This part is concerned with \emph{local} issues, sufficient to prove Theorem \ref{main_thm_weak}.

Section \ref{sec_facts} in Part \ref{part_limit} is devoted to establishing some general facts on hypoelliptic heat kernels:
\begin{itemize}
\item In Section \ref{sec_Hormander_kernel}, we gather some remarks on H\"ormander operators (intrinsic formula of integration by parts on a domain with boundary, symmetry properties, with Dirichlet or Neumann boundary conditions), useful in order to state a general result for existence of semigroups (Lemma \ref{lem_semigroup}).
\item In Section \ref{sec_two_general_results}, we establish two general results for parameter-dependent hypoelliptic heat kernels:
\begin{itemize}
\item In Section \ref{sec_localheat}, we prove that the small time asymptotics of hypoelliptic heat kernels is purely local: this reflects the famous Kac's principle of not feeling the boundary. Our version (hypoelliptic Kac's principle, Theorem \ref{theo:localheat}) is moreover uniform with respect to parameters.
\item In Section \ref{sec_convergence_heat_kernels}, we give a general result for singular perturbations of hypoelliptic operators (Theorem \ref{thm_CV_general1}): assuming that the H\"ormander operator depends continuously on some parameter $\tau$, we prove that the corresponding heat kernel depends as well continuously on $\tau$.
\end{itemize}
\end{itemize}

\medskip

Section \ref{sec:proof_main_thm_prelim} is dedicated to proving Theorem \ref{main_thm_weak}, which corresponds to proving the limit in Theorem \ref{main_thm}. As said before, this proof only requires to use local tools.
As sketched at the beginning of Part \ref{part_limit}, the argument starts by applying the Trotter-Kato theorem to the operator $\triangle^\varepsilon = \varepsilon^2 \delta_\varepsilon^*\triangle(\delta_\varepsilon)_*$, which converges to $\widehat{\triangle}^{q}$ in $C^\infty$ topology, before using uniform local subellipticity (Appendix \ref{sec:local_estimates}) to infer strong convergence properties.

\paragraph{Structure of Part \ref{part_complete}.} This part is devoted to establishing the complete statement of Theorem \ref{main_thm}. In contrast to the proof of Theorem \ref{main_thm_weak}, the proof of Theorem \ref{main_thm} requires \emph{global} smoothing properties. 

Since the proof is quite lengthy, in Section \ref{sec_ideaoftheproof} we give the idea of our approach to the proof and we list a number of properties that are required. We point out the main difficulties, in order to motivate the developments done in Section \ref{sec:proof_main_thm} and in Appendix.

Theorem \ref{main_thm} is proved in full detail in Section \ref{sec:proof_main_thm}. 
The proof starts by applying iteratively the Duhamel formula but, as explained in Section \ref{sec_ideaoftheproof}, getting the complete asymptotic expansion out of it raises serious difficulties and requires global considerations.
To facilitate the reading we have organized this section as follows:
\begin{itemize}
\item The operator $\triangle^{\varepsilon,\gamma}$, adequate modification of the operator $\triangle^\varepsilon$ with a ``damping" parameter $\gamma>0$ so as to be (uniformly) at most polynomial at infinity, is defined and analyzed in Section \ref{sec_construction_deltaepsiloneta}. In particular, the key lemmas \ref{lem_epsgamma} and \ref{lem_unif_poly_Hormander} state strong global convergence properties of $\triangle^{\varepsilon,\gamma}$ to $\widehat{\triangle}^{q}$
that are required in our proof (and which do not hold for $\triangle^\varepsilon$ that converges to $\widehat{\triangle}^{q}$ only in $C^\infty$ topology).
\item In Section \ref{sec:asymptotic_expansion_epsilongamma}, and more precisely, in Proposition \ref{prop_expansion_semigroup_epsgam}, we prove that $e^{t\triangle^{\varepsilon,\gamma}}$ has a small-time asymptotic expansion with respect to $\varepsilon$ at any order, in the sense of (uniformly) smoothing operators. The proof of this proposition is delicate and uses in an instrumental way, as explained and motivated in Section \ref{sec_ideaoftheproof}, the global smoothing properties established for $e^{t\widehat{\triangle}^{q}}$ in Appendix \ref{sec:global_sR}, and those established for $e^{t\triangle^{\varepsilon,\gamma}}$ in Appendix \ref{sec:global_estimates} thanks to (uniform) global subelliptic estimates.
\item Taking Schwartz kernels in Section \ref{taking_Skernels}, we obtain the asymptotic expansion of the heat kernel.
\item The end of the proof, in Section \ref{end_of_the_proof_main_thm}, consists of applying the localization theorem (hypoelliptic Kac's principle).
\end{itemize}
As noted in Section \ref{sec_idea_eps}, in the particular case where $M=\R^n$, all vector fields $X_i$ are polynomial and $\triangle$ is selfadjoint, it is not necessary to resort to the modified operator $\triangle^{\varepsilon,\gamma}$, and the global results established in Appendix \ref{sec:global_sR} are sufficient to achieve the proof of Theorem \ref{main_thm}.

\paragraph{Structure of Part \ref{part_appendix} (appendix).} We have gathered in the appendix the following material.

In Appendix \ref{appendix_Schwartz}, we recall some known facts on Schwartz and heat kernels.
In particular, we point out the meaningful fact that Schwartz kernels do not depend on the measure while heat kernels do.

\medskip

In Appendix \ref{sec:uniform_subelliptic_estimates}, we establish subelliptic estimates and smoothing properties for hypoelliptic heat semigroups:
\begin{itemize}
\item Local estimates in Appendix \ref{sec:local_estimates}, uniform with respect to some parameters: although these are standard (adding dependence with respect to parameters is straightforward), we give sketches of proofs, in order to prepare the reader to global estimates.
\item Global estimates in Appendix \ref{sec:global_estimates}, uniform with respect to some parameters, established for parameter-dependent H\"ormander operators whose growth at infinity is at most polynomial, satisfying a uniform polynomial H\"ormander condition.
\end{itemize}

\medskip

In Appendix \ref{sec:global_sR}, we derive a number of more precise and stronger global smoothing properties in the case where the operator $\triangle$ is a (selfadjoint) sR Laplacian, in Sobolev spaces or iterated domains with polynomial weight. Our arguments are based on the Kannai transform combined with the finite speed propagation property for sR waves and on upper exponential estimates for the sR heat kernel.

%
%
%

\section{Reminders: sub-Riemannian (sR) structure}\label{sec_sR}
Attached with the $m$-tuple of vector fields $(X_1,\ldots,X_m)$, there is a canonical sub-Riemannian (sR) structure.
This section consists of reminders in sR geometry (see the textbooks \cite{AgrachevBarilariBoscain_book2019, Bellaiche, Gromov, Jean_2014, LeDonne_book2017, Mo-02, Rifford_2014}), which are useful in our analysis.

\subsection{Sub-Riemannian metric}
The sR metric $g$ associated with $(X_1,\ldots,X_m)$ is defined as follows: given any $q\in M$ and any $v\in D_q=\mathrm{Span}(X_1(q),\ldots,X_m(q))$, we define the positive definite quadratic form $g_q$ on $D_q$ by
$$
g_q(v) = \inf \left\{ \sum_{i=1}^m u_i^2 \ \ \Big\vert \ \ v=\sum_{i=1}^mu_iX_i(q) \right\} .
$$
The triple $(M,D,g)$ is called a sub-Riemannian structure (see \cite{Bellaiche,Rifford_2014}). 
When $D$ has constant rank $m$ on $M$ with $m\leq n$, $D$ is a subbundle of $TM$ endowed with the Riemannian metric $g$, and the frame $(X_1,\ldots,X_m)$ is $g$-orthonormal. But the rank of $D$ may vary (i.e., $D$ is a subsheaf of $TM$) and the above definition encompasses the so-called almost-Riemannian case, for which $m\geq n$ and $\mathrm{rank}(D)<n$ at some singular points.

More formally, a sR structure on $M$ can be defined by an Euclidean vector bundle $E$ over $M$ and a smooth vector bundle morphism $\sigma:E\rightarrow TM$, with $D_q=\sigma(E_q)$ and $g_q(V)=\inf\{\Vert u\Vert_{E_q}^2\ \mid\ u\in E_q,\ \sigma(u)=V\}$ for every $q\in M$. When $E=M \times\R^m$ and $\sigma(x,u)=\sum_{i=1}^m u_i X_i(x)$, we recover the definition of a sR structure attached with the $m$ vector fields $X_1,\ldots,X_m$.

A horizontal path is, by definition, an absolutely continuous path $q(\cdot):[0,1]\rightarrow M$ for which there exist $m$ functions $u_i\in L^1(0,1)$ such that $\dot q(t) = \sum_{i=1}^m u_i(t)X_i(q(t))$ for almost every $t\in[0,1]$.
The metric $g$ induces a length on the set of horizontal paths, and thus a distance $\dsR$ on $M$ that is called the sR distance.

The cometric $g^*$ associated with $(X_1,\ldots,X_m)$ is the nonnegative quadratic form on $T^*M$ defined as follows: given any $q\in M$, $g^*_q$ is the nonnegative quadratic form defined on $T_q^*M$ by $g^*_q(\xi)=\sum_{i=1}^m\langle\xi,X_i(q)\rangle^2$.
Note that $\frac{1}{2} g_q(v) = \sup_{\xi\in T^*_qM} \left( \langle\xi,v\rangle - \frac{1}{2}g^*_q(v) \right)$ (Legendre transform).

Given any smooth function $f$ on $M$, the horizontal gradient $\nabla_gf$ of $f$ is the smooth section of $D$ defined by $g(\nabla_gf,X)=df.X$ for every smooth section $X$ of $D$. We have $\nabla_gf = \sum_{i=1}^m (X_if)X_i$.

\subsection{Sub-Riemannian Laplacian}\label{sec_sRLaplacian}
We denote by $\mathrm{div}_\mu$ the divergence operator associated with the smooth measure $\mu$ on $M$, defined by $\Lie_X \mu=\mathrm{div}_\mu(X)\, \mu$ for every smooth vector field $X$ on $M$. Here, $\Lie_X$ is the Lie derivative along $X$.
The sR Laplacian $\trianglesR$ is defined as the differential operator
\begin{equation}\label{def_DeltasR}
\trianglesR f = \mathrm{div}_\mu(\nabla_g f) = \sum_{i=1}^m \left( X_i^2f + \mathrm{div}_\mu(X_i)X_if \right) \qquad \forall f\in C^\infty(M).
\end{equation}
Its principal symbol is the cometric $g^*$.
The sR Laplacian is a particular instance of a H\"ormander operator: we have $\trianglesR=\triangle$ with $X_0 = \sum_{i=1}^m \mathrm{div}_\mu(X_i)X_i$ and $\mathbb{V}=0$.

Let $\Omega$ be an open subset of $M$.
By integration by parts, we have
$$
\langle \trianglesR u,v\rangle_{L^2(\Omega,\mu)} = - \int_\Omega g(\nabla_g u,\nabla_g v)\, d\mu + \int_{\partial\Omega} v\, d\left(\iota_{\nabla_gu}\mu\right)  \qquad \forall u,v\in C^\infty(M)
$$
where $\iota_{\nabla_gu}$ is the interior product of $\mu$ and $\nabla_gu$.
We infer the sR Green formula
$$
\langle \trianglesR u,v\rangle_{L^2(\Omega,\mu)} = \langle u,\trianglesR v\rangle_{L^2(\Omega,\mu)} + \int_{\partial\Omega} v\, d\left(\iota_{\nabla_gu}\mu\right) - \int_{\partial\Omega} u\, d\left(\iota_{\nabla_gv} \mu\right) \qquad \forall u,v\in C^\infty(M).
$$
Hence, $\trianglesR$ is symmetric and dissipative on $C^\infty(\Omega)$ in the two following cases:
\begin{itemize}
\item Dirichlet case: $f=0$ along $\partial\Omega$;
\item Neumann case: $\iota_{\nabla_gf}\mu=0$ along $\partial\Omega$.
\end{itemize}
In these two cases, $\trianglesR$ has selfadjoint extensions; moreover, if the manifold $\Omega$ endowed with the induced sR distance is complete then $\trianglesR$ is essentially selfadjoint (see \cite{Strichartz_JDG1986}) and thus has a unique selfadjoint extension. We speak then of the Dirichlet (resp., Neumann) sR Laplacian, which is defined on the maximal domain that is the completion in $L^2(\Omega,\mu)$ of the subset of $f\in C^\infty(\Omega)$ satisfying $f=0$ (resp., $\iota_{\nabla_gf}\mu=0$) along $\partial\Omega$.

Note that, if $\Omega=M$ is compact or if $\Omega=M=\R^n$ or if one considers Dirichlet boundary conditions then the adjoint of $X_i$ in $L^2(\Omega,\mu)$ is $X_i^*=-X_i-\mathrm{div}_\mu(X_i)$ 
and then $\trianglesR=-\sum_{i=1}^mX_i^*X_i$.

\subsection{Sub-Riemannian flag}\label{sec:sRflag}
We define the sequence of subsheafs $D^k$ of $TM$ by $D^0=\{0\}$, $D^1=D=\mathrm{Span}(X_1,\ldots,X_m)$ and $D^{k+1}=D^k+[D,D^k]$ for $k\geq 1$. Under the strong H\"ormander condition \eqref{Hormander_condition}, given any point $q\in M$, we have the flag
$$
\{0\}=D^0_q\subset D_q=D^1_q\subset D^2_q\subset \ldots\subset D^{r(q)-1}_q\subsetneq D^{r(q)}_q=T_qM 
$$
where $r(q)$ is called the \emph{degree of nonholonomy} at $q$. We set $n_i(q) = \dim D^i_q$. The $r(q)$-tuple of integers $(n_1(q),\ldots,n_{r(q)}(q))$ is called the \emph{growth vector} at $q$, and we have $n_{r(q)}(q)=n=\dim M$. By convention, we set $n_0(q)=0$.
We define the nondecreasing sequence of weights $w_i(q)$ as follows:
given any $i\in\{1,\ldots,n\}$, there exists a unique $j\in\{1,\ldots,n\}$ such that $n_{j-1}(q)+1\leq i\leq n_j(q)$, and we set $w_i(q)=j$.
By definition, we have $w_1(q)=\cdots=w_{n_1}(q)=1$, and $w_{n_{j-1}+1}(q)=\cdots=w_{n_j}(q)=j$ when $n_j(q)>n_{j-1}(q)$. We also have $w_{n_{r-1}+1}(q)=\cdots=w_{n_r}(q)=r(q)$.
Note that $n_{w_{n_j}}(q) = n_j(q)$ for $j=1,\ldots,r$ and that $w_{n_j}(q) = j$ if (and only if) $n_j(q)>n_{j-1}(q)$.

Given any $q\in M$, we set
$$
\mathcal{Q}(q) = \sum_{i=1}^r i ( n_i(q)-n_{i-1}(q) ) =\sum_{i=1}^n w_i(q).
$$
If $q$ is regular then $\mathcal{Q}(q)$ is the Hausdorff dimension of a small ball in $M$ containing $q$ endowed with the induced corresponding sR distance (see \cite{Gromov}).

A point $q\in M$ is said to be \emph{regular} if the growth vector is constant in a neighborhood of $q$; otherwise it is said to be \emph{singular}. The sR structure is said to be \emph{equiregular} if all points of $M$ are regular; in this case, the weights and the Hausdorff dimension are constant as well on $M$.

We recall that, at a point $q$ that is regular, with degree of nonholonomy $r(q)$, $\triangle$ is locally hypoelliptic and even subelliptic with a gain of regularity $2/r(q)$, meaning that if $\triangle f=g$ with $g$ of Sobolev class $H^s$ locally at $q$ then $f$ is (at least) of Sobolev class $H^{s+2/r(q)}$ locally at $q$ (see \cite{Ho-67}). 



\subsection{Sub-Riemannian isometries}
Given two sR structures $(M_1,D_1,g_1)$ and $(M_2,D_2,g_2)$, of respective cometrics $g^*_1$ and $g^*_2$, a (local) sR isometry $\phi:M_1\rightarrow M_2$ is a (local) smooth diffeomorphism mapping $g^*_1$ to $g^*_2$. 



Note that, if $M$ is a Lie group equipped with a left-invariant sR structure, then the left action is a sR isometry on $M$.

\subsection{Nilpotentization of the sub-Riemannian structure}\label{sec:nilp}
\subsubsection{First definition}
Let $q\in M$ be arbitrary. The nilpotentization of the sR structure $(M,D,g)$ at $q$ is defined as the metric tangent space of $M$ (endowed with its sR distance) in the sense of Gromov-Hausdorff (see \cite{Bellaiche,Gromov}). It is identified, with a sR isometry, to the sR structure $(\widehat{M}^{q},\widehat{D}^{q},\widehat{g}^{q})$ defined hereafter, where $\widehat{M}^{q}$ is a smooth connected manifold of dimension $n$ (as a topological space, $\widehat{M}^{q}$ is the usual tangent space to $M$ at $q$), $\widehat{D}^{q}=\mathrm{Span}(\widehat{X}^{q}_1,\ldots,\widehat{X}^{q}_m)$ with smooth vector fields $\widehat{X}^{q}_1,\ldots,\widehat{X}^{q}_m$ on $M$ (defined hereafter) called nilpotentizations at $q$ of the vector fields $X_1,\ldots,X_m$ at $q$, and the sR metric $\widehat{g}^{q}$ is defined, accordingly, by
$$
\widehat{g}^{q}_x(v) = \inf \left\{ \sum_{i=1}^m u_i^2 \ \ \Big\vert \ \ v = \sum_{i=1}^m u_i \widehat{X}^{q}_i(x) \right\}\qquad \forall x\in \widehat{M}^{q}\qquad \forall v\in \widehat{D}^{q}_x .
$$
The metric $\widehat{g}^{q}$ induces 
a distance $\hatdsR^{q}$ on $\widehat{M}^{q}$.

To define $(\widehat{M}^{q},\widehat{D}^{q},\widehat{g}^{q})$, it suffices to define one of the elements of the equivalence class under the action of sR isometries.
A standard description consists of using charts in $M$ of so-called \textit{privileged coordinates}, and then to identify $\widehat{M}^{q}\simeq\R^n$ with a sR isometry, as follows.

\subsubsection{Privileged coordinates}
We first recall the notion of nonholonomic order (see \cite{Bellaiche,Jean_2014,Mo-02} for details).
Given a germ $f$ of a smooth function at $q$, given $k\in\N$ and integers $j_1,\ldots,j_k$ in $\{1,\ldots,m\}$, the Lie derivative $\Lie_{X_{j_1}}\cdots\Lie_{X_{j_k}}f(q)$ is called a nonholonomic derivative of order $k$. By definition, the nonholonomic order of $f$ at $q$, denoted by $\mathrm{ord}_{q}(f)$, is the smallest integer $k$ for which at least one nonholonomic derivative of $f$ of order $k$ at $q$ is not equal to zero.
Given a germ $Y$ of a smooth vector field at $q$, the nonholonomic order of $Y$ at $q$ is the largest integer $k$ such that $\mathrm{ord}_{q}(\Lie_Yf)\geq k+\mathrm{ord}_{q}(f)$, for every germ $f$ at $q$.

A family $(Y_1,\ldots,Y_n)$ of $n$ vector fields is said to be adapted to the flag at $q$ if it is a frame of $T_{q}M$ at $q$ and if $Y_i(q)\in D_{q}^{w_i(q)}$, for every $i\in\{1,\ldots,n\}$.

A system of privileged coordinates at $q$ is a system of local coordinates $(x_1,\ldots,x_n)$ at $q$ such that $\mathrm{ord}_{q}(x_j)=w_j(q)$, for every $i\in\{1,\ldots,n\}$.
Note that we must have $dx_i(D_{q}^{w_i(q)})\neq 0$ and $dx_i(D_{q}^{w_i(q)-1})= 0$, meaning that $\partial_{x_i}\in D_{q}^{w_i(q)}\setminus D_{q}^{w_i(q)-1}$ at $q$ (i.e., privileged coordinates are always adapted to the flag).

An example of privileged coordinates at $q$ is given by 
\begin{equation}\label{example_privileged}
(x_1,\ldots,x_n)\mapsto\exp \left( \sum_{i=1}^nx_i Z^q_i \right) (q)
\end{equation}
where $(Z^q_i)_{1\leq i\leq n}$ is a frame of vector fields that is adapted to the flag at $q$.

Privileged coordinates can be obtained from any system of adapted coordinates by a triangular change of variables (see \cite{Jean_2014}).

\subsubsection{Dilations and nilpotentization of smooth sections of $D$}\label{sec_nilp_smoothsection}
We consider a chart of privileged coordinates at $q$, that is a smooth mapping $\psi_{q}:U\mapsto\R^n$, where $U$ is a neighborhood of $q$ in $M$, with $\psi_{q}(q)=0$, inducing local coordinates $x=\psi_{q}(q)$ in which the vector fields $(\psi_{q})_*X_i$ ($i=1,\ldots,m$) have a nilpotent approximation $\widehat{X}^{q}_i$ with the following precise meaning. 
In these local coordinates, for every $\varepsilon\in\R$, the dilation $\delta_\varepsilon$ is defined in $\R^n$, according to the flag at $q$, by 
\begin{equation*}
\delta_\varepsilon(x) = \left( \varepsilon^{w_1(q)} x_1,\ldots, \varepsilon^{w_n(q)} x_n \right) \qquad\forall x=(x_1,\ldots,x_n)\in\R^n .
\end{equation*}
Note that, denoting by $m$ the Lebesgue measure on $\R^n$ (given by $dm=dx_1\cdots dx_n$), we have $\delta_\varepsilon^*m=\vert\varepsilon\vert^{\mathcal{Q}(q)}m$ for every $\varepsilon\neq 0$.

Given any vector field $X$ on $M$ that is a smooth section%
\footnote{Note that we consider a smooth section of the subsheaf $D$, otherwise there are some difficulties: take $M=\R^2$, $D$ spanned by $X_1=\partial_x$ and $X_2=x^6\partial_y$, and the vector field $X=x^2\partial_y$.}
of $D$ (i.e., $X(q) = \sum_{i=1}^m a_i(q) X_i(q)$ at any point $q$, with smooth functions $a_i$), the nilpotentization $\widehat{X}^{q}$ at $q$ of $X$ is the (nilpotent and complete) vector field on $\R^n$ defined in the chart by 
$$
\widehat{X}^{q} = \lim_{\varepsilon\rightarrow 0\atop \varepsilon\neq 0}\varepsilon \delta_\varepsilon^* X . 
$$
Actually this convergence is valid in $C^\infty$ topology (uniform convergence of all derivatives on compact subsets of $\R^n$).
Note that $\widehat{X}^{q}$ is homogeneous of order $-1$ with respect to dilations, i.e., $\lambda\delta_\lambda^*\widehat{X}^{q}=\widehat{X}^{q}$ for every $\lambda\neq 0$,
and that the nonholonomic order of $X-\widehat{X}^{q}$ at $q$ is nonnegative.
Actually, setting $X^\varepsilon=\varepsilon\delta_\varepsilon^*X$ and writing in $C^\infty$ topology the Taylor expansion $X=X^{(-1)}+X^{(0)}+X^{(1)}+\cdots$ around $0$, where $X^{(k)}$ is polynomial and homogeneous of degree $k$ (with respect to dilations), we get that $X^\varepsilon$ has a Taylor expansion at any order $N$ with respect to $\varepsilon$, in $C^\infty$ topology:
\begin{equation}\label{expansion_Xeps}
X^\varepsilon = \varepsilon \delta_\varepsilon^* X = \widehat{X}^{q} + \varepsilon X^{(0)} + \varepsilon^2 X^{(1)} + \cdots + \varepsilon^N X^{(N-1)} + \mathrm{o}\big(\vert\varepsilon\vert^N\big)
\end{equation}
with $\widehat{X}^{q} = X^{(-1)}$ (see also \cite[Lemma 1]{Ba-13}), i.e., setting $X^0=\widehat{X}^{q}$ for $\varepsilon=0$, $X^\varepsilon$ depends smoothly on $\varepsilon$ in $C^\infty$ topology. We also have
\begin{equation}\label{expansion_XZeps}
X^\varepsilon = \widehat{X}^{q} + \varepsilon Z^\varepsilon
\end{equation}
for every $\varepsilon\in\R$ with $\vert\varepsilon\vert$ small enough so that we are in the chart, where $Z^\varepsilon$ is a smooth vector field depending smoothly on $\varepsilon$ in $C^\infty$ topology.

\subsubsection{Definition of the nilpotentization of the sR structure}\label{sec_def_nilpotentization}
In the above chart, we define $\widehat{M}^{q}\simeq\R^n$, endowed with the sR structure (denoted by $(\widehat{M}^{q},\widehat{D}^{q},\widehat{g}^{q})$) induced by the vector fields $\widehat{X}^{q}_i$, $i=1,\ldots,m$.
This definition does not depend on the choice of privileged coordinates at $q$ because two sets of such coordinates produce two sR-isometric sR structures. 
This is due to the fact that, since transition maps of charts of privileged coordinates are triangular with respect to the flag, the nilpotentization of any transition map is a sR isometry (see \cite[Proposition 5.20]{Bellaiche}).
Note that the nilpotent sR structure $(\widehat{M}^{q},\widehat{D}^{q},\widehat{g}^{q})$ is homogeneous with respect to the above dilations and that the corresponding sR distance is homogeneous of order $1$. Moreover, the growth vector of $\widehat{D}^{q}$ coincides with that of $D$ at $q$, and $\mathrm{Lie}(\widehat{X}^{q}_1,\ldots,\widehat{X}^{q}_m)$ is a nilpotent Lie algebra of step $r(q)$.

It follows from the definition of the sR metric that 
$$
\widehat{g}^{q} = \lim_{\varepsilon\rightarrow 0\atop \varepsilon\neq 0} \varepsilon^{-2} \delta_\varepsilon^* g
\qquad \textrm{and}\qquad
\widehat{g}^{q}_x(\widehat{X}^{q}(x),\widehat{Y}^{q}(x)) = g_{q}(X(q),Y(q))
$$
for every $x\in\R^n$, for all vector fields $X$ and $Y$ on $M$ that are smooth sections of $D$. 

Another useful geometric identification of $(\widehat{M}^{q},\widehat{D}^{q},\widehat{g}^{q})$ is the following.
Fixing a chart of privileged coordinates at $q$, let $\mathcal{G}_{q}$ be the (nilpotent) Lie group of diffeomorphims of $\R^n$ generated by $\exp(t\widehat{X}^{q}_i)$, for $t\in\R$ and $i=1,\ldots,m$. Its Lie algebra is 
$$
\mathfrak{g}_{q} = \mathrm{Lie}(\widehat{X}^{q}_1,\ldots,\widehat{X}^{q}_m)
= \bigoplus_{i=1}^{r(q)} \big( \widehat{D}^{q} \big)^i / \big( \widehat{D}^{q} \big)^{i-1} 
$$
it is nilpotent, graded, and generated by its first component $\widehat{D}^{q}$. In other words, $\mathcal{G}_{q}$ is a Carnot group (see \cite{Mo-02}).
Under the strong H\"ormander condition \eqref{Hormander_condition}, $\mathcal{G}_{q}$ acts transitively on $\R^n$.
Defining the isotropy group $H_{q}=\{\varphi\in \mathcal{G}_{q}\ \mid\ \varphi(0)=0\}$, of Lie algebra $\mathfrak{h}_{q}=\{Y\in \mathfrak{g}_{q}\ \mid\ Y(0)=0\}$, we identify $\widehat{M}^{q}$ to the homogeneous (coset) space $\mathcal{G}_{q}/H_{q}$. If $q$ is regular then $H_{q}=\{0\}$ and thus $\widehat{M}^{q}\simeq \mathcal{G}_{q}$ is a Carnot group endowed with a left-invariant sR structure.

\begin{remark}
Carnot groups are to sub-Riemannian geometry as Euclidean spaces are to Riemannian geometry.
However, there is a major difference, which is of particular importance here.
In Riemannian geometry, all tangent spaces are isometric, but this is not the case in sub-Riemannian geometry: given two points $q_1$ and $q_2$ of $M$, the nilpotentizations $(\widehat{M}^{q_1},\widehat{D}^{q_1},\widehat{g}^{q_1})$ and $(\widehat{M}^{q_2},\widehat{D}^{q_2},\widehat{g}^{q_2})$ of the sR structure respectively at $q_1$ and $q_2$ may not be sR-isometric, even though the growth vectors at $q_1$ and $q_2$ coincide.\footnote{Actually, the flags of two sR structures coincide at any point if and only if the sR structures are locally Lipschitz equivalent, meaning that the corresponding sR distances satisfy $c_1 d_2(q,q')\leq d_1(q,q')\leq c_2 d_2(q,q')$ for some uniform constants $c_1>0$ and $c_2>0$.
}
There are many algebraically non-isomorphic (and thus non-isometric) $n$-dimensional Carnot groups, and even uncountably many for $n\geq 5$ (due to moduli in their classification). 
We refer to \cite{AgrachevMarigo_2005,Marigo_2007} for a complete classification of rigid and semi-rigid Carnot algebras.

Note that, in dimension three, if the growth vector is $(2,3)$ 
then we have a unique model that is the Heisenberg flat case 
in the equivalence class of sR-isometric Carnot groups.
\end{remark}

\subsubsection{Nilpotentized sR Laplacian}
Let $q\in M$ be arbitrary. Associated with the sR structure $(\widehat{M}^{q},\widehat{D}^{q},\widehat{g}^{q})$, we define on $C^\infty(\widehat{M}^{q})$ the differential operator
\begin{equation}\label{def_triangle_nilp}
\widehat{\triangle}^{q} = \sum_{i=1}^m (\widehat{X}^{q}_i)^2 .
\end{equation}

\subsubsection{Nilpotentization of measures}
Let us define the nilpotentization of a smooth measure $\mu$ on $M$. Let $q\in M$ be arbitrary.
Using the bijective correspondence between smooth measures and densities, the measure $\mu$ induces a volume form that we consider at the point $q$. Then, the canonical isomorphism%
\footnote{Indeed, following \cite{ABGR}, considering a basis $(e_1,\ldots,e_n)$ of $T_{q}M$ that is adapted to the flag, that is, such that $e_i\in D_{q}^{w_i(q)}$, the wedge product $e_1\wedge\cdots\wedge e_n$ depends only on $e_i\mod D_{q}^{w_i(q)-1}$. This induces the canonical isomorphism.}
\begin{equation*}
\Lambda^n(T^\star_{q} M) \simeq \Lambda^n\bigg(  \bigoplus_{k=1}^{r(q)} D_{q}^k/D_{q}^{k-1} \bigg)^* 
\end{equation*}
induces a measure $\widehat{\mu}^{q}$ on $\widehat{M}^{q}$. Using a chart $\psi_{q}$ of privileged coordinates at $q$, and using the isometric representation $\widehat{M}^{q}\simeq\R^n$, the measure $\widehat{\mu}^{q}$ on $\R^n$ is given in the chart by
$$
\widehat{\mu}^{q} = \lim_{\varepsilon\rightarrow 0\atop \varepsilon\neq 0} \frac{1}{\vert\varepsilon\vert^{\mathcal{Q}(q)}} \delta_\varepsilon^* \mu 
$$
where the convergence is understood in the vague topology (i.e., the weak star topology of $C_c(M)'$, where $C_c(M)$ is the set of continuous functions on $M$ of compact support).
According to this definition, if $\mu$ and $\nu$ are two smooth measures on $M$, with $\mu=h\nu$, where $h$ is a positive smooth function on $M$, then $\widehat{\mu}^{q}=h(q)\widehat{\nu}^{q}$. Equivalently, this means that
\begin{equation}\label{density_nilp}
h(q) = \frac{d\mu}{d\nu}(q) = \frac{\widehat{\mu}^{q}}{\widehat{\nu}^{q}}.
\end{equation}
In particular, the nilpotentizations at $q$ of all smooth measures are proportional to the Lebesgue measure $m$ on $\widehat{M}^{q}\simeq\R^n$. Note that, if $q$ is regular, then $\widehat{\mu}^{q}$ is a left-invariant measure on the Carnot group $\widehat{M}^{q}$; in this case, $\widehat{M}^{q}$ is a nilpotent Lie group and thus is unimodular, and hence $\widehat{\mu}^{q}$ coincides with the Haar measure, up to scaling.

In passing, note that, applying \eqref{density_nilp} to the measure $\nu=\mathcal{H}_S$ that is the spherical Hausdorff measure and using the fact (proved in \cite{AgrachevBarilariBoscain_CV2012}) that $\widehat{\mathcal{H}_S}^{q}(\widehat{B}^{q})=2^{\mathcal{Q}(q)}$, we obtain that the density at $q$ of $\mu$ with respect to the spherical Hausdorff measure is $h(q) = \frac{d\mu}{d\mathcal{H}_S}(q) = \frac{\widehat{\mu}^{q}(\widehat{B}^{q})}{2^{\mathcal{Q}(q)}}$.

\begin{remark}\label{rem_nilpLap}
Let $q\in M$ be arbitrary, and let $\mu$ be an arbitrary smooth measure on $M$.
Endowing $\widehat{M}^{q}$ with the measure $\widehat{\mu}^{q}$, we claim that
\begin{equation}\label{div_zero}
\mathrm{div}_{\widehat{\mu}^{q}}(\widehat{X}^{q}_i)=0 \qquad \forall i\in\{1,\ldots,m\}.
\end{equation}
As a consequence, 
we have $(\widehat{X}^{q}_i)^*=-\widehat{X}^{q}_i$, where the transpose is considered in $L^2(\widehat{M}^{q},\widehat{\mu}^{q})$.  It follows that
$$
\widehat{\triangle}^{q} = \sum_{i=1}^m (\widehat{X}^{q}_i)^2 = \sum_{i=1}^m - (\widehat{X}^{q}_i)^*\widehat{X}^{q}_i  .
$$
Due to the cancellation of the divergence term, there are no terms of order one (compare with the general formula for a sR Laplacian, given, e.g., in \cite{CHT-SEDP}).

Let us prove \eqref{div_zero}. 
Following \cite[page 25]{Jean_2014}, we write $\widehat{X}^{q}_i(x) = \sum_{j=1}^n a_{ij}(x)\partial_{x_j}$ in a chart of privileged coordinates. By definition, $\partial_{x_j}$ is homogeneous of degree $-w_j(q)$, and since $\widehat{X}^{q}_i$ is homogeneous of degree $-1$, it follows that $a_{ij}$ is a homogenous polynomial of degree $w_j(q)-1$.
Since $x_k$ is of weight greater than or equal to $w_j(q)$ for $k\geq j$, $a_{ij}$ does not depend on variables $x_k$ for $k\geq j$.
It follows that $\partial_{x_j}a_{ij}=0$, and hence $\mathrm{div}_{\widehat{\mu}^{q}}(\widehat{X}^{q}_i)=0$ (recall that $\widehat{\mu}^{q}$ is equal, up to constant scaling, to the Lebesgue measure on $\R^n$). 
\end{remark}

\part{Proof of the limit in Theorem \ref{main_thm}}\label{part_limit}

This part is devoted to proving the following result (first term in the complete asymptotic expansion stated in Theorem \ref{main_thm}).

\begin{customthm}{B}\label{main_thm_weak}
{\it 
Let $q\in M$ be arbitrary (regular or not). 
Let $\psi_{q}:U\rightarrow V$ be a chart of privileged coordinates at $q$ such that $\psi_{q}(q)=0$, where $U$ is an open connected neighborhood of $q$ in $M$ and $V$ is an open neighborhood of $0$ in $\R^n$.
We assume that $X_0$ is a smooth section of $D$ over $M$. 
We also assume that $\sup_{q'\in U} r(q')<+\infty$.
Then, in the chart, we have
\begin{equation}\label{limit_main_thm_weak}
\lim_{\varepsilon\rightarrow 0\atop \varepsilon\neq 0}
\vert\varepsilon\vert^{\mathcal{Q}(q)}\, e_{\triangle,\mu}(\varepsilon^2 t,\delta_\varepsilon(x),\delta_\varepsilon(x')) \\
= \widehat{e}^{q}(t,x,x') 
\end{equation}
in $C^\infty((0,+\infty)\times V\times V)$ topology.

Moreover, if $q$ is regular, then the above convergence is locally uniform with respect to $q$, and the function $\widehat{e}^{q}$ depends smoothly (in $C^\infty$ topology) on $q$ in any open neighborhood of $q$ consisting of regular points. If the manifold $M$ is Whitney stratifiable with strata defined according to the sR flag (i.e., the growth vector $(n_1(q),\ldots, n_{r(q)}(q))$ is constant along each stratum) then the latter property is satisfied along strata.

When $X_0$ is a smooth section of $D^2$ over $M$, the above statement remains true, replacing $\widehat{\triangle}^{q}$ with $\widehat{\triangle}^{q}+\widehat{X}_0^{q}$ (see Remark \ref{rem_X0_length2}).
}
\end{customthm}

\paragraph{Sketch of proof.}
In few words, the proof goes as follows (see Section \ref{sec:proof_main_thm_prelim} for all details).

Assume first that $M=\R^n$. For every $i\in\{0,\ldots,m\}$, the vector field $X_i^\varepsilon = \varepsilon\delta_\varepsilon^*X_i$ converges to $\widehat{X}^q_i$ in $C^\infty$ topology as $\varepsilon\rightarrow 0$. Hence the operator 
$$
\triangle^\varepsilon = \varepsilon^2\delta_\varepsilon^*\triangle(\delta_\varepsilon)_*  
= \sum_{i=1}^m \left( X^\varepsilon_i \right)^2 + \varepsilon X^\varepsilon_0 - \varepsilon^2 \left( \delta_\varepsilon^*\mathbb{V}\right)
$$
converges to $\widehat{\triangle}^{q} = \sum_{i=1}^m \big( \widehat{X}^q_i\big)^2$ in $C^\infty$ topology. By the Trotter-Kato theorem (see \cite{EngelNagel,Pazy}), the corresponding heat kernel
\begin{equation}\label{rel_eeps_eX}
e^\varepsilon (t,x,x') = \vert\varepsilon\vert^{\mathcal{Q}(q)}\, e_{\triangle,\mu}(\varepsilon^2 t,\delta_\varepsilon(x),\delta_\varepsilon(x'))
\end{equation}
converges to $\widehat{e}^{q}$ in a weak topology, but actually convergence is true as well in $C^\infty((0,+\infty)\times\R^n\times\R^n$ topology because, by uniform local subellipticity of $(\triangle^\varepsilon)_{\varepsilon\in[-\varepsilon_0,\varepsilon_0]}$, the family $(e^\varepsilon)_{\varepsilon\in[-\varepsilon_0,\varepsilon_0]}$ is uniformly bounded in the Montel space $C^\infty$, for some $\varepsilon_0>0$ small enough.

On a general manifold, we follow the above argument in a local chart around $q$, extending the vector fields $X_i$ by $0$ outside of a neighborhood of $q$. The relation \eqref{rel_eeps_eX} is then not exactly true, but, thanks to the fact that the small-time asymptotics of hypoelliptic heat kernels is purely local (Kac's principle), the relation \eqref{rel_eeps_eX} remains true with a remainder term $\mathrm{O}(\vert\varepsilon\vert^\infty)$ as $\varepsilon\rightarrow 0$, and we conclude as well.

\medskip

As one can see, in the above argument, we only use \emph{local} results:
\begin{itemize}
\item local subellipticity estimates and local smoothing properties for hypoelliptic heat kernels, uniform with respect to some parameters: these are well known results, but for completeness (and in order to prepare global estimates), we give statements and proofs in Appendix \ref{sec:local_estimates};
\item local nature of the small-time asymptotics of hypoelliptic heat kernels (Kac's principle), uniform with respect to parameters: this is established in Theorem \ref{theo:localheat} in Section \ref{sec_localheat}.
\end{itemize}
In turn, we state in Theorem \ref{thm_CV_general1} (in Section \ref{sec_convergence_heat_kernels}) a general convergence result for hypoelliptic heat kernels depending on parameters: this is a singular perturbation theorem for hypoelliptic operators, generalizing existing singular perturbation results for elliptic operators.

\section{Some general facts for hypoelliptic heat kernels}\label{sec_facts}
\subsection{H\"ormander operators, semigroups and heat kernels}\label{sec_Hormander_kernel}
\subsubsection{Preliminary remarks on H\"ormander operators}\label{sec_rem_Horm}
In this section, we make some remarks on H\"ormander operators, which are useful in view of defining the domains of such operators and then show existence of semigroups, as done in Section \ref{sec_kernel}.
Let $\Omega$ be an open subset of $M$.

\paragraph{Integration by parts with a H\"ormander operator.}
We consider the differential operator $\triangle$ is defined by \eqref{def_DeltaX}. By integration by parts, we compute
\begin{multline}\label{IPP_Hormander}
\langle \triangle f,f\rangle_{L^2(\Omega,\mu)} 
= - \sum_{i=1}^m\Vert X_if\Vert_{L^2(\Omega,\mu)}^2 + \int_{\partial\Omega} f\, d\left( \iota_Y\mu\right) \\
+  \frac{1}{2} \int_\Omega f^2 \left( \sum_{i=1}^m \Big( X_i(\mathrm{div}_\mu(X_i))  +  (\mathrm{div}_\mu(X_i))^2 \Big) - \mathrm{div}_\mu(X_0) - 2\mathbb{V} \right) \, d\mu 
\end{multline}
with
\begin{equation}\label{def_Y}
Y = \sum_{i=1}^m \left( X_if - \frac{1}{2}  f\mathrm{div}_\mu(X_i) \right) X_i + \frac{1}{2}f X_0 .
\end{equation}
Of course, if $\partial\Omega=\emptyset$ then there is no boundary term. When $\partial\Omega\neq\emptyset$, the boundary term $\int_{\partial\Omega} f\, d\left( \iota_Y\mu\right)$ is equal to zero in the two following cases:
\begin{itemize}
\item Dirichlet case: $f=0$ along $\partial\Omega$;
\item Neumann case: $\iota_Y\mu=0$ along $\partial\Omega$.
\end{itemize}
In particular, we have defined here the Neumann boundary condition for the H\"ormander operator $\triangle$: the interior product of $\mu$ and of the vector field $Y$ (defined by \eqref{def_Y}) is zero along $\partial\Omega$. Note that
$$
Y = \nabla_gf + \frac{1}{2} f \left( X_0 - \sum_{i=1}^m \mathrm{div}_\mu(X_i) X_i \right)
$$
where $\nabla_gf$ is the horizontal gradient of $f$.

\paragraph{Symmetry properties of H\"ormander operators.}
Recalling that the differential operator $\triangle$ is defined by \eqref{def_DeltaX} and that $\trianglesR$ is defined by \eqref{def_DeltasR}, we have
$$
\triangle = \trianglesR + X'_0 - \mathbb{V} 
\qquad\textrm{with}\qquad X'_0=X_0 - \sum_{i=1}^m\mathrm{div}_\mu(X_i)X_i .
$$
Integrating by parts, we compute
\begin{multline*}
\langle \triangle u,v\rangle_{L^2(\Omega,\mu)} 
= -\int_\Omega g(\nabla_gu,\nabla_gv)\,d\mu - \int_\Omega u(X'_0v)\, d\mu - \int_\Omega uv ( \mathrm{div}_\mu(X_0')+ \mathbb{V}) \, d\mu \\
+ \int_{\partial\Omega} v\, d\left(\iota_{\nabla_gu} \mu\right) + \int_{\partial\Omega} uv\, d\left(\iota_{X'_0u} \mu\right)
\end{multline*}
and we infer the Green formula for H\"ormander operators:
\begin{multline*}
\langle \triangle u,v\rangle_{L^2(\Omega,\mu)} = \langle u,\triangle v\rangle_{L^2(\Omega,\mu)} 
+ \int_\Omega \big( v(X'_0u)\, d\mu - u(X'_0v)\big)\, d\mu \\
+ \int_{\partial\Omega} v\, d\left(\iota_{\nabla_gu} \mu\right) - \int_{\partial\Omega} u\, d\left(\iota_{\nabla_gv} \mu\right) \qquad \forall u,v\in C^\infty(M).
\end{multline*}
For the operator $\triangle$ to be symmetric on $C^\infty(\Omega)$, there are two necessary conditions:
\begin{itemize}
\item The term $\int_\Omega \big( v(X'_0u)\, d\mu - u(X'_0v)\big)\, d\mu$ must be zero, which is the case if and only if $X'_0=0$, i.e., $X_0 = \sum_{i=1}^m  \mathrm{div}_\mu(X_i) X_i$ on $\Omega$.
\item The boundary term must be zero. This is the case for Dirichlet boundary conditions. For Neumann boundary conditions, using the fact that $X'_0=0$ by the first item and thus that the vector field $Y$ defined by \eqref{def_Y} coincides with the horizontal gradient, we see then that the Neumann boundary condition for the H\"ormander operator coincides with the sR Neumann boundary condition.
\end{itemize}
Therefore, in the Dirichlet as in the Neumann case, the H\"ormander operator $\triangle$ is symmetric on $C^\infty(\Omega)$ if and only if $X_0 = \sum_{i=1}^m  \mathrm{div}_\mu(X_i) X_i$ on $\Omega$.
In this case, $\triangle = \triangle_{sR} - \mathbb{V}$ 
has selfadjoint extensions; moreover, if the manifold $\Omega$ endowed with the induced sR distance is complete then $\triangle$ is essentially selfadjoint on $C_c^\infty(\Omega)$ and thus has a unique selfadjoint extension. 

\subsubsection{Hypoelliptic semigroups and heat kernels}\label{sec_kernel}
We consider the operator $\triangle:D(\triangle)\rightarrow L^2(\Omega,\mu)$ defined 
on a domain $D(\triangle)$ that is assumed to be dense in $L^2(\Omega,\mu)$ and for which $(\triangle,D(\triangle))$ is closed. When $\partial\Omega\neq\emptyset$, the domain $D(\triangle)$ encodes some possible boundary conditions on $\partial\Omega$. 

According to Section \ref{sec_rem_Horm}, given a sufficiently regular function $f$, we speak of the Dirichlet boundary condition when $f=0$ along $\partial\Omega$, and of the Neumann boundary condition when $\iota_Y\mu=0$ (interior product of $\mu$ and $Y$) along $\partial\Omega$, where $Y$ is defined by \eqref{def_Y}.

Let $q\in M$ be arbitrary. Let $\mu$ be an arbitrary smooth measure on $M$.
We set $D(\widehat{\triangle}^{q}) = \{ f\in L^2(\widehat M^{q},\widehat\mu^{q})\ \mid\ \widehat{\triangle}^{q} f\in L^2(\widehat M^{q},\widehat\mu^{q}) \}$.
According to Section \ref{sec_rem_Horm} and Remark \ref{rem_nilpLap}, since $(\widehat M^{q},\hatdsR^{q})$ is complete (indeed, sR balls of small radius are compact, and $\widehat M^{q}$ is invariant under dilations), the operator $\widehat{\triangle}^{q}:D(\widehat{\triangle}^{q})\rightarrow L^2(\widehat M^{q},\widehat\mu^{q})$ is selfadjoint.

\begin{lemma}\label{lem_semigroup}
Under any of the following assumptions:
\begin{enumerate}[label=$\bf (\Alph*)$]
\item\label{caseA} $\triangle:D(\triangle)\rightarrow L^2(\Omega,\mu)$ is selfadjoint (see Section \ref{sec_rem_Horm});
\item\label{caseB} the functions $\mathrm{div}_\mu(X_0)$, $\mathrm{div}_\mu(X_i)$ and $X_i(\mathrm{div}_\mu(X_i))$, $i=1,\ldots,m$, are bounded on $\Omega$,
and we have Dirichlet or Neumann boundary conditions whenever $\partial\Omega\neq\emptyset$;
\end{enumerate}
the operator $(\triangle,D(\triangle))$ 
generates a strongly continuous semigroup $(e^{t\triangle})_{t\geq 0}$ on $L^2(\Omega,\mu)$.

The operator $(\widehat{\triangle}^{q},D(\widehat{\triangle}^{q}))$ is selfadjoint and generates a strongly continuous contraction semigroup $(e^{t\widehat{\triangle}^{q}})_{t\geq 0}$ on $L^2(\widehat M^{q},\widehat\mu^{q})$.
\end{lemma}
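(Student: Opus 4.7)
The plan is to address the three claims in turn, starting with the easiest. For the nilpotentized operator $\widehat{\triangle}^{q}$, I combine Remark \ref{rem_nilpLap} (which yields $(\widehat{X}_i^{q})^\ast = -\widehat{X}_i^{q}$ in $L^2(\widehat{M}^q, \widehat{\mu}^q)$ and hence $\widehat{\triangle}^{q} = -\sum_{i=1}^m (\widehat{X}_i^{q})^\ast \widehat{X}_i^{q}$) with the completeness of $(\widehat{M}^q, \hatdsR^q)$. The latter together with the Strichartz criterion already invoked in Section \ref{sec_sRLaplacian} gives essential selfadjointness on $C_c^\infty(\widehat{M}^q)$; the unique selfadjoint extension has domain $D(\widehat{\triangle}^q)$ and is nonpositive, so by the spectral theorem it generates a strongly continuous contraction semigroup.

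In case \ref{caseA} the selfadjointness of $\triangle$ is assumed. From the discussion preceding the lemma in Section \ref{sec_rem_Horm}, symmetry forces $X_0 = \sum_i \mathrm{div}_\mu(X_i)\, X_i$, so that $\triangle = \trianglesR - \mathbb{V}$. The sR Green formula combined with the (Dirichlet or Neumann) boundary conditions yields, on a core,
\[
\langle \triangle f, f\rangle_{L^2(\Omega,\mu)} = -\int_\Omega g(\nabla_g f, \nabla_g f)\, d\mu - \int_\Omega \mathbb{V}\, f^2 \, d\mu \leq -\bigl( \inf_\Omega \mathbb{V} \bigr) \, \|f\|_{L^2}^2,
\]
and this bound extends to $D(\triangle)$ by density. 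Thus $\triangle$ is selfadjoint and bounded above, so the spectral theorem produces the required strongly continuous semigroup, with operator norm at most $e^{-t \inf_\Omega \mathbb{V}}$.

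In case \ref{caseB} I would apply the integration-by-parts identity \eqref{IPP_Hormander} to test functions satisfying the prescribed boundary condition. The boundary term drops, the quadratic contribution $-\sum_i \|X_i f\|^2$ is nonpositive, and the remaining zeroth-order integrand is uniformly bounded above owing to the hypotheses on $\mathrm{div}_\mu(X_0)$, $\mathrm{div}_\mu(X_i)$, $X_i(\mathrm{div}_\mu(X_i))$, and $\mathbb{V}$. This furnishes a constant $C > 0$ such that $\triangle - C$ is dissipative on a core, and hence, by closure, on all of $D(\triangle)$.

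The main obstacle is then to establish the range condition required by the Lumer--Phillips theorem. I would overcome it by computing the formal adjoint $\triangle^\ast$ in $L^2(\Omega, \mu)$: a direct calculation shows that $\triangle^\ast$ is again a H\"ormander operator of the form \eqref{def_DeltaX}, with the same principal part $\sum_i X_i^2$ but with drift $-X_0 - 2\sum_i \mathrm{div}_\mu(X_i)\, X_i$ and potential shifted by bounded quantities (the combination appearing on the right-hand side of \eqref{IPP_Hormander}). The hypothesis in case \ref{caseB} is preserved under this adjunction, so the same integration-by-parts estimate produces a constant $C'$ for which $\triangle^\ast - C'$ is dissipative. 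Taking a common shift $C$ large enough to dominate both constants and invoking the standard corollary of Lumer--Phillips (a densely defined closed operator $A$ on a Hilbert space generates a contraction semigroup provided both $A$ and $A^\ast$ are dissipative), the operator $\triangle - C$ generates a contraction semigroup, and hence $\triangle$ generates a strongly continuous semigroup on $L^2(\Omega, \mu)$, with growth bound $e^{tC}$.
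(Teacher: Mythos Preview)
Your proposal is correct and follows essentially the same route as the paper: in case \ref{caseB} you use the integration-by-parts identity \eqref{IPP_Hormander} to get dissipativity of $\triangle - C\,\mathrm{id}$, then argue that the adjoint is again a H\"ormander operator satisfying the same hypotheses so that it too is dissipative after a shift, and conclude via the Lumer--Phillips corollary; the paper does exactly this, only more tersely (it simply asserts ``the same result holds for its adjoint'' without spelling out the form of $\triangle^\ast$). The only cosmetic difference is that in case \ref{caseA} you invoke the spectral theorem directly for a selfadjoint operator bounded above, whereas the paper routes case \ref{caseA} through the same dissipativity-plus-Lumer--Phillips argument; both are standard and equivalent here. (One minor slip: your drift for $\triangle^\ast$ should read $-X_0 + 2\sum_i \mathrm{div}_\mu(X_i)\, X_i$, with a plus sign, but this does not affect the argument.)
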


\begin{proof}
Under Assumption \ref{caseA}, it follows from Sections \ref{sec_sRLaplacian} and \ref{sec_rem_Horm} that $\triangle = \trianglesR - \mathbb{V}$ 
with $\mathbb{V}$ bounded below and thus there exists $C\geq 0$ such that $\triangle-C\,\mathrm{id}$ is dissipative.

Under Assumption \ref{caseB}, by integration by parts, we have the formula \eqref{IPP_Hormander}.
The integral on $\partial\Omega$ is zero in the case of Dirichlet or Neumann boundary conditions.
Hence there exists $C\geq 0$ (not depending on $f$) such that $\langle(\triangle-C\,\mathrm{id}) f,f\rangle_{L^2(\Omega,\mu)}\leq 0$ for every $f\in D(\triangle)$, and thus $\triangle-C\,\mathrm{id}$ is dissipative in $L^2(\Omega,\mu)$. The same result holds for its adjoint. 

Now, since the operator $\triangle-C\,\mathrm{id}$ in $L^2(\Omega,\mu)$ is closed and dissipative as well as its adjoint, it follows from the Lumer-Phillips theorem (see, e.g., \cite{EngelNagel,Pazy}) that it generates a strongly continuous contraction semigroup $(e^{t(\triangle-C\,\mathrm{id})})_{t\geq 0}$. 
Then, the operator $\triangle$ generates a strongly continuous semigroup $(e^{t\triangle})_{t\geq 0}$, and we have $e^{t\triangle} = e^{C t} e^{t(\triangle-C\,\mathrm{id})}$.

The operator $\widehat{\triangle}^{q}$ on $L^2(\widehat M^{q},\widehat\mu^{q})$ is closed, selfadjoint and dissipative, and thus it generates a strongly continuous contraction semigroup $(e^{t\widehat{\triangle}^{q}})_{t\geq 0}$.
\end{proof}

\paragraph{Hypoelliptic heat kernels.}
Under the assumptions done in Lemma \ref{lem_semigroup}, since $\mathrm{Lie}(D)=TM$ (which implies $\mathrm{Lie}(\widehat D^{q})=T\widehat M^{q}$)\footnote{Actually, the following weaker H\"ormander assumption is enough to ensure hypoellipticity of $\partial_t-\triangle$: $TM$ is spanned by the vector fields $(X_i)_{1\leq i\leq m}$, $([X_i,X_j])_{0\leq i,j\leq m}$, $([X_i,[X_j,X_k]])_{0\leq i,j,k\leq m}$, etc.}, both operators $\partial_t-\triangle$ and $\partial_t-\widehat\triangle^{q}$ are hypoelliptic and therefore the corresponding heat kernels exist and are smooth: 
given any smooth measure $\mu$ on $M$, we consider the heat kernel $e_{\triangle,\mu}$ defined on $(0,+\infty)\times\Omega\times\Omega$, associated with the operator $\triangle$ and with the measure $\mu$, and the heat kernel $\widehat{e}^{q} = e_{\widehat{\triangle}^{q},\widehat{\mu}^{q}}$ defined on $(0,+\infty)\times\widehat{M}^{q}\times\widehat{M}^{q}$, associated with the operator $\widehat{\triangle}^{q}$ and with the measure $\widehat{\mu}^{q}$ (see Appendix \ref{appendix_Schwartz} for reminders on heat kernels).
Smoothness follows from the fact that $e_{\triangle,\mu}$ is solution of $Pe_{\triangle,\mu}=0$, with $P=2\partial_t-(\triangle)_q-(\triangle^*)_{q'}$ that is hypoelliptic.

When $\mathbb{V}$ is bounded on $M$, it follows from the maximum principle for hypoelliptic operators (see \cite{Bony}) that $e_{\triangle,\mu}$ and $\widehat{e}^{q}$ are positive functions. Since $-\widehat\triangle^{q}$ is selfadjoint, $\widehat{e}^{q}$ is also symmetric.
Note also that, using the formulas in Appendix \ref{appendix_Schwartz}, we have the homogeneity property
\begin{equation}\label{homog_nilp}
\widehat{e}^{q}(t,x,x')
= \vert\varepsilon\vert^{\mathcal{Q}(q)}\, \widehat{e}^{q}(\varepsilon^2 t,\delta_\varepsilon(x),\delta_\varepsilon(x'))
\end{equation}
for all $(t,x,x')\in(0,+\infty)\times\R^n\times\R^n$ and for every $\varepsilon\neq 0$ (where we have identified $\widehat{M}^{q}\simeq\R^n$).

In Theorem \ref{main_thm} further, we will establish an asymptotic relationship between the heat kernel $e_{\triangle,\mu}$ and the nilpotentized heat kernel $\widehat{e}^{q}$.

\paragraph{Probabilistic interpretation.}
It can be noted that, when $\Omega=M$ and $\mathbb{V}=0$, the heat kernel $e_{\triangle,\mu}$ is the density of the probability law of the solution to the stochastic differential equation on $M$ in the Stratonovich sense
$$
dx_{t,q} = \sqrt{2}\sum_{i=1}^m X_i(x_{t,q})\circ dw_t^i + X_0(x_{t,q})\, dt 
$$
with $x_{0,q}=q$, where $(w_t^i)_{1\leq i\leq m}$ is a $m$-dimensional Brownian process realized as the coordinate process on $\{u\in C([0,1],\R^m\ \mid\ u(0)=0\}$ under the Wiener measure (see \cite{BenArous_AIF1989}): the solution to $\partial_t u-\triangle u=0$ for $t>0$, $u(0,q)=f(q)$ with $f\in L^2(\Omega,\mu)$, is then given by 
$$
u(t,q) = \int_\Omega e_{\triangle,\mu}(t,q,q')f(q')\, d\mu(q') = \mathbb{E} f(x_{t,q}) .
$$

\subsection{Two general results for parameter-dependent hypoelliptic heat kernels}\label{sec_two_general_results}

This section can be read independently of the rest.

Let $M$ be a smooth connected manifold.
Let $m\in\N^*$ and let $\mathcal{K}$ be a compact set. For every $\tau\in\mathcal{K}$, 
let $\mu^\tau$ be a smooth density on $M$,
let $X_0^\tau,X_1^\tau,\ldots,X_m^\tau$ be smooth vector fields on $M$ and let $\mathbb{V}^\tau$ be a smooth function on $M$, all of them depending continuously on $\tau$ in $C^\infty$ topology. We consider 
the second-order differential operator
\begin{equation*}
\triangle^\tau 
= \sum_{i=1}^m (X_i^\tau)^2 + X_0^\tau - \mathbb{V}^\tau . 
\end{equation*}
Throughout the section, we assume that the Lie algebra $\mathrm{Lie}(X_1^\tau,\ldots,X_m^\tau)$ generated by the vector fields is equal to $T_qM$ at any point $q\in M$, with a degree of nonholonomy that is uniform with respect to $\tau\in\mathcal{K}$ (\emph{uniform strong H\"ormander condition}).

\subsubsection{Local nature of the small-time asymptotics of hypoelliptic heat kernels}\label{sec_localheat}
Let $\Omega_1$ and $\Omega_2$ be two arbitrary open subsets of $M$, assumed to be manifolds with or without boundary. We still denote by $\mu^\tau$ the volume induced on each $\Omega_i$.

For $i=1,2$, we define the operator $\triangle^\tau_i$ on $L^2(\Omega_i,\mu^\tau)$ as follows. Let $D(\triangle^\tau_i)$ be a subset of $\{ u\in L^2(\Omega_i,\mu^\tau) \mid (\triangle^\tau u)_{\vert \Omega_i} \in L^2(\Omega_i,\mu^\tau) \}$, standing for the domain of $\triangle^\tau_i$ and encoding possible boundary conditions on $\partial \Omega_i$ if $\Omega_i$ has a nontrivial boundary. We now consider the operator $\triangle^\tau_i:D(\triangle^\tau_i)\rightarrow L^2(\Omega_i,\mu^\tau)$ defined by $\triangle^\tau_i u=(\triangle^\tau u)_{\vert \Omega_i}$ for every $u\in D(\triangle^\tau_i)$. 

In other words, we consider here the operator $\triangle^\tau$ on different subsets $\Omega_i$, with some boundary conditions. For instance, one can take $\Omega_1=M$ and $\Omega_2$ an open subset of $M$ with Dirichlet conditions on $\partial \Omega_2$.

We assume that $\triangle^\tau_i$ generates a strongly continuous semigroup $(e^{t\triangle^\tau_i})_{t\geq 0}$ on $L^2(\Omega_i,\mu^\tau)$, satisfying the uniform estimate
$$
\Vert e^{t\triangle^\tau_i} \Vert_{L(L^2(\Omega_i,\mu^\tau))} \leq \Cst\, e^{C t}\qquad \forall t\geq 0\qquad \forall \tau\in\mathcal{K} 
$$
for some $C>0$.
By an obvious (parameter-dependent) generalization of Lemma \ref{lem_semigroup}, we note that this is the case if $\mathbb{V}^\tau$ is uniformly bounded below on $\Omega_1\cup \Omega_2$ with respect to $\tau$ and if, for $i=1,2$:
\begin{enumerate}[label=$\bf (\Alph*_\tau)$]
\item\label{caseAtau} either $\triangle^\tau_i:D(\triangle^\tau_i)\rightarrow L^2(\Omega_i,\mu^\tau)$ is selfadjoint for every $\tau\in\mathcal{K}$,
\item\label{caseBtau} or the functions $\mathrm{div}_\mu^\tau(X_0^\tau)$, $\mathrm{div}_\mu^\tau(X_j^\tau)$ and $X_j^\tau(\mathrm{div}_\mu^\tau(X_j^\tau))$, $j=1,\ldots,m$, are bounded on $\Omega$, uniformly with respect to $\tau$, 
and we have Dirichlet or Neumann boundary conditions (see Lemma \ref{lem_semigroup}).
\end{enumerate}

Let $e^\tau_i(t,q,q')=e_{\triangle^\tau_i,\mu^\tau}(t,q,q')$ be its heat kernel, defined on $(0,+\infty) \times\Omega_i\times\Omega_i$.
Note that $e^\tau_i$ is symmetric whenever $\triangle^\tau_i$ is selfadjoint.

The following fact was noticed in \cite{Je-Sa-86}: extending the heat kernels by $0$ for $t<0$, by hypoellipticity of the operator $\partial_t-\triangle^\tau_i$ (under the Lie algebra generating assumption), it follows that $e^\tau_i(t,q,q')$ vanishes at infinite order as $t\rightarrow 0$ for fixed $q$ and $q'$ such that $q\neq q'$.
This observation inspired to us the result below.

Hereafter, given a function $e$ depending on three variables $(t,q,q')$, the notation $\partial_1$ (resp., $\partial_2$, $\partial_3$) denotes the partial derivative with respect to $t$ (resp., to $q$, to $q'$).

\begin{theorem}\label{theo:localheat}
For all $(k,\alpha,\beta)\in \N\times\N^d\times\N^d$, we have
$$
(\partial_1^k \partial_2^\alpha \partial_3^\beta e^\tau_1) (t,q,q') = (\partial_1^k \partial_2^\alpha \partial_3^\beta e^\tau_2) (t,q,q') + \mathrm{O}(t^\infty) 
$$
as $t\rightarrow 0$, $t>0$, uniformly with respect to $\tau\in\mathcal{K}$ and to $q$ and $q'$ varying in any compact subset of $\Omega_1\cap \Omega_2$.
This means that, for all $(k,\alpha,\beta)\in \N\times\N^d\times\N^d$, for every $t_1>0$, for every compact subset $K\subset\Omega_1\cap\Omega_2$, for every $N\in\N^*$, we have 
\begin{multline*}
\left\vert (\partial_1^k \partial_2^\alpha \partial_3^\beta (e^\tau_1 - e^\tau_2)) (t,q,q') \right\vert \leq \Cst(k,\alpha,\beta,t_1,K,N) t^N  \\
\qquad \forall t\in(0,t_1]\qquad\forall (q,q')\in K\times K\qquad\forall \tau\in\mathcal{K}.
\end{multline*}
\end{theorem}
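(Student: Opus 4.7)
My approach adapts the Kac-type observation of Jerison--S\'anchez-Calle \cite{Je-Sa-86} that the difference of two heat kernels of the same hypoelliptic operator (differing only in boundary conditions) satisfies a homogeneous equation, combined with a uniform subelliptic bootstrap drawn from Appendix \ref{sec:local_estimates}. For each $\tau\in\mathcal{K}$, I extend $e^\tau_i$ to all of $\R_t$ by setting it to zero for $t\le 0$. Because $e^{t\triangle^\tau_i}f\to f$ in $L^2$ as $t\to 0^+$ (and similarly for the adjoint semigroup, whose kernel is the reflection of $e^\tau_i$), one obtains in $\mathcal{D}'(\R\times\Omega_i\times\Omega_i)$ the two identities
\[
(\partial_t-\triangle^\tau_q)\,e^\tau_i \;=\; \delta(t)\otimes\delta_{\mathrm{diag}},
\qquad
(\partial_t-(\triangle^\tau)^*_{q'})\,e^\tau_i \;=\; \delta(t)\otimes\delta_{\mathrm{diag}}.
\]
Restricted to $\R\times(\Omega_1\cap\Omega_2)^2$ the source terms coincide for $i=1,2$, so $v^\tau:=e^\tau_1-e^\tau_2$ is annihilated by each operator and therefore also by their sum $\mathcal{P}^\tau:=2\partial_t-\triangle^\tau_q-(\triangle^\tau)^*_{q'}$.

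The operator $\mathcal{P}^\tau$ lives on $\R\times M\times M$, and the vector fields $(X^\tau_j)_q$ and $(X^\tau_j)_{q'}$ commute (they act on different factors). Since each family already spans $T_qM$, respectively $T_{q'}M$, by iterated brackets under the uniform strong H\"ormander condition, their union together with $\partial_t$ generates $T_{(t,q,q')}(\R\times M\times M)$. Hence $\mathcal{P}^\tau$ is hypoelliptic and $v^\tau$ is $C^\infty$ on $\R\times(\Omega_1\cap\Omega_2)^2$. Since $v^\tau\equiv 0$ for $t<0$, smoothness forces $\partial_1^k\partial_2^\alpha\partial_3^\beta v^\tau|_{t=0}=0$ for every multi-index $(k,\alpha,\beta)$. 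The integral form of Taylor's theorem in the $t$ variable then gives, for every $N\ge 1$,
\[
(\partial_1^k\partial_2^\alpha\partial_3^\beta v^\tau)(t,q,q')
\;=\; \int_0^t \frac{(t-s)^{N-1}}{(N-1)!}\,
(\partial_1^{k+N}\partial_2^\alpha\partial_3^\beta v^\tau)(s,q,q')\,ds,
\]
so the pointwise $O(t^N)$ bound for each $N$ is immediate once the right-hand integrand is bounded.

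To upgrade this qualitative statement to uniformity in $\tau\in\mathcal{K}$, I apply the uniform local subelliptic estimates of Appendix \ref{sec:local_estimates} to $\mathcal{P}^\tau$: its coefficients depend continuously on $\tau$ in $C^\infty$ topology, and the H\"ormander condition on the product is uniform in $\tau$. A standard bootstrap starting from an a priori $L^2_{\mathrm{loc}}$-control of $v^\tau$ then produces $\tau$-uniform $C^\infty_{\mathrm{loc}}$-bounds on $v^\tau$ on any compact subset of $\R\times(\Omega_1\cap\Omega_2)^2$, and in particular a uniform bound on $\partial_1^{k+N}\partial_2^\alpha\partial_3^\beta v^\tau$ on $[-t_1,t_1]\times K\times K$; inserting this into the Taylor formula above delivers the estimate stated in the theorem. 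The main obstacle is obtaining that a priori $L^2_{\mathrm{loc}}$ input uniformly in $\tau$: each individual kernel $e^\tau_i(t,\cdot,\cdot)$ exhibits $t^{-\mathcal{Q}/2}$-type blow-up on the diagonal as $t\to 0^+$, but the cancellation of the $\delta$-source in Step~1 controls the singularity of the difference. Concretely, pairing $v^\tau$ against smooth compactly supported test functions and using the uniform semigroup bound $\|e^{t\triangle^\tau_i}\|_{L(L^2)}\le\Cst\,e^{Ct}$ on both factors yields a $\tau$-uniform bound for $v^\tau$ in a negative-order local Sobolev space across $t=0$; the uniform hypoelliptic regularity for $\mathcal{P}^\tau$ then converts this into the required $L^2_{\mathrm{loc}}$ control, after which the bootstrap is mechanical.
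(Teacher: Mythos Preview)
Your proposal is correct and follows essentially the same route as the paper: form $w^\tau=e^\tau_1-e^\tau_2$, observe that $P_\tau w^\tau=0$ with $P_\tau=(\triangle^\tau)_q+(\triangle^\tau)^*_{q'}-2\partial_t$, use hypoellipticity of $P_\tau$ together with $w^\tau\equiv0$ for $t<0$ to conclude flatness at $t=0$, and obtain uniformity in $\tau$ by feeding a rough negative-order Sobolev bound (coming from the uniform semigroup estimate) into the uniform local subelliptic estimates of Appendix~\ref{sec:local_estimates}. One small remark: your comment that ``the cancellation of the $\delta$-source controls the singularity of the difference'' slightly misplaces the role of the cancellation---each kernel $e^\tau_i$ is already uniformly bounded in a sufficiently negative local Sobolev norm across $t=0$ (the paper records this as Lemma~\ref{lem_rough_uniform}, via a Hilbert--Schmidt argument equivalent to your pairing-against-test-functions), so no cancellation is needed for the initialization; the cancellation is what makes $P_\tau w^\tau=0$ and hence enables the bootstrap.
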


This result reflects Kac's principle of ``not feeling the boundary", showing that the small-time asymptotic behavior of the heat kernel is purely local. Moreover, we establish here a uniform parameter-dependent version, which is possible thanks to the uniform subelliptic estimates obtained in Appendix \ref{sec:uniform_subelliptic_estimates}.

\begin{proof}
Let $\tau\in\mathcal{K}$ be arbitrary. Let $\Omega$ be an open subset of $\Omega_1\cap\Omega_2$.
We set $w^\tau(t,q,q')=e^\tau_1(t,q,q')-e^\tau_2(t,q,q')$, for all $t\in\R$ and $(q,q')\in\Omega\times\Omega$. The function $w^\tau$ is smooth on $(0,+\infty)\times\Omega\times\Omega$, and actually (extending by $0$ for $t<0$) we are going to prove that it is smooth on $\R\times\Omega\times\Omega$, with uniform estimates with respect to $\tau\in\mathcal{K}$.

On $\Omega\times\Omega$, we consider the differential operator $(\triangle^\tau)_q = \triangle^\tau\otimes\mathrm{id}$, meaning that given any smooth function $g$ on $\Omega\times\Omega$, the function $(\triangle^\tau)_q g$ designates the partial derivative, using the differential operator $\triangle^\tau$, of the function $g$, with respect to $q$. Accordingly, we consider the operator $(\triangle^\tau)_{q'}^* = \mathrm{id}\otimes (\triangle^\tau)^*$.

Noticing that the heat kernels have been extended by $0$ for $t<0$, both kernels $e^\tau_1$ and $e^\tau_2$ are solutions of the same differential equation $(\partial_t - (\triangle^\tau)_q)e = \delta_{(0,q')}(t,q)$ in the sense of distributions, for any fixed $q'\in\Omega$, where the distribution pairing is considered with respect to the measure $dt\times d\mu^\tau(q)$ on $\R\times\Omega$. Hence $(\partial_t-(\triangle^\tau)_q)w^\tau=0$ on $\R\times\Omega\times\Omega$.
Using that $e_{\triangle^\tau,\mu^\tau}(t,q,q') = e_{\triangle^\tau_*,\mu^\tau}(t,q',q)$, both $e^\tau_1$ and $e^\tau_2$ are also solutions of $(\partial_t - (\triangle^\tau)_{q'}^*)e = \delta_{(0,q)}(t,q')$ in the sense of distributions, for any fixed $q\in\Omega$. Hence $(\partial_t-(\triangle^\tau)_{q'}^*)w^\tau=0$ on $\R\times\Omega\times\Omega$.
Setting $P_\tau = (\triangle^\tau)_q + (\triangle^\tau)_{q'}^* - 2  \partial_t$, we infer that $P_\tau w^\tau=0$ on $\R\times\Omega\times\Omega$. 
At this step, for any $\tau$ fixed, we infer by hypoellipticity of $P_\tau$ that $w^\tau$ is smooth, and since $w^\tau$ vanishes for $t<0$, it follows that $w^\tau$ is flat at $t=0$. This gives the result, for $\tau$ fixed.

But in order to ensure uniform estimates with respect to $\tau$, we have to elaborate further arguments.
In order to use the uniform local subelliptic estimates \eqref{thm_subelliptic_uniform_cor} established in Section \ref{sec:uniform_local_subelliptic_estimates}, as an initialization, we need to prove that $w^\tau$ is bounded, uniformly with respect to $\tau$, for some weak enough Sobolev norm.
To this aim, let us first establish a rough norm estimate, valid for both heat kernels, and uniform with respect to $\tau$.

\begin{lemma}\label{lem_rough_uniform}
For every $t_1>0$, for every open subset $V\subset\Omega$ of compact closure, there exists $p\in\N^*$ such that
$$
\Vert e^\tau_i(\cdot,\cdot,\cdot) \Vert_{L^\infty_t(0,t_1)\times H^{-p}_q(V)\times L^2_{q'}(V)} \leq \Cst(t_1,V)
\qquad\forall \tau\in\mathcal{K}\qquad \forall i\in\{1,2\}.
$$
\end{lemma}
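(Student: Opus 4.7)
The plan is to extract this rough uniform bound directly from the hypothesis $\|e^{t\triangle^\tau_i}\|_{L(L^2(\Omega_i,\mu^\tau))}\leq\Cst\,e^{Ct}$, using only $L^2$ theory. No subelliptic regularity is invoked at this initialization step — that will enter only in the bootstrap that follows to upgrade $-p$ to positive Sobolev regularity via the uniform local subelliptic estimates of Appendix \ref{sec:local_estimates}.

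I would first fix the open set $V\subset \Omega_1\cap\Omega_2$ of compact closure, and a reference smooth measure $\mu_0$ on $M$. Since $\tau\mapsto\mu^\tau$ is continuous in $C^\infty$ topology on the compact set $\mathcal{K}$, the densities $d\mu^\tau/d\mu_0$ are bounded above and below by positive constants on $\overline{V}$, uniformly in $\tau\in\mathcal{K}$; consequently the norms $\|\cdot\|_{L^2(V,\mu^\tau)}$ are all uniformly equivalent to $\|\cdot\|_{L^2(V,\mu_0)}$. Given $\phi\in L^2(V,\mu_0)$, extending by zero to $\Omega_i$ and applying the semigroup yields $\|e^{t\triangle^\tau_i}\phi\|_{L^2(\Omega_i,\mu^\tau)}\leq\Cst\,e^{Ct_1}\|\phi\|_{L^2(V,\mu^\tau)}$ for $t\in(0,t_1]$. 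Restricting back to $V$ and using the uniform measure equivalence, the composition $R_V\circ e^{t\triangle^\tau_i}\circ E_V$ (extension by zero followed by restriction to $V$) is uniformly bounded on $L^2(V,\mu_0)$, with operator norm $\leq\Cst(t_1,V)$ uniformly in $\tau\in\mathcal{K}$, $i\in\{1,2\}$ and $t\in(0,t_1]$.

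The Schwartz kernel of this composed operator is exactly $e^\tau_i(t,q,q')|_{V\times V}$. Since $L^2(V)\hookrightarrow H^{-p}(V)$ continuously for any $p\in\N^*$, the uniform operator norm bound $L^2_{q'}(V)\to L^2_q(V)$ upgrades trivially to a uniform bound $L^2_{q'}(V)\to H^{-p}_q(V)$, i.e.\ to a bound on the kernel viewed as an element of $L^\infty_t((0,t_1);H^{-p}_q(V)\otimes L^2_{q'}(V))$. The main (and essentially only) obstacle is the $\tau$-dependence of $\mu^\tau$, handled cleanly by the $C^\infty$-continuity hypothesis together with compactness of $\mathcal{K}$ and $\overline{V}$; the value of $p$ is entirely flexible (in fact $p=0$ already suffices) and is taken positive only because a positive $p$ is what the subsequent subelliptic bootstrap will consume as input.
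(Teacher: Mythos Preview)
There is a genuine gap. The norm $\Vert e^\tau_i(t,\cdot,\cdot)\Vert_{H^{-p}_q(V)\times L^2_{q'}(V)}$ in the statement is the Hilbert--Schmidt norm $\Vert (\Lambda^{-p})_q\, e^\tau_i(t,\cdot,\cdot)\Vert_{L^2(V\times V)}$, not an operator norm. Your argument only yields the uniform \emph{operator} bound $\Vert R_V e^{t\triangle^\tau_i}E_V\Vert_{L(L^2(V),L^2(V))}\leq\Cst(t_1,V)$, and then the trivially weaker operator bound $L^2_{q'}(V)\to H^{-p}_q(V)$. But a bounded operator $L^2\to H^{-p}$ need not have its kernel in the tensor product $H^{-p}_q(V)\otimes L^2_{q'}(V)$: that tensor-product space corresponds precisely to the Hilbert--Schmidt operators $L^2_{q'}(V)\to H^{-p}_q(V)$, which is strictly smaller. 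Your parenthetical remark that ``in fact $p=0$ already suffices'' makes the confusion explicit: for $p=0$ the claim would be that $e^\tau_i(t,\cdot,\cdot)$ is uniformly bounded in $L^2(V\times V)$ down to $t=0$, which is false since the kernel concentrates on the diagonal.

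The missing ingredient is that one must choose $p$ large enough that the embedding $L^2(V)\hookrightarrow H^{-p}(V)$ is itself Hilbert--Schmidt (equivalently, that $\Lambda^{-p}$ restricted to $V$ is Hilbert--Schmidt, which holds for $p$ large compared to the dimension). Then the composition of the Hilbert--Schmidt operator $\Lambda^{-p}$ with the uniformly bounded operator $e^{t\triangle^\tau_i}$ is Hilbert--Schmidt, with HS norm $\leq\Vert\Lambda^{-p}\Vert_{HS}\cdot\Vert e^{t\triangle^\tau_i}\Vert_{L(L^2)}\leq\Cst(t_1,V)$. This is exactly the paper's argument, and it is the only place where the specific value of $p$ actually matters.
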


In the norm above, and in the proof hereafter, the Sobolev spaces are considered with respect to the density $\mu^\tau$.

\begin{proof}[Proof of Lemma \ref{lem_rough_uniform}.]
Let $t_1>0$ be arbitrary. 
As in the proofs of Lemma \ref{lem_semigroup} (Section \ref{sec_kernel}) and of Corollary \ref{cor_heat_uniform} (Appendix \ref{sec:localregheat}), we first note that $\Vert e^{t\triangle^\tau_i}\Vert_{L(L^2(\Omega_i))}\leq \Cst(t_1)$, for every $t\in[0,t_1]$.
Besides, we set $\Lambda_a = a (\mathrm{id}-\triangle_R)^{1/2}$, where $\triangle_R$ is any second-order elliptic operator on $\Omega_i$ (for instance, a Riemannian Laplacian if $M$ is Riemannian) and $a$ is a smooth positive function on $\Omega_i$ chosen such that there exists $p\in\N^*$ large enough so that  $\Lambda_a^{-p}$ is Hilbert-Schmidt (as an operator on $L^2(\Omega_i)$), i.e., $\left\Vert \Lambda_a^{-p} \right\Vert_{HS}<+\infty$
(see also the proof of Corollary \ref{cor_heat_uniform} in Appendix \ref{sec:localregheat} for the existence of such an integer $p$).
It follows that the operator $\Lambda_a^{-p} e^{t\triangle^\tau_i}$ on $L^2(\Omega_i)$ is Hilbert-Schmidt, and its Hilbert-Schmidt norm is bounded uniformly with respect to $\tau\in\mathcal{K}$ and to $t\in[0,t_1]$. Since $\Vert \Lambda_a^{-p} e^{t\triangle^\tau_i}\Vert_{HS} = \Vert (\Lambda_a)_q^{-p} e^\tau_i(t,\cdot,\cdot)\Vert_{L^2_q(\Omega_i)\times L^2_{q'}(\Omega_i)}$, the conclusion follows.
\end{proof}

Now, let $\zeta$ and $\zeta'$ be arbitrary smooth functions compactly supported in $(-t_1,t_1)\times\Omega\times\Omega$, with $\zeta'=1$ on the support of $\zeta$.
From Lemma \ref{lem_rough_uniform}, there exists $s<0$ such that $\Vert \zeta' w^\tau\Vert_{H^s((-t_1,t_1)\times\Omega\times\Omega)}\leq \Cst(t_1,\zeta')$.
Applying Theorem \ref{thm_subelliptic_uniform} to the family of operators $P_\tau$ (in particular, applying to $w^\tau$ the uniform estimates \eqref{thm_subelliptic_uniform_cor} that follow from this theorem), we infer that, for every $k\in\N$, the norm $\Vert \zeta w^\tau\Vert_{H^{s+k\sigma}((-t_1,t_1)\times\Omega\times\Omega)}$ is uniformly bounded with respect to $\tau\in\mathcal{K}$.
Using Sobolev embeddings, the theorem follows.
\end{proof}

\begin{remark}
Note that a quite similar result has been established in \cite{Hsu1995}, without parameter dependence and under completeness assumptions.
%
%
\end{remark}


\subsubsection{A general convergence result for hypoelliptic heat kernels}\label{sec_convergence_heat_kernels}
We keep the notations and assumptions done at the beginning of Section \ref{sec_facts}.
We assume that $\triangle^\tau$ generates a strongly continuous semigroup $(e^{t\triangle^\tau})_{t\geq 0}$ on $L^2(\Omega,\mu^\tau)$, satisfying uniform estimate
$$
\Vert e^{t\triangle^\tau} \Vert_{L(L^2(\Omega,\mu^\tau))} \leq \Cst\, e^{C t}\qquad \forall t\geq 0\qquad \forall \tau\in\mathcal{K} 
$$
for some $C>0$.
Like in Section \ref{sec_localheat}, we note that this is the case if $\mathbb{V}^\tau$ is uniformly bounded below on $M$ and if either \ref{caseAtau} or \ref{caseBtau} is satisfied.
We denote by $e^\tau=e_{\triangle^\tau,\mu^\tau}$ the associated heat kernel, defined on $(0,+\infty)\times\Omega\times\Omega$.

\begin{theorem}\label{thm_CV_general1}
The heat kernel $e^\tau$ is smooth on $(0,+\infty)\times\Omega\times\Omega$, for every $\tau\in\mathcal{K}$, and depends continuously on $\tau\in\mathcal{K}$ in $C^\infty((0,+\infty)\times\Omega\times\Omega)$ topology.
\end{theorem}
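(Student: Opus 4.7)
The plan is to combine the Trotter--Kato theorem, which yields strong $L^2$-convergence of the semigroups, with the uniform hypoelliptic regularity provided by Appendix \ref{sec:local_estimates}, which upgrades this to $C^\infty$-convergence of the kernels. Smoothness of each individual $e^\tau$ on $(0,+\infty)\times\Omega\times\Omega$ is immediate from Section \ref{sec_kernel}: the operator $P_\tau = (\triangle^\tau)_q + ((\triangle^\tau)^\ast)_{q'} - 2\partial_t$ annihilates $e^\tau$ and is hypoelliptic under the strong H\"ormander condition. The remaining task is to show that for every sequence $\tau_k\to\tau_\infty$ in $\mathcal{K}$, we have $e^{\tau_k}\to e^{\tau_\infty}$ in $C^\infty((0,+\infty)\times\Omega\times\Omega)$.

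For the $L^2$ convergence step, I would first reduce to a common Hilbert space. Fix the reference measure $\mu_\infty = \mu^{\tau_\infty}$ and write $\mu^\tau = \rho^\tau\mu_\infty$, where $\rho^\tau\to 1$ in $C^\infty$ topology. The unitary conjugation $u\mapsto\sqrt{\rho^\tau}\,u$ identifies $L^2(\Omega,\mu^\tau)$ with $L^2(\Omega,\mu_\infty)$ and transforms $\triangle^\tau$ into a H\"ormander operator $\widetilde{\triangle}^\tau$ on the common space with coefficients depending continuously on $\tau$ in $C^\infty$ and with a uniform semigroup bound $\Vert e^{t\widetilde{\triangle}^\tau}\Vert\leq\Cst\, e^{Ct}$. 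On a common core (for instance $C_c^\infty(\Omega)$ in the interior case, or its variant encoding Dirichlet/Neumann boundary conditions), we have $\widetilde{\triangle}^{\tau_k}\varphi\to\widetilde{\triangle}^{\tau_\infty}\varphi$ in $L^2(\Omega,\mu_\infty)$ for every $\varphi$. The Trotter--Kato approximation theorem then yields strong convergence $e^{t\widetilde{\triangle}^{\tau_k}}u\to e^{t\widetilde{\triangle}^{\tau_\infty}}u$ for every $u\in L^2$, locally uniformly in $t\geq 0$. Testing against functions of the form $\chi(t)\varphi(q)\psi(q')$ with $\chi\in C_c^\infty((0,+\infty))$ and $\varphi,\psi\in C_c^\infty(\Omega)$ provides the convergence $e^{\tau_k}\to e^{\tau_\infty}$ in the sense of distributions on $(0,+\infty)\times\Omega\times\Omega$.

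The upgrade from distributional convergence to $C^\infty$ convergence is achieved by the uniform subelliptic estimates of Theorem \ref{thm_subelliptic_uniform}: the family $(P_{\tau_k})$ satisfies a uniform strong H\"ormander condition and hence uniform local subelliptic estimates. A rough initial bound $\Vert e^{\tau_k}\Vert_{L^\infty_t H^{-p}_q L^2_{q'}}\leq\Cst$ is obtained by exactly the Hilbert--Schmidt argument of Lemma \ref{lem_rough_uniform} (using that $\Lambda_a^{-p}e^{t\triangle^{\tau_k}}$ is Hilbert--Schmidt with uniformly bounded norm for $t$ in a compact interval). Iterating the subelliptic estimate on the identity $P_{\tau_k}e^{\tau_k}=0$ produces uniform $H^s_{\mathrm{loc}}$-bounds at all orders $s\in\R$, hence, by Sobolev embedding, a uniform $C^\infty$-bound on every compact subset of $(0,+\infty)\times\Omega\times\Omega$. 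The Montel property of $C^\infty((0,+\infty)\times\Omega\times\Omega)$ then yields relative compactness of $(e^{\tau_k})$; every limit point must coincide with the distributional limit $e^{\tau_\infty}$, so the full sequence converges in $C^\infty$ to $e^{\tau_\infty}$.

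The main obstacle is the setup of Trotter--Kato in the first step, since the Hilbert spaces $L^2(\Omega,\mu^\tau)$ themselves vary with $\tau$: strong operator convergence of the semigroups only makes sense after a common functional framework has been chosen. This is handled by the unitary identification through $\sqrt{\rho^\tau}$ described above (or equivalently by reformulating the whole statement in terms of Schwartz kernels, which, as recalled in Appendix \ref{appendix_Schwartz}, do not depend on the auxiliary measure). Once this framework is in place, the argument mirrors, in a simpler setting without dilations, the strategy sketched for Theorem \ref{main_thm_weak} at the end of Part \ref{part_limit}.
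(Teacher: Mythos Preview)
Your proposal is correct and follows essentially the same route as the paper: Trotter--Kato for weak (distributional) convergence of the kernels, uniform local hypoelliptic bounds for a $C^\infty$-bounded family, and the Montel property to upgrade to $C^\infty$ convergence. The paper compresses your second step by citing Corollary~\ref{cor_heat_uniform} directly for the uniform $C^\infty$ bound on $(e^\tau)_{\tau\in\mathcal{K}}$, rather than rebuilding it from Theorem~\ref{thm_subelliptic_uniform} and Lemma~\ref{lem_rough_uniform}; and it does not spell out the reduction to a common Hilbert space via $\sqrt{\rho^\tau}$, which you handle more carefully.
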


\begin{proof}
Let $\tau_0\in\mathcal{K}$ be arbitrary.
The differential operator $\triangle^{\tau_0}$ is the limit of $\triangle^\tau$ in $C^\infty$ topology as $\tau\rightarrow\tau_0$, meaning that $\triangle^\tau f\rightarrow \triangle^{\tau_0} f$ as $\tau\rightarrow\tau_0$ uniformly on any compact subset of $M$, for every smooth function $f$ on $M$. 

By the Trotter-Kato theorem (see, e.g., \cite[Chapter III]{EngelNagel} or \cite[Chapter 3]{Pazy}), $e^{t\triangle^\tau} f \rightarrow e^{t \triangle^{\tau_0}} f$ in $L^2(\Omega,\mu^\tau)$ as $\tau\rightarrow\tau_0$, for every $t\geq 0$ and every smooth function $f$ on $\Omega$ with compact support, and the convergence is uniform with respect to $t$ on $[0,t_1]$, for every $t_1>0$.
Taking the Schwartz kernels (see Appendix \ref{appendix_Schwartz}), it follows that, given any $0<t_0<t_1$ and any compact subset $K$ of $\Omega$, $e^\tau$ converges to $e^{\tau_0}$ in $C^{-\infty}([t_0,t_1]\times K\times K)$ as $\tau\rightarrow\tau_0$ for the weak-star topology. Here, $C^{-\infty}([t_0,t_1]\times K\times K)$ is the topological dual of the Fr\'echet Montel space $C^\infty([t_0,t_1]\times K\times K)$. 

By Corollary \ref{cor_heat_uniform} in Appendix \ref{sec:localregheat}, applied with $L^\tau = \triangle^\tau$, the family $(e^\tau)_{\tau\in\mathcal{K}}$ is uniformly bounded in $C^{\infty}([t_0,t_1]\times K\times K)$. Therefore, thanks to the Heine-Borel property, we conclude that $e^\tau$ converges to $e^{\tau_0}$ in the Fr\'echet Montel space $C^\infty((0,+\infty)\times\Omega\times\Omega)$ as $\tau\rightarrow\tau_0$.
\end{proof}

\begin{remark}
We note that $\triangle^\tau$ is a singular perturbation of $\triangle^0$. One can find in \cite{Lions_perturbations} (see also \cite{Huet}) a number of results on singular perturbations of elliptic operators, i.e., when $\triangle^0$ is an elliptic operator. Here, our results can be seen as some singular perturbations of hypoelliptic operators.

For example, Theorem \ref{thm_CV_general1} can be applied to the situation where $\triangle^\tau = \triangle^0 + \tau \triangle_R$, with $\triangle^0$ being hypoelliptic and $\triangle_R$ being a Riemannian Laplacian if $M$ is Riemannian: this is an elliptic perturbation of a hypoelliptic operator.
We thus recover results established in \cite[Theorem 7.2]{Rumin} (see also \cite{Ge}) in the particular case where $\triangle^0$ is a contact sR Laplacian.

Note also that, when the considered operators are selfadjoint and of compact resolvent, using the max-min principle, our results imply convergence of the spectrum of $\triangle^\tau$ (eigenvalues and eigenfunctions) to that of $\triangle^0$ (as in \cite{Fukaya,Ge,Rumin}). We do not give details. This convergence is of course not uniform in general because the leading term in the short-time asymptotics of heat kernels may differ: for instance when $\triangle^0$ is a 3D contact sub-Riemannian Laplacian then the short-time asymptotics is like $1/t^2$, whereas for $\tau\neq 0$, assuming that $\triangle^\tau = \triangle^0 + \tau \triangle_R$ as above, the short-time asymptotics is like $1/t^{3/2}$ (asymptotics in the Riemannian case).
%
\end{remark}


\section{Proof of the limit in Theorem \ref{main_thm}}\label{sec:proof_main_thm_prelim}
In this section, we prove Theorem \ref{main_thm_weak}, which is Theorem \ref{main_thm} at the order zero.

\subsection{Preliminaries}
Throughout, we assume that $X_0$ is a smooth section of $D$ over $M$. Each time this is required, we will indicate the modifications that must be done when $X_0$ is a smooth section of $D^2$ over $M$.

Let $\varepsilon_0>0$ be small enough such that $\delta_\varepsilon(V)\subset V$ for every $\varepsilon\in[-\varepsilon_0,\varepsilon_0]$. 
We first extend the vector fields $(\psi_{q})_*X_i$ (which are defined in the neighborhood $V$ given by the chart) to $\R^n$.
Let $W_1$ and $W_2$ be open subsets of $\R^n$ of compact closure such that $\overline W_1\subset W_2\subset\overline W_2\subset V$ and such that $\delta_\varepsilon(W_1)\subset W_1$ and $\delta_\varepsilon(W_2)\subset W_2$ for every $\varepsilon\in[-\varepsilon_0,\varepsilon_0]$.
Let $\chi$ be a smooth function of compact support on $\R^n$, such that $0\leq\chi\leq 1$, $\chi(x)=1$ if $x\in \overline W_1$ and $\chi(x)=0$ if $x\in\R^n\setminus W_2$. 

Hereafter, we will use the measure $\widehat{\mu}^{q}$ on $\R^n$, which coincides, up to a constant scaling, with the Lebesgue measure on $\R^n$.

\paragraph{Definition of $\widetilde\triangle$ (local version of $\triangle$ in the chart).}
We define
$$
Y_i = \chi (\psi_{q})_* X_i  , \qquad i=0,\ldots,m, 
$$
so that $Y_i = (\psi_{q})_* X_i$ on $W_1$ and $Y_i = 0$ on $\R^n\setminus W_2$. Similarly, we define the function $\mathtt{v}$ on $\R^n$ by $\mathtt{v}=\chi (\psi_{q})_* \mathbb{V}$,
and the measure $\nu$ on $\R^n$ by $\langle\nu,f\rangle=\langle\psi_{q})_*\mu,\chi f\rangle$ for every $f\in C^0(\R^n)$.
Setting $Y=(Y_0,Y_1,\ldots,Y_m)$, we consider on $C^\infty(\R^n)$ the differential operator
\begin{equation*}
\widetilde\triangle = \sum_{i=1}^m Y_i^2 + Y_0 - \mathtt{v} . 
\end{equation*}
The operator $\widetilde\triangle$ (resp., the measure $\nu$) is the extension to $\R^n$ (by $0$) of the local version of $\triangle$ (resp., of $\mu$) in the chart. As we are going to see, in the proof of Theorem \ref{main_thm_weak}, the way we extend does not have any impact on the local asymptotics of the heat kernel, thanks to the localization result stated in Theorem \ref{theo:localheat} (Section \ref{sec_localheat}). 

Since the vector fields $Y_i$ are of compact support, setting $D(\widetilde\triangle) = \{ f\in L^2(\R^n,\nu) \ \mid\ \widetilde\triangle f \in L^2(\R^n,\nu) \}$, it follows from Lemma \ref{lem_semigroup} that the operator $(\widetilde\triangle,D(\widetilde\triangle))$ generates a strongly continuous semigroup $(e^{t\widetilde\triangle})_{t\geq 0}$ on $L^2(\R^n,\nu)$.
By hypoellipticity (see Corollary \ref{cor_heat_uniform}), the Schwartz kernel of $e^{t\widetilde\triangle}$, restricted to $(0,+\infty)\times W_1\times W_1\rightarrow(0,+\infty)$, has a continuous density with respect to $\nu$, which is the smooth function
\begin{equation*}
\widetilde e = e_{\widetilde\triangle,\nu}:  (0,+\infty)\times W_1\times W_1\rightarrow(0,+\infty) .
\end{equation*}

\subsection{Definition of the vector fields $Y_i^\varepsilon$}
For every $\varepsilon\in[-\varepsilon_0,\varepsilon_0]\setminus\{0\}$, we set
\begin{equation}\label{defYieps}
\nu^\varepsilon = \frac{1}{\vert\varepsilon\vert^{\mathcal{Q}(q)}}\delta_\varepsilon^*\nu,\qquad
Y^\varepsilon_i = \varepsilon\delta_\varepsilon^*Y_i,\qquad i=0,\ldots,m .
\end{equation}
Using that $\delta_\varepsilon=\delta_{\frac{\varepsilon}{\lambda}}\delta_\lambda=\delta_\lambda\delta_{\frac{\varepsilon}{\lambda}}$, we observe that 
\begin{equation}\label{homog_deltaY}
Y_i^\varepsilon = \lambda\delta_\lambda^* Y_i^{\varepsilon/\lambda},\qquad i=0,\ldots,m, \qquad \forall\lambda>0\qquad\forall\varepsilon\in[-\lambda\varepsilon_0,\lambda\varepsilon_0]\setminus\{0\}.
\end{equation}

When $X_0$ is a smooth section of $D^2$ over $M$, we modify the definition of $Y^\varepsilon_0$ by setting $Y^\varepsilon_0 = \varepsilon^2\delta_\varepsilon^*Y_0$, and we have the homogeneity property $Y_0^\varepsilon = \lambda^2\delta_\lambda^* Y_0^{\varepsilon/\lambda}$.

Note that $Y_i^\varepsilon$ is nontrivial on $\delta_\varepsilon^{-1}(W_1)=\delta_{1/\varepsilon}(W_1)$ which is a neighborhood of $0$ increasing to $\R^n$ as $\varepsilon\rightarrow 0$. 
For every $i\in\{0,\ldots,m\}$, $Y_i^\varepsilon$ converges to $\widehat{X}_i^{q}$ in $C^\infty(\R^n,\R^n)$ as $\varepsilon\rightarrow 0$ (see Section \ref{sec:nilp} and Remark \ref{rem_X0_length2}).
Actually, using \eqref{expansion_Xeps}, we have the expansion
\begin{equation}\label{expansion_Yieps}
Y_i^\varepsilon = \varepsilon \delta_\varepsilon^* Y_i = \widehat{X}_i^{q} + \varepsilon Y_i^{(0)} + \varepsilon^2 Y_i^{(1)} + \cdots + \varepsilon^N Y_i^{(N-1)} + \mathrm{o}\big(\vert\varepsilon\vert^N\big)
\end{equation}
in $C^\infty$ topology, where $Y_i^{(k)}$ is polynomial and homogeneous of degree $k$ (with respect to dilations), and $\widehat{X}_i^{q} = Y_i^{(-1)}$, i.e., setting $Y^0_i=\widehat{X}_i^{q}$ for $\varepsilon=0$, $Y^\varepsilon_i$ depends smoothly on $\varepsilon$ in $C^\infty$ topology.

Since $Y_i^\varepsilon$ converges to $\widehat{X}_i^{q}$, as well as all its derivatives, on any compact, since the $m$-tuple $(\widehat{X}_1^{q},\ldots,\widehat{X}_m^{q})$ satisfies the H\"ormander condition, using that $[Y_i^\varepsilon,Y_j^\varepsilon] = \varepsilon^2 \delta_\varepsilon^* [Y_i,Y_j]$, it is clear that the $m$-tuple $(Y_1^\varepsilon,\ldots,Y_m^\varepsilon)$ satisfies the uniform strong H\"ormander condition (as defined in Section \ref{sec:localregheat}) on $W_1$, for $\varepsilon\in[-\varepsilon_0,\varepsilon_0]$, provided that $\varepsilon_0$ be small enough.

Moreover, we have $\nu^\varepsilon\rightarrow \widehat{\mu}^{q}$ for the vague topology as $\varepsilon\rightarrow 0$. Actually, the density of $\nu^\varepsilon$ with respect to the Lebesgue measure of $\R^n$ converges in $C^\infty$ topology to the density of $\widehat{\mu}^{q}$ with respect to the Lebesgue measure of $\R^n$ (which is constant).

\subsection{Definition of the operator $\triangle^\varepsilon$}
\paragraph{Differential operator $\triangle^\varepsilon$.}
For every $\varepsilon\in[-\varepsilon_0,\varepsilon_0]\setminus\{0\}$ we define on $C^\infty(\R^n)$ the differential operator
\begin{equation}\label{def_triangleepsilon}
\triangle^\varepsilon 
= \varepsilon^2 \delta_\varepsilon^* \widetilde\triangle (\delta_\varepsilon)_* .
\end{equation}
Using \eqref{homog_deltaY}, we have the homogeneity property
\begin{equation}\label{homog_deltaeps}
\triangle^\varepsilon = \lambda^2 \delta_\lambda^* \triangle^{\frac{\varepsilon}{\lambda}} (\delta_\lambda)_* \qquad \forall\lambda>0\qquad\forall\varepsilon\in[-\lambda\varepsilon_0,\lambda\varepsilon_0]\setminus\{0\}.
\end{equation}
Using 
Appendix \ref{appendix_Schwartz}, we have
\begin{equation}\label{formula_Deltaeps}
\triangle^\varepsilon =
\sum_{i=1}^m \left( \varepsilon\delta_\varepsilon^*Y_i \right)^2 + \varepsilon^2\delta_\varepsilon^* Y_0 - \varepsilon^2 \left(\delta_\varepsilon^*\mathtt{v} \right) 
= \sum_{i=1}^m \left( Y^\varepsilon_i \right)^2 + \varepsilon Y^\varepsilon_0 - \varepsilon^2 \left( \delta_\varepsilon^*\mathtt{v}\right) . 
\end{equation}

When $X_0$ is a smooth section of $D^2$ over $M$, the definition of $\triangle^\varepsilon$ is modified as follows:
$$
\triangle^\varepsilon 
= \sum_{i=1}^m \left( Y^\varepsilon_i \right)^2 + Y^\varepsilon_0 - \varepsilon^2 \left( \delta_\varepsilon^*\mathtt{v}\right) . 
$$

\paragraph{Convergence of $\triangle^\varepsilon$ to $\widehat{\triangle}^{q}$.}
Since $\varepsilon Y^\varepsilon_0 \rightarrow 0$ in $C^\infty(\R^n,\R^n)$, the differential operator $\widehat{\triangle}^{q}$ defined by \eqref{def_triangle_nilp} is the limit of $\triangle^\varepsilon$ in $C^\infty$ topology as $\varepsilon\rightarrow 0$, meaning that $\triangle^\varepsilon f\rightarrow \widehat{\triangle}^{q} f$ in $C^\infty(\R^n)$
as $\varepsilon\rightarrow 0$, for every $f\in C^\infty(\R^n)$.
Defining $\triangle^0=\widehat{\triangle}^{q}$ for $\varepsilon=0$, $\triangle^\varepsilon$ depends smoothly on $\varepsilon$ in $C^\infty$ topology.

We could give an asymptotic expansion of $\triangle^\varepsilon$ in $C^\infty$ topology, as we will do further in Section \ref{sec_def_deltaepsgam} for an appropriate modification of $\triangle^\varepsilon$, but we do not give it because it will not be useful. Indeed, we will see further that the $C^\infty$ topology is not strong enough to establish the complete asymptotic expansion stated in Theorem \ref{main_thm}. Anyway, the limit in $C^\infty$ topology suffices to establish Theorem \ref{main_thm_weak}. 

\medskip

When $X_0$ is a smooth section of $D^2$ over $M$, we have
$Y_0^\varepsilon = \varepsilon^2\delta_\varepsilon^* Y_0 = \widehat{X}_0^{q} + \varepsilon Y_0^{(-1)} +  \mathrm{o}(\vert\varepsilon\vert)$
where $\widehat{X}_0^{q}$ is homogeneous of order $-2$ (see Remark \ref{rem_X0_length2} for the details). We obtain in this case that $\triangle^\varepsilon\rightarrow \widehat{\triangle}^{q} +\widehat{X}_0^{q}$ in $C^\infty$ topology as $\varepsilon\rightarrow 0$. 

\paragraph{Semigroup generated by $\triangle^\varepsilon$.}
Setting $D(\triangle^\varepsilon) = \{ f\in L^2(\R^n,\nu^\varepsilon) \ \mid\ \triangle^\varepsilon f \in L^2(\R^n,\nu^\varepsilon) \}$, using \eqref{def_triangleepsilon}, the operator $(\triangle^\varepsilon,D(\triangle^\varepsilon))$ generates a strongly continuous semigroup $(e^{t\triangle^\varepsilon})_{t\geq 0}$ on $L^2(\R^n,\nu^\varepsilon)$, satisfying
$$
\delta_\varepsilon^*e^{\varepsilon^2 t\widetilde\triangle}(\delta_\varepsilon)_* = e^{t\triangle^\varepsilon}\qquad \forall t\geq 0\qquad \forall \varepsilon\in[-\varepsilon_0,\varepsilon_0]\setminus\{0\} .
$$
Since $\Vert e^{t\widetilde\triangle}\Vert_{L(L^2(\R^n,\nu))} \leq \Cst\, e^{C t}$ for every $t\geq 0$, for some $C>0$, it follows that the semigroup $(e^{t\triangle^\varepsilon})_{t\geq 0}$ satisfies the uniform estimate $\Vert e^{t\triangle^\varepsilon}\Vert_{L(L^2(\R^n,\nu^\varepsilon))} \leq \Cst\, e^{C\varepsilon^2 t}$.

\paragraph{Heat kernel $e^\varepsilon$ of $\triangle^\varepsilon$.}
By hypoellipticity (see Corollary \ref{cor_heat_uniform}), the Schwartz kernel of $e^{t\triangle^\varepsilon}$, restricted to $(0,+\infty)\times W_1\times W_1$, has a continuous density with respect to $\nu^\varepsilon$, which is the smooth function
\begin{equation*}
e^\varepsilon = e_{\triangle^\varepsilon,\nu^\varepsilon}: (0,+\infty)\times W_1\times W_1\rightarrow(0,+\infty) .
\end{equation*}
Using \eqref{def_triangleepsilon} and the formulas \eqref{formulas_kernel} of Appendix \ref{appendix_Schwartz}, we have
\begin{multline}\label{relation_eY_eepsilon}
e^\varepsilon(t,x,x') = \vert \varepsilon\vert^{\mathcal{Q}(q)} \widetilde e(\varepsilon^2t,\delta_\varepsilon(x),\delta_\varepsilon(x'))  \\
\forall \varepsilon\in[-\varepsilon_0,\varepsilon_0]\setminus\{0\} \qquad \forall (t,x,x')\in(0,+\infty)\times W_1\times W_1 .
\end{multline}

\paragraph{Convergence of $e^\varepsilon$ to $\widehat{e}^{q}$.}
Recall that the nilpotentized heat kernel is the smooth function $\widehat{e}^{q} = e_{\widehat{\triangle}^{q},\widehat{\mu}^{q}}:  (0,+\infty)\times\R^n\times\R^n\rightarrow(0,+\infty)$, defined as the continuous density with respect to $\widehat{\mu}^{q}$ of the Schwartz kernel of $e^{t\widehat{\triangle}^{q}}$.

Applying the general convergence result stated in Theorem \ref{thm_CV_general1} (in Section \ref{sec_convergence_heat_kernels}) with $\mathcal{K} = [-\varepsilon_0,\varepsilon_0]$, $\tau=\varepsilon$, $\Omega=W_1$, $\mu^\tau=\nu^\varepsilon$ and $L^\tau = \triangle^\varepsilon$, we obtain that 
\begin{equation}\label{CVeepsehat}
e^\varepsilon \underset{\varepsilon\rightarrow 0}{\longrightarrow}  \widehat{e}^{q} \qquad\textrm{in}\quad C^\infty((0,+\infty)\times W_1\times W_1) .
\end{equation}
When $X_0$ is a smooth section of $D^2$ over $M$, the result remains true provided that $\widehat{\triangle}^{q}$ be replaced with $\widehat{\triangle}^{q} +\widehat{X}_0^{q}$.

\begin{remark}
The above argument yields a convergence that is much stronger than the convergence on semigroups provided by the Trotter-Kato theorem (which is only pointwise).
We do not know if this could have been established by general results on analytic semigroups. Indeed, although the strongly continuous contraction semigroup $(e^{t\widehat{\triangle}^{q}})_{t\geq 0}$ is analytic of angle $\pi/2$ (this follows, e.g., from \cite[Chapter II, Corollary 4.7]{EngelNagel}, because the operator $\widehat{\triangle}^{q}$ on $L^2(\R^n)$ is nonpositive selfadjoint and thus has a real nonpositive spectrum), given any $\varepsilon>0$, we do not know if the strongly continuous semigroup $(e^{t\triangle^\varepsilon})_{t\geq 0}$ on $L^2(\R^n)$ is analytic with an angle that would be uniform with respect to $\varepsilon$ in general (unless, of course, we are in the case where $\triangle^\varepsilon$ is selfadjoint). Actually, there are hints (see \cite{EckmannHairer}) showing that the operator $\triangle^\varepsilon$ may fail to be uniformly sectorial.\footnote{We thank Martin Hairer for a discussion on this subject.}
Note also that $\triangle^\varepsilon-\widehat{\triangle}^{q}$ \emph{is not} $\widehat{\triangle}^{q}$-bounded in general and that $\widehat{\triangle}^{q}$ is not an elliptic operator.
This is why, instead of using classical integral representations of analytic semigroups, we used the fact (proved in Section \ref{sec:localregheat}) that $e^{t\triangle^\varepsilon}$ is uniformly locally smoothing. 
\end{remark}

\subsection{End of the proof of Theorem \ref{main_thm_weak}}\label{sec_proof_weak_main_thm}
We already know that $e^\varepsilon$ is related to $\widetilde e$ by the formula \eqref{relation_eY_eepsilon}, which gives as well
\begin{equation}\label{relation_eY_eepsilon_der}
(\partial_1^k \partial_2^\alpha \partial_3^\beta \, e_{\triangle^\varepsilon,\nu^\varepsilon}) (t,x,x') = \vert\varepsilon\vert^{\mathcal{Q}(q)+2k} \varepsilon^{\sum_{i=1}^n (\alpha_i+\beta_i)w_i(q)}  (\partial_1^k \partial_2^\alpha \partial_3^\beta \, e_{\widetilde\triangle,\nu}) (\varepsilon^2 t,\delta_\varepsilon(x),\delta_\varepsilon(x')) 
\end{equation}
for every $\varepsilon\in[-\varepsilon_0,\varepsilon_0]\setminus\{0\}$, for all $(t,x,x')\in(0,+\infty)\times W_1\times W_1$ and for all $(k,\alpha,\beta)\in\N\times\N^d\times\N^d$, where we have set $\alpha=(\alpha_1,\ldots,\alpha_n)$ and $\beta=(\beta_1,\ldots,\beta_n)$.
Let us now relate $\widetilde e$ with $e_{\triangle,\mu}$. This is done thanks to the localization result stated in Theorem \ref{theo:localheat} (Section \ref{sec_localheat}).
Recalling that $\widetilde\triangle$ coincides with $\triangle$ in the chart, Theorem \ref{theo:localheat} gives, in the chart,
\begin{equation}\label{rel_XY}
e_{\triangle,\mu} (\varepsilon^2 t,\delta_\varepsilon(x),\delta_\varepsilon(x')) = 
e_{\widetilde\triangle,\nu} (\varepsilon^2 t,\delta_\varepsilon(x),\delta_\varepsilon(x')) + \mathrm{O}(\vert\varepsilon\vert^\infty) 
\end{equation}
in $C^\infty$ topology.
The limit \eqref{limit_main_thm_weak} in $C^\infty$ topology then follows from \eqref{CVeepsehat}, \eqref{relation_eY_eepsilon_der} and \eqref{rel_XY}.

\paragraph{Case $q$ regular.}
Let us assume that $q$ is regular, meaning that there exists an open subset $U$ of $M$ on which the flag is regular. Let us make vary $q$ in $U$. 
We first remark that it is possible to choose, at the beginning of the proof, a chart $\psi_{q}$ depending smoothly on $q$: for instance, one may use the map \eqref{example_privileged} that is obtained with a frame of vector fields $Z_i^{q}$ adapted to the flag and depending smoothly on $q$.
With such a choice, $Y_i^\varepsilon$ depends smoothly on $q$, and the convergence of $Y_i^\varepsilon$ to $\widehat{X}_i^{q}$ is uniform as well with respect to $q$. Similar properties hold for all convergences under consideration in the proof. Since all our results on subelliptic estimates (Appendix \ref{sec:uniform_subelliptic_estimates}) and localization of the heat kernels (hypoelliptic Kac's principle, Theorem \ref{theo:localheat}) are valid uniformly with respect to $q$ in this regular neighborhood, we can keep track of the regularity with respect to $q$ in the entire proof above, and smoothness with respect to $q$ of all the coefficients of the expansion follows.

\part{Proof of the complete asymptotic expansion in Theorem \ref{main_thm}}\label{part_complete}

Theorem \ref{main_thm_weak} is a weaker version of Theorem \ref{main_thm}, in which we have obtained the limit, i.e., the first term of the expansion with respect to $\varepsilon$. 
The full version of Theorem \ref{main_thm} states an asymptotic expansion at any order with respect to $\varepsilon$.

\section{Idea of the proof}\label{sec_ideaoftheproof}
Surprisingly, the proof of Theorem \ref{main_thm}, done in Section \ref{sec:proof_main_thm}, is much more difficult than the (quite easy) one of Theorem \ref{main_thm_weak} done in Section \ref{sec:proof_main_thm_prelim}.
Deriving the complete expansion indeed requires significant additional work. In particular, as we explain hereafter, it requires to use global smoothing estimates (established in Appendix \ref{sec:global_sR} and in Appendix \ref{sec:global_estimates}) and to consider an adequate modification $\triangle^{\varepsilon,\gamma}$ of the operator $\triangle^\varepsilon$, which complicates significantly the analysis.

Hereafter, we explain our proof approach and we point out the main difficulties, in order to motivate some of the developments that will follow.

\subsection{Duhamel formula}\label{sec_duh}
Consider the operator $\triangle^\varepsilon$ defined by \eqref{def_triangleepsilon} in Section \ref{sec:proof_main_thm_prelim}.
As in \cite{Ba-13}, the starting point is the Duhamel formula
\begin{equation*}
e^{t\triangle^\varepsilon} = e^{t \widehat{\triangle}^{q}} + \int_0^t e^{(t-s)\triangle^\varepsilon}(\triangle^\varepsilon-\widehat{\triangle}^{q})e^{s \widehat{\triangle}^{q}}\, ds 
\end{equation*}
for $t>0$.
Setting 
\begin{equation*}
\Sigma_i(t) = \left\{ s^{i+1} = (s_1,\ldots,s_{i+1})\in(0,+\infty)^{i+1}\quad \Big\vert\quad \sum_{k=1}^{i+1} s_k = t \right\} \qquad \forall i\in\N^* ,
\end{equation*}
given any $N\in\N^*$ we obtain by iteration
\begin{multline}\label{rough_duh1}
e^{t\triangle^{\varepsilon}} 
= e^{t \widehat{\triangle}^{q}} + \sum_{i=1}^N \int_{\Sigma_i(t)} e^{s_1 \widehat{\triangle}^{q}}  (\triangle^{\varepsilon} - \widehat{\triangle}^{q}) e^{s_2 \widehat{\triangle}^{q}} \cdots  (\triangle^{\varepsilon} - \widehat{\triangle}^{q}) e^{s_{i+1} \widehat{\triangle}^{q}} \, ds^{i+1} \\
+ \int_{\Sigma_{N+1}(t)} e^{s_1 \triangle^\varepsilon}  (\triangle^{\varepsilon} - \widehat{\triangle}^{q}) e^{s_2 \widehat{\triangle}^{q}} \cdots  (\triangle^{\varepsilon} - \widehat{\triangle}^{q}) e^{s_{N+2} \widehat{\triangle}^{q}} \, ds^{N+2} .
\end{multline}
Besides, using an expansion in homogeneous terms, we have an asymptotic expansion at any order
\begin{equation}\label{expansionaaa}
\triangle^\varepsilon = \widehat{\triangle}^{q} + \varepsilon \mathcal{A}_1 + \cdots + \varepsilon^N\mathcal{A}_N + \varepsilon^{N+1}\mathcal{R}_{N+1}^\varepsilon 
\end{equation}
where $\mathcal{A}_i$, $i\in\N^*$, and $\mathcal{R}_{N+1}^\varepsilon$ are second-order differential operators. 
Moreover all derivations $\mathcal{A}_i$, for $i=1,\ldots,N$, have polynomial coefficients with a degree that is bounded by some power of $N$.
Of course, we must be careful with the topology taken for the convergences and for the asymptotic expansions, and by the way, this is one of the main problems, because the $C^\infty$ topology, which was considered previously, will not be sufficient. Let us explain why.

Using \eqref{rough_duh1} and \eqref{expansionaaa}, for the moment in a formal way, we obtain
\begin{equation}\label{rough_duhamel2}
e^{t\triangle^\varepsilon} = e^{t \widehat{\triangle}^{q}} + \varepsilon\mathcal{C}_1(t) + \cdots + \varepsilon^N \mathcal{C}_N(t) + \varepsilon^{N+1} \mathcal{P}_{N+1}^\varepsilon(t)
\end{equation}
where each operator $\mathcal{C}_j(t)$ is a finite sum of terms $\mathcal{I}_i(t)$ for some $i\in\{1,\ldots,N\}$, where $\mathcal{I}_i(t)$ is defined by
\begin{equation}\label{def_mathcal_Ii}
\mathcal{I}_i(t) = \int_{\Sigma_i(t)} e^{s_1 \widehat{\triangle}^{q}} \mathcal{A}_{j_1} \, e^{s_2 \widehat{\triangle}^{q}} \cdots  \mathcal{A}_{j_i} \, e^{s_{i+1} \widehat{\triangle}^{q}} \, ds^{i+1} 
\end{equation}
with $j_1,\ldots,j_N\in\{1,\ldots,N\}$, and where the remainder term $\mathcal{P}_{N+1}^\varepsilon(t)$ is a finite sum of terms $\varepsilon^k\mathcal{I}_i(t)$, $\varepsilon^k\mathcal{J}_i^\varepsilon(t)$ and $\varepsilon^k\mathcal{K}_i^\varepsilon(t)$ with $k,i\in\N$, $k\leq (N+1)^2$, $1\leq i\leq N$, and $\mathcal{J}_i^\varepsilon(t)$ and $\mathcal{K}_i^\varepsilon(t)$ are defined by
\begin{equation}\label{def_mathcal_Jieps}
\mathcal{J}_i^\varepsilon(t) = \int_{\Sigma_i(t)} e^{s_1 \widehat{\triangle}^{q}}  \mathcal{B}_{1}^\varepsilon \, e^{s_2 \widehat{\triangle}^{q}} \cdots  \mathcal{B}_i^\varepsilon \, e^{s_{i+1} \widehat{\triangle}^{q}} \, ds_{i+1}
\end{equation}
\begin{equation}\label{def_mathcal_Kieps}
\mathcal{K}_i^\varepsilon(t) = \int_{\Sigma_i(t)} e^{s_1 \triangle^\varepsilon}  \mathcal{B}_{1}^\varepsilon \, e^{s_2 \widehat{\triangle}^{q}} \cdots  \mathcal{B}_i^\varepsilon \, e^{s_{i+1} \widehat{\triangle}^{q}} \, ds^{i+1} 
\end{equation}
where each $\mathcal{B}_{j}^\varepsilon$ is a second-order derivation, either equal to some $\mathcal{A}_i$, $i\in\{1,\ldots,N\}$, or to $\mathcal{R}_{N+1}^\varepsilon$.

All operators above are defined as convolutions, i.e., iterated compositions involving the operators $e^{s_i \widehat{\triangle}^{q}}$ and $e^{s_1\triangle^\varepsilon}$, and derivations $\mathcal{A}_i$ and $\mathcal{R}_{N+1}^\varepsilon$ in-between. For instance, we have 
$$
\mathcal{C}_1(t) = \int_0^t e^{(t-s)\widehat{\triangle}^{q}}  \mathcal{A}_1 e^{s \widehat{\triangle}^{q}}\, ds,
\qquad
\mathcal{C}_2(t) = \int_0^t e^{(t-s)\widehat{\triangle}^{q}} \left( \mathcal{A}_2 e^{s\widehat{\triangle}^{q}} + \mathcal{A}_1 \mathcal{C}_1(s)  \right) ds .
$$
Only the terms $\mathcal{K}_i^\varepsilon(t)$ (involved in the remainder $\mathcal{P}_{N+1}^\varepsilon(t)$) contain $e^{s_1 \triangle^\varepsilon}$ as a first term in the convolution. Note that $\widehat{\triangle}^{q}$ is selfadjoint but $\triangle^\varepsilon$ is not selfadjoint in general.

The basic idea is then to take Schwartz kernels in \eqref{rough_duhamel2}, in order to obtain the expansion of the heat kernel $e^\varepsilon$ with respect to $\varepsilon$. 

\medskip

Although apparently simple, at least in a formal way, establishing rigorously the expansion at any order appears to be difficult and technical. The main difficulty is to give a sense to the expansion \eqref{rough_duhamel2} with respect to some appropriate topology. The asymptotic expansion \eqref{rough_duhamel2} will be written in the sense of smoothing operators (see Proposition \ref{prop_expansion_semigroup_epsgam} in Section \ref{sec:asymptotic_expansion_epsilongamma}), i.e., operators that map continuously any $H^j_{loc}(\R^n)$ to any $H^k_{loc}(\R^n)$, with a norm that is uniformly bounded with respect to $\varepsilon$. More precisely, let $0<t_0<t_1$ be fixed. We would like to prove that
\begin{multline}\label{smoothingproperty_Ii}
\Vert\chi_1\mathcal{I}_i(t)\chi_2\Vert_{L(H^j(\R^n),H^k(\R^n))} \leq \Cst(N,\chi_1,\chi_2,j,k,t_0,t_1) \\
\forall t\in[t_0,t_1] \qquad \forall \chi_1,\chi_2\in C^\infty_c(\R^n) \qquad\forall j,k\in\Z  \qquad \forall i\in\{1,\ldots,N\}
\end{multline}
and that there exists $\varepsilon_0>0$ such that
\begin{multline}\label{smoothingproperty_JKNeps}
\Vert\chi_1\mathcal{J}_{N+1}^\varepsilon(t)\chi_2\Vert_{L(H^j(\R^n),H^k(\R^n))}  + 
\Vert\chi_1\mathcal{K}_{N+1}^\varepsilon(t)\chi_2\Vert_{L(H^j(\R^n),H^k(\R^n))} 
\leq \Cst(N,\chi_1,\chi_2,j,k,t_0,t_1) \\
\forall t\in[t_0,t_1] \qquad \forall \chi_1,\chi_2\in C^\infty_c(\R^n) \qquad\forall j,k\in\Z  \qquad\forall\varepsilon\in[-\varepsilon_0,\varepsilon_0] .
\end{multline}

To prove these smoothing properties, we will prove that the chain of compositions appearing in the integrals \eqref{def_mathcal_Ii}, \eqref{def_mathcal_Jieps} and \eqref{def_mathcal_Kieps}, involving the operators $e^{s_i \widehat{\triangle}^{q}}$ and $e^{s_1\triangle^\varepsilon}$ and the derivations $\mathcal{A}_i$ and $\mathcal{R}_{N+1}^\varepsilon$, is performed in the scale of Sobolev spaces with polynomial weight $H^\alpha_\beta(\R^n)$ (whose definition is recalled in Appendix \ref{sec:global_estimates}), and that at least one of the operators $e^{s_i \widehat{\triangle}^{q}}$ and $e^{s_1\triangle^\varepsilon}$ enjoys a strong smoothing property, able to map continuously any $H^\alpha_\beta(\R^n)$ to $H^{\alpha'}_{\beta'}(\R^n)$ for any $\alpha'\in\R$ and for some appropriate $\beta'\in\R$ (and this, uniformly with respect to $\varepsilon$).

Here, in contrast to Section \ref{sec:proof_main_thm_prelim}, \emph{local} subelliptic estimates are not enough and \emph{global} subelliptic estimates are required, in order to establish such global smoothing properties. Since the derivations appearing in the chain of compositions can be arbitrary (they have no specific relationship with the operators $\widehat{\triangle}^{q}$ and $\triangle^\varepsilon$), the use of Sobolev spaces with polynomial weight $H^\alpha_\beta(\R^n)$ appears to be relevant.

Hereafter, we list the properties that we will have to establish in order to prove \eqref{smoothingproperty_Ii} and \eqref{smoothingproperty_JKNeps}.
Note that \eqref{smoothingproperty_Ii} involves operators $e^{s_i \widehat{\triangle}^{q}}$ and derivations $\mathcal{A}_i$, not depending on $\varepsilon$, while \eqref{smoothingproperty_JKNeps} involves also the operator $e^{s_1\triangle^\varepsilon}$ and the derivation $\mathcal{R}_{N+1}^\varepsilon$ for which we will have to establish properties that are uniform with respect to $\varepsilon$. As we will see in Section \ref{sec_idea_eps}, this will raise a significant additional difficulty.

\subsection{Requirements to prove (\ref{smoothingproperty_Ii})}\label{sec_idea_nilp}
To prove \eqref{smoothingproperty_Ii}, we observe that, inside the integral \eqref{def_mathcal_Ii}, at least one of the real numbers $s_p$ is such that $s_p\geq \frac{t_0}{N}$. We want the corresponding operator $e^{s_p \widehat{\triangle}^{q}}$ to be globally smoothing. To this aim, we will need to establish the following property:
\begin{enumerate}[label=$\bf (P_1)$] 
\item\label{P_globsmoothing} \emph{Global smoothing property in Sobolev spaces with polynomial weight}: there exists $k_0\in\N$ such that 
\begin{multline*}
\Vert e^{\tau \widehat{\triangle}^{q}}\Vert_{L\big(H^\alpha_\beta(\R^n),H^{\alpha'}_{\beta/k_0-k_0(\vert\alpha\vert+\vert\alpha'\vert)}(\R^n)\big)} \leq \Cst(\alpha,\alpha',\beta,\tau_0) \\
\forall \alpha,\alpha',\beta\in\R \qquad \forall \tau_0\in(0,1) \qquad \forall\tau\in[\tau_0,1] .
\end{multline*}
\end{enumerate}
In other words, in positive times ($\tau_0\leq\tau\leq1$) the operator $e^{\tau \widehat{\triangle}^{q}}$ gains differential regularity, with a controlled loss of polynomial weight regularity.

Besides, all other operators in the composition in the integral \eqref{def_mathcal_Ii} are either $e^{s_i \widehat{\triangle}^{q}}$, with $0\leq s_i\leq 1$ or derivations $\mathcal{A}_i$. The second-order derivations $\mathcal{A}_i$ for $i=1,\ldots,N$, because they have polynomial coefficients with maximal degree, say, $N^\ell$ for some $\ell\in\N^*$, and thus map continuously any $H^\alpha_\beta(\R^n)$ to $H^{\alpha-2}_{\beta-N^\ell}(\R^n)$. Concerning $e^{s_i \widehat{\triangle}^{q}}$, we require the following property:
\begin{enumerate}[label=$\bf (P_2)$] 
\item\label{P_stab} \emph{Continuity with controlled loss in the Sobolev spaces with polynomial weight, including time zero}: there exist $\beta_0>0$ and $k_1\in\N$ such that
$$
\Vert e^{\tau \widehat{\triangle}^{q}}\Vert_{L\big(H^\alpha_\beta(\R^n),H^{\alpha-k_1\vert\alpha\vert}_{\beta/k_1-k_1\vert\alpha\vert-k_1}(\R^n)\big)} \leq \Cst(\alpha,\beta) \qquad
\forall \alpha\in\R\qquad\forall\beta\geq\beta_0 \qquad \forall\tau\in[0,1]  .
$$
\end{enumerate}
Note that, in contrast to \ref{P_globsmoothing} which is a smoothing property in positive time, \ref{P_stab} is stated on the time interval $[0,1]$, including time zero.
The property \ref{P_stab} is inferred from the following two properties that we will establish:
\begin{enumerate}[label=$\bf (P_{\theenumi})$]
\setcounter{enumi}{2}
\item\label{P_semigroup} Let $\widehat{\mathcal{D}}^{q,\mathrm{sR}}_{k,\beta}$ be the completion of $C_c^\infty(\R^n)$ for the norm $\Vert \langle x\rangle_{\mathrm{sR}}^\beta(\mathrm{id} - \widehat{\triangle}^{q})^k u\Vert_{L^2(\R^n)}$, which is the domain of $(\mathrm{id} - \widehat{\triangle}^{q})^k$ polynomially weighted with the power $\beta$ of the sR Japanese bracket (see a precise definition in Appendix \ref{sec:global_estimates_sR_weight}). We have
$$
\big\Vert e^{\tau\widehat{\triangle}^{q}} \big\Vert_{L\big(\widehat{\mathcal{D}}^{q,\mathrm{sR}}_{k,\beta}\big)} \leq \Cst(k,\beta) \qquad \forall \tau\in[0,1] \qquad \forall k\in\Z\qquad \forall \beta\geq 1 .
$$

\item\label{P_contembed} \emph{Continuous embeddings}: there exist $\sigma>0$ and $N_0\in\N$ such that
$$
H^{2k}_{\beta+2kN_0}(\R^n) \hookrightarrow \widehat{\mathcal{D}}^{q,\mathrm{sR}}_{k,\beta} \hookrightarrow H^{k\sigma}_{\beta/r(q)-kN_0}(\R^n)
\qquad\forall k\in\N\qquad\forall\beta\in\R
$$
and by duality, $H^{-k\sigma}_{\beta/r(q)+kN_0}(\R^n) \hookrightarrow \widehat{\mathcal{D}}^{q,\mathrm{sR}}_{-k,\beta} \hookrightarrow H^{-2k}_{\beta-2kN_0}(\R^n)$ for all $k\in\N$ and $\beta\in\R$.
\end{enumerate}
The smoothing property of $\mathcal{I}_i(t)$ follows from \ref{P_globsmoothing}, \ref{P_semigroup} and \ref{P_contembed} (the full detail of the argument will be given in Proposition \ref{prop_expansion_semigroup_epsgam}, but one can already note the important fact that $\beta$ must be taken large enough).

The property \ref{P_contembed} follows from \emph{global} subelliptic estimates, that we establish in Appendix \ref{sec:global_estimates} for general H\"ormander operators whose coefficients (as well as their derivatives) have a growth at infinity that is at most polynomial (of course, there, the polynomial property is crucial). Actually, in Appendix \ref{sec:global_estimates} we will establish \ref{P_contembed} for the domains $\widehat{\mathcal{D}}^{q}_{k,\beta}$ of $(\mathrm{id} - \widehat{\triangle}^{q})^k$ polynomially weighted with the power $\beta$ of the usual (not sR) Japanese bracket (see their definition in Appendix \ref{sec:uniform_global_subelliptic_estimates}). But, since we have the inequality $\Cst \Vert \cdot\Vert_{\widehat{\mathcal{D}}^q_{j,\alpha/r(q)}} \leq \Vert \cdot\Vert_{\widehat{\mathcal{D}}_{j,\alpha}^{q,\mathrm{sR}}} \leq \Cst \Vert \cdot\Vert_{\widehat{\mathcal{D}}^q_{j,\alpha}}$ for all $j\in\Z$ and $\alpha>0$ (see \eqref{equivDjalpha} in Section \ref{sec:global_estimates_sR_weight}), \ref{P_contembed} follows.

The property \ref{P_globsmoothing} is inferred from \ref{P_contembed} and from the following property:
\begin{enumerate}[label=$\bf (P'_1)$] 
\item\label{P'_globsmoothing} \emph{Global smoothing property in the iterated domains with polynomial weight}: there exists $k_0\in\N$ such that 
$$
\hspace{-5mm}\Vert e^{\tau \widehat{\triangle}^{q}}\Vert_{L\big(\widehat{\mathcal{D}}^{q,\mathrm{sR}}_{j,\beta}, \widehat{\mathcal{D}}^{q,\mathrm{sR}}_{k,\beta}\big)} \leq \Cst(j,k,\beta,\tau_0) \qquad
\forall j,k\in\Z\qquad \forall\beta\geq 1 \qquad \forall \tau_0\in(0,1) \qquad \forall \tau\in[\tau_0,1] .
$$
\end{enumerate}
While \ref{P'_globsmoothing} is a smoothing property (valid for positive times), the property \ref{P_semigroup}, which must hold also at $\tau=0$ but does not provide any gain of regularity, is equivalent to the fact that $(e^{t\widehat{\triangle}^{q}})_{t\geq 0}$ is a semigroup on the weighted Hilbert space $L^2_\beta(\R^n)$, for every $\beta\geq 1$. These facts are not obvious. We prove \ref{P'_globsmoothing} and \ref{P_semigroup} in Appendix \ref{sec:global_sR} (more precisely, see Proposition \ref{prop_globreg_poidsfixe} in Appendix \ref{sec:global_estimates_sR_weight}) by using upper exponential estimates of the heat kernel of the nilpotent sR Laplacian $\widehat{\triangle}^{q}$. 

At this step, we have realized that, to prove that the operators $\chi_1\mathcal{I}_i(t)\chi_2$ are smoothing, we have to establish some properties for the operator $\widehat{\triangle}^q$ that are of a global nature. To prove them, we will use the instrumental facts that the operator $\widehat{\triangle}^q$ is selfadjoint and polynomial. This is the objective of Appendix \ref{sec:global_sR}. Selfadjointness allows us to use the spectral theorem. 

\begin{remark}
It seems unavoidable in general to use Sobolev spaces with polynomial weight $H^\alpha_\beta(\R^n)$, in the above chain of arguments. Indeed, the second-order derivations $\mathcal{A}_i$ can be arbitrary, in the sense that they ``cannot be factorized" by $\widehat{\triangle}^{q}$ and thus they do not map an iterated domain of $\widehat{\triangle}^{q}$ to another in general.
\end{remark}

\subsection{Requirements to prove (\ref{smoothingproperty_JKNeps})}\label{sec_idea_eps}
Let us now search what properties are required to prove \eqref{smoothingproperty_JKNeps}.
Compared with the operator $\mathcal{J}_i(t)$ defined by \eqref{def_mathcal_Ii}:
\begin{itemize}
\item in the definitions \eqref{def_mathcal_Jieps} and \eqref{def_mathcal_Kieps} of $\mathcal{J}_i^\varepsilon(t)$ and $\mathcal{K}_i^\varepsilon(t)$, the derivations in-between can be either $\mathcal{A}_j$ or $\mathcal{R}_{N+1}^\varepsilon$;
\item in the definition \eqref{def_mathcal_Kieps} of $\mathcal{K}_i^\varepsilon(t)$, we have a final composition by the operator $e^{s_0\triangle^\varepsilon}$ in the integral.
\end{itemize}
Note that, in contrast to $\widehat{\triangle}^q$, the operator $\triangle^\varepsilon$ is not selfadjoint in general.

In order to perform a reasoning as above, it would be desirable that there exist $\ell\in\N$ and $\varepsilon_0>0$ such that:
\begin{enumerate}[label=(\roman*)]
\item\label{desirable_i} $\Vert \mathcal{R}_{N+1}^\varepsilon\Vert_{L(H^\alpha_\beta(\R^n),H^{\alpha-2}_{\beta-N^\ell}(\R^n))} \leq \Cst(\alpha,\beta) \qquad \forall\alpha,\beta\in\R \qquad \forall\varepsilon\in[-\varepsilon_0,\varepsilon_0]$;
\item\label{desirable_ii} the operator $\chi_1 e^{\tau\triangle^\varepsilon}$ satisfy the properties \ref{P_globsmoothing} (global smoothing in the Sobolev spaces with polynomial weight) and \ref{P_stab} (continuity with controlled loss, including time zero), uniformly with respect to $\varepsilon\in[-\varepsilon_0,\varepsilon_0]$.
\end{enumerate}
However, except in the following particular case, we are going to see that these properties are not satisfied in general, which raises a serious difficulty that we will show how to overcome.

\paragraph{A particular case.}
It is interesting to note that \ref{desirable_i} and \ref {desirable_ii} are satisfied under the following additional assumptions:
\begin{itemize}
\item $M=\R^n$;
\item the vector fields $X_0, X_1,\ldots,X_m$ are polynomial;
\item the operator $\triangle$ defined by \eqref{def_DeltaX} is selfadjoint (see Section \ref{sec_rem_Horm}).
\end{itemize}
Indeed, under these assumptions the coefficients of $\triangle^\varepsilon$ are polynomial, and thus the homogeneous expansion \eqref{expansionaaa} is exact for $N$ large enough, i.e., $\mathcal{R}_{N+1}^\varepsilon=0$, hence \ref{desirable_i} is satisfied. As for \ref{desirable_ii}, since $\triangle^\varepsilon$ is also selfadjoint, all results established in Appendix \ref{sec:global_sR} can straightforwardly be extended to such a one-parameter family of operators. In particular, Proposition \ref{prop_globreg_poidsfixe} and continuous embeddings give \ref{desirable_ii}.

Hence, under the above additional assumptions, the results established in Appendix \ref{sec:global_sR} are sufficient to conclude that $\chi_1\mathcal{J}_{N+1}^\varepsilon(t)\chi_2$ is smoothing and then complete the proof of Theorem \ref{main_thm}.

\paragraph{Difficulties in the general case.}
But in the general case where either the vector fields are not polynomial at infinity, or $\triangle$ is not selfadjoint, the properties \ref{desirable_i} and \ref{desirable_ii} may fail and we have to proceed differently.

First of all, when $\triangle$ is not selfadjoint, we cannot use the results of Appendix \ref{sec:global_sR}, but this difficulty is bypassed in Appendix \ref{sec:global_estimates}, where we establish some global smoothing properties for general hypoelliptic H\"ormander operators (not necessarily selfadjoint) depending on a parameter. 

However, this can only be done under the crucial assumption that the growth at infinity of the differential operator is at most polynomial, with a degree that is uniform with respect to the parameter.
This requirement of being at most polynomial at infinity is the main constraint. The results of Section \ref{sec:global_estimates} cannot be applied to $e^{t\triangle^\varepsilon}$, because the growth at infinity of the vector fields $X_i^\varepsilon$ is \emph{not} at most polynomial uniformly with respect to $\varepsilon$, in general (see Example \ref{example_lemmaYi} in Section \ref{sec_defYiepsgam}).

This serious flaw is due to the fact that $Y_i^\varepsilon$ (defined by \eqref{defYieps}) converges to $\widehat{X}_i^{q}$ only in $C^\infty$ topology as $\varepsilon\rightarrow 0$, i.e., $Y_i^\varepsilon$ converges uniformly, as well as all its derivatives, to $\widehat{X}_i^{q}$ only on every compact subset of $\R^n$. But this convergence is \emph{not global} in general.

\paragraph{Adding a ``damping" parameter $\gamma$.}
To overcome this defect of convergence, we introduce in Section \ref{sec_construction_deltaepsiloneta} a slightly different operator $\triangle^{\varepsilon,\gamma}$, depending on an additional (fixed) parameter $\gamma\in(0,1)$, which satisfies $\triangle^{\varepsilon,\gamma} \rightarrow \widehat{\triangle}^{q}$ as $\varepsilon\rightarrow 0$ in a much stronger sense, and whose growth at infinity is uniformly at most polynomial.

To do so, we replace each vector field $Y_i^\varepsilon$ with an adequate modification $Y_i^{\varepsilon,\gamma}$: we define the vector field $Y_i^{\varepsilon,\gamma}$ such that, roughly speaking, $Y_i^{\varepsilon,\gamma} = \widehat{X}^{q}_i$ outside of the sR ball $\hatBsR^{q}(0,1/\varepsilon^\gamma)$ and $Y_i^{\varepsilon,\gamma} = Y_i^\varepsilon$ inside the ball. 
In the key Lemma \ref{lem_epsgamma}, we establish that, while $Y_i^\varepsilon$ converges uniformly (as well as all its derivatives) to $\widehat{X}_i^{q}$ only on every compact, for appropriate (small enough) values of $\gamma$, $Y_i^{\varepsilon,\gamma}$ converges to $\widehat{X}_i^{q}$ globally, on the whole $\R^n$, as $\varepsilon\rightarrow 0$, and the convergence is even valid in any space $L^p$, $p\in[1,+\infty]$, as well as all its derivatives. This much stronger convergence property, obtained thanks to the adequate modification using the parameter $\gamma$, is instrumental in our proof. By the way, we think that it could be useful for other purposes.

The parameter $\gamma$ can be viewed, in some sense, as a ``damping" parameter: $Y_i^{\varepsilon,\gamma}$ is an adequate modification of $Y_i^\varepsilon$, which is sufficiently damped (but not too much) to be of polynomial growth at infinity, uniformly with respect to $\varepsilon$.

The construction of $\triangle^{\varepsilon,\gamma}$ is done in Section \ref{sec_construction_deltaepsiloneta}.
We will prove that the family of vector fields $Y_i^{\varepsilon,\gamma}$ satisfies a uniform polynomial strong H\"ormander condition if $\gamma>0$ is chosen small enough (see Lemma \ref{lem_unif_poly_Hormander}), thus allowing us to use the uniform global subelliptic estimates established in Section \ref{sec:global_estimates}.

Note that, to ensure the validity of the global subelliptic estimates, it could seem sufficient to consider the above truncation on a ball $\hatBsR^{q}(0,1)$ of fixed radius, rather than on a ball $\hatBsR^{q}(0,1/\varepsilon^\gamma)$. But then, the end of the proof of Theorem \ref{main_thm} would fail; more precisely, the application of the final localization argument (hypoelliptic Kac's principle) would fail (see Section \ref{end_of_the_proof_main_thm}). So, we underline that it is important to take $\gamma>0$ small and thus keep the equality $Y_i^{\varepsilon,\gamma} = Y_i^\varepsilon$ valid on a neighborhood of $0$ increasing to $\R^n$ as $\varepsilon\rightarrow 0$.

\medskip

With this modification in mind, we go back to Section \ref{sec_duh} and we apply the Duhamel formula, replacing $\triangle^\varepsilon$ with $\triangle^{\varepsilon,\gamma}$. This is what we will do in Section \ref{sec:asymptotic_expansion_epsilongamma}, and we will obtain the expansion \eqref{rough_duhamel2} with $\triangle^{\varepsilon,\gamma}$ instead of $\triangle^\varepsilon$. 

We will then want to prove \eqref{smoothingproperty_JKNeps} with $\triangle^{\varepsilon,\gamma}$ replacing $\triangle^\varepsilon$. Recalling \ref{desirable_i} and \ref{desirable_ii} as desirable properties, what we will be able to establish is:
\begin{enumerate}[label=$\bf (P_{\theenumi})$]
\setcounter{enumi}{4}
\item\label{Repsgam} We have $\triangle^{\varepsilon,\gamma} = \widehat{\triangle}^{q} + \varepsilon \mathcal{A}_1^{\varepsilon,\gamma} + \varepsilon^2 \mathcal{A}_2^{\varepsilon,\gamma} + \cdots + \varepsilon^N \mathcal{A}_N^{\varepsilon,\gamma} + \varepsilon^{N(1-\gamma)+1-\gamma r(q)} \mathcal{R}_{N+1}^{\varepsilon,\gamma}$ and there exist $\beta_0>0$ and $\varepsilon_0>0$ such that
$$
\Vert \mathcal{R}_{N+1}^{\varepsilon,\gamma}\Vert_{L\big(H^\alpha_\beta(\R^n),H^{\alpha-2}_{\beta-\beta_0}(\R^n)\big)} \leq \Cst(\alpha,\beta) \qquad \forall\alpha,\beta\in\R \qquad \forall\varepsilon\in[-\varepsilon_0,\varepsilon_0] .
$$

\item\label{globsmootheps} \emph{Global-to-local smoothing property}: there exist $k_0\in\N$ and $\alpha_0>0$ such that 
$$
\Vert \chi_1 e^{\tau \triangle^{\varepsilon,\gamma}}\Vert_{L\big(H^{-\alpha}_{k_0\alpha}(\R^n),H^{\beta}(\R^n)\big)} \leq \Cst(\alpha,\beta,\tau_0) \qquad \forall \alpha,\beta\geq\alpha_0 \qquad \forall \tau_0\in(0,1) \qquad \forall\tau\in[\tau_0,1] .
$$
\end{enumerate}

We note that, in contrast to the asymptotic expansion \eqref{expansionaaa} of $\triangle^\varepsilon$, the introduction of the damping parameter $\gamma$ implies a loss in the power of $\varepsilon$ for the remainder term, in the asymptotic expansion of $\triangle^{\varepsilon,\gamma}$ given in \ref{Repsgam}. 
The property \ref{globsmootheps} is proved in Appendix \ref{sec:globalregheat}, thanks to the fact that a uniform polynomial strong H\"ormander condition is satisfied as soon as $0<\gamma<\frac{1}{r(q)(r(q)+1)}$ (see Lemma \ref{lem_unif_poly_Hormander}).

We show in Proposition \ref{prop_expansion_semigroup_epsgam} that the properties \ref{Repsgam} and \ref{globsmootheps} are sufficient to conclude.

\subsection{End of the proof}
Finally, we will take Schwartz kernels in Section \ref{taking_Skernels}, in order to obtain an expansion at any order of the heat kernel $e^{\varepsilon,\gamma}$ associated with $\triangle^{\varepsilon,\gamma}$, in function of the heat kernel $\widehat{e}^{q}$ of $e^{t\widehat{\triangle}^{q}}$. 

To conclude, it will remain to relate the heat kernels $e^{\varepsilon,\gamma}$ and $e_{\triangle,\mu}$. As in Section \ref{sec:proof_main_thm_prelim}, the local nature of the small-time asymptotics of heat kernels (hypoelliptic Kac's principle), established in Theorem \ref{theo:localheat} in Section \ref{sec_facts}, will be instrumental there and will be applied several times (as in the proof of Theorem \ref{main_thm_weak}). First, by localization, small-time asymptotics of $e_{\triangle,\mu}$ and of the heat kernel of a representation of $\triangle$ in a chart are the same. Second, the heat kernel $e^\varepsilon$ is directly related to the kernel of the local representation by homogeneity. Unfortunately, the heat kernel $e^{\varepsilon,\gamma}$ does not satisfy this homogeneity property, and an additional difficulty arises here, which we will solve thanks to the adequate construction of the operator $\triangle^{\varepsilon,\gamma}$. There, the fact that the localization argument can be applied is strongly due to the fact that $0<\gamma<1$, more precisely, that $\varepsilon/\varepsilon^\gamma\rightarrow 0$ as $\varepsilon\rightarrow 0$.

\section{Proof of Theorem \ref{main_thm}}\label{sec:proof_main_thm}

Throughout, we assume that $X_0$ is a smooth section of $D$ over $M$. In view of Remark \ref{rem_X0_length2}, each time this is required, we will indicate the modifications that must be done when $X_0$ is a smooth section of $D^2$ over $M$.

We consider the framework and notations introduced in Section \ref{sec:proof_main_thm_prelim}.

\subsection{Construction of $\triangle^{\varepsilon,\gamma}$, with a damping parameter $\gamma$}\label{sec_construction_deltaepsiloneta}
Let $\gamma\in(0,1)$ be fixed, to be chosen later. We assume that $\varepsilon_0>0$ is small enough so that $\delta_{\varepsilon^\gamma}(V)\subset V$ for every $\varepsilon\in[-\varepsilon_0,\varepsilon_0]$.

\subsubsection{Definition and properties of the vector fields $Y_i^{\varepsilon,\gamma}$}\label{sec_defYiepsgam}
Considering the subsets $W_1,W_2$ and the function $\chi\in C^\infty(\R^n)$ introduced in Section \ref{sec:proof_main_thm_prelim}, 
for every $\varepsilon\in[-\varepsilon_0,\varepsilon_0]\setminus\{0\}$, we define
$$
\nu^{\varepsilon,\gamma} = (\delta_{\varepsilon^\gamma}^*\chi) \nu^\varepsilon + (1-(\delta_{\varepsilon^\gamma}^*\chi))\widehat{\mu}^{q}
= \widehat{\mu}^{q} + (\delta_{\varepsilon^\gamma}^*\chi) ( \nu^\varepsilon - \widehat{\mu}^{q})
$$
and
$$
Y_i^{\varepsilon,\gamma} = (\delta_{\varepsilon^\gamma}^*\chi) Y_i^\varepsilon + (1-\delta_{\varepsilon^\gamma}^*\chi)\widehat{X}_i^{q} = \widehat{X}_i^{q} + (\delta_{\varepsilon^\gamma}^*\chi) (Y_i^\varepsilon-\widehat{X}_i^{q}) , \qquad i=0,\ldots,m . 
$$
By construction, we have $Y_i^{\varepsilon,\gamma} = Y_i^\varepsilon$ on $\delta_{1/\varepsilon^\gamma}(W_1)$ and $Y_i^{\varepsilon,\gamma} = \widehat{X}_i^{q}$ on $\R^n\setminus\delta_{1/\varepsilon^\gamma}(W_2)$.
Note that $\delta_{1/\varepsilon^\gamma}(W_1)\subset \delta_{1/\varepsilon}(W_1)$ is a neighborhood of $0$ increasing to $\R^n$ as $\varepsilon\rightarrow 0$. The vector field $Y_i^{\varepsilon,\gamma}$ is thus an adequate restriction of $Y_i^\varepsilon$ on a neighborhood increasing to $\R^n$ as $\vert\varepsilon\vert$ decreases, and $Y_i^{\varepsilon,\gamma}$coincides with $\widehat{X}_i^{q}$ ``at infinity", more precisely, outside of the increasing neighborhood $\delta_{1/\varepsilon^\gamma}(W_2)$. The asymptotics in $\varepsilon^\gamma$, with a $\gamma\in(0,1)$ to be chosen small enough later, will be instrumental several times in the proof. In particular, if we would replace $(\delta_{\varepsilon^\gamma}^*\chi)$ by $(\delta_\lambda^*\chi)$ in the definition of $Y_i^{\varepsilon,\gamma}$, for some $\lambda>0$ fixed, then, at the very end of the proof, the localization theorem (Theorem \ref{theo:localheat}) could not be applied.

Setting $Y_i^{0,\gamma}= \widehat{X}_i^{q}$ for $\varepsilon=0$, $Y_i^{\varepsilon,\gamma}$ depends smoothly on $\varepsilon$ at $\varepsilon=0$ in $C^\infty$ topology (this is because $Y_i^\varepsilon$ depends smoothly on $\varepsilon$ and, on any fixed compact subset $K\subset\R^n$, one has $\delta_{\varepsilon^\gamma}^*\chi=1$ on $K$ as soon as $\varepsilon$ is small enough).

We note that, for $i=0,\ldots,m$, we have the homogeneity property
\begin{equation}\label{homogYiepsgamma}
\varepsilon^\beta \delta_{\varepsilon^\beta}^* Y_i^{\varepsilon,\gamma} = Y_i^{\varepsilon^{1+\beta},\gamma+\beta} \qquad \forall\beta\in(-\gamma,1-\gamma) \qquad\forall \varepsilon\in \big[ -\varepsilon_0^{1/(1+\beta)},\varepsilon_0^{1/(1+\beta)} \big] \setminus\{0\} .
\end{equation}

\begin{remark}\label{rem_comp_Yiepsgamma_Yieps}
Given any $i\in\{0,\ldots,m\}$, since $Y_i^{\varepsilon,\gamma}$ coincides with $Y_i^\varepsilon$ on the growing (as $\varepsilon$ decreases) neighborhood on $\delta_{1/\varepsilon^\gamma}(W_1)$, we obviously have $Y_i^{\varepsilon,\gamma} = Y_i^\varepsilon + \mathrm{O}(\vert\varepsilon\vert^\infty)$ as $\varepsilon\rightarrow 0$ in $C^\infty$ topology.
\end{remark}



The parameter $\gamma>0$ introduced above is used to sufficiently \emph{damp} the growth of the vector field $Y_i^{\varepsilon,\gamma}$ at infinity, with respect to that of the undamped vector field $Y_i^\varepsilon$.

In the next lemma, we prove that, while $Y_i^\varepsilon$ converges uniformly (as well as all its derivatives) to $\widehat{X}_i^{q}$ only on every compact, $Y_i^{\varepsilon,\gamma}$ converges to $\widehat{X}_i^{q}$ \emph{globally}, on the whole $\R^n$, as $\varepsilon\rightarrow 0$, and the convergence is even valid in any space $L^p$, $p\in[1,+\infty]$. This much stronger convergence property is due to the adequate modification using the parameter $\gamma$. 
It implies in particular that the growth at infinity of the coefficients of $Y_i^{\varepsilon,\gamma}$, as well as all their derivatives, is at most polynomial, uniformly with respect to $\varepsilon$.
This will be crucial in what follows, as motivated and explained in Section \ref{sec_idea_eps}.

\begin{lemma}\label{lem_epsgamma}
Recall that $r(q)$ is the degree of nonholonomy at $q$ (we have $r(q)=w_n(q)$, the largest weight at $q$, see Section \ref{sec:sRflag}) and that $\mathcal{Q}(q)=\sum_{i=1}^n w_i(q)$. If $\gamma < \frac{1}{r(q)}$ then:
\begin{enumerate}[label=(\roman*)]
\item\label{lem_epsgamma_i} For every $i\in\{0,\ldots,m\}$, considering the vector field $Y_i^{\varepsilon,\gamma} - \widehat{X}_i^{q}$ as a derivation, we have
\begin{multline*}
\big\Vert (Y_i^{\varepsilon,\gamma} - \widehat{X}_i^{q})f \big\Vert_{W^{k,p}(\R^n)} \leq \Cst(k) \vert\varepsilon\vert^{1-\gamma r(q)} \Vert f\Vert_{W^{k+1,p}(\R^n)}\\
\qquad \forall k\in\N\qquad \forall p\in[1,+\infty]\qquad \forall \varepsilon\in[-\varepsilon_0,\varepsilon_0] \qquad \forall f\in C_c^\infty(\R^n).
\end{multline*}
In particular:
\begin{itemize}
\item Taking $p=+\infty$: $Y_i^{\varepsilon,\gamma}$ converges uniformly on $\R^n$ to $\widehat{X}_i^{q}$ as $\varepsilon\rightarrow 0$ (meaning that all coefficients of the vector field converge uniformly on $\R^n$), as well as all its derivatives. 
\item Taking $p=2$: $Y_i^{\varepsilon,\gamma}f$ converges to $\widehat{X}_i^{q}f$ in $L^2(\R^n)$ as $\varepsilon\rightarrow 0$, as well as all its derivatives. 
\end{itemize}

\item\label{lem_epsgamma_ii} The density of $\nu^{\varepsilon,\gamma}$ with respect to $\widehat{\mu}^{q}$ (which is a constant times the Lebesgue measure of $\R^n$) converges uniformly on the whole $\R^n$ to $1$, and all its derivatives converge uniformly on $\R^n$ to $0$.
Therefore, for every $s\in \R$ we have $H^s(\R^n,\nu^{\varepsilon,\gamma}) = H^s(\R^n,\widehat{\mu}^{q}) = H^s(\R^n)$ with respective norms that are equivalent, uniformly with respect to $\varepsilon$.

\item\label{lem_epsgamma_iii} As in Appendix \ref{sec:global_estimates}, we denote by $(Y_i^{\varepsilon,\gamma})_{i\geq 1}$ the family of vector fields consisting of the vector fields $Y_1^{\varepsilon,\gamma}, Y_2^{\varepsilon,\gamma}, \ldots, Y_m^{\varepsilon,\gamma}$ completed with all their successive Lie brackets.
Then \ref{lem_epsgamma_i} is satisfied as well for every $i\geq 1$.

In particular, if $\varepsilon_0$ is small enough then $\mathrm{Lie}(Y_1^{\varepsilon,\gamma},\ldots,Y_m^{\varepsilon,\gamma})=\R^n$ for every $\varepsilon\in[-\varepsilon_0,\varepsilon_0]$, with a uniform degree of nonholonomy $r$. 

\item\label{lem_epsgamma_iv} Generalizing \ref{lem_epsgamma_i} (which we recover for $N=0$), we have the following asymptotic expansion: given any $N\in\N$, for every $i\in\{0,\ldots,m\}$, for every $\varepsilon\in[-\varepsilon_0,\varepsilon_0]$,
\begin{equation}\label{asymptotic_expansion_Yepsgamma}
Y_i^{\varepsilon,\gamma} = \widehat{X}_i^q + \varepsilon (\delta_{\varepsilon^\gamma}^*\chi) Y_i^{(0)} + \cdots  + \varepsilon^N (\delta_{\varepsilon^\gamma}^*\chi) Y_i^{(N-1)} + \varepsilon^{N(1-\gamma)+1-\gamma r(q)} R_{i,N}^{\varepsilon,\gamma}
\end{equation}
where $R_{i,N}^{\varepsilon,\gamma}$ is a smooth vector field on $\R^n$, depending smoothly on $\varepsilon$, which satisfies
\begin{multline*}
\Vert R_{i,N}^{\varepsilon,\gamma} f \Vert_{W^{k,p}(\R^n)} \leq \Cst(k,N) \Vert f\Vert_{W^{k+1,p}(\R^n)} \\
\forall k\in\N\qquad\forall p\in[1,+\infty]\qquad \forall \varepsilon\in[-\varepsilon_0,\varepsilon_0]\qquad \forall f\in C_c^\infty(\R^n).
\end{multline*}
\end{enumerate}
\end{lemma}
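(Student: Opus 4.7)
My plan is based on the decomposition
\[
Y_i^{\varepsilon,\gamma} - \widehat X_i^q \;=\; (\delta_{\varepsilon^\gamma}^*\chi)\,(Y_i^\varepsilon - \widehat X_i^q) \;=\; \varepsilon\,(\delta_{\varepsilon^\gamma}^*\chi)\,Z_i^\varepsilon,
\]
coming from \eqref{expansion_XZeps}, where $Z_i^\varepsilon$ depends smoothly on $\varepsilon$ in $C^\infty$ topology. The difficulty is that this smooth dependence only gives convergence of coefficients on compact sets, whereas the cutoff $\delta_{\varepsilon^\gamma}^*\chi$ is supported on the growing region $\delta_{1/\varepsilon^\gamma}(W_2)$, on which each coordinate satisfies $|x_j| \leq C\,\varepsilon^{-\gamma w_j(q)} \leq C\,\varepsilon^{-\gamma r(q)}$; I therefore have to track, uniformly in $\varepsilon$, the polynomial growth of $Z_i^\varepsilon$ on this region.

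For (i), I would write $Y_i$ in privileged coordinates as $Y_i = \sum_j c_{ij}(y)\,\partial_{y_j}$ and perform a weighted Taylor expansion of each $c_{ij}$ at the origin. Since $Y_i$ is a section of $D$, the function $c_{ij}$ has nonholonomic order at least $w_j(q)-1$ at the base point, so this yields
\[
\varepsilon^{1-w_j(q)}\,c_{ij}(\delta_\varepsilon(x)) \;=\; \widehat c_{ij}(x) \;+\; \sum_{\ell=1}^{N} \varepsilon^\ell P_{\ell,j}(x) \;+\; \varepsilon^{1-w_j(q)}\,R_{N,j}(\delta_\varepsilon(x)),
\]
where $\widehat c_{ij}$ is the coefficient of $\partial_{x_j}$ in $\widehat X_i^q$, each $P_{\ell,j}$ is polynomial of weighted degree $\ell + w_j(q) - 1$, and the Taylor remainder $R_{N,j}$ vanishes to high enough nonholonomic order. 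On the support of $\delta_{\varepsilon^\gamma}^*\chi$, the polynomial $P_{\ell,j}$ is bounded by $C\,\varepsilon^{-\gamma(\ell + w_j(q)-1)}$, so $|\varepsilon^\ell P_{\ell,j}| \leq C\,\varepsilon^{\ell(1-\gamma) + \gamma(1-w_j(q))}$; the hypothesis $\gamma r(q) < 1$ ensures that this quantity is at most $\varepsilon^{1-\gamma r(q)}$ uniformly in $\ell \geq 1$ and $j$, and the Taylor remainder is absorbed into the same bound by taking $N$ large enough (using that $|\delta_\varepsilon(x)| = O(\varepsilon^{1-\gamma})$ in the weighted sense on the support). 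Derivatives are controlled similarly: differentiating a polynomial lowers its weighted degree and improves the estimate, while differentiating the cutoff gives factors $\varepsilon^{\gamma w_k(q)}$ that are bounded. This yields $\|b_j\|_{W^{k,\infty}(\R^n)} \leq C_k\,\varepsilon^{1-\gamma r(q)}$ for the coefficients $b_j$ of $Y_i^{\varepsilon,\gamma} - \widehat X_i^q$, and (i) follows from the Sobolev product inequality $\|b_j\,\partial_{x_j} f\|_{W^{k,p}} \leq C\,\|b_j\|_{W^{k,\infty}}\,\|f\|_{W^{k+1,p}}$.

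Part (ii) reduces to the same weighted Taylor argument applied to the smooth density of $\nu$ with respect to the Lebesgue measure, since the density of $\widehat{\mu}^q$ is exactly the constant (weighted-degree-$0$) Taylor term of that density at $q$. Part (iii) is an induction on bracket length via the bilinear identity
\[
[Y_i^{\varepsilon,\gamma}, Y_j^{\varepsilon,\gamma}] = [\widehat X_i^q, \widehat X_j^q] + [\widehat X_i^q, Y_j^{\varepsilon,\gamma} - \widehat X_j^q] + [Y_i^{\varepsilon,\gamma} - \widehat X_i^q, \widehat X_j^q] + [Y_i^{\varepsilon,\gamma} - \widehat X_i^q, Y_j^{\varepsilon,\gamma} - \widehat X_j^q]
\]
together with the fact that the bracket is a first-order operation in each argument, so the estimate of (i) propagates at every iteration (the coefficients of the nilpotent $\widehat X_k^q$ are polynomial of bounded weighted degree and are controlled by the same mechanism). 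The uniform H\"ormander condition then follows because an $m$-tuple that is $W^{1,\infty}_{\mathrm{loc}}$-close, uniformly, to $(\widehat X_1^q,\ldots,\widehat X_m^q)$, which spans $\R^n$ with degree of nonholonomy $r(q)$, must also span with that degree for $\varepsilon$ small enough. Finally, (iv) is the same weighted Taylor expansion as in (i) but keeping $N$ polynomial terms, and the remainder coefficient $R_{i,N}^{\varepsilon,\gamma}$ is estimated as above; the exponent $N(1-\gamma)+1-\gamma r(q)$ comes from the worst-case balance at the $(N+1)$-st Taylor term on the support.

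The main obstacle is the tension between the damping $\gamma > 0$, which is necessary to keep the cutoff region polynomially bounded in $\varepsilon$, and the requirement $\gamma r(q) < 1$, which guarantees that the growth $\varepsilon^{-\gamma r(q)}$ of the polynomial coefficients is dominated by the prefactor $\varepsilon$ arising from nilpotentization. The delicate part is the uniform-in-$k$ bookkeeping of derivatives, and in particular making the weighted Taylor remainder estimate precise enough to fit inside the target exponent $\varepsilon^{1-\gamma r(q)}$ (and its refinement in part (iv)).
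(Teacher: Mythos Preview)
Your approach is correct and reaches the same conclusions, but it is organized differently from the paper's proof in one key respect. You work directly on the growing region $\delta_{1/\varepsilon^\gamma}(W_2)$ and track the polynomial growth of each weighted Taylor coefficient $P_{\ell,j}$ there. The paper instead exploits the homogeneity identity $Y_i^\varepsilon=\varepsilon^\gamma\delta_{\varepsilon^\gamma}^*Y_i^{\varepsilon^{1-\gamma}}$ (together with $\widehat X_i^q=\varepsilon^\gamma\delta_{\varepsilon^\gamma}^*\widehat X_i^q$) to rewrite
\[
Y_i^{\varepsilon,\gamma}-\widehat X_i^q
\;=\;\varepsilon\,\delta_{\varepsilon^\gamma}^*\bigl(\chi\,Z_i^{\varepsilon^{1-\gamma}}\bigr),
\]
thereby transferring the analysis to the \emph{fixed} compact set $\supp(\chi)$, where $\chi Z_i^{\varepsilon^{1-\gamma}}$ is uniformly bounded in $C^\infty$ by the ordinary smooth dependence in~$\varepsilon$; the only loss then comes from the differential of $\delta_{1/\varepsilon^\gamma}$, which contributes the factor $\varepsilon^{-\gamma r(q)}$. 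This pullback trick replaces your explicit weighted--degree bookkeeping by a one--line bound and, for~(iv), yields the remainder exponent $N(1-\gamma)+1-\gamma r(q)$ immediately from the same mechanism applied to the higher expansion $Y_i^\eta=\widehat X_i^q+\cdots+\eta^{N+1}Z_{i,N}^\eta$ at $\eta=\varepsilon^{1-\gamma}$.

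For part~(iii) your bilinear expansion is the right strategy, but be aware that the single uniform bound $|V_{j,\ell}|\le C\varepsilon^{1-\gamma r(q)}$ is not sharp enough to close the induction directly: terms like $\widehat c_{ik}\,\partial_kV_{j,\ell}$ pick up a polynomial factor $\widehat c_{ik}$ of weighted degree $w_k-1$, growing like $\varepsilon^{-\gamma(w_k-1)}$ on the support. What makes the argument go through is the coordinate--wise refinement $|V_{j,\ell}|\le C\varepsilon^{1-\gamma w_\ell}$ and $|\partial_kV_{j,\ell}|\le C\varepsilon^{1+\gamma w_k-\gamma w_\ell}$ (both of which your weighted Taylor computation actually gives); with these, the polynomial growth of $\widehat c_{ik}$ cancels and one finds $|[\widehat X_i^q,V_j]_\ell|\le C\varepsilon^{1+\gamma-\gamma w_\ell}$, which is even better than the inductive hypothesis. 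The paper's pullback formulation makes this cancellation automatic, since Lie brackets commute with pullback.
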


Note that the asymptotic expansion \eqref{asymptotic_expansion_Yepsgamma} is not at order $N$ but is at the order of the floor (integer part) of $N(1-\gamma)+1-\gamma r(q)$.
Note also that the asymptotic expansion \eqref{asymptotic_expansion_Yepsgamma} of $Y_i^{\varepsilon,\gamma}$ is \emph{global}, in contrast to the asymptotic expansion \eqref{expansion_Yieps} of $Y_i^\varepsilon$ which is in $C^\infty$ topology.

We will mainly use this lemma with $p=2$ (note that $W^{k,2}(\R^n)=H^k(\R^n)$) or with $p=+\infty$.

\begin{proof}
Using \eqref{homog_deltaY}, we have $Y_i^{\varepsilon} = \varepsilon^\gamma \delta_{\varepsilon^\gamma}^* Y_i^{\varepsilon^{1-\gamma}}$, and besides, since $\widehat{X}_i^{q}$ is homogeneous of degree $-1$, we have $\widehat{X}_i^{q} = \varepsilon^\gamma \delta_{\varepsilon^\gamma}^*\widehat{X}_i^{q}$, therefore,
\begin{equation}\label{Yiepsgammaeps1mgam}
Y_i^{\varepsilon,\gamma} = \widehat{X}_i^{q} + \varepsilon^\gamma \delta_{\varepsilon^\gamma}^* \Big( \chi \big( Y_i^{\varepsilon^{1-\gamma}} - \widehat{X}_i^{q} \big) \Big),\qquad i=0,\ldots,m,\qquad \forall \varepsilon\in[-\varepsilon_0,\varepsilon_0].
\end{equation}
Now, using \eqref{expansion_XZeps}, we have $Y_i^{\eta} = \widehat{X}_i^{q} + \eta Z_i^{\eta}$ for every $\eta\in\R$ with $Z_i^{\eta}$ depending smoothly on $\eta$ in $C^\infty$ topology, and we have $\delta_\lambda^* Z_i^{\eta} = Z_i^{\lambda\eta}$ for every $\lambda>0$. Hence
$$
Y_i^{\varepsilon,\gamma} = \widehat{X}_i^{q} + \varepsilon \delta_{\varepsilon^\gamma}^* \big( \chi  Z_i^{\varepsilon^{1-\gamma}} \big),\qquad i=0,\ldots,m,\qquad \forall \varepsilon\in[-\varepsilon_0,\varepsilon_0].
$$
Recalling that $Y_i^\eta$ has an asymptotic expansion in $\eta$ around $0$, at any order, in $C^\infty$ topology, $Y_i^\eta = \widehat{X}_i^{q}+\eta Y_i^{(0)} + \mathrm{o}(\eta)$ (see \eqref{expansion_Yieps}), we have that $Z_i^\eta$ converges uniformly to $Y_i^{(0)}$ as well as all its derivatives, on any compact, as $\eta\rightarrow 0$. Therefore, multiplying by $\chi$ that is of compact support, $\chi  Z_i^{\varepsilon^{1-\gamma}}$ converges to $\chi Y_i^{(0)}$ uniformly on $\R^n$, as well as all its derivatives, as $\varepsilon\rightarrow 0$. In particular, 
for every $\alpha\in\N^n$, we have
\begin{equation}\label{bound_chiZ}
\left\Vert \partial_x^\alpha \big( \chi  Z_i^{\varepsilon^{1-\gamma}} \big) \right\Vert_{L^p(\R^n)} \leq \Cst(\alpha) \qquad \forall p\in[1,+\infty]\qquad \forall \varepsilon\in[-\varepsilon_0,\varepsilon_0].
\end{equation}
We note that $\chi(\delta_\eta(x))\, d\delta_{1/\eta}(\delta_\eta(x))$ is uniformly bounded on $\R^n$ by $\Cst/\eta^{r(q)}$, as well as all its derivatives. Now, since
$$\delta_{\varepsilon^\gamma}^* \big( \chi  Z_i^{\varepsilon^{1-\gamma}} \big) (x) = d\delta_{1/\varepsilon^\gamma}(\delta_{\varepsilon^\gamma}(x)) . \left( \chi(\delta_{\varepsilon^\gamma}(x)) \, Z_i^{\varepsilon^{1-\gamma}}( \delta_{\varepsilon^\gamma}(x)) \right) ,
$$
using \eqref{bound_chiZ}, it follows that the pullback $\delta_{\varepsilon^\gamma}^* \big( \chi  Z_i^{\varepsilon^{1-\gamma}} \big)$ is uniformly bounded on $\R^n$ by $\Cst/\vert\varepsilon\vert^{\gamma r(q)}$, as well as all its derivatives (indeed, differentiation with respect to $x$ can only multiply terms by $\varepsilon^\gamma$), and then, given any $f\in C_c^\infty(\R^n)$, $(Y_i^{\varepsilon,\gamma}-\widehat{X}_i^{q})f$ is uniformly bounded on $\R^n$ by $\Cst\,\vert\varepsilon\vert^{1-\gamma r(q)}$, as well as all its derivatives, in any $L^p$ space and independently of $\varepsilon$. 
The proof is similar for the measure $\nu^{\varepsilon,\gamma}$.
The statements \ref{lem_epsgamma_i} and \ref{lem_epsgamma_ii} follow immediately. 

In order to prove \ref{lem_epsgamma_iii}, let us for instance prove that $[Y_i^{\varepsilon,\gamma},Y_j^{\varepsilon,\gamma}]$ converges to $[\widehat{X}_i^{q},\widehat{X}_j^{q}]$. Noting that, by \ref{lem_epsgamma_i}, we have $\Vert Y_i^{\varepsilon,\gamma}\Vert_{L(W^{1,p}(\R^n),L^p(\R^n))}\leq\Cst$ for every $p\in[1,+\infty]$, given any $f\in C_c^\infty(\R^n)$, we have 
\begin{equation*}
\begin{split}
\Vert Y_i^{\varepsilon,\gamma}Y_j^{\varepsilon,\gamma}f - \widehat{X}_i^{q}\widehat{X}_j^{q}f\Vert_{L^p(\R^n)}
&\leq \Vert Y_i^{\varepsilon,\gamma} ( Y_j^{\varepsilon,\gamma} - \widehat{X}_j^{q}) f\Vert_{L^p(\R^n)} + \Vert ( Y_i^{\varepsilon,\gamma} - \widehat{X}_i^{q})\widehat{X}_j^{q}f\Vert_{L^p(\R^n)} \\
&\leq \Cst \Vert Y_j^{\varepsilon,\gamma} - \widehat{X}_j^{q}) f\Vert_{W^{1,p}(\R^n)} + \Cst \vert\varepsilon\vert^{1-\gamma r(q)} \Vert\widehat{X}_j^{q}f\Vert_{W^{1,p}(\R^n)} \\
&\leq \Cst \vert\varepsilon\vert^{1-\gamma r(q)} \Vert f\Vert_{W^{2,p}(\R^n)}
\end{split}
\end{equation*}
and the result follows.

Let us finally prove \ref{lem_epsgamma_iv}. We generalize the above argument, starting from the expansion at the order $N$ (see \eqref{expansion_Xeps} and \eqref{expansion_Yieps})
$$
Y_i^\eta = \widehat{X}_i^{q} + \eta Y_i^{(0)} + \cdots + \eta^N Y_i^{(N-1)} + \eta^{N+1} Z_{i,N}^\eta,\qquad i=0,\ldots,m,
$$
for every $\eta\in\R$, where $Z_{i,N}^\eta$ is a smooth vector field on $\R^n$ depending smoothing on $\eta$ in $C^\infty$ topology. Hence, using \eqref{Yiepsgammaeps1mgam} and the fact that $Y_i^{(k)}$ is homogeneous of order $k$ (with respect to dilations), we obtain
$$
Y_i^{\varepsilon,\gamma} = \widehat{X}_i^{q} + \varepsilon (\delta_{\varepsilon^\gamma}^* \chi) Y_i^{(0)} + \cdots + \varepsilon^N (\delta_{\varepsilon^\gamma}^* \chi) Y_i^{(N-1)} + \varepsilon^{N(1-\gamma)+1} \delta_{\varepsilon^\gamma}^* \big( \chi Z_{i,N}^{\varepsilon^{1-\gamma}} \big)
$$
and then, reasoning as above, all coefficients of the vector field $\delta_{\varepsilon^\gamma}^* \big( \chi Z_{i,N}^{\varepsilon^{1-\gamma}} \big)$ are smooth functions of compact support that are uniformly bounded on $\R^n$ by $\Cst/\vert\varepsilon\vert^{\gamma r(q)}$, as well as all their derivatives. 
Setting $R_{i,N}^{\varepsilon,\gamma} = \varepsilon^{\gamma r(q)} \delta_{\varepsilon^\gamma}^* \big( \chi Z_{i,N}^{\varepsilon^{1-\gamma}} \big)$, the statement follows.
\end{proof}

Thanks to the damping parameter $\gamma$, chosen such that $1-\gamma r(q)>0$, the modified vector fields $Y^{\varepsilon,\gamma}_i$ converge uniformly on $\R^n$ to $\widehat{X}^{q}_i$, as well as all their derivatives (with rate of convergence $\vert\varepsilon\vert^{1-\gamma r(q)}$), while the vector fields $Y^\varepsilon_i$ \emph{do not} converge uniformly on $\R^n$ to $\widehat{X}^{q}_i$ in general, although their convergence is true on any compact. Convergence is also established in Sobolev spaces.

To shed light on the above proof, let us take an example, which moreover shows that the estimates derived in Lemma \ref{lem_epsgamma} (convergence rate $\vert\varepsilon\vert^{1-\gamma r(q)}$) are sharp.

\begin{example}\label{example_lemmaYi}
In $\R^2$, consider the two vector fields
$$
X_1(x_1,x_2) = \frac{\partial}{\partial x_1},\qquad X_2(x_1,x_2) = f(x_1,x_2)\frac{\partial}{\partial x_2}
$$
where $f\in C^\infty(\R^2)$ is such that $f(0,0)=\frac{\partial f}{\partial x_1}(0,0)=\cdots=\frac{\partial^{k-1}f}{\partial x_1^{k-1}}(0,0)=0$ and $\frac{\partial^k f}{\partial x_1^k}(0,0)=1$ for some $k\in\N^*$, i.e., $f(x_1,x_2)=x_1^k(1+\mathrm{O}(x_1,x_2))+\mathrm{O}(x_2)$ around $q=(0,0)$.
The coordinates $(x_1,x_2)$ are privileged around $(0,0)$ and we have $w_1(0,0)=1$ and $r(0,0)=w_2(0,0)=k+1$. 

For instance, for $k=1$ one can take $f(x_1,x_2)=x_1$ or $f(x_1,x_2)=x_1+x_2^2$ (this case is known in the literature as the \emph{Grushin case} and the corresponding Grushin sR structure is singular at $(0,0)$); for $k=2$ one can take $f(x_1,x_2)=x_1^2$ or $f(x_1,x_2)=x_1^2+x_2^2$ or $f(x_1,x_2)=x_1^2-x_2$ (the latter is called \emph{singular Grushin case}).

For every $\varepsilon\in\R\setminus\{0\}$, we have
$$
X_1^\varepsilon(x_1,x_2) = \frac{\partial}{\partial x_1},\qquad X_2^\varepsilon(x_1,x_2) = \frac{f(\varepsilon x_1,\varepsilon^{k+1} x_2)}{\varepsilon^k}\frac{\partial}{\partial x_2}
$$
and thus
$$
\widehat{X}^{(0,0)}_1(x_1,x_2) = \frac{\partial}{\partial x_1},\qquad \widehat{X}^{(0,0)}_2(x_1,x_2) = x_1^k\frac{\partial}{\partial x_2} .
$$
The family of functions $(f^\varepsilon)_{\varepsilon\in\R\setminus\{0\}}$, defined by $f^\varepsilon(x_1,x_2)=\frac{f(\varepsilon x_1,\varepsilon^{k+1} x_2)}{\varepsilon^k}$, converges to the function $\widehat f$ defined by $\widehat f(x_1,x_2)=x_1^k$, uniformly on any compact subset of $\R^2$, but does not converge uniformly on $\R^2$ to $\hat f$ in general (take for instance $f(x_1,x_2)=x_1+x_1^2$, or $f(x_1)=e^{x_1}-1$).

Now, let us fix an arbitrary $\gamma\in(0,\frac{1}{k+1})$ (recall that $r(0,0)=k+1$) and an arbitrary function $\chi\in C_c^\infty(\R)$ such that $\chi(x_1,x_2)=1$ on the sR ball $\BsR(0,1)$ and $\chi(x_1,x_2)=0$ on $\R^2\setminus\BsR(0,2)$. Writing, in short, $\chi\simeq \mathbf{1}_{\BsR(0,1)}$, we have $\delta_{\varepsilon^\gamma}^*\chi\simeq \mathbf{1}_{\BsR(0,1/\varepsilon^\gamma)}$, and we compute
$$
X_1^{\varepsilon,\gamma}(x_1,x_2) = \frac{\partial}{\partial x_1},\qquad X_2^{\varepsilon,\gamma}(x_1,x_2) = x_1\frac{\partial}{\partial x_2} + \chi(\varepsilon^\gamma x_1,\varepsilon^{(k+1)\gamma} x_2) \left( \frac{f(\varepsilon x_1,\varepsilon^{k+1} x_2)}{\varepsilon^k} -x_1 \right)\frac{\partial}{\partial x_2} .
$$
Thanks to the introduction of the parameter $\gamma$ (small enough), now, the family of functions $(f^{\varepsilon,\gamma})_{\varepsilon\in\R\setminus\{0\}}$, defined by $f^{\varepsilon,\gamma}(x_1,x_2)=\frac{f(\varepsilon x_1,\varepsilon^{k+1} x_2)}{\varepsilon^k}$, converges uniformly on $\R^2$ to the function $\widehat f$ defined by $\widehat f(x_1,x_2)=x_1$. Indeed, thanks to the truncation function, we have to prove this convergence for $\vert x_1\vert\leq 1/\vert\varepsilon\vert^\gamma$ and $\vert x_2\vert\leq 1/\vert\varepsilon\vert^{(k+1)\gamma}$, i.e., $\vert \varepsilon x_1\vert\leq \vert\varepsilon\vert^{1-\gamma}$ and $\vert \varepsilon^{k+1} x_2\vert\leq \vert\varepsilon\vert^{(k+1)(1-\gamma)}$. But then, for such values of $(x_1,x_2)$, we have 
$$
f(\varepsilon x_1,\varepsilon^{k+1} x_2)=\varepsilon^k x_1^k(1+\mathrm{O}(\vert\varepsilon\vert^{1-\gamma}))+\mathrm{O}(\vert\varepsilon\vert^{(k+1)(1-\gamma)})
$$
and thus 
$$
f^{\varepsilon,\gamma}(x_1,x_2)= x_1^k(1+\mathrm{O}(\vert\varepsilon\vert^{1-\gamma}))+\mathrm{O}(\vert\varepsilon\vert^{1-(k+1)\gamma}) = x_1^k+\mathrm{O}(\vert\varepsilon\vert^{1-(k+1)\gamma})
$$
where the remainder term $\mathrm{O}(\vert\varepsilon\vert^{1-(k+1)\gamma})$ is uniform with respect to $(x_1,x_2)\in\R^2$. Hence $\Vert f^{\varepsilon,\gamma}-\hat f\Vert_{L^\infty(\R^2)}\leq \Cst\,\vert\varepsilon\vert^{1-(k+1)\gamma}$, as stated in the general lemma above.
\end{example}

Thanks to the introduction of the damping parameter $\gamma$, we have seen that $Y_i^{\varepsilon,\gamma}$ converges to $\widehat{X}_i^{q}$ in a much stronger way than $Y_i^\varepsilon$. We next prove that these modified vector fields $Y_i^{\varepsilon,\gamma}$ satisfy a global strong H\"ormander condition on $\R^n$.

\subsubsection{Uniform polynomial strong H\"ormander condition}
As in Appendix \ref{sec:global_estimates}, we denote by $(Y_i^{\varepsilon,\gamma})_{i\geq 1}$ the family of vector fields consisting of the vector fields $Y_1^{\varepsilon,\gamma}, Y_2^{\varepsilon,\gamma}, \ldots, Y_m^{\varepsilon,\gamma}$ completed with all their successive Lie brackets.
In the following lemma, we prove that the family $(Y_i^{\varepsilon,\gamma})_{i\geq 1}$ satisfies the uniform polynomial strong H\"ormander condition \eqref{UPH_nondegen_1} (see Appendix \ref{sec:globalregheat}), whenever $\gamma>0$ is small enough.

\begin{lemma}\label{lem_unif_poly_Hormander}
There exist $N_1\in\N$ such that, if $\gamma<\frac{1}{r(q)(r(q)+1)}$ then
\begin{equation}\label{ccl_lem_unif_poly_Hormander}
\Vert y\Vert_2^2 \leq \Cst\, \langle x\rangle^{2r(q)} \sum_{i=1}^{N_1} \left\langle Y_i^{\varepsilon,\gamma}(x),y\right\rangle^2
\qquad \forall x,y\in\R^n\qquad\forall \varepsilon\in[-\varepsilon_0,\varepsilon_0] .
\end{equation}
\end{lemma}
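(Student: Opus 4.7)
The plan is to combine a weighted H\"ormander-type inequality for the nilpotent approximation with a perturbation argument based on Lemma~\ref{lem_epsgamma}. Applying Lemma~\ref{lem_epsgamma}(i) with $p = +\infty$ and $k = 0$ to every iterated bracket (via part (iii)) yields the uniform pointwise estimate
\[
\sup_{x \in \R^n} \bigl\Vert Y_i^{\varepsilon,\gamma}(x) - \widehat{X}_i^q(x) \bigr\Vert_2 \leq \Cst(i)\, \vert\varepsilon\vert^{1-\gamma r(q)}, \qquad \forall i \geq 1,
\]
which makes sense since $\gamma < 1/(r(q)(r(q)+1))$ implies in particular $\gamma < 1/r(q)$. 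Moreover, $Y_i^{\varepsilon,\gamma}$ coincides with $\widehat{X}_i^q$ identically on $\R^n \setminus \delta_{1/\varepsilon^\gamma}(\overline{W_2})$, while on $\delta_{1/\varepsilon^\gamma}(\overline{W_2})$ the compactness of $\overline{W_2}$ gives $\Vert x\Vert_2 \leq \Cst\, \vert\varepsilon\vert^{-\gamma r(q)}$.

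The first main step is to establish the weighted H\"ormander inequality for the nilpotent approximation: there exist $N_1 \in \N$ and $C_0 > 0$ such that
\[
\Vert y\Vert_2^2 \leq C_0\, \langle x\rangle^{2(r(q)-1)} \sum_{i=1}^{N_1} \bigl\langle \widehat{X}_i^q(x), y\bigr\rangle^2 \qquad \forall x, y \in \R^n.
\]
To obtain this I would choose, at each weight level $w_k(q)$, an iterated bracket $\widehat{X}_{I_k}^q$ of length exactly $w_k(q)$ adapted to the flag at $0$, so that in privileged coordinates $\widehat{X}_{I_k}^q(0) = \partial_{x_k}$. By weighted homogeneity of order $-w_k(q)$, the coefficient of $\partial_{x_j}$ in $\widehat{X}_{I_k}^q$ is a weighted-homogeneous polynomial of weighted degree $w_j(q) - w_k(q)$: it vanishes identically when $w_j < w_k$, equals the constant $1$ when $j = k$, and is bounded in Euclidean norm by $\Cst\,\langle x\rangle^{w_j - w_k}$ when $w_j > w_k$. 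The resulting weight-upper-triangular structure, combined with a descending induction on $w_k$ from $r(q)$ down to $1$, yields the inequality with exponent $2(r(q)-1)$. One then takes $N_1$ large enough that every iterated bracket of length at most $r(q)$ appears among $Y_1^{\varepsilon,\gamma},\ldots,Y_{N_1}^{\varepsilon,\gamma}$ in the enumeration of Appendix~\ref{sec:global_estimates}.

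The second step is a direct perturbation. The elementary inequality $b^2 \leq 2(a-b)^2 + 2a^2$, combined with the uniform bound above, gives
\[
\bigl\langle \widehat{X}_i^q(x), y\bigr\rangle^2 \leq 2\, \bigl\langle Y_i^{\varepsilon,\gamma}(x), y\bigr\rangle^2 + \Cst(i)\, \vert\varepsilon\vert^{2(1-\gamma r(q))} \Vert y\Vert_2^2,
\]
and inserting this into the nilpotent inequality yields
\[
\Vert y\Vert_2^2 \leq 2 C_0\, \langle x\rangle^{2(r(q)-1)} \sum_{i=1}^{N_1} \bigl\langle Y_i^{\varepsilon,\gamma}(x), y\bigr\rangle^2 + C_1\, \langle x\rangle^{2(r(q)-1)} \vert\varepsilon\vert^{2(1-\gamma r(q))} \Vert y\Vert_2^2.
\]
The last term vanishes for $x \notin \delta_{1/\varepsilon^\gamma}(\overline{W_2})$ (the perturbation is supported there), and for $x \in \delta_{1/\varepsilon^\gamma}(\overline{W_2})$ the estimate $\Vert x\Vert_2 \leq \Cst \vert\varepsilon\vert^{-\gamma r(q)}$ gives $\langle x\rangle^{2(r(q)-1)} \vert\varepsilon\vert^{2(1-\gamma r(q))} \leq \Cst\, \vert\varepsilon\vert^{2(1 - \gamma r(q)^2)}$. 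The hypothesis $\gamma < 1/(r(q)(r(q)+1))$ rewrites as $\gamma r(q)^2 + \gamma r(q) < 1$, so $1 - \gamma r(q)^2 > \gamma r(q) > 0$; after shrinking $\varepsilon_0$ the coefficient $C_1\,\langle x\rangle^{2(r(q)-1)} \vert\varepsilon\vert^{2(1-\gamma r(q))}$ is at most $1/2$ and absorbs into the left-hand side. Using $\langle x\rangle^{2(r(q)-1)} \leq \langle x\rangle^{2r(q)}$ then yields~\eqref{ccl_lem_unif_poly_Hormander}.

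The main obstacle is establishing the weighted H\"ormander inequality for the nilpotent approximation: it requires exploiting the weight-upper-triangular structure of an adapted bracket frame in privileged coordinates and carefully tracking the Euclidean polynomial growth of weighted-homogeneous coefficients. Once this is in place, the rest is a bounded-region perturbation driven by Lemma~\ref{lem_epsgamma}, whose precise rate $\vert\varepsilon\vert^{1-\gamma r(q)}$ is exactly what couples to the polynomial weight $\langle x\rangle^{2(r(q)-1)}$ to force the threshold $\gamma < 1/(r(q)(r(q)+1))$.
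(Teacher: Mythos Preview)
Your proof is correct, and the perturbation step (your second step) is essentially the paper's argument: compare the quadratic forms only on the region $\delta_{1/\varepsilon^\gamma}(\overline{W_2})$ where $Y_i^{\varepsilon,\gamma}$ and $\widehat{X}_i^q$ differ, use the bound $\Vert x\Vert_2\leq\Cst\vert\varepsilon\vert^{-\gamma r(q)}$ there, and absorb the error into the left-hand side.

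Your first step, however, differs genuinely from the paper's. The paper obtains the nilpotent inequality with exponent $2r(q)$ by a dilation argument: it applies the compactness bound on $\{\Vert x\Vert_{\mathrm{sR}}\leq 1\}$ and rescales via the homogeneity of the $\widehat{X}_i^q$. You instead exploit the weight-upper-triangular structure of an adapted bracket frame in privileged coordinates, and your descending induction yields the sharper exponent $2(r(q)-1)$. Concretely, one checks by induction on the weight that $y_k^2\leq\Cst\,\langle x\rangle^{2(r(q)-w_k)}\sum_i\langle\widehat{X}_{I_i}^q(x),y\rangle^2$, the worst case being $w_k=1$. This sharper exponent means your perturbation step already succeeds under the weaker condition $\gamma<1/r(q)^2$ (your error exponent is $2(1-\gamma r(q)^2)$), while the paper's exponent $2r(q)$ is what forces the threshold $\gamma<1/(r(q)(r(q)+1))$ in the lemma. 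Both routes prove the lemma as stated; yours gives a bit more.

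One small imprecision: a single iterated bracket of length $w_k$ need not satisfy $\widehat{X}_{I_k}^q(0)=\partial_{x_k}$ exactly; its value at $0$ lies only in $\mathrm{span}\{\partial_{x_j}:w_j=w_k\}$. You need constant-coefficient linear combinations of brackets of length $w_k$ to hit each $\partial_{x_k}$. This is harmless, since for constant combinations $V_k=\sum_j c_{kj}B_j$ one has $\sum_k\langle V_k,y\rangle^2\leq\Cst\sum_j\langle B_j,y\rangle^2$, so the inequality for the triangular frame implies the one for the full bracket enumeration.
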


This lemma will allow us to use the global subelliptic estimates established in Appendix \ref{sec:global_estimates}, required in order to obtain the asymptotic expansion of the semi-group with respect to $\varepsilon$ (see further).

\begin{proof}
Recall that $Y_i^{0,\gamma}= \widehat{X}_i^{q}$ for $\varepsilon=0$. Let us first prove that the family of vector fields $(\widehat{X}_i^{q})_{i\geq 1}$ (consisting of the vector fields $\widehat{X}_1^{q}, \ldots, \widehat{X}_m^{q}$, completed with their iterated Lie brackets), satisfies the polynomial strong H\"ormander condition, i.e., that \eqref{ccl_lem_unif_poly_Hormander} is satisfied for $\varepsilon=0$. 
Actually, one could apply Remark \ref{rem_polynomial_Hormander} (i.e., one could use \cite[Lemma 2]{Ho-58}), which would give the inequality \eqref{ccl_lem_unif_poly_Hormander} with the term $\langle x\rangle^{2N_0}$ at the right-hand side, for some $N_0\in\N$. This would be enough for our needs.
But actually, hereafter, instead of using the non-obvious result \cite[Lemma 2]{Ho-58}, we make a direct proof by using the homogeneity property of the vector fields $\widehat{X}_i^{q}$. In turn, we obtain the optimal integer $N_0=r(q)$ (it is easy to see on examples that it is optimal).

Let $N_1\in\N$ be an integer, large enough so that the finite family $(\widehat{X}_i^{q})_{0\leq i\leq N_1}$ is a frame of $\R^n$ at every point: such an integer exists because the $m$-tuple of polynomial vector fields $(\widehat{X}_1^{q},\ldots,\widehat{X}_m^{q})$ satisfies the H\"ormander condition at every point of $\R^n$, with a uniform degree of nonholonomy. 
We have to prove that 
\begin{equation}\label{coerciv_N1}
\Vert y\Vert_2^2\leq\Cst\langle x\rangle^{2r(q)}\sum_{i=1}^{N_1} \big\langle \widehat{X}^{q}_i(x),y\big\rangle^2
\qquad \forall x,y\in\R^n.
\end{equation}
Let us introduce the sR pseudo-norm, defined by
$$
\Vert x\Vert_{\mathrm{sR}} = \sum_{i=1}^n \vert x_i\vert^{1/w_i(q)}
$$
for every $x=(x_1,\ldots,x_n)\in\R^n$ in privileged coordinates. Note that $\Vert\delta_\varepsilon(x)\Vert_{\mathrm{sR}}=\vert\varepsilon\vert\Vert x\Vert_{\mathrm{sR}}$ for every $x\in\R^n$ and every $\varepsilon\in\R$.
By the H\"ormander condition and by compactness, we first note that
\begin{equation}\label{coerciv_0}
\Vert y\Vert_2^2\leq\Cst\sum_{i=1}^{N_1} \big\langle \widehat{X}^{q}_i(x),y\big\rangle^2
\qquad  \forall y\in\R^n\qquad\forall x\in\R^n\ \mid\ \Vert x\Vert_{\mathrm{sR}}\leq 1
\end{equation}
and thus \eqref{coerciv_N1} is satisfied as well for $x$ such that $\Vert x\Vert_{\mathrm{sR}}\leq 1$.
Let us now establish \eqref{coerciv_N1} for every $x\in\R^n$. Let $x\in\R^n\setminus\{0\}$ be arbitrary. Setting $\varepsilon=\frac{1}{\Vert x\Vert_{\mathrm{sR}}}$, we have $\Vert\delta_\varepsilon(x)\Vert_{\mathrm{sR}}=1$ and hence, using \eqref{coerciv_0}, we get
$$
\Vert y\Vert_2^2\leq\Cst\sum_{i=1}^{N_1} \big\langle \widehat{X}^{q}_i(\delta_\varepsilon(x)),y\big\rangle^2
\leq \frac{\Cst}{\varepsilon^{2r(q)}} \sum_{i=1}^{N_1} \big\langle \widehat{X}^{q}_i(x),y\big\rangle^2
= \Cst\Vert x\Vert_{\mathrm{sR}}^{2r(q)} \sum_{i=1}^{N_1} \big\langle \widehat{X}^{q}_i(x),y\big\rangle^2
$$
for all $x,y\in\R^n$,
where we have used that $\varepsilon\widehat{X}^{q}_i(\delta_\varepsilon(x))=\widehat{X}^{q}_i(x)$ for $i\in\{1,\ldots,m\}$ and thus $\varepsilon^{k_i}\widehat{X}^{q}_i(\delta_\varepsilon(x))=\widehat{X}^{q}_i(x)$ for $i\in\{1,\ldots,N_1\}$ with $k_i\leq w_n(q)=r(q)$.
To obtain \eqref{coerciv_N1}, we use the fact that $\Vert x\Vert_{\mathrm{sR}}\leq\Cst\langle x\rangle$ for every $x\in\R^n$. 

Now, let us establish \eqref{ccl_lem_unif_poly_Hormander} for every $\varepsilon\in[-\varepsilon_0,\varepsilon_0]$, whenever $\gamma$ is small enough. 
Since $\chi$ has a compact support, there exists $R>0$ such that $\supp(\chi)\subset\hatBsR^{q}(0,R)$. 
Here, given any $r>0$, the set $\hatBsR^{q}(0,r) = \{x\in\R^n\ \mid\ \hatdsR^{q}(0,x)<r\}$ is the sR ball of center $0$ and of radius $r$, for the sR distance $\hatdsR^{q}$ on $\widehat M^{q}\simeq\R^n$.
By definition, given any $f\in C^\infty(\R^n)$ we have
$$
\supp\big( (Y^{\varepsilon,\gamma}_i -\widehat X_i^{q})f \big) \subset \supp( \delta_{\varepsilon^\gamma}^*\chi ) \subset \hatBsR^{q}(0,R/\vert\varepsilon^\gamma\vert) ,
$$
i.e., $Y^{\varepsilon,\gamma}_i$ coincides with $\widehat X_i^{q}$ outside of the sR ball $\hatBsR^{q}(0,R/\vert\varepsilon^\gamma\vert)$. Since \eqref{ccl_lem_unif_poly_Hormander} is satisfied for the family $(\widehat X_i^{q})_{1\leq i\leq N_1}$, we only have to prove that
\eqref{ccl_lem_unif_poly_Hormander} is satisfied for the family $(Y^{\varepsilon,\gamma}_i)_{1\leq i\leq N_1}$ for $x\in\hatBsR^{q}(0,R/\vert\varepsilon^\gamma\vert)$, i.e., defining the $n$-by-$N_1$ matrix
$$
P^{\varepsilon,\gamma}(x) = \begin{pmatrix}
Y_1^{\varepsilon,\gamma}(x) & Y_2^{\varepsilon,\gamma}(x) & \cdots & Y_{N_1}^{\varepsilon,\gamma}(x)
\end{pmatrix}\qquad\forall x\in\R^n,
$$
we have to prove that, if $\gamma$ is small enough then
\begin{equation}\label{coerciv_epsN0}
y^\top P^{\varepsilon,\gamma}(x) P^{\varepsilon,\gamma}(x)^\top y \geq \Cst \langle x\rangle^{-2r(q)} \Vert y\Vert_2^2\qquad \forall x,y\in\R^n.
\end{equation}
It follows from Lemma \ref{lem_epsgamma} that $P^{\varepsilon,\gamma}(x) P^{\varepsilon,\gamma}(x)^\top = \widehat{P}^{q}(x) \widehat{P}^{q}(x)^\top y + \mathrm{O}(\vert\varepsilon\vert^{1-\gamma r(q)})$, when $0<\gamma<\frac{1}{r(q)}$. Therefore, using \eqref{coerciv_N1}, to prove \eqref{coerciv_epsN0} it suffices to observe that
$$
\langle x\rangle^{2r(q)}\vert\varepsilon\vert^{1-\gamma r(q)} \leq \vert\varepsilon\vert^{1-\gamma r(q)(r(q)+1)}
\qquad\forall x\in\hatBsR^{q}(0,R/\vert\varepsilon^\gamma\vert) ,
$$
which is so because, for $x\in\hatBsR^{q}(0,R/\vert\varepsilon^\gamma\vert)$, we have $\vert x\vert\leq \Cst \vert\varepsilon\vert^{-\gamma r(q)}$ (actually, in privileged coordinates, we have $\vert x_i\vert\leq \Cst\vert\varepsilon\vert^{-\gamma w_i(q)}$). The conclusion follows, by taking $\gamma<\frac{1}{r(q)(r(q)+1)}$.
\end{proof}

\subsubsection{Definition of the operator $\triangle^{\varepsilon,\gamma}$}\label{sec_def_deltaepsgam}
\paragraph{Differential operator $\triangle^{\varepsilon,\gamma}$.}
For every $\gamma\in(0,1)$, for every $\varepsilon\in[-\varepsilon_0,\varepsilon_0]$, we define on $C^\infty(\R^n)$ the differential operator
\begin{equation}\label{def_triangleepsgamma}
\triangle^{\varepsilon,\gamma} = \sum_{i=1}^m \left( Y^{\varepsilon,\gamma}_i \right)^2 + \varepsilon (\delta_{\varepsilon^\gamma}^*\chi)  Y^\varepsilon_0 - \varepsilon^2 (\delta_{\varepsilon^\gamma}^*\chi)\left( \delta_\varepsilon^*\mathtt{v} \right) 
\end{equation}
(note that $\triangle^{0,\gamma} = \widehat{\triangle}^{q}$).
By construction, we have $\triangle^{\varepsilon,\gamma} = \triangle^\varepsilon$ on $\delta_{1/\varepsilon^\gamma}(W_1)$ and $\triangle^{\varepsilon,\gamma} = \widehat{\triangle}^{q}$ on $\R^n\setminus\delta_{1/\varepsilon^\gamma}(W_2)$.
We have the homogeneity property (obvious to check, using \eqref{homogYiepsgamma}):
\begin{equation}\label{homog_deltaepsgamma}
\varepsilon^{2\beta} \delta_{\varepsilon^\beta}^* \triangle^{\varepsilon,\gamma} (\delta_{\varepsilon^\beta})_* = \triangle^{\varepsilon^{1+\beta},\gamma+\beta}
\qquad \forall\beta\in(\gamma-1,\gamma) \qquad\forall \varepsilon\in\big[-\varepsilon_0^{1/(1+\beta)},\varepsilon_0^{1/(1+\beta)}\big]\setminus\{0\} .
\end{equation}

When $X_0$ is a smooth section of $D^2$ over $M$, we modify the definition of $\triangle^{\varepsilon,\gamma}$ as follows:
$$
\triangle^{\varepsilon,\gamma} = \sum_{i=1}^m \left( Y^{\varepsilon,\gamma}_i \right)^2 + Y^{\varepsilon,\gamma}_0 - \varepsilon^2 (\delta_{\varepsilon^\gamma}^*\chi) \left( \delta_\varepsilon^*\mathtt{v} \right) . 
$$

\begin{remark}\label{rem_comp_triangleepsgamma_triangleeps}
As in Remark \ref{rem_comp_Yiepsgamma_Yieps}, since $\triangle^{\varepsilon,\gamma}$ coincides with $\triangle^\varepsilon$ on the growing neighborhood $\delta_{1/\varepsilon^\gamma}(W_1)$, we have in particular $\triangle^{\varepsilon,\gamma} = \triangle^\varepsilon + \mathrm{O}(\vert\varepsilon\vert^\infty)$ as $\varepsilon\rightarrow 0$ in $C^\infty$ topology.
\end{remark}


The operator $\triangle^{\varepsilon,\gamma}$, defined by \eqref{def_triangleepsgamma}, depends smoothly on $\varepsilon\in[-\varepsilon_0,\varepsilon_0]$ in $C^\infty$ topology. In particular, $\triangle^{\varepsilon,\gamma}$ converges to $\widehat{\triangle}^{q}$ in $C^\infty$ topology as $\varepsilon\rightarrow 0$. But, as a consequence of Lemma \ref{lem_epsgamma}, we have the following  stronger result.

\begin{lemma}\label{lem_comp_triangleepsgamma_triangleeps2}
If $0<\gamma<1/r(q)$ then
\begin{multline*}
\Vert ( \triangle^{\varepsilon,\gamma} - \widehat{\triangle}^{q} ) f\Vert_{L^p(\R^n)} \leq \Cst \vert\varepsilon\vert^{1-\gamma r(q)}\Vert f\Vert_{W^{2,p}(\R^n)} \\
\forall p\in[1,+\infty] \qquad\forall\varepsilon\in[-\varepsilon_0,\varepsilon_0] \qquad \forall f\in C_c^\infty(\R^n). 
\end{multline*}
\end{lemma}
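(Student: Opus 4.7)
The plan is to expand $\triangle^{\varepsilon,\gamma}-\widehat{\triangle}^{q}$ using the definition \eqref{def_triangleepsgamma} and to treat each resulting term separately, invoking Lemma \ref{lem_epsgamma}\ref{lem_epsgamma_i} as the main building block. Write
$$
\triangle^{\varepsilon,\gamma}-\widehat{\triangle}^{q}=\sum_{i=1}^m\bigl[(Y_i^{\varepsilon,\gamma})^2-(\widehat{X}_i^{q})^2\bigr]+\varepsilon(\delta_{\varepsilon^\gamma}^*\chi)Y_0^\varepsilon-\varepsilon^2(\delta_{\varepsilon^\gamma}^*\chi)(\delta_\varepsilon^*\mathtt{v}).
$$
The potential contribution $\varepsilon^2(\delta_{\varepsilon^\gamma}^*\chi)(\delta_\varepsilon^*\mathtt{v})$ is a mere multiplication operator whose $L^\infty$-norm is uniformly bounded in $\varepsilon$, since $\chi$ has compact support and $\mathtt{v}$ is smooth near $0$; hence its contribution is $\mathrm{O}(\varepsilon^2)\Vert f\Vert_{L^p}$, which is dominated by $\vert\varepsilon\vert^{1-\gamma r(q)}\Vert f\Vert_{W^{2,p}}$ for $\vert\varepsilon\vert$ small. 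The drift contribution $\varepsilon(\delta_{\varepsilon^\gamma}^*\chi)Y_0^\varepsilon$ is treated by an argument parallel to the one in Lemma \ref{lem_epsgamma}: on the support of $\delta_{\varepsilon^\gamma}^*\chi$, the coefficients of $Y_0^\varepsilon$ are controlled by a power of $\vert\varepsilon\vert^{-\gamma}$, and an elementary scaling computation gives a bound of order $\vert\varepsilon\vert^{1-\gamma(r(q)-1)}=\vert\varepsilon\vert^\gamma\cdot\vert\varepsilon\vert^{1-\gamma r(q)}\Vert f\Vert_{W^{1,p}}$, which is $\mathrm{o}(\vert\varepsilon\vert^{1-\gamma r(q)})\Vert f\Vert_{W^{2,p}}$.

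For the principal terms, set $Z_i^{\varepsilon,\gamma}:=Y_i^{\varepsilon,\gamma}-\widehat{X}_i^{q}$, so that
$$
(Y_i^{\varepsilon,\gamma})^2-(\widehat{X}_i^{q})^2=(Z_i^{\varepsilon,\gamma})^2+Z_i^{\varepsilon,\gamma}\widehat{X}_i^{q}+\widehat{X}_i^{q}Z_i^{\varepsilon,\gamma}.
$$
The quadratic term is immediate by iterating Lemma \ref{lem_epsgamma}\ref{lem_epsgamma_i} with $k=0$ and then $k=1$:
$$
\Vert(Z_i^{\varepsilon,\gamma})^2f\Vert_{L^p(\R^n)}\leq\Cst\vert\varepsilon\vert^{1-\gamma r(q)}\Vert Z_i^{\varepsilon,\gamma}f\Vert_{W^{1,p}(\R^n)}\leq\Cst\vert\varepsilon\vert^{2(1-\gamma r(q))}\Vert f\Vert_{W^{2,p}(\R^n)},
$$
which is dominated by $\Cst\vert\varepsilon\vert^{1-\gamma r(q)}\Vert f\Vert_{W^{2,p}}$ since $1-\gamma r(q)>0$ by hypothesis.

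The two mixed terms $Z_i^{\varepsilon,\gamma}\widehat{X}_i^{q}f$ and $\widehat{X}_i^{q}Z_i^{\varepsilon,\gamma}f$ are the delicate part and expected to be the main technical obstacle, because $\widehat{X}_i^{q}$ has polynomial coefficients growing at infinity while Lemma \ref{lem_epsgamma}\ref{lem_epsgamma_i} is formulated with unweighted Sobolev norms. The key observation is that the coefficients of $Z_i^{\varepsilon,\gamma}$ are supported inside $\hatBsR^{q}(0,R/\vert\varepsilon\vert^\gamma)$, so both mixed terms vanish outside this (growing) sR ball; on this set the polynomial coefficients of $\widehat{X}_i^{q}$ are uniformly controlled, which allows one to apply Lemma \ref{lem_epsgamma}\ref{lem_epsgamma_i} (either directly to $\widehat{X}_i^{q}f$, or to $Z_i^{\varepsilon,\gamma}f$ followed by an application of $\widehat{X}_i^{q}$ on the already-localized function). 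Keeping track of the compact support of $f$ in the constants, each mixed term is bounded by $\Cst\vert\varepsilon\vert^{1-\gamma r(q)}\Vert f\Vert_{W^{2,p}}$. Summing all the contributions yields the claimed inequality.
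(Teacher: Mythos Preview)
Your approach is correct and matches the paper's: the paper states this lemma simply ``as a consequence of Lemma~\ref{lem_epsgamma}'', and the decomposition you carry out is precisely the computation done in the proof of Lemma~\ref{lem_epsgamma}\ref{lem_epsgamma_iii} (replacing $Y_i^{\varepsilon,\gamma}Y_j^{\varepsilon,\gamma}-\widehat{X}_i^{q}\widehat{X}_j^{q}$ there by $(Y_i^{\varepsilon,\gamma})^2-(\widehat{X}_i^{q})^2$ here).

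One small point of exposition: your claim that ``on this set the polynomial coefficients of $\widehat{X}_i^{q}$ are uniformly controlled'' is misleading, since the ball $\hatBsR^{q}(0,R/\vert\varepsilon\vert^\gamma)$ grows as $\varepsilon\to 0$ and the coefficients of $\widehat{X}_i^{q}$ on it grow like $\vert\varepsilon\vert^{-\gamma(r(q)-1)}$, which would spoil the rate. You correctly recover by invoking the compact support of $f$: since $\supp(f)$ is fixed, $\widehat{X}_i^{q}f$ is supported there and $\Vert\widehat{X}_i^{q}f\Vert_{W^{1,p}}\leq\Cst(\supp f)\Vert f\Vert_{W^{2,p}}$, after which Lemma~\ref{lem_epsgamma}\ref{lem_epsgamma_i} applies directly. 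This is exactly how the paper handles the analogous step in its proof of~\ref{lem_epsgamma_iii} (it writes $\Vert\widehat{X}_j^{q}f\Vert_{W^{1,p}}\leq\Cst\Vert f\Vert_{W^{2,p}}$ without comment). So drop the growing-ball remark and go straight to the compact-support argument; that is the actual mechanism, and it is the paper's own.
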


\paragraph{Asymptotic expansion of $\triangle^{\varepsilon,\gamma}$.}
Let us first give an asymptotic expansion in $C^\infty$ topology.
Using \eqref{expansion_Yieps}, \eqref{formula_Deltaeps} and Remark \ref{rem_comp_triangleepsgamma_triangleeps}, we get that $\triangle^{\varepsilon,\gamma}$ has an asymptotic expansion in $C^\infty$ topology at any order $N$ with respect to $\varepsilon$,
\begin{equation}\label{expansion_triangleepsilon1}
\triangle^{\varepsilon,\gamma} = \widehat{\triangle}^{q} + \varepsilon \mathcal{A}_1 + \varepsilon^2 \mathcal{A}_2 + \cdots + \varepsilon^N \mathcal{A}_N + \mathrm{o}\big(\vert\varepsilon\vert^N\big) 
\end{equation}
where $\mathcal{A}_i$ is a second-order differential operator for every $i\in\N^*$, with
\begin{equation}\label{def_Ai}
\begin{split}
\mathcal{A}_1 &= \sum_{i=1}^m \big( \widehat{X}^{q}_i Y^{(0)}_i + Y^{(0)}_i \widehat{X}^{q}_i \big) + \widehat{X}^{q}_0  \\
\mathcal{A}_2 &= \sum_{i=1}^m \big( \widehat{X}^{q}_i Y^{(1)}_i + Y^{(1)}_i \widehat{X}^{q}_i + \big( Y^{(0)}_i \big)^2 \big) + Y^{(0)}_0 - \mathtt{v}(0)  
\end{split}
\end{equation}
etc. Moreover, $\mathcal{A}_i$ has polynomial coefficients of degree less than $(r(q)+i)^2$ (this bound is not optimal).
Indeed, given any integer $k\geq -1$, the vector field $Y_i^{(k)}$ is a polynomial that is homogeneous of degree $k$ with respect to dilations. In privileged coordinates, its coefficient along $\frac{\partial}{\partial x_n}$ (where $x_n$ has the largest weight, $w_n(q)=r(q)$) must therefore be a polynomial of degree $\leq w_n(q)+k=r(q)+k$. 

\medskip

When $X_0$ is a smooth section of $D^2$ over $M$, since $Y_0^{\varepsilon,\gamma} = \widehat{X}_0^{q} + (\delta _{\varepsilon^\gamma}^*\chi) (Y_0^\varepsilon-\widehat{X}_0^{q}) = \widehat{X}_i^{q} + (\delta _{\varepsilon^\gamma}^*\chi) ( \varepsilon Y_0^{(-1)} + \varepsilon^2 Y_0^{(0)} + \cdots  + \varepsilon^N Y_0^{(N-2)} + \mathrm{o}\big(\vert\varepsilon\vert^N\big) )$ with $\widehat{X}_0^{q}$ homogeneous of order $-2$ (see Remark \ref{rem_X0_length2}), the expansion \eqref{expansion_triangleepsilon1} of $\triangle^{\varepsilon,\gamma}$ remains true as well by replacing $\widehat{\triangle}^{q}$ with $\widehat{\triangle}^{q} +\widehat{X}_0^{q}$ and $\mathcal{A}_1$ with $\mathcal{A}_1 = \sum_{i=1}^m \big( \widehat{X}^{q}_i  Y^{(0)}_i + Y^{(0)}_i \widehat{X}^{q}_i \big) + Y^{(-1)}_0$, etc.

\medskip

The asymptotic expansion \eqref{expansion_triangleepsilon1} is in $C^\infty$ topology. Thanks to \eqref{asymptotic_expansion_Yepsgamma} in Lemma \ref{lem_epsgamma}, we now derive an asymptotic expansion of $\triangle^{\varepsilon,\gamma}$, at any order, valid on the whole $\R^n$ (not only on every compact): in the above definition of $\mathcal{A}_i$, it suffices to replace $Y_i^{(k)}$ by $(\delta_{\varepsilon^\gamma}^*\chi)Y_i^{(k)}$, $\widehat{X}_0^q$ by $(\delta_{\varepsilon^\gamma}^*\chi)\widehat{X}_0^q$, etc, and we obtain
\begin{equation}\label{expansion_triangleepsilon_stronger}
\triangle^{\varepsilon,\gamma} = \widehat{\triangle}^{q} + \varepsilon \mathcal{A}_1^{\varepsilon,\gamma} + \varepsilon^2 \mathcal{A}_2^{\varepsilon,\gamma} + \cdots + \varepsilon^N \mathcal{A}_N^{\varepsilon,\gamma} + \varepsilon^{N(1-\gamma)+1-\gamma r(q)} \mathcal{R}_{N+1}^{\varepsilon,\gamma}
\end{equation}
with
\begin{equation}\label{def_Aiepsgam}
\begin{split}
\mathcal{A}_1^{\varepsilon,\gamma} &= \sum_{i=1}^m \big( \widehat{X}^{q}_i  (\delta_{\varepsilon^\gamma}^*\chi) Y^{(0)}_i + (\delta_{\varepsilon^\gamma}^*\chi) Y^{(0)}_i \widehat{X}^{q}_i \big) + (\delta_{\varepsilon^\gamma}^*\chi) \widehat{X}^{q}_0  \\
\mathcal{A}_2^{\varepsilon,\gamma} &= \sum_{i=1}^m \big( \widehat{X}^{q}_i (\delta_{\varepsilon^\gamma}^*\chi) Y^{(1)}_i + (\delta_{\varepsilon^\gamma}^*\chi) Y^{(1)}_i \widehat{X}^{q}_i + \big( (\delta_{\varepsilon^\gamma}^*\chi) Y^{(0)}_i \big)^2 \big) + (\delta_{\varepsilon^\gamma}^*\chi) Y^{(0)}_0 - (\delta_{\varepsilon^\gamma}^*\chi) \mathtt{v}(0) 
\end{split}
\end{equation}
etc. Because of the damping parameter $\gamma$, the growth of the coefficients of the second-order differential operator $\mathcal{A}_i^{\varepsilon,\gamma}$ is at most polynomial of degree $r(q)-1$ (which is the maximal degree of the coefficients of $\widehat{X}^{q}_i$), for all $i\in\N^*$ and $\varepsilon\in[-\varepsilon_0,\varepsilon_0]$. Note that $\mathcal{A}_i^{\varepsilon,\gamma} = \mathcal{A}_i + \mathrm{O}(\vert\varepsilon\vert^\infty)$
as $\varepsilon\rightarrow 0$ in $C^\infty$ topology.
We infer from \ref{lem_epsgamma_iv} in Lemma \ref{lem_epsgamma} that, in \eqref{expansion_triangleepsilon_stronger}, the term $\mathcal{R}_{N+1}^{\varepsilon,\gamma}$ is a finite sum of products of $R_{j,N}^{\varepsilon,\gamma}$ (whose coefficients are uniformly bounded) with either some $\widehat{X}^{q}_i$ (whose coefficients are polynomial of degree $\leq r(q)-1$) or some $(\delta_{\varepsilon^\gamma}^*\chi) Y^{(0)}_i$ (whose coefficients are uniformly bounded). Therefore $\mathcal{R}_{N+1}^{\varepsilon,\gamma}$ a second-order differential operator, depending smoothly on $\varepsilon$, with coefficients whose growth is at most polynomial of degree $r(q)-1$, and
\begin{equation*}
\Vert \mathcal{R}_{N+1}^{\varepsilon,\gamma}\Vert_{L\big(H^\alpha_\beta(\R^n),H^{\alpha-2}_{\beta-r(q)+1}\big)} \leq \Cst(\alpha,\beta) \qquad \forall\alpha,\beta\in\R \qquad \forall\varepsilon\in[-\varepsilon_0,\varepsilon_0] 
\end{equation*}
which is exactly the property \ref{Repsgam} that we have identified in Section \ref{sec_idea_eps}.
This means that the asymptotic expansion \eqref{expansion_triangleepsilon_stronger} is in the sense of operators mapping $H^\alpha_\beta(\R^n)$ to $H^{\alpha-2}_{\beta-N^\ell}(\R^n)$, uniformly with respect to $\varepsilon$.

Note that we also have $\Vert \mathcal{R}_{N+1}^{\varepsilon,\gamma} f \Vert_{W^{k,p}(\R^n)} \leq \Cst(k,N) \Vert f\Vert_{W^{k+2,p}(\R^n)}$ for all $k\in\N$, $p\in[1,+\infty]$, $\varepsilon\in[-\varepsilon_0,\varepsilon_0]$ and $f\in C_c^\infty(\R^n)$. This means that the asymptotic expansion \eqref{expansion_triangleepsilon_stronger} is also in the sense of operators mapping $W^{k+2,p}(\R^n)$ to $W^{k,p}(\R^n)$, uniformly with respect to $\varepsilon$.

\paragraph{Semigroup generated by $\triangle^{\varepsilon,\gamma}$.} 
In the sequel, we assume that $0<\gamma<1/r(q)$. By Lemma \ref{lem_epsgamma}, we have $H^s(\R^n,\nu^{\varepsilon,\gamma})=H^s(\R^n)$, so that, hereafter, we use the Lebesgue measure in all Sobolev spaces that we consider.

Reasoning as in the proof of Lemma \ref{lem_semigroup} and 
using Lemma \ref{lem_comp_triangleepsgamma_triangleeps2} with $p=2$,
we first observe that there exists $C\geq 0$, not depending on $\varepsilon\in[-\varepsilon_0,\varepsilon_0]$, such that the operator 
$\triangle^{\varepsilon,\gamma}-C\,\mathrm{id}$ 
is closed and dissipative in $L^2(\R^n)$, as well as its adjoint. 
Setting $D(\triangle^{\varepsilon,\gamma}) = \{ f\in L^2(\R^n) \ \mid\ \triangle^{\varepsilon,\gamma} f \in L^2(\R^n) \}$, it follows that the operator $(\triangle^{\varepsilon,\gamma},D(\triangle^{\varepsilon,\gamma}))$ generates a strongly continuous (quasicontraction) semigroup $(e^{t\triangle^{\varepsilon,\gamma}})_{t\geq 0}$ on $L^2(\R^n)$, satisfying 
$\Vert e^{t\triangle^{\varepsilon,\gamma}}\Vert_{L(L^2(\R^n))} \leq e^{Ct}$ 
for every $t\geq 0$ and every $\varepsilon\in[-\varepsilon_0,\varepsilon_0]\setminus\{0\}$. 

Following Section \ref{sec:global_estimates}, for every $k\in\Z$, we define the Hilbert space $\mathcal{D}^{\varepsilon,\gamma}_k$ as the completion of $C_c^\infty(\R^n)$ for the norm 
$$
\Vert f\Vert_{\mathcal{D}^{\varepsilon,\gamma}_k} = \Vert (2C\,\mathrm{id}-\triangle^{\varepsilon,\gamma}-(\triangle^{\varepsilon,\gamma})^*)^k f\Vert_{L^2(\R^n)} .
$$
For $k=1$, we have $\mathcal{D}^{\varepsilon,\gamma}_1 = D(\triangle^{\varepsilon,\gamma})$.

By applying the semigroup estimate to $(2C\,\mathrm{id}-\triangle^{\varepsilon,\gamma}-(\triangle^{\varepsilon,\gamma})^*)^k f$ and by a classical argument of restriction or extension of semigroups on the Sobolev towers (see \cite{EngelNagel}), we obtain the uniform estimates
\begin{equation}\label{ineg_contraction_epsgamma}
\Vert  e^{t\triangle^{\varepsilon,\gamma}}\Vert_{L(\mathcal{D}^{\varepsilon,\gamma}_k)}\leq \Cst\, e^{Ct}\qquad \forall t\geq 0\qquad \forall k\in\Z \qquad\forall\varepsilon\in[-\varepsilon_0,\varepsilon_0].
\end{equation}
Recall that, for $\varepsilon=0$, we have $\triangle^{0,\gamma}=\widehat{\triangle}^{q}$. For every $k\in\Z$, the Hilbert space $\widehat{\mathcal{D}}^{q}_k$ is the completion of $C_c^\infty(\R^n)$ for the (equivalent) norm $\Vert f\Vert_{\widehat{\mathcal{D}}^{q}_k} = \Vert (\mathrm{id}-\widehat{\triangle}^{q})^kf\Vert_{L^2(\R^n)}$.
Actually, for $\varepsilon=0$ we have the better estimate $\Vert e^{t \widehat{\triangle}^{q}}\Vert_{L(\widehat{\mathcal{D}}^{q}_k)}\leq 1$ for every $k\in\Z$ (contraction semigroup).

\paragraph{Heat kernel $e^{\varepsilon,\gamma}$ of $\triangle^{\varepsilon,\gamma}$.}
By hypoellipticity (see Corollary \ref{cor_heat_uniform}), the Schwartz kernel of $e^{t\triangle^{\varepsilon,\gamma}}$ has a continuous density with respect to $\nu^{\varepsilon,\gamma}$, which is the smooth function
\begin{equation*}
e^{\varepsilon,\gamma} = e_{\triangle^{\varepsilon,\gamma},\nu^{\varepsilon,\gamma}}: (0,+\infty)\times \R^n\times\R^n\rightarrow(0,+\infty) .
\end{equation*}
Using \eqref{homog_deltaepsgamma}, we have the homogeneity property (which will be useful at the end of the proof):
\begin{multline}\label{homog_eepsgamma}
e^{\varepsilon,\gamma}(t,x,x') = \vert\varepsilon\vert^{\beta\mathcal{Q}(q)} e^{\varepsilon^{1-\beta},\gamma-\beta} ( \varepsilon^{2\beta} t, \delta_{\varepsilon^\beta}(x), \delta_{\varepsilon^\beta}(x') ) \\
\forall\beta\in(\gamma,1-\gamma)\qquad \forall \varepsilon\in \big[ -\varepsilon_0^{1/(1-\beta)},\varepsilon_0^{1/(1-\beta)} \big] \setminus\{0\}\qquad \forall (t,x,x')\in (0,+\infty)\times\R^n\times\R^n .
\end{multline}

\paragraph{Convergence of $e^{\varepsilon,\gamma}$ to $\widehat{e}^{q}$.}
Applying Theorem \ref{thm_CV_general1}  (in Section \ref{sec_convergence_heat_kernels}), we obtain that $e^{\varepsilon,\gamma}$ converges to $\widehat{e}^{q}$ in $C^\infty((0,+\infty)\times\R^n\times\R^n)$ topology as $\varepsilon\rightarrow 0$ (when $X_0$ is a smooth section of $D^2$ over $M$, one has to replace $\widehat{\triangle}^{q}$ with $\widehat{\triangle}^{q} +\widehat{X}_0^{q}$), but, as announced earlier, we are now going to derive an asymptotic expansion in $\varepsilon$ at any order. This is possible thanks to the introduction of the parameter $\gamma$.

\subsection{Asymptotic expansion in $\varepsilon$ of the semigroup}\label{sec:asymptotic_expansion_epsilongamma}
Recall that the operators $\mathcal{C}_i(t)$ have been defined in Section \ref{sec_duh}, as finite sums of  operators $\mathcal{I}_i(t)$, defined by the integral \eqref{def_mathcal_Ii}, which is a convolution in which the compositions involve the operators $e^{s\widehat{\triangle}^{q}}$ and the derivations $\mathcal{A}_j$. The operators $\mathcal{C}_i(t)$ appear in the asymptotic expansion \eqref{rough_duhamel2} of $e^{t\triangle^\varepsilon}$ in $C^\infty$ topology. In the proposition hereafter, we prove that they appear as well in the asymptotic expansion of $e^{t\triangle^{\varepsilon,\gamma}}$ in the sense of uniformly smoothing operators. This key result is the byproduct of the introduction of the damping parameter $\gamma$.

\begin{proposition}\label{prop_expansion_semigroup_epsgam}
We assume that
$$
\gamma<\frac{1}{r(q)(r(q)+1)} .
$$
Given any $0<t_0<t_1<+\infty$, given any $N\in\N^*$, we have
\begin{equation}\label{expansion_exp_triangle_eps}
\begin{split}
e^{t\triangle^{\varepsilon,\gamma}} = e^{t \widehat{\triangle}^{q}} + \varepsilon \mathcal{C}_1(t) + \cdots + \varepsilon^N \mathcal{C}_N(t) + \varepsilon^{N+1} \mathcal{Q}_{N+1}^{\varepsilon,\gamma}(t)
\end{split}
\end{equation}
as $\varepsilon\rightarrow 0$, for every $t\in[t_0,t_1]$, where all operators in \eqref{expansion_exp_triangle_eps} are locally smoothing for $t\in[t_0,t_1]$, i.e.,
\begin{multline*}
\sum_{i=1}^N\Vert\chi_1\mathcal{C}_i(t)\chi_2\Vert_{L(H^j(\R^n),H^k(\R^n))} + \Vert\chi_1\mathcal{Q}_{N+1}^{\varepsilon,\gamma}(t)\chi_2\Vert_{L(H^j(\R^n),H^k(\R^n))}
\leq \Cst(\chi_1,\chi_2,j,k,t_0,t_1,N) \\
\forall t\in[t_0,t_1] \qquad \forall \chi_1,\chi_2\in C^\infty_c(\R^n) \qquad\forall j,k\in\Z  .
\end{multline*}
\end{proposition}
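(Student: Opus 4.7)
Choose an integer $N_0=N_0(N,\gamma)$ to be fixed at the end, set $\Delta^{\varepsilon,\gamma}:=\triangle^{\varepsilon,\gamma}-\widehat{\triangle}^{q}$, and iterate the Duhamel formula $N_0+1$ times to obtain
\begin{equation*}
e^{t\triangle^{\varepsilon,\gamma}} = \sum_{i=0}^{N_0}\int_{\Sigma_i(t)} e^{s_1\widehat{\triangle}^{q}}\Delta^{\varepsilon,\gamma}e^{s_2\widehat{\triangle}^{q}}\cdots \Delta^{\varepsilon,\gamma}\, e^{s_{i+1}\widehat{\triangle}^{q}}\,ds^{i+1} + \mathcal{T}_{N_0+1}^{\varepsilon,\gamma}(t),
\end{equation*}
where the remainder $\mathcal{T}_{N_0+1}^{\varepsilon,\gamma}(t)$ has the form of a $\mathcal{K}_{N_0+1}^{\varepsilon,\gamma}$-integral (see \eqref{def_mathcal_Kieps}) with leftmost factor $e^{s_1\triangle^{\varepsilon,\gamma}}$. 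Next I would plug in the expansion \eqref{expansion_triangleepsilon_stronger} of $\Delta^{\varepsilon,\gamma}$ in every slot and expand multilinearly, absorbing the $\mathrm{O}(|\varepsilon|^\infty)$ gap between $\mathcal{A}_i^{\varepsilon,\gamma}$ and $\mathcal{A}_i$ into the remainder. Collecting like powers of $\varepsilon$ produces explicit coefficients $\mathcal{C}_j(t)$, $1\leq j\leq N$, each a finite sum of $\mathcal{I}_i(t)$-type integrals as in \eqref{def_mathcal_Ii}, while the leftover is a finite sum of $\mathcal{I}_i(t)$, $\mathcal{J}_i^{\varepsilon,\gamma}(t)$ and $\mathcal{K}_i^{\varepsilon,\gamma}(t)$ prefactored by powers of $\varepsilon$. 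Choosing $N_0$ large enough, every such prefactor is at least $\varepsilon^{N+1}$: the only subtle contribution is the remainder factor $\mathcal{R}_{N+1}^{\varepsilon,\gamma}$ carrying weight $\varepsilon^{N(1-\gamma)+1-\gamma r(q)}$, and the inequality $N_0\cdot 1 + (N(1-\gamma)+1-\gamma r(q)) \geq N+1$ determines $N_0$.

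The analytic heart of the argument is the uniform smoothing bounds \eqref{smoothingproperty_Ii} and \eqref{smoothingproperty_JKNeps}, which I would prove by threading the convolutions through the scale of polynomially weighted Sobolev spaces $H^\alpha_\beta(\R^n)$. For each $\mathcal{I}_i(t)$ one of the exponents $s_p$ must satisfy $s_p\geq t_0/(N+2)$; the corresponding factor $e^{s_p\widehat{\triangle}^{q}}$ is made globally smoothing by \ref{P_globsmoothing}, gaining arbitrary differential regularity in exchange for a controlled loss of polynomial weight. All remaining semigroup factors $e^{s_q\widehat{\triangle}^{q}}$ with $s_q\in[0,t_1]$ are handled by \ref{P_stab} (itself deduced from \ref{P_semigroup} and \ref{P_contembed}), each causing only a bounded loss in both scales. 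The intermediate derivations $\mathcal{A}_{j_l}$, second order with polynomial coefficients of degree bounded by a constant $c_N$, map continuously $H^\alpha_\beta(\R^n)\to H^{\alpha-2}_{\beta-c_N}(\R^n)$. The cutoff $\chi_2\in C_c^\infty(\R^n)$ embeds $H^j(\R^n)$ into $H^j_{\beta_0}(\R^n)$ for any $\beta_0$, and $\chi_1$ allows the final space $H^{k'}_{\beta'}(\R^n)$ to be compressed back to $H^k(\R^n)$ as soon as $k'\geq k$. Picking $\beta_0$ large enough as a function of $N,i,j,k$ and of the losses above, the composition closes in the scale and delivers \eqref{smoothingproperty_Ii}.

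The treatment of $\mathcal{J}_i^{\varepsilon,\gamma}(t)$ and $\mathcal{K}_i^{\varepsilon,\gamma}(t)$ follows the same template: the derivations that equal $\mathcal{R}_{N+1}^{\varepsilon,\gamma}$ cost only a bounded loss in the weighted scale uniformly in $\varepsilon$ by \ref{Repsgam}; for $\mathcal{K}_i^{\varepsilon,\gamma}$, the leftmost factor $e^{s_1\triangle^{\varepsilon,\gamma}}$ is treated by the uniform global-to-local smoothing estimate \ref{globsmootheps}, with the cutoff $\chi_1$ built in. It is precisely at this step that the hypothesis $\gamma<1/(r(q)(r(q)+1))$ is used: it is the threshold from Lemma \ref{lem_unif_poly_Hormander} guaranteeing the uniform polynomial strong H\"ormander condition that underlies \ref{globsmootheps}. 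The main obstacle, as already announced in Section \ref{sec_ideaoftheproof}, is the simultaneous bookkeeping of differential regularity and polynomial weight losses along a chain whose length grows with $N$ and whose derivations are arbitrary (not factorizable by $\widehat{\triangle}^{q}$): the weight $\beta_0$ must be chosen sufficiently large as a function of $N,j,k$ and of all numerical constants from \ref{P_globsmoothing}--\ref{P_contembed} and \ref{Repsgam}--\ref{globsmootheps}, so that each step of the composition remains within the scale and the final output lands in $H^k(\R^n)$ with constants uniform in $\varepsilon\in[-\varepsilon_0,\varepsilon_0]$ and $t\in[t_0,t_1]$.
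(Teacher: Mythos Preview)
Your overall architecture matches the paper's: iterate Duhamel, insert the expansion \eqref{expansion_triangleepsilon_stronger}, and prove the smoothing bounds by threading through the weighted Sobolev scale using \ref{P_globsmoothing}--\ref{P_contembed} and \ref{Repsgam}--\ref{globsmootheps}. Two points, however, are not covered and the first one is a genuine gap.

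\medskip

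\textbf{The passage from $\mathcal{A}_i^{\varepsilon,\gamma}$ to $\mathcal{A}_i$.} You write that the $\mathrm{O}(|\varepsilon|^\infty)$ gap between $\mathcal{A}_i^{\varepsilon,\gamma}$ and $\mathcal{A}_i$ can simply be ``absorbed into the remainder''. But that $\mathrm{O}(|\varepsilon|^\infty)$ is only in $C^\infty$ topology: the difference $\mathcal{A}_i^{\varepsilon,\gamma}-\mathcal{A}_i$ is supported in $\R^n\setminus\delta_{1/\varepsilon^\gamma}(W_1)$ and has polynomial coefficients of degree up to $(r(q)+i)^2$; as an operator $H^\alpha_\beta(\R^n)\to H^{\alpha-2}_{\beta'}(\R^n)$ its norm is \emph{not} small in $\varepsilon$. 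So the weighted-Sobolev bookkeeping you describe does not by itself yield $\chi_1(\mathcal{C}_i^{\varepsilon,\gamma}(t)-\mathcal{C}_i(t))\chi_2=\mathrm{O}(|\varepsilon|^\infty)$. The paper isolates this as a separate lemma and proves it by an essentially different mechanism: the exponential off-diagonal decay \eqref{upper_estim_kernel} of the nilpotent heat kernel $\widehat{e}^{q}$. Since $\mathrm{supp}(\mathcal{A}_i^{\varepsilon,\gamma}-\mathcal{A}_i)$ lies at sR distance $\gtrsim 1/\varepsilon^\gamma$ from $\mathrm{supp}(\chi_1)\cup\mathrm{supp}(\chi_2)$, one inserts an intermediate cutoff $\chi_3\simeq\mathbf{1}_{\widehat{B}^q(0,R/\varepsilon^{\gamma/2})}$, splits $e^{s_p\widehat{\triangle}^q}=e^{\frac{s_p}{2}\widehat{\triangle}^q}e^{\frac{s_p}{2}\widehat{\triangle}^q}$, and uses \eqref{upper_estim_kernel} to show that both $(1-\chi_3)e^{\frac{s_p}{2}\widehat{\triangle}^q}\cdots\chi_2$ and $(\mathcal{A}_i^{\varepsilon,\gamma}-\mathcal{A}_i)e^{\frac{s_p}{2}\widehat{\triangle}^q}\chi_3$ carry a factor $e^{-C/|\varepsilon|^\gamma}$. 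Without this ingredient the expansion you obtain has $\varepsilon$-dependent coefficients $\mathcal{C}_i^{\varepsilon,\gamma}(t)$, not the $\varepsilon$-independent $\mathcal{C}_i(t)$ required by the statement.

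\medskip

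\textbf{The case $s_1$ small in $\mathcal{K}_i^{\varepsilon,\gamma}(t)$.} You invoke \ref{globsmootheps} for the leftmost factor $\chi_1 e^{s_1\triangle^{\varepsilon,\gamma}}$, but \ref{globsmootheps} only applies for $s_1\geq\tau_0>0$. When $s_1<t_0/(N+2)$, some other $e^{s_p\widehat{\triangle}^q}$ does the smoothing, and then you need $e^{s_1\triangle^{\varepsilon,\gamma}}$ merely to be uniformly bounded on the iterated domains $\mathcal{D}_m^{\varepsilon,\gamma}$ (for $s_1\in[0,t_1]$, including $s_1=0$). The paper handles this via \eqref{ineg_contraction_epsgamma} together with the embedding $H^m_\beta(\R^n)\hookrightarrow\mathcal{D}_m^{\varepsilon,\gamma}$ from \eqref{inclusions_global}, followed by local subelliptic estimates after applying $\chi_1$. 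This case distinction should be made explicit.
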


\begin{proof}
Following the approach described in Section \ref{sec_ideaoftheproof}, we start by applying the Duhamel formula, with the operator $\triangle^{\varepsilon,\gamma}$,
$$
e^{t\triangle^{\varepsilon,\gamma}} 
=\, e^{t \widehat{\triangle}^{q}} + \int_0^t e^{(t-s)\triangle^{\varepsilon,\gamma}} (\triangle^{\varepsilon,\gamma} - \widehat{\triangle}^{q}) e^{s \widehat{\triangle}^{q}}\, ds  ,
$$
which we iterate $N$ times, thus obtaining the formula \eqref{rough_duh1} with $\triangle^\varepsilon$  replaced by $\triangle^{\varepsilon,\gamma}$. Now, using the asymptotic expansion \eqref{expansion_triangleepsilon_stronger} of $\triangle^{\varepsilon,\gamma}$, applied at order $N_1$ such that $N_1(1-\gamma)+1-\gamma r(q)\geq N+1$, we obtain
\begin{equation}\label{expansion_exp_triangle_epsgam}
e^{t\triangle^{\varepsilon,\gamma}} = e^{t \widehat{\triangle}^{q}} + \varepsilon \mathcal{C}_i^{\varepsilon,\gamma}(t) + \cdots + \varepsilon^N \mathcal{C}_N^{\varepsilon,\gamma}(t) + \varepsilon^{N+1} \mathcal{P}_{N+1}^{\varepsilon,\gamma}(t)
\end{equation}
where each operator $\mathcal{C}_j^{\varepsilon,\gamma}(t)$ is a finite sum of terms $\mathcal{I}_i^{\varepsilon,\gamma}(t)$ for some $i\in\{1,\ldots,N\}$, where $\mathcal{I}_i^{\varepsilon,\gamma}(t)$ is defined by
\begin{equation}\label{def_mathcal_Ii_epsgam}
\mathcal{I}_i^{\varepsilon,\gamma}(t) = \int_{\Sigma_i(t)} e^{s_1 \widehat{\triangle}^{q}} \mathcal{A}_{j_1}^{\varepsilon,\gamma} \, e^{s_2 \widehat{\triangle}^{q}} \cdots  \mathcal{A}_{j_i}^{\varepsilon,\gamma} \, e^{s_{i+1} \widehat{\triangle}^{q}} \, ds^{i+1} 
\end{equation}
with $j_1,\ldots,j_N\in\{1,\ldots,N\}$, and where the remainder term $\mathcal{P}_{N+1}^{\varepsilon,\gamma}(t)$ is a finite sum of terms $\varepsilon^k\mathcal{I}_i^{\varepsilon,\gamma}(t)$, $\varepsilon^\alpha\mathcal{J}_i^{\varepsilon,\gamma}(t)$ and $\varepsilon^\alpha\mathcal{K}_i^{\varepsilon,\gamma}(t)$ with $k,i\in\N$, $k\leq (N+1)^2$, $1\leq i\leq N$, $\alpha\in[0,(N+1)^2]$, and $\mathcal{J}_i^{\varepsilon,\gamma}(t)$ and $\mathcal{K}_i^{\varepsilon,\gamma}(t)$ are defined by
\begin{equation}\label{def_mathcal_Jiepsgam}
\mathcal{J}_i^{\varepsilon,\gamma}(t) = \int_{\Sigma_i(t)} e^{s_1 \widehat{\triangle}^{q}}  \mathcal{B}_{1}^{\varepsilon,\gamma} \, e^{s_2 \widehat{\triangle}^{q}} \cdots  \mathcal{B}_i^{\varepsilon,\gamma} \, e^{s_{i+1} \widehat{\triangle}^{q}} \, ds^{i+1} 
\end{equation}
\begin{equation}\label{def_mathcal_Kiepsgam}
\mathcal{K}_i^{\varepsilon,\gamma}(t) = \int_{\Sigma_i(t)} e^{s_1 \triangle^{\varepsilon,\gamma}}  \mathcal{B}_{1}^{\varepsilon,\gamma} \, e^{s_2 \widehat{\triangle}^{q}} \cdots  \mathcal{B}_i^{\varepsilon,\gamma} \, e^{s_{i+1} \widehat{\triangle}^{q}} \, ds^{i+1}
\end{equation}
where each $\mathcal{B}_{j}^{\varepsilon,\gamma}$ is a second-order derivation, either equal to some $\mathcal{A}_i^{\varepsilon,\gamma}$, $i\in\{1,\ldots,N\}$, or to $\mathcal{R}_{N_1+1}^{\varepsilon,\gamma}$, whose coefficients growth at infinity is at most polynomial of degree $r(q)$.

Note that, compared with the operator $\mathcal{I}_i(t)$ defined by \eqref{def_mathcal_Ii} in Section \ref{sec_duh}, in the definition \eqref{def_mathcal_Ii_epsgam} of $\mathcal{I}_i^{\varepsilon,\gamma}(t)$ the derivations $\mathcal{A}_j$ are replaced with $\mathcal{A}_j^{\varepsilon,\gamma}$. 

The above expansion is formal. We give it a rigorous meaning in the following proposition, by using, as explained in Section \ref{sec_ideaoftheproof}, the various global smoothing properties established in Appendix \ref{sec:global_estimates} and in Appendix \ref{sec:global_sR}. 

The arguments that we develop hereafter will show that all operators $\mathcal{C}_i^{\varepsilon,\gamma}(t)$, $\mathcal{C}_i(t)$, for $i\in\N^*$ and the operators $\mathcal{P}_{N+1}^{\varepsilon,\gamma}(t)$ and $\mathcal{Q}_{N+1}^{\varepsilon,\gamma}(t)$ are locally smoothing for $t\in[t_0,t_1]$, and that, actually, $\mathcal{C}_i^{\varepsilon,\gamma}(t) = \mathcal{C}_i(t) + \mathrm{O}(\vert\varepsilon\vert^\infty)$ in $C^\infty$ topology, and then \eqref{expansion_exp_triangle_eps} will follow from \eqref{expansion_exp_triangle_epsgam}.

We start by noting that:
\begin{itemize}
\item by construction, the coefficients of the operator $\triangle^{\varepsilon,\gamma}$, as well as all their derivatives, have a growth at infinity that is at most polynomial, uniformly with respect to $\varepsilon\in[-\varepsilon_0,\varepsilon_0]$;
\item the vector fields defining the one-parameter family of H\"ormander operators $(\triangle^{\varepsilon,\gamma})_{-\varepsilon_0\leq\varepsilon\leq\varepsilon_0}$ satisfy the uniform polynomial strong H\"ormander condition established in Lemma \ref{lem_unif_poly_Hormander}.
\end{itemize}
Therefore, the global subelliptic estimates established in Appendix \ref{sec:global_estimates} can be applied to $\triangle^{\varepsilon,\gamma}$ (in other words, we have the properties \ref{Repsgam} and \ref{globsmootheps} motivated in Section \ref{sec_idea_eps}).

We are also going to apply to $\widehat{\triangle}^{q}$ the global smoothing properties established in Appendix \ref{sec:global_sR} (in other words, we have the properties \ref{P_globsmoothing} and \ref{P_stab} motivated in Section \ref{sec_idea_nilp}).


Let $\chi_1,\chi_2\in C_c^\infty(\R^n)$, let $0<t_0<t_1$ be arbitrarily fixed and let $j,k\in\Z$ be arbitrary.

\begin{lemma}\label{lem_Ci_locsmoothing}
All operators $\mathcal{C}_i(t)$ and $\mathcal{C}_i^{\varepsilon,\gamma}(t)$, $i\in\N^*$, are locally smoothing for $t\in[t_0,t_1]$. 
\end{lemma}

\begin{proof}
It suffices to prove that, for every $i\in\N^*$, the operator $\chi_1\mathcal{I}_i(t)\chi_2$, defined by \eqref{def_mathcal_Ii}, maps continuously $H^j(\R^n)$ to $H^{k}(\R^n)$, with a norm that is uniformly bounded with respect to $t\in[t_0,t_1]$ (in other words, we want to prove \eqref{smoothingproperty_Ii}).
The proof for the operator $\mathcal{I}_i^{\varepsilon,\gamma}(t)$, defined by \eqref{def_mathcal_Ii_epsgam}, will be exactly similar, by replacing the derivations $\mathcal{A}_i$ with $\mathcal{A}_i^{\varepsilon,\gamma}$ which are, as well, derivations whose coefficients growth is at most polynomial, uniformly with respect to $\varepsilon$.

Although the argument has been sketched in Section \ref{sec_idea_nilp}, we provide hereafter the detailed proof. Inside the integral \eqref{def_mathcal_Ii} defining $\mathcal{I}_i(t)$, at least one of the real numbers $s_p$ is such that $s_p\geq \frac{t_0}{N}$. We write:
$$
\chi_1\mathcal{I}_i(t)\chi_2 = \int_{\Sigma_i(t)} \chi_1 e^{s_1 \widehat{\triangle}^{q}} \mathcal{A}_{j_1} \, e^{s_2 \widehat{\triangle}^{q}} \cdots \mathcal{A}_{j_{p-1}} \, e^{s_p \widehat{\triangle}^{q}}  \mathcal{A}_{j_p} e^{s_{p+1} \widehat{\triangle}^{q}} \cdots \mathcal{A}_{j_i} \, e^{s_{i+1} \widehat{\triangle}^{q}} \chi_2 \, ds^{i+1}  .
$$
Using \eqref{equivDjalpha} and the global continuous embeddings \eqref{inclusions_global} and \eqref{inclusions_global_neg}, the localization operator $\chi_2$ maps continuously $H^j(\R^n)$ to $\widehat{\mathcal{D}}^{q}_{-m,\beta}$ for some $m\in\N$, for every $\beta\in\R$. We note here that $\beta$ can be chosen arbitrarily large.
Then, by Proposition \ref{prop_globreg_poidsfixe} (and more precisely, by \eqref{15:34}), the operator $e^{s_{i+1} \widehat{\triangle}^{q}}$ maps continuously $\widehat{\mathcal{D}}^{q}_{-m,\beta}$ to itself for every $s_i\in[0,t_1]$, provided that $\beta\geq 1$, with a norm constant not depending on $s_i\in[0,t_1]$.
By \eqref{inclusions_global_neg}, $\widehat{\mathcal{D}}^{q}_{-m,\beta}$ is continously embedded into $H^{-2m}_{\beta-2mN}(\R^n)$. Applying the derivation $\mathcal{A}_{j_i}$ then maps to $H^{-2m-2}_{\beta-2mN-(r(q)+j_i)^2}(\R^n)$, because $\mathcal{A}_{j_i}$ is a second-order derivation with polynomial coefficients of degree less than $(r(q)+j_i)^2$. This step is repeated $i-p$ times, so that 
$\mathcal{A}_{j_p} e^{s_{p+1} \widehat{\triangle}^{q}} \cdots \mathcal{A}_{j_i} \, e^{s_{i+1} \widehat{\triangle}^{q}} \chi_2$ maps continuously $H^j(\R^n)$ to $H^\alpha_\beta(\R^n)$, for some $\alpha\leq 0$, for every $\beta\geq 1$. We stress that, in the latter repeated argument, as well as in the following, it is important to ensure that the weight be greater than $1$, in order to be able to apply \eqref{15:34}. 
One now applies the operator $e^{s_p \widehat{\triangle}^{q}}$, which maps continuously $H^{\alpha}_{\beta}(\R^n)$ to $H^{\alpha'}_{\beta-k_0(\vert\alpha\vert+\vert\alpha'\vert)}(\R^n)$, for every $\alpha'\in\R$, by applying the global smoothing property \ref{P_globsmoothing} stated in Section \ref{sec_idea_nilp} (itself following from Proposition \ref{prop_globreg_poidsfixe} in Appendix \ref{sec:global_estimates_sR_weight} and from the embeddings \eqref{inclusions_global} and \eqref{inclusions_global_neg}). Then we continue by applying again compositions $\mathcal{A}_j e^{s\widehat{\triangle}^q}$ as above, and the final composition by the localization operator $\chi_1$, which maps continuously to $H^k(\R^n)$ if $\alpha'$ and $\beta$ have been chosen large enough.
\end{proof}

\begin{lemma}
The operator $\mathcal{P}_{N+1}^{\varepsilon,\gamma}(t)$ is locally smoothing for $t\in[t_0,t_1]$.
\end{lemma}

\begin{proof}
It suffices to prove that $\chi_1\mathcal{I}_i^{\varepsilon,\gamma}(t)\chi_2$ and $\chi_1\mathcal{K}_i^{\varepsilon,\gamma}(t)\chi_2$ (see \eqref{def_mathcal_Jiepsgam} and \eqref{def_mathcal_Kiepsgam}) map continuously $H^j(\R^n)$ to $H^k(\R^n)$, with a norm that is uniformly bounded with respect to $t\in[t_0,t_1]$ and to $\varepsilon\in[-\varepsilon_0,\varepsilon_0]$. 
As pointed out at the beginning of Section \ref{sec_idea_eps}, the argument is similar to the one used in the proof of Lemma \ref{lem_Ci_locsmoothing} with the following differences:
\begin{itemize}
\item The second-order derivations $\mathcal{B}_{j}^{\varepsilon,\gamma}$ can be either equal to some $\mathcal{A}_i^{\varepsilon,\gamma}$, $i\in\{1,\ldots,N\}$, or to $\mathcal{R}_{N_1+1}^{\varepsilon,\gamma}$. In the latter case, this does not affect our previous reasoning because its coefficients have a growth at infinity that is at most polynomial of degree $r(q)$.
\item In the definition \eqref{def_mathcal_Kiepsgam} of $\mathcal{K}_i^{\varepsilon,\gamma}(t)$, the last term in the composition is $e^{s_1 \triangle^{\varepsilon,\gamma}}$.
\end{itemize}
Hence, the only main difference is in the last item: we have to explain how to deal with the composition by $e^{s_1 \triangle^{\varepsilon,\gamma}}$. There are two cases.

If $s_1\geq \frac{t_0}{N}$, then the term $\chi_1 e^{s_1 \triangle^{\varepsilon,\gamma}}$ is expected to act as a smoothing operator. By the above reasoning, the operator $\mathcal{B}_{1}^{\varepsilon,\gamma} \, e^{s_2 \widehat{\triangle}^{q}} \cdots  \mathcal{B}_i^{\varepsilon,\gamma} \, e^{s_{i+1} \widehat{\triangle}^{q}} \chi_2$ maps continuously $H^j(\R^n)$ to $H^{-\alpha}_{k_0\alpha}(\R^n)$ for some $\alpha>0$ and some $k_0\in\N$. Now, we apply to $\chi_1 e^{s_1 \triangle^{\varepsilon,\gamma}}$ the property \ref{globsmootheps} stated in Section \ref{sec_idea_eps} (itself following from Corollary \ref{cor_heat_uniform_global} in Appendix \ref{sec:globalregheat}), which gives the result.

Otherwise, another term $e^{s_p\widehat{\triangle}^q}$ acts as a smoothing operator, and thus, by the above reasoning, the operator $\mathcal{B}_{1}^{\varepsilon,\gamma} \, e^{s_2 \widehat{\triangle}^{q}} \cdots  \mathcal{B}_i^{\varepsilon,\gamma} \, e^{s_{i+1} \widehat{\triangle}^{q}} \chi_2$ maps continuously $H^j(\R^n)$ to $H^m_\beta(\R^n)$ for any $m\in\N$ and $\beta\geq 1$, and thus, using the global continuous embeddings \eqref{inclusions_global}, it maps $H^j(\R^n)$ to $\mathcal{D}^{\varepsilon,\gamma}_m$, for any $m\in\N$ arbitrarily large, uniformly with respect to $\varepsilon\in[-\varepsilon_0,\varepsilon_0]$. One then applies $e^{s_1\triangle^{\varepsilon,\gamma}}$, which, by \eqref{ineg_contraction_epsgamma}, maps continously $\mathcal{D}^{\varepsilon,\gamma}_m$ to $\mathcal{D}^{\varepsilon,\gamma}_m$ (uniformly with respect to $\varepsilon$), and the final application of the localization operator $\chi_1$ maps to $H^k(\R^n)$, by applying the uniform local subellipticity estimates (Theorem \ref{thm_subelliptic_uniform}), provided that $m$ be large enough.
\end{proof}

To conclude, it remains to establish the following lemma.

\begin{lemma}
We have
\begin{equation*}
\mathcal{C}_i^{\varepsilon,\gamma}(t) = \mathcal{C}_i(t) + \mathrm{O}(\vert\varepsilon\vert^\infty)
\end{equation*}
as $\varepsilon\rightarrow 0$ in $C^\infty$ topology, for every $i\in\{1,\ldots,N\}$, for every $t\in[t_0,t_1]$.
\end{lemma}

\begin{proof}
It suffices to prove that $\chi_1 ( \mathcal{I}_i^{\varepsilon,\gamma}(t)  - \mathcal{I}_i(t) )\chi_2 = \mathrm{O}(\vert\varepsilon\vert^\infty)$, i.e., that
\begin{equation}\label{int_diffA}
\int_{\Sigma_i(t)} \chi_1 e^{s_1 \widehat{\triangle}^{q}} ( \mathcal{A}_{j_1}^{\varepsilon,\gamma} - \mathcal{A}_{j_1}) \, e^{s_2 \widehat{\triangle}^{q}} \cdots ( \mathcal{A}_{j_i}^{\varepsilon,\gamma} - \mathcal{A}_{j_i}) \, e^{s_{i+1} \widehat{\triangle}^{q}} \chi_2 \, ds^{i+1} = \mathrm{O}(\vert\varepsilon\vert^\infty)
\end{equation}
as $\varepsilon\rightarrow 0$, for every $i\in\N^*$.
Recall that the second-order differential operators $\mathcal{A}_i$ and $\mathcal{A}_i^{\varepsilon,\gamma}$ are respectively defined by \eqref{def_Ai} and \eqref{def_Aiepsgam}. It follows from their definition that there exists $R>0$ such that
$$
\supp\big( ( \mathcal{A}_i^{\varepsilon,\gamma} - \mathcal{A}_i ) f \big) \subset \R^n \setminus\widehat{B}^q(0,R/\varepsilon^\gamma) \qquad \forall \varepsilon\in[-\varepsilon_0,\varepsilon_0]\qquad\forall f\in C^\infty(\R^n) .
$$
Besides, for $i=1,2$, let $R_i>0$ be such that $\supp(\chi_i)\subset \widehat{B}^q(0,R_i)$. Here, $\widehat{B}^q(0,R)$ denotes the sR ball in $\R^n$ of center $0$ and radius $R$, for the (nilpotent) sR structure associated with $\widehat{\triangle}^{q}$.
We are going to use the exponential estimates \eqref{upper_estim_kernel} given in Appendix \ref{sec:global_estimates_sR}.

The Sobolev regularity arguments are to those in the proof of Lemma \ref{lem_Ci_locsmoothing}, but we slightly modify those arguments with the following additional considerations.

Inside the integral \eqref{int_diffA}, at least one of the real numbers $s_p$ is such that $s_p\geq \frac{t_0}{N}$. We write $e^{s_p \widehat{\triangle}^{q}}=e^{\frac{s_p}{2} \widehat{\triangle}^{q}} e^{\frac{s_p}{2} \widehat{\triangle}^{q}}$. Let $\chi_3^\varepsilon$ be a function of compact support such that $\chi_3(x)=1$ on $\widehat{B}^q(0,R/\varepsilon^{\gamma/2})$ and $\chi_3(x)=0$ on $\R^n\setminus\widehat{B}^q(0,2R/\varepsilon^{\gamma/2})$. In other words, one has $\chi_3\simeq\mathbf{1}_{\widehat{B}^q(0,R/\varepsilon^{\gamma/2})}$. We write the operator inside the integral \eqref{int_diffA} as
$$
\chi_1\mathcal{D}_1^{\varepsilon,\gamma} e^{\frac{s_p}{2} \widehat{\triangle}^{q}} e^{\frac{s_p}{2} \widehat{\triangle}^{q}} \mathcal{D}_2^{\varepsilon,\gamma} \chi_2
=
\chi_1\mathcal{D}_1^{\varepsilon,\gamma} e^{\frac{s_p}{2} \widehat{\triangle}^{q}} \chi_3 e^{\frac{s_p}{2} \widehat{\triangle}^{q}} \mathcal{D}_2^{\varepsilon,\gamma} \chi_2
+ 
\chi_1\mathcal{D}_1^{\varepsilon,\gamma} e^{\frac{s_p}{2} \widehat{\triangle}^{q}} (1-\chi_3) e^{\frac{s_p}{2} \widehat{\triangle}^{q}} \mathcal{D}_2^{\varepsilon,\gamma} \chi_2
$$
where $\mathcal{D}_1^{\varepsilon,\gamma}$ and $\mathcal{D}_2^{\varepsilon,\gamma}$ are compositions of $e^{s_j \widehat{\triangle}^{q}}$ and of $\mathcal{A}_{j}^{\varepsilon,\gamma} - \mathcal{A}_{j}$. Using the exponential estimates \eqref{upper_estim_kernel} for heat kernels and their derivatives (see Appendix \ref{sec:global_estimates_sR}), and combining with the previous reasonings, we infer that, not only the operator $e^{\frac{s_p}{2} \widehat{\triangle}^{q}} \mathcal{D}_2^{\varepsilon,\gamma} \chi_2$ maps $H^j(\R^n)$ to $H^\alpha_\beta(\R^n)$ for any $\alpha$ and $\beta$ arbitrarily large, but also, its range is ``essentially concentrated" in $\widehat{B}^q(0,R/\varepsilon^{\gamma/2})$, in the sense that
$$
\Vert (1-\chi_3) e^{\frac{s_p}{2} \widehat{\triangle}^{q}} \mathcal{D}_2^{\varepsilon,\gamma} \chi_2 \Vert_{L( H^j(\R^n), H^\alpha_\beta(\R^n))} \leq C \, e^{-C/\vert\varepsilon\vert^\gamma} \qquad\forall\varepsilon\in[-\varepsilon_0,\varepsilon_0]
$$
with $C=\Cst(j,k,t_0,t_1,N)$. 
The exponential term comes from the fact that the support of $1-\chi_3$ is far from the support of $\chi_2$, at a distance of the order of $1/\vert\varepsilon\vert^{\gamma/2}$.
By a similar argument, we have
$$
\Vert ( \mathcal{A}_{j_p}^{\varepsilon,\gamma} - \mathcal{A}_{j_p}) \, e^{\frac{s_p}{2} \widehat{\triangle}^{q}} \chi_3 e^{\frac{s_p}{2} \widehat{\triangle}^{q}} \mathcal{D}_2^{\varepsilon,\gamma} \chi_2\Vert_{L( H^j(\R^n), H^\alpha_\beta(\R^n))} \leq C \, e^{-C/\vert\varepsilon\vert^\gamma} \qquad\forall\varepsilon\in[-\varepsilon_0,\varepsilon_0].
$$
The exponential term comes from the fact that the support of $\mathcal{A}_i^{\varepsilon,\gamma} - \mathcal{A}_i$ is far from the support of $\chi_3$, at a distance of the order of $1/\vert\varepsilon\vert^\gamma$.

All in all, combining with the arguments already used in the proof of Lemma \ref{lem_Ci_locsmoothing}, we obtain the lemma.
\end{proof}

Proposition \ref{prop_expansion_semigroup_epsgam} is proved.
\end{proof}


\subsection{Asymptotic expansion in $\varepsilon$ of the heat kernel}\label{taking_Skernels}
Taking the Schwartz kernels in the expansion \eqref{expansion_exp_triangle_eps}, more precisely, considering their densities with respect to the measure $\widehat{\mu}^{q}$, and recalling that $\widehat{e}^{q} = e_{\widehat{\triangle}^{q},\widehat{\mu}^{q}}$, we get that
\begin{equation}\label{expansion_kernel_eps_temp}
e_{\triangle^{\varepsilon,\gamma},\widehat{\mu}^{q}}(t,x,x') = \widehat{e}^{q}(t,x,x') + \sum_{i=1}^N \varepsilon^i [\mathcal{C}_i(t)]_{\widehat{\mu}^{q}}(x,x') +\mathrm{o}\big(\vert\varepsilon\vert^N\big) 
\end{equation}
at any order $N$, as $\varepsilon\rightarrow 0$, in $C^\infty((0,+\infty)\times\R^n\times\R^n)$ topology. For instance, we have
$$
[\mathcal{C}_1(t)]_{\widehat{\mu}^{q}}(x,x') = \int_0^t \int_{\R^n} \widehat{e}^{q}(t-s,x,z) \, ((\mathcal{A}_1)_x \widehat{e}^{q} ) (s,z,x') \, dz\, ds
$$
and all other (smooth) functions $[\mathcal{C}_i(t)]_{\widehat{\mu}^{q}}(x,x')$ can be expressed as well with convolutions. Now, using the formula \eqref{kernel_change_measure} in Appendix \ref{appendix_Schwartz}, we have
$$
e^{\varepsilon,\gamma}(t,x,x') = e_{\triangle^{\varepsilon,\gamma},\nu^{\varepsilon,\gamma}}(t,x,x') = \frac{1}{h^{\varepsilon,\gamma}(x')} e_{\triangle^{\varepsilon,\gamma},\widehat{\mu}^{q}}(t,x,x')
$$
where $h^{\varepsilon,\gamma} = \frac{d \nu^{\varepsilon,\gamma}}{d\widehat{\mu}^{q}}$ is the (smooth) density of $\nu^{\varepsilon,\gamma}$ with respect to $\widehat{\mu}^{q}$. By Lemma \ref{lem_epsgamma}, $h^{\varepsilon,\gamma}$ converges to $1$ uniformly on $\R^n$, and it depends smoothly on $\varepsilon$ in $C^\infty$ topology, hence $h^{\varepsilon,\gamma} = 1 + \varepsilon h^1 + \cdots + \varepsilon^Nh^N+\mathrm{o}\big(\vert\varepsilon\vert^N\big)$ at any order $N$, in $C^\infty$ topology. Using \eqref{expansion_kernel_eps_temp}, we conclude that
\begin{equation}\label{expansion_kernel_epsgamma}
e^{\varepsilon,\gamma}(t,x,x') = \widehat{e}^{q}(t,x,x') + \sum_{i=1}^N \varepsilon^i f_i^{q}(t,x,x') +\mathrm{o}\big(\vert\varepsilon\vert^N\big) 
\end{equation}
at any order $N$, as $\varepsilon\rightarrow 0$, in $C^\infty((0,+\infty)\times\R^n\times\R^n)$ topology, where the functions $f_i^{q}$ are smooth on $(0,+\infty)\times\R^n\times\R^n$.

Recall that 
$e^\varepsilon = e_{\triangle^\varepsilon,\nu^\varepsilon}$ 
where 
$\triangle^\varepsilon$ 
is defined by 
\eqref{def_triangleepsilon}. 

\begin{lemma}\label{lem_eepsgamma_eeps}
We have
$$
e^{\varepsilon,\gamma}(t,x,x') = e^\varepsilon(t,x,x') + \mathrm{O}(\vert\varepsilon\vert^\infty)
$$
as $\varepsilon\rightarrow 0$ in $C^\infty((0,+\infty)\times\R^n\times\R^n)$ topology.
\end{lemma}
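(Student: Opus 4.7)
The starting point is the fact that $\triangle^{\varepsilon,\gamma}$ and $\triangle^\varepsilon$ coincide on $\delta_{1/\varepsilon^\gamma}(W_1)$, a neighborhood of $0$ that exhausts $\R^n$ as $\vert\varepsilon\vert\to 0$---this is precisely what the damping parameter $\gamma>0$ was introduced to guarantee (Section \ref{sec_defYiepsgam} and Remark \ref{rem_comp_triangleepsgamma_triangleeps}). Consequently, any fixed compact $K\subset\R^n$ lies in this open set as soon as $\vert\varepsilon\vert$ is small enough.

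I first apply the hypoelliptic Kac's principle (Theorem \ref{theo:localheat}) to the parametric pair $(\triangle^{\varepsilon,\gamma},\triangle^\varepsilon)$. Although Theorem \ref{theo:localheat} is stated for a single operator restricted to two open sets, its proof extends verbatim to two different operators that coincide on an open set: the difference $w^\varepsilon:=e^{\varepsilon,\gamma}-e^\varepsilon$ satisfies a hypoelliptic equation on $\delta_{1/\varepsilon^\gamma}(W_1)\times\delta_{1/\varepsilon^\gamma}(W_1)\times(0,+\infty)$, extends smoothly by zero to $t<0$, and is therefore flat at $t=0$; uniformity in $\varepsilon$ is provided by the uniform local subelliptic estimates of Appendix \ref{sec:local_estimates} applied to both parametric families. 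This yields, for every compact $K\subset\R^n$, every $(k,\alpha,\beta)\in\N\times\N^n\times\N^n$, every $t_1>0$ and every $N\in\N^*$:
\begin{equation*}
\sup_{(x,x')\in K\times K}\big\vert\partial_t^k\partial_x^\alpha\partial_{x'}^\beta w^\varepsilon(t,x,x')\big\vert \leq \Cst(K,k,\alpha,\beta,t_1,N)\,t^N
\end{equation*}
uniformly for $t\in(0,t_1]$ and $\vert\varepsilon\vert$ small.

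The second step converts this $\mathrm{O}(t^\infty)$ decay (useless for $t$ bounded away from $0$) into the required $\mathrm{O}(\vert\varepsilon\vert^\infty)$ decay at fixed positive time. The mechanism is a scaling argument. By \eqref{relation_eY_eepsilon}, $e^\varepsilon(t,x,x') = \vert\varepsilon\vert^{\mathcal{Q}(q)}\widetilde e(\varepsilon^2 t,\delta_\varepsilon(x),\delta_\varepsilon(x'))$; defining $\widetilde{\triangle^{\varepsilon,\gamma}}$ by $\triangle^{\varepsilon,\gamma} = \varepsilon^2\delta_\varepsilon^*\widetilde{\triangle^{\varepsilon,\gamma}}(\delta_\varepsilon)_*$, the corresponding identity holds for its heat kernel $\widetilde{e^{\varepsilon,\gamma}}$. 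Since $\widehat{\triangle}^q$ is invariant under its own dilations (the $\widehat X_i^q$ being homogeneous of degree $-1$), one checks that $\widetilde{\triangle^{\varepsilon,\gamma}}$ equals $\widetilde\triangle$ on $\delta_{\varepsilon^{1-\gamma}}(W_1)$ and equals $\widehat{\triangle}^q$ outside $\delta_{\varepsilon^{1-\gamma}}(W_2)$. For $(x,x')$ in a fixed compact $K_1$ and $t\in[t_0,t_1]$ with $\varepsilon$ small, the rescaled points $\delta_\varepsilon(x),\delta_\varepsilon(x')$ lie in $\delta_{\varepsilon^{1-\gamma}}(W_1)$---indeed, $\delta_\varepsilon(K_1)\subset\delta_{\varepsilon^{1-\gamma}}(W_1)$ is equivalent to $K_1\subset\delta_{1/\varepsilon^\gamma}(W_1)$, which holds. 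Applying Kac's principle at this rescaled level to $(\widetilde\triangle,\widetilde{\triangle^{\varepsilon,\gamma}})$ bounds the rescaled-kernel difference by $\mathrm{O}((\varepsilon^2 t)^N) = \mathrm{O}(\vert\varepsilon\vert^{2N})$; multiplying by the Jacobian factor $\vert\varepsilon\vert^{\mathcal{Q}(q)}$ and taking $N$ arbitrary yields $e^{\varepsilon,\gamma}(t,x,x') - e^\varepsilon(t,x,x') = \mathrm{O}(\vert\varepsilon\vert^\infty)$, and the derivative statements follow with only improved powers of $\varepsilon$ from the chain-rule factors $\varepsilon^{w_i(q)}$.

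The main obstacle will be to verify that Kac's principle applies at the rescaled scale with constants uniform in $\varepsilon$, since the rescaled compact $\delta_\varepsilon(K_1)$ and the region of agreement $\delta_{\varepsilon^{1-\gamma}}(W_1)$ both shrink with $\varepsilon$. The saving grace is that $1-\gamma>0$: the ratio between the two scales is $\varepsilon^\gamma$, so in rescaled intrinsic coordinates the compact sits at a uniform relative depth inside the agreement region. Combined with the fact that $\widetilde{\triangle^{\varepsilon,\gamma}}$ is obtained by gluing the two fixed operators $\widetilde\triangle$ and $\widehat{\triangle}^q$ (not an $\varepsilon$-dependent zoo of operators), this supplies the uniform subelliptic estimates needed to reproduce the proof of Theorem \ref{theo:localheat} without loss of uniformity.
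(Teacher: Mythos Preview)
Your overall strategy---rescale so that time goes to zero, then invoke the localization principle---is the right one, and it is exactly what the paper does. But the execution in your second step has a genuine gap, and your final paragraph is an acknowledgement of that gap rather than a resolution of it.

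After the full $\varepsilon$-rescaling, you are comparing $\widetilde\triangle$ and $\widetilde{\triangle^{\varepsilon,\gamma}}$ on the agreement region $\delta_{\varepsilon^{1-\gamma}}(W_1)$, at spatial points in $\delta_\varepsilon(K_1)$. Both sets shrink to the origin. Theorem~\ref{theo:localheat} as stated requires a \emph{fixed} compact inside a \emph{fixed} intersection, with operators depending continuously on the parameter in $C^\infty$ topology. Your family $(\widetilde{\triangle^{\varepsilon,\gamma}})_\varepsilon$ fails the last hypothesis: it glues $\widetilde\triangle$ and $\widehat{\triangle}^{q}$ across a transition zone collapsing to a point, so it does not converge in $C^\infty$ as $\varepsilon\to 0$, and the cut-off functions $\zeta,\zeta'$ needed in the proof of Theorem~\ref{theo:localheat} would have derivatives blowing up like $\varepsilon^{-(1-\gamma)}$. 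Your remark that the two building blocks are individually fixed does not neutralise this, and the phrase ``rescaled intrinsic coordinates'' is a signpost toward a further rescaling rather than an argument.

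The paper avoids the difficulty by using a \emph{partial} dilation. Via the built-in homogeneity~\eqref{homog_eepsgamma} of the $(\varepsilon,\gamma)$-family, a dilation by $\delta_{\varepsilon^\beta}$ with a small $\beta>0$ sends $e^{\varepsilon,\gamma}(t,x,x')$ to $\vert\varepsilon\vert^{\beta\mathcal{Q}(q)}\,e^{\varepsilon^{1-\beta},\gamma-\beta}\big(\varepsilon^{2\beta}t,\delta_{\varepsilon^\beta}(x),\delta_{\varepsilon^\beta}(x')\big)$, and the ordinary homogeneity~\eqref{homog_deltaeps} does the same for $e^\varepsilon$. Now the new parameter is $\varepsilon'=\varepsilon^{1-\beta}$, the new damping $\gamma'=\gamma-\beta$ is still positive, and $\triangle^{\varepsilon',\gamma'}$ coincides with $\triangle^{\varepsilon'}$ on $\delta_{1/(\varepsilon')^{\gamma'}}(W_1)$, which \emph{grows} as $\varepsilon\to 0$ and therefore contains any fixed compact (in particular the images $\delta_{\varepsilon^\beta}(x),\delta_{\varepsilon^\beta}(x')$). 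Theorem~\ref{theo:localheat} applies verbatim, with uniform constants, yielding a bound $\mathrm{O}\big((\varepsilon^{2\beta}t)^N\big)=\mathrm{O}\big(\vert\varepsilon\vert^{2\beta N}\big)$ and hence $\mathrm{O}(\vert\varepsilon\vert^\infty)$. In effect, the paper's $\delta_{\varepsilon^\beta}$ is the composition of your full $\delta_\varepsilon$ with the back-rescaling you gesture at; doing it in one step, and exploiting that the $(\varepsilon,\gamma)$-family is closed under such dilations, is what makes the uniformity immediate.
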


\begin{proof}
Let $\beta>0$ small. By \eqref{homog_eepsgamma}, we have
$$
e^{\varepsilon,\gamma}(t,x,x') = \vert\varepsilon\vert^{\beta\mathcal{Q}(q)} e^{\varepsilon^{1-\beta},\gamma-\beta} ( \varepsilon^{2\beta} t, \delta_{\varepsilon^\beta}(x), \delta_{\varepsilon^\beta}(x') )  .
$$
Since $\triangle^{\varepsilon^{1-\beta},\gamma-\beta}$ coincides with $\triangle^{\varepsilon^{1-\beta}}$ on every compact, we infer from the localization result, Theorem \ref{theo:localheat} (Section \ref{sec_localheat}), that
$$
\vert\varepsilon\vert^{\beta\mathcal{Q}(q)} e^{\varepsilon^{1-\beta},\gamma-\beta} ( \varepsilon^{2\beta} t, \delta_{\varepsilon^\beta}(x), \delta_{\varepsilon^\beta}(x') )
= \vert\varepsilon\vert^{\beta\mathcal{Q}(q)} e^{\varepsilon^{1-\beta}}( \varepsilon^{2\beta} t, \delta_{\varepsilon^\beta}(x), \delta_{\varepsilon^\beta}(x') ) + \mathrm{O}(\vert\varepsilon\vert^\infty)
$$
as $\varepsilon\rightarrow 0$, on every compact subset of $(0,+\infty)\times\R^n\times\R^n$. 
Besides, it follows from the homogeneity property \eqref{homog_deltaeps} that
$e^\varepsilon(t,x,x') = \lambda^{\mathcal{Q}(q)} e^{\varepsilon/\lambda}(\lambda^2 t,\delta_\lambda(x),\delta_\lambda(x'))$ for $\lambda>0$, and taking $\lambda=\varepsilon^{2\beta}$, we obtain
$$
\vert\varepsilon\vert^{\beta\mathcal{Q}(q)} e^{\varepsilon^{1-\beta}}( \varepsilon^{2\beta} t, \delta_{\varepsilon^\beta}(x), \delta_{\varepsilon^\beta}(x') )
= e^\varepsilon(t,x,x').
$$
The lemma follows.
\end{proof}

We note again, in the above proof, the crucial role of the parameter $\gamma$ in our construction. Here, it has been instrumental to be able to apply the localization result, and thus show that the small-time asymptotics of $e^{\varepsilon,\gamma}$ and of $e^\varepsilon$ coincide at the infinite order.

Using \eqref{expansion_kernel_epsgamma} and Lemma \ref{lem_eepsgamma_eeps}, we finally obtain that
\begin{equation*}
e^\varepsilon(t,x,x') = \widehat{e}^{q}(t,x,x') + \sum_{i=1}^N \varepsilon^i f_i^{q}(t,x,x') +\mathrm{o}\big(\vert\varepsilon\vert^N\big) 
\end{equation*}
at any order $N$, as $\varepsilon\rightarrow 0$, in $C^\infty((0,+\infty)\times\R^n\times\R^n)$ topology.

\subsection{End of the proof}\label{end_of_the_proof_main_thm}
The end of the proof is now similar to the proof of Theorem \ref{main_thm_weak}. Thanks to repeated applications of the localization result, Theorem \ref{theo:localheat} (hypoelliptic Kac's principle), we finally obtain
$$
\vert\varepsilon\vert^{\mathcal{Q}(q)} e_{\triangle,\mu}(\varepsilon^2t,\delta_\varepsilon(x),\delta_\varepsilon(x')) = \widehat{e}^{q}(t,x,x') + \sum_{i=1}^N \varepsilon^i f_i^{q}(t,x,x') +\mathrm{o}\big(\vert\varepsilon\vert^N\big) 
$$
at any order $N$, as $\varepsilon\rightarrow 0$, in $C^\infty$ topology, which is exactly \eqref{complete_expansion}.

Let us next establish the homogeneity property for the (smooth) functions $f^{q}_i$.

\begin{lemma}\label{lem_fi}
For every $i\in\N^*$ and every $\varepsilon\neq 0$, we have
$$
f^{q}_i(t,x,x') = \varepsilon^{-i} \vert\varepsilon\vert^{\mathcal{Q}(q)} f^{q}_i(\varepsilon^2 t,\delta_\varepsilon(x),\delta_\varepsilon(x'))  
$$
for all $(t,x,x')\in(0,+\infty)\times\R^n\times\R^n$.
\end{lemma}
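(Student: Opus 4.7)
The plan is to exploit the fact that the asymptotic expansion \eqref{complete_expansion} holds for every sequence $\varepsilon \to 0$, $\varepsilon \neq 0$, together with the composition rule $\delta_{\eta\varepsilon} = \delta_\varepsilon \circ \delta_\eta$ for dilations and the homogeneity \eqref{homog_nilp} of $\widehat{e}^{q}$. The whole argument reduces to uniqueness of coefficients in an asymptotic expansion; no new analytic estimates are required.

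First I would fix a nonzero real $\eta$ with $|\eta|$ small enough that $(\eta^2\tau, \delta_\eta(x), \delta_\eta(x'))$ stays in $(0,+\infty) \times V \times V$ for $(\tau,x,x')$ in any prescribed compact subset of $(0,+\infty)\times V\times V$, and compute the quantity $|\eta\varepsilon|^{\mathcal{Q}(q)} e_{\triangle,\mu}((\eta\varepsilon)^2\tau, \delta_{\eta\varepsilon}(x), \delta_{\eta\varepsilon}(x'))$ in two ways as $\varepsilon \to 0$. Applying \eqref{complete_expansion} directly to the parameter $\eta\varepsilon$ yields
$$
|\eta\varepsilon|^{\mathcal{Q}(q)} e_{\triangle,\mu}((\eta\varepsilon)^2\tau, \delta_{\eta\varepsilon}(x), \delta_{\eta\varepsilon}(x')) = \widehat{e}^{q}(\tau,x,x') + \sum_{i=1}^N (\eta\varepsilon)^i f^{q}_i(\tau,x,x') + \mathrm{o}(|\varepsilon|^N).
$$
On the other hand, using $\delta_{\eta\varepsilon} = \delta_\varepsilon \circ \delta_\eta$ and $|\eta\varepsilon|^{\mathcal{Q}(q)} = |\eta|^{\mathcal{Q}(q)}|\varepsilon|^{\mathcal{Q}(q)}$, the same quantity equals $|\eta|^{\mathcal{Q}(q)}$ times the left-hand side of \eqref{complete_expansion} evaluated at $(\eta^2\tau, \delta_\eta(x), \delta_\eta(x'))$ with parameter $\varepsilon$, which by a second application of \eqref{complete_expansion} gives
$$
|\eta|^{\mathcal{Q}(q)}\Big(\widehat{e}^{q}(\eta^2\tau, \delta_\eta(x), \delta_\eta(x')) + \sum_{i=1}^N \varepsilon^i f^{q}_i(\eta^2\tau, \delta_\eta(x), \delta_\eta(x')) + \mathrm{o}(|\varepsilon|^N)\Big).
$$

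Equating these two expansions and using uniqueness of coefficients in an asymptotic expansion in $\varepsilon$ (with $\eta, \tau, x, x'$ fixed, $\varepsilon \to 0$ through both signs), I would then match powers of $\varepsilon$. The $i=0$ match reproduces \eqref{homog_nilp} as a consistency check, while for $i \geq 1$ one obtains
$$
\eta^i f^{q}_i(\tau,x,x') = |\eta|^{\mathcal{Q}(q)} f^{q}_i(\eta^2\tau, \delta_\eta(x), \delta_\eta(x')),
$$
which rearranges exactly to the required identity, at least for $\eta$ in a punctured neighborhood of $0$ and $(\tau,x,x')$ in $(0,+\infty) \times V \times V$.

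Finally, to reach all $\varepsilon \neq 0$ and $(\tau,x,x') \in (0,+\infty)\times\R^n\times\R^n$, I would use that the functions $f_i^q$ are smooth on the whole $(0,+\infty)\times\R^n\times\R^n$ (as Schwartz kernels of the convolution operators $\mathcal{C}_i(t)$ built in Section \ref{taking_Skernels}) and that the identity just proved is a one-parameter group law in $\eta$: it is clearly closed under the substitutions $\eta \mapsto \eta_1\eta_2$ and $\eta \mapsto 1/\eta$, so the set of valid $\eta$ is a multiplicative subgroup of $\R\setminus\{0\}$ containing a neighborhood of $0$, hence all of $\R\setminus\{0\}$. Iterating with small $\eta$ also lets us move any $(x,x')\in\R^n\times\R^n$ back into $V\times V$, extending the identity to the full declared domain. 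The main obstacle will be simply the bookkeeping around signs (to cover $\varepsilon<0$, which requires using negative values of $\eta$ in the initial neighborhood) and the group-closure extension, but no further analytic ingredient is needed beyond what has already been established in the preceding sections.
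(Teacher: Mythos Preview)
Your proof is correct and follows essentially the same approach as the paper: compute $|\eta\varepsilon|^{\mathcal{Q}(q)} e_{\triangle,\mu}((\eta\varepsilon)^2\tau,\delta_{\eta\varepsilon}(x),\delta_{\eta\varepsilon}(x'))$ in two ways via $\delta_{\eta\varepsilon}=\delta_\varepsilon\circ\delta_\eta$ and match coefficients in the asymptotic expansion. The paper's proof swaps the roles of your two parameters --- it fixes the scaling factor $\varepsilon$ \emph{arbitrary} and sends an auxiliary parameter $s\to 0$ --- so the identity drops out directly for all $\varepsilon\neq 0$ and your group-law extension step becomes unnecessary.
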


\begin{proof}
Given any fixed $\varepsilon\neq 0$, by \eqref{complete_expansion}, we have on the one part
$$
\vert s\varepsilon\vert^{\mathcal{Q}(q)} e_{\triangle,\mu} ( s^2\varepsilon^2 t, \delta_{s\varepsilon}(x), \delta_{s\varepsilon}(x') ) = \widehat{e}^{q}(t,x,x') + \sum_{i=1}^N (s\varepsilon)^i f^{q}_i(t,x,x') +\mathrm{o}(s^N) 
$$
and on the other part
\begin{multline*}
\vert s\varepsilon\vert^{\mathcal{Q}(q)} e_{\triangle,\mu} ( s^2\varepsilon^2 t, \delta_{s\varepsilon}(x), \delta_{s\varepsilon}(x') ) = \vert \varepsilon\vert^{\mathcal{Q}(q)} \widehat{e}^{q}(\varepsilon^2 t,\delta_\varepsilon(x),\delta_\varepsilon(x')) \\
+ \vert \varepsilon\vert^{\mathcal{Q}(q)} \sum_{i=1}^N s^i f^{q}_i(\varepsilon^2 t,\delta_\varepsilon(x),\delta_\varepsilon(x')) +\mathrm{o}(s^N) 
\end{multline*}
for every $s\in\R$ with $\vert s\vert$ sufficiently small and all $t>0$ and $x,x'\in\R^n$. The result follows, since $\widehat{e}^{q}(t,x,x') = \vert \varepsilon\vert^{\mathcal{Q}(q)} \widehat{e}^{q}(\varepsilon^2 t,\delta_\varepsilon(x),\delta_\varepsilon(x'))$ (see \eqref{homog_nilp}).
\end{proof}

\begin{remark}
Applying Lemma \ref{lem_fi} with $\varepsilon=-1$, we obtain the ``oddness" property 
$$
f^{q}_{2j-1}(t,x,x') = -f^{q}_{2j-1}(t,\delta_{-1}(x),\delta_{-1}(x')) \qquad\forall j\in\N^*\qquad\forall (t,x,x')\in(0,+\infty)\times\R^n\times\R^n .
$$
In particular, we have $f^{q}_{2j-1}(t,0,0) = -f^{q}_{2j-1}(t,0,0)$ and thus $f^{q}_{2j-1}(t,0,0)=0$ for every $j\in\N^*$ and for every $t>0$. Note that, to obtain this property, it has been necessary to consider dilations $\delta_\varepsilon$ with $\varepsilon<0$, whereas dilations are most often considered only with $\varepsilon>0$ in the existing literature. We realized this fact in a discussion with Davide Barilari, whom we thank once again.
\end{remark}

\paragraph{Case $q$ regular.}
The case is treated exactly as in Section \ref{sec_proof_weak_main_thm}. We do not repeat the argument.

\part{Appendix}\label{part_appendix}

\appendix

\section{Schwartz kernels, heat kernels}\label{appendix_Schwartz}
Let $M$ be a smooth manifold. We set $\mathcal{D}(M)=C_c^\infty(M)$ and we denote by $\mathcal{D}'(M)$ the  space of distributions on $M$, i.e., the topological dual of $\mathcal{D}(M)$ endowed with the weak topology.
Let $\mu$ be a smooth measure (density) on $M$.

\paragraph{Schwartz kernels.}
According to the Schwartz kernel theorem, there is a linear bijection between $\mathcal{D}'(M\times M)$ and the set of bilinear continuous functionals on $\mathcal{D}(M)\times \mathcal{D}(M)$.
Given a linear continuous mapping $A: \mathcal{D}(M)\rightarrow \mathcal{D}'(M)$, the Schwartz kernel of $A$ is the unique distribution $[A]\in \mathcal{D}'(M\times M)$ defined by $\langle Af,g\rangle_{\mathcal{D}'(M),\mathcal{D}(M)} = \langle [A],g\otimes f\rangle$ for all $f,g\in C_c^\infty(M)$, where $\langle\cdot,\cdot\rangle$ is the duality bracket.

When $[A]\in C^0(M\times M)$, identifying the distribution bracket by an integral with respect to the measure $\mu\otimes\mu$ and denoting by $[A]_\mu$ the density function, we have the familiar formula 
$$
Af(q) = \int_M [A]_\mu(q,q') f(q')\, d\mu(q') \qquad \forall q\in M\qquad \forall f\in\mathcal{D}(M).
$$
We stress that, although the density function $[A]_\mu$ depends on $\mu$, given any $q\in M$, the absolutely continuous measure $[A]_\mu(q,\cdot)\, d\mu(\cdot)$ depends only on $A$: it does not depend on the smooth measure $\mu$, in the sense that $[A]_\mu(q,\cdot)\, d\mu(\cdot) = [A]_\nu(q,\cdot)\, d\nu(\cdot)$ for any other smooth measure $\nu$ on $M$.

Actually, in geometric terms, $[A]$ is a continuous section of the bundle $\pi_2^*(\Omega_M)$ on $M\times M$, where $\Omega_M$ is the line bundle of smooth measures (densities) on $M$ and $\pi_2:M\times M\rightarrow M$ is the projection defined by $\pi_2(q,q')=q'$.

Similarly, the diagonal part $[A]_\mu(q,q)\, d\mu(q)$ is an absolutely continuous measure, which does not depend on $\mu$. Denoting by $\mathcal{M}_f$ the operator of multiplication by $f$, we have
$$
\mathrm{Tr}(A \mathcal{M}_f) = \int_M [A]_\mu(q,q) f(q)\, d\mu(q) \qquad\forall f\in \mathcal{D}(M).
$$

\paragraph{Hilbert-Schmidt norm.}
When $[A]\in L^2(M\times M,\mu\otimes\mu)$, the operator $A\in L(L^2(M,\mu))$ is Hilbert-Schmidt and the Hilbert-Schmidt norm of $A$ is 
$$
\Vert A\Vert_{HS} = (\mathrm{Tr}(A^*A))^{1/2} = \Vert [A]_\mu\Vert_{L^2(M\times M,\mu\otimes\mu)} 
$$
and we recall that 
\begin{equation*}
\begin{split}
& \Vert A\Vert_{L(L^2(M,\mu))} \leq \Vert A\Vert_{HS} \\
& \Vert AB\Vert_{HS} \leq \Vert A\Vert_{L(L^2(M,\mu))} \Vert B\Vert_{HS} \\
& \Vert AB\Vert_{HS} \leq \Vert A\Vert_{HS} \Vert B\Vert_{L(L^2(M,\mu))} 
\end{split}
\end{equation*}
where $A$ and $B$ are bounded operators on $L^2(M,\mu)$, with $A$ or $B$ Hilbert-Schmidt according to the inequality under consideration.

\paragraph{Action of pseudo-differential operators on Schwartz kernels.}
Given any pseudo-differential operators $T_1$ and $T_2$ on $M$, we have $[T_1AT_2^*]_\mu = (T_1)_q (T_2)_{q'}[A]_\mu$ (where $T_2^*$ is the transpose in $L^2(M,\mu)$), i.e.,
$$
T_1AT_2^* f(q) = \int_M (T_1)_q (T_2)_{q'}[A]_\mu (q,q') f(q')\, d\mu(q') .
$$


\paragraph{Heat kernels.}
Let $A:D(A)\rightarrow L^2(M,\mu)$ be a densely defined operator on $L^2(M,\mu)$, generating a strongly continuous semigroup $(e^{tA})_{t\geq 0}$. 
For every $t>0$, the heat kernel $e_A(t)$ associated with $A$ is the measure on $M$ defined as the Schwartz kernel of $e^{tA}$, i.e., $e_A(t)=[e^{tA}]$. 
Of course, it does not depend on $\mu$.


When this measure has a density $[e^{tA}]_\mu$ with respect to $\mu$ which is locally integrable, we define the heat kernel $e_{A,\mu}(t,\cdot,\cdot)$ associated with $A$ and with the measure $\mu$ by $e_{A,\mu}(t,q,q') = [e^{tA}]_\mu(q,q')$. This means that 
$$
u(t,q) = (e^{tA}f)(q) = \int_M e_{A,\mu}(t,q,q')f(q')\, d\mu(q')
$$
is the unique solution to $\partial_t u-Au=0$ for $t>0$, $u(0,\cdot)=f(\cdot)$, for every $f\in C_c^\infty(M)$.
In other words, we have
$$
e_A(t)(q,q') = [e^{tA}](q,q') = e_{A,\mu}(t,q,q')\, d\mu(q') \qquad\forall t>0\qquad \forall q,q'\in M.
$$
As said above, this expression depends only on $A$, not on the smooth measure $\mu$.

\medskip

Extending $e_{A,\mu}$ by $0$ for $t<0$, for any fixed $q'\in M$ the mapping $(t,q)\mapsto e_{A,\mu}(t,q,q')$ is also solution of $(\partial_t-A)e_{A,\mu}(\cdot,\cdot,q')=\delta_{(0,q')}$ in the sense of distributions, where the distribution pairing is considered with respect to the measure $dt\times d\mu(q)$ on $\R\times M$. 

We gather hereafter some useful facts.

\begin{itemize}
\item
Let $\varphi:M\rightarrow M$ be a diffeomorphism, representing a change of variable in the manifold $M$. We have $\varphi^*\mu = \vert J_\mu(\varphi)\vert\mu$, where $J_\mu(\varphi)$ is the Jacobian of $\varphi$ with respect to $\mu$, and where $\varphi^*\mu$ is the pullback of $\mu$ under $\varphi$.
Then
\begin{equation}\label{formulas_kernel}
\begin{split}
e_{\varphi^*A\varphi_*,\mu}(t,q,q') &= \vert J_\mu(\varphi)(q')\vert\, e_{A,\mu}(t,\varphi(q),\varphi(q'))  \\
e_{A,\varphi^*\mu}(t,q,q') &= \frac{1}{\vert J_\mu(\varphi)(q')\vert}\, e_{A,\mu}(t,q,q')  \\
e_{\varphi^*A\varphi_*,\varphi^*\mu}(t,q,q') &= e_{A,\mu}(t,\varphi(q),\varphi(q'))
\end{split}
\end{equation}
for all $t>0$ and $(q,q')\in M^2$.
Note that the last one follows from the two first ones, in which we have replaced $A$ with $\varphi^*A\varphi_*$ in the second one.
The two first formulas in \eqref{formulas_kernel} are not symmetric, but there is no contradiction there: indeed if $A$ is selfadjoint in $L^2(M,\mu)$ then $e_{A,\mu}$ is symmetric, but $A$ need not be selfadjoint in $L^2(M,\varphi^*\mu)$ and thus $e_{A,\varphi^*\mu}$ need not be symmetric.

Actually, we have
$$
e_{\varphi^*A\varphi_*,\mu}(t,q,q')\, d(\varphi^*\mu)(q') = e_{\varphi^*A\varphi_*,\nu}(t,q,q')\, d\nu(q') 
$$
for any other smooth measure $\nu$ on $M$.

\item As a particular case, given any $\lambda>0$, we have
$\lambda\, e_{A,\lambda\mu} = e_{A,\mu} $.

\item
Given any $\varepsilon>0$, the kernel associated with $\varepsilon^2 A$ and with the measure $\nu$ is 
\begin{equation*}
e_{\varepsilon^2 A, \nu}(t,q,q') = e_{A,\nu}(\varepsilon^2 t,q,q')
\end{equation*}
for all $t>0$ and $(q,q')\in M^2$.

\item
We assume that $\mu=h\nu$ with $h$ a positive smooth function on $M$ (density of $\mu$ with respect to $\nu$). Then
$h(q')e_{A,\mu}(t,q,q')=e_{A,\nu}(t,q,q')$
for all $(t,q,q')\in(0,+\infty)\times M\times M$, or equivalently,
\begin{equation}\label{kernel_change_measure}
e_{A,\mu}(t,q,q')\, d\mu(q') = e_{A,\nu}(t,q,q')\, d\nu(q').
\end{equation}


\end{itemize}

\section{Subelliptic estimates and smoothing properties for hypoelliptic heat semigroups}\label{sec:uniform_subelliptic_estimates}
This section can be read independently of the rest.

Let $d\in\N^*$. We set $\Lambda = \left(\mathrm{id}-\sum_{i=1}^d \partial_i^2\right)^{1/2}$. Denoting by $\Vert u\Vert_{L^2(\R^d)}$ the $L^2$ norm of a function $u\in L^2(\R^d)$ and by $\langle\cdot,\cdot\rangle_{L^2(\R^d)}$ the corresponding inner product, we recall that the Hilbert space $H^s(\R^d)$ is equipped with the norm $\Vert u\Vert_{H^s(\R^d)} = \Vert \Lambda^s u\Vert_{L^2(\R^d)}$ and with the inner product $\langle u,v\rangle_{H^s(\R^d)} = \langle u,\Lambda^{2s}v\rangle_{L^2(\R^d)}$.

Let $p\in\N^*$ and let $\mathcal{K}$ be a compact set (in our applications, we will take either $\mathcal{K}=[-\varepsilon_0,\varepsilon_0]$ or $\mathcal{K}=[-\varepsilon_0,\varepsilon_0]\times K$ for some $\varepsilon_0>0$ and for some compact subset $K$ of $M$). For every $\tau\in\mathcal{K}$, let $Y_0^\tau,Y_1^\tau,\ldots,Y_p^\tau$ be smooth vector fields on $\R^d$ and let $\mathbb{V}^\tau$ be a smooth function on $\R^d$, all of them depending continuously on $\tau$ in $C^\infty$ topology. 
We set
\begin{equation} \label{def_Ltau}
L^\tau = \sum_{i=1}^p (Y_i^\tau)^2 + Y_0^\tau - \mathbb{V}^\tau . 
\end{equation}

\subsection{Local estimates}\label{sec:local_estimates}
Let $U$ be an arbitrary open subset of $\R^d$.
Of course, all local estimates hereafter could be settled as well on a $d$-dimensional manifold.

\subsubsection{Uniform local subelliptic estimates}\label{sec:uniform_local_subelliptic_estimates}

\begin{theorem}\label{thm_subelliptic_uniform}
We assume that the Lie algebra $\mathrm{Lie}(Y_0^\tau,Y_1^\tau,\ldots,Y_p^\tau)$ generated by the vector fields is equal to $\R^d$ at any point of $U$, with a degree of nonholonomy $r$ that is uniform with respect to $\tau\in\mathcal{K}$ (\emph{uniform weak H\"ormander condition on $U$}).
Then there exists $\sigma>0$ such that, for every $s\in\R$, for all smooth functions $\zeta$ and $\zeta'$ compactly supported in $U$ with $\zeta'=1$ on the support of $\zeta$, 
$$
\Vert \zeta u\Vert_{H^{s+\sigma}(\R^d)} \leq \Cst(s,\zeta,\zeta') \left( \Vert \zeta' L^\tau u\Vert_{H^s(\R^d)} + \Vert \zeta' u \Vert_{H^s(\R^d)} \right) \qquad \forall u\in C^\infty(\R^d)\qquad \forall \tau\in\mathcal{K} .
$$
\end{theorem}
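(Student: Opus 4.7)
My plan is to adapt the classical H\"ormander--Kohn--Rothschild--Stein proof of subelliptic estimates, keeping track at every step of how constants depend on the vector fields, and then invoke compactness of $\mathcal{K}$ and continuity in $C^\infty$ topology to turn pointwise-in-$\tau$ estimates into uniform ones. The non-uniform version is well known, so the real content is the bookkeeping that shows every constant that appears can be bounded by a finite collection of $C^k$-seminorms of the $Y_i^\tau$ and $\mathbb{V}^\tau$ on a compact neighbourhood of $\mathrm{supp}(\zeta')$, all of which are uniformly bounded for $\tau\in\mathcal{K}$.

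\textbf{Step 1: the basic energy estimate.} First I would prove that for cutoffs $\zeta\prec\zeta'$ supported in $U$,
\[
\sum_{i=1}^p \Vert \zeta Y_i^\tau u\Vert_{L^2}^2 \leq C\bigl(\vert \langle \zeta^2 L^\tau u, u\rangle_{L^2}\vert + \Vert \zeta' u\Vert_{L^2}^2\bigr),
\]
by expanding $\langle L^\tau u,\zeta^2 u\rangle$ and integrating by parts. The constant involves only $\sup \vert Y_i^\tau(\zeta)\vert$, $\sup\vert \mathrm{div}\, Y_i^\tau\vert$, $\sup\vert Y_0^\tau\zeta\vert$ and $\sup\vert\mathbb{V}^\tau\vert$ on $\mathrm{supp}(\zeta')$, which are uniformly bounded in $\tau\in\mathcal{K}$ by continuity in $C^\infty$ topology and compactness of $\mathcal{K}$.

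\textbf{Step 2: bracket iteration.} Following Kohn, I would introduce the family of iterated commutators $\mathcal{V}_k^\tau$ built out of $Y_0^\tau, Y_1^\tau,\ldots, Y_p^\tau$ up to length $k$, and show inductively that for every $V\in\mathcal{V}_k^\tau$ one has
\[
\Vert \zeta V u\Vert_{H^{-1+2^{-(k-1)}}}^2 \leq C_k\bigl(\Vert \zeta' L^\tau u\Vert_{L^2}^2 + \Vert \zeta' u\Vert_{L^2}^2\bigr),
\]
the induction step being the standard identity $[A,B]u = ABu - BAu$ combined with a fractional-Sobolev trick using the pseudodifferential operator $\Lambda^{2^{-k}}$. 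Under the uniform weak H\"ormander condition of order $r$, by compactness the $\mathcal{V}_r^\tau$ span $T_x\R^d$ uniformly on $\mathrm{supp}(\zeta')\times\mathcal{K}$ (one gets a uniform lower bound on the smallest singular value of the matrix formed by selecting $d$ of the $V$'s). This yields, with $\sigma=2^{-(r-1)}$ (not optimal, but sufficient),
\[
\Vert \zeta u\Vert_{H^\sigma}^2 \leq C\bigl(\Vert \zeta' L^\tau u\Vert_{L^2}^2 + \Vert \zeta' u\Vert_{L^2}^2\bigr)
\]
with $C$ uniform in $\tau\in\mathcal{K}$, giving the statement for $s=0$.

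\textbf{Step 3: general $s$.} To pass from $s=0$ to arbitrary $s\in\R$, I would apply the $s=0$ estimate to $\Lambda^s(\zeta'' u)$ for a slightly enlarged cutoff $\zeta\prec\zeta''\prec\zeta'$, using the mapping properties of the pseudodifferential operator $\Lambda^s$ and commuting $\Lambda^s$ through $L^\tau$. The commutator $[\Lambda^s, L^\tau]$ is a pseudodifferential operator of order $s+1$ whose symbol is polynomial in the coefficients of $L^\tau$ and their derivatives, hence has operator norm $H^{s+\sigma-1}\to H^\sigma$ bounded uniformly on $\mathcal{K}$. Absorbing the extra term by induction on $s$ (or by the standard reiteration lemma applied to $\Lambda^s u$), one obtains the stated estimate with the same $\sigma$.

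\textbf{Main obstacle.} The only genuine difficulty is the uniformity in $\tau$. In the classical proof every constant is implicit, and it is not entirely obvious that the cascade of commutator estimates in Step 2, which involves compositions with pseudodifferential operators whose symbols contain the coefficients of the $Y_i^\tau$, produces constants depending only on finitely many $C^k$-seminorms. The point, which I would make explicit, is that at each iteration one only needs a fixed finite number of derivatives of the coefficients (bounded by a polynomial in $r$), so that by continuity of $\tau\mapsto Y_i^\tau$ in $C^\infty$ topology and compactness of $\mathcal{K}$ one can replace $C(\tau)$ by $\sup_{\tau\in\mathcal{K}} C(\tau)<+\infty$. The uniform nondegeneracy of the bracket-generating condition (uniform rank $r$) is crucial here, because it is what prevents the Rothschild--Stein lifting constants from blowing up as $\tau$ varies.
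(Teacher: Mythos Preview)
Your proposal is correct and follows essentially the same approach as the paper: both adapt Kohn's commutator-iteration proof of the subelliptic estimate, establishing the basic energy inequality for $\sum\Vert Y_i^\tau u\Vert_{L^2}^2$, then iterating commutators to reach the full bracket family, and finally passing to general $s$ via $[\Lambda^s,L^\tau]$. The paper packages Step~2 slightly differently---it introduces the sets $\mathcal{P}_\sigma$ of order-zero pseudodifferential operators satisfying the subelliptic bound, proves $\mathcal{P}_\sigma$ is a two-sided ideal stable under adjoints with $Y_j^\tau\Lambda^{-1}\in\mathcal{P}_{1/2}$ and $[Y_j^\tau,P]\in\mathcal{P}_{\sigma/4}$, yielding $\sigma=1/2^{2r-1}$---whereas you bound norms of iterated commutators directly and claim $\sigma=2^{-(r-1)}$; these are two bookkeepings of the same Kohn mechanism, and the discrepancy in the (anyway suboptimal) value of $\sigma$ is immaterial. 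Your emphasis on the uniformity-in-$\tau$ issue (finitely many $C^k$-seminorms suffice, compactness of $\mathcal{K}$) is exactly the point the paper makes when it says the parameter-dependent version ``is straightforward to obtain by following the classical proofs.''
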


This is a parameter-dependent version of the famous local subelliptic H\"ormander estimate (see \cite{Ho-67}). It is straightfoward to obtain by following the classical proofs, but for completeness, we provide the main steps (this is also useful in view of deriving global subelliptic estimates, further). The proof given in \cite{Ho-67} gives an optimal gain of regularity, which is $\sigma=2/r$. Here, we rather follow the simpler proof given by Kohn in \cite{Kohn1973} (see also \cite{HelfferNier}), which does not give the optimal gain of regularity and yields $\sigma=1/2^{2r-1}$. 

Note that, in the case where $\overline{U}$ is compact, Theorem \ref{thm_subelliptic_uniform} implies that there exists $C>0$ such that
$\Cst\, \Lambda^{\sigma} \leq C\,\mathrm{id} - L^\tau - (L^\tau)^* \leq \Cst\, \Lambda^2$,
for every $\tau\in\mathcal{K}$, where the inequalities are written for positive selfadjoint operators on $L^2(U)$.

Taking $s\in\{s,s+\sigma,\ldots,s+k\sigma\}$, we also infer from the theorem that, for every $k\in\N$, for all smooth functions $\zeta$ and $\zeta'$ compactly supported in $U$ with $\zeta'=1$ on the support of $\zeta$, we have
\begin{multline}\label{thm_subelliptic_uniform_cor}
\Vert \zeta u\Vert_{H^{k\sigma}(\R^d)} \leq \Cst(k,\zeta,\zeta') \left( \Vert \zeta' (L^\tau)^k u\Vert_{L^2(\R^d)} + \cdots+\Vert \zeta' L^\tau u\Vert_{L^2(\R^d)} + \Vert \zeta' u \Vert_{L^2(\R^d)} \right)  \\
\forall u\in C^\infty(\R^d)\qquad \forall \tau\in\mathcal{K} .
\end{multline}

Another useful consequence is the following.
For every $k\in\Z$, we define the Hilbert space $\mathcal{D}^\tau_k$ as the completion for the norm $\Vert u\Vert_{\mathcal{D}^\tau_k} = \Vert (C\,\mathrm{id} - L^\tau - (L^\tau)^*)^k u\Vert_{L^2(\R^d)}$ of the set of smooth functions on $\R^d$ of compact support.
For $k=1$, the set $\mathcal{D}^\tau_1=\{ u\in L^2(\R^d)\ \mid\ L^\tau u\in L^2(\R^d)\}$ is the maximal domain of the operator $L^\tau$ on $L^2(\R^d)$.
It follows from \eqref{thm_subelliptic_uniform_cor} that, for every $k\in\N$, for all smooth functions $\zeta$ and $\zeta'$ compactly supported in $U$ with $\zeta'=1$ on the support of $\zeta$, we have
\begin{equation*}
\Cst(k,\zeta,\zeta') \Vert \zeta u \Vert_{H^{k\sigma}(\R^d)}  \leq \Vert \zeta' u \Vert_{\mathcal{D}^\tau_k }  \leq \Cst(k,\zeta,\zeta') \Vert \zeta' u \Vert_{H^{2k}(\R^d)} \qquad \forall u\in C^\infty(\R^d)\qquad \forall \tau\in\mathcal{K} .
\end{equation*}
By duality, we have the converse inequalities for $k$ negative.

\begin{remark}
Note that, in the statement of Theorem \ref{thm_subelliptic_uniform}, we use cut-off functions $\zeta$ and $\zeta'$ (as in \cite{Kohn1973}), in order to obtain the subelliptic estimates for any smooth function $u$ on $\R^d$, and not only for any smooth function of compact support on $U$. This is because, in Appendix \ref{sec:localregheat}, we are going to apply these estimates to solutions $u(t)=e^{t L^\tau}f$ of the heat equation $(\partial_t-L^\tau)u=0$, which are not of compact support.
\end{remark}

\begin{proof}[Proof of Theorem \ref{thm_subelliptic_uniform}.]
We only recall the main steps of the proof, without providing all details.
We modify the function $\mathbb{V}^\tau$ and the vector fields $Y_j^\tau$ (and thus $L^\tau$) outside of $U$ so that their coefficients and all their derivatives are uniformly bounded on $\R^d$, uniformly with respect to $\tau$. Localization is performed by bracketting (see the localization lemma in \cite{Kohn1973}).

First of all, we note that $(Y_j^\tau)^*=-Y_j^\tau-(\mathrm{div}\, Y_j^\tau)$, 
where the dual is taken in $L^2(\R^d)$. It follows that, given any smooth function $g$ on $\R^d$, we have
$$
\vert\langle Y_j^\tau u,gu\rangle_{L^2(\R^d)}\vert = \frac{1}{2} \langle u, (g\,\mathrm{div}\, Y_j^\tau+Y_j^\tau g)u\rangle_{L^2(\R^d)}\vert \leq \Cst(g) \Vert u\Vert_{L^2(\R^d)}^2  
\qquad \forall u\in C_c^\infty(\R^d)\qquad \forall \tau\in\mathcal{K} .
$$
We easily infer that
\begin{multline*}
\sum_{i=1}^p \Vert Y_j^\tau u\Vert_{L^2(\R^d)}^2 \leq \Cst\left( \left\vert\langle L^\tau u,u\rangle_{L^2(\R^d)}\right\vert+\Vert u\Vert_{L^2(\R^d)}^2\right) \leq \Cst \left( \Vert L^\tau u\Vert_{L^2(\R^d)}^2+\Vert u\Vert_{L^2(\R^d)}^2 \right) \\
\forall u\in C_c^\infty(\R^d) \qquad \forall \tau\in\mathcal{K} .
\end{multline*}
For every $\sigma\in(0,1]$, let $\mathcal{P}_\sigma$ be the set of pseudo-differential operators $P$ of order $0$ satisfying
$$
\Vert Pu\Vert_{H^\sigma(\R^d)} \leq \Cst(\sigma) \left( \Vert L^\tau u\Vert_{L^2(\R^d)} + \Vert u \Vert_{L^2(\R^d)}  \right) 
\qquad \forall u\in C_c^\infty(\R^d) \qquad \forall \tau\in\mathcal{K} .
$$
for every smooth function $u$ on $\R^d$ of compact support. Note that $0\leq \sigma_1\leq \sigma_2$ implies $\mathcal{P}_{\sigma_2}\subset\mathcal{P}_{\sigma_1}$. The proof consists of proving that:
\begin{enumerate}
\item $\mathcal{P}_\sigma$ is stable by taking the adjoint, for every $\sigma\in(0,1/2]$;
\item $\mathcal{P}_\sigma$ is a left and right ideal in the ring of pseudo-differential operators of order $0$;
\item $Y_j^\tau\Lambda^{-1}\in\mathcal{P}_\sigma$ for every $j\in\{0,1,\ldots,p\}$, for every $\sigma\in(0,1/2]$;
\item if $P\in \mathcal{P}_\sigma$ then $[Y_j^\tau,P]\in \mathcal{P}_{\sigma/4}$, for every $j\in\{0,1,\ldots,p\}$ and for every $\sigma\in(0,1/2]$.
\end{enumerate}
Each of the steps is quite straightforward to establish, following \cite{HelfferNier,Kohn1973} and using elementary properties of brackets and of pseudo-differential operators.
Then, noting that
$
[Y_j^\tau,Y_k^\tau]\Lambda^{-1} = [Y_j^\tau,Y_k^\tau\Lambda^{-1}] - Y_k^\tau\Lambda^{-1} \Lambda[Y_j^\tau,\Lambda^{-1}],
$
using that $Y_k^\tau\Lambda^{-1}\in\mathcal{P}_{1/2}$ by the third property and thus that $[Y_j^\tau,Y_k^\tau\Lambda^{-1}]\in\mathcal{P}_{1/8}$, using that $\Lambda[Y_j^\tau,\Lambda^{-1}]$ is a pseudo-differential operator of order $0$ and using the first and the third properties, we have $Y_k^\tau\Lambda^{-1} \Lambda[Y_j^\tau,\Lambda^{-1}]\in\mathcal{P}_{1/2}$, and we infer that $[Y_j^\tau,Y_k^\tau]\Lambda^{-1}\in\mathcal{P}_{1/8}$. By recurrence, we get that $Y_I^\tau\Lambda^{-1}\in\mathcal{P}_{1/2^{2k-1}}$, where $Y_I^\tau$ is any Lie bracket of the vector fields $Y_j^\tau$ of length $k$.

Using the uniform H\"ormander condition, it follows that $\mathcal{P}_{1/2^{2r-1}}$ coincides with the ring of pseudo-differential operators of order $0$. 
Indeed, we have proved that $\Lambda^{-1} \partial_{x_i} \in \mathcal{P}_{1/2^{2k-1}}$ for every $i\in\{1,\ldots,d\}$; using the properties of ideal and of stability by taking the adjoint, we have then $\Lambda^{-1} \partial_{x_i}^2 \Lambda^{-1} \in \mathcal{P}_{1/2^{2k-1}}$ and thus $\Lambda^{-2} \partial_{x_i}^2 = \Lambda^{-1} \partial_{x_i}^2 \Lambda^{-1} +  \Lambda^{-1} [ \Lambda^{-1} \partial_{x_i}^2 \Lambda^{-1}, \Lambda ]  \in \mathcal{P}_{1/2^{2k-1}}$, and finally, $\mathrm{id} \in \mathcal{P}_{1/2^{2k-1}}$.
The theorem is thus proved for $s=0$.

We then prove the result for any $s\in\R$ by applying the inequality established for $s=0$ to $\Lambda^s u$ and by managing the bracket $[L^\tau,\Lambda^s]$, using in particular the fact that $[(Y_i^\tau)^2,\Lambda^s] = 2[Y_i^\tau,\Lambda^s]Y_i^\tau+[Y_i^\tau,[Y_i^\tau,\Lambda^s]]$.
\end{proof}


\subsubsection{Application: uniform local smoothing property for heat semigroups}\label{sec:localregheat}
Throughout the section, we assume that the operator $L^\tau$ defined by \eqref{def_Ltau} on the domain $\mathcal{D}^\tau_1$ generates a strongly continuous semigroup $(e^{tL^\tau})_{t\geq 0}$ on $L^2(\R^d)$, for every $\tau\in\mathcal{K}$, satisfying the following uniform estimate: for all positive real numbers $t_0<t_1$ we have 
\begin{equation}\label{uniformestimatesemigroup0}
\Vert e^{tL^\tau} \Vert_{L(L^2(\R^d))}\leq\Cst(t_0,t_1) \qquad\forall t\in[t_0,t_1]\qquad \forall \tau\in\mathcal{K}.
\end{equation}
This is the case if $\mathbb{V}^\tau$ is uniformly bounded below on $\R^d$ and if either $L^\tau$ is selfadjoint for any $\tau$ or the vector fields $Y_0^\tau,Y_1^\tau,\ldots,Y_p^\tau$ are bounded on $\R^d$ as well as their first derivatives uniformly with respect to $\tau$. Indeed, under these assumptions, reasoning as in the proof of Lemma \ref{lem_semigroup}, there exists $C\geq 0$ such that the operator $L^\tau-C\,\mathrm{id}$ on $L^2(\R^d)$ is closed and dissipative, as well as its adjoint, and thus the strongly continuous semigroup $(e^{t(L^\tau-C\,\mathrm{id})})_{t\geq 0}$ that it generates is a contraction semigroup, and then $(e^{tL^\tau})_{t\geq 0}$ is a strongly continuous semigroup, with uniform norm estimates.

We denote by $e^\tau$ the 
heat kernel associated with $L^\tau$ for the Lebesgue measure on $\R^d$.
The function $e^\tau$ is defined on $(0,+\infty)\times\R^d\times\R^d$ and depends on three variables $(t,x,y)$. In what follows, the notation $\partial_1$ (resp., $\partial_2$, $\partial_3$) denotes the partial derivative with respect to $t$ (resp., to $x$, to $y$). The indices $x$ in $\chi_1$ and $y$ in $\chi_2$ below mean that, when $e^\tau$ is taken at $(t,x,y)$, $\chi_1$ is taken at $x$ and $\chi_2$ is taken at $y$.

\begin{corollary}\label{cor_heat_uniform}
We assume that the Lie algebra $\mathrm{Lie}(Y_1^\tau,\ldots,Y_p^\tau)$ generated by the vector fields $Y_i^\tau$, $i=1,\ldots,p$, is equal to $\R^d$ at any point of $U$, with a degree of nonholonomy $r$ that is uniform with respect to $\tau\in\mathcal{K}$ (\emph{uniform strong H\"ormander condition on $U$}).
Note that this assumption is more restrictive\footnote{This more restrictive assumption ensures that $L^\tau-\partial_t$ is hypoelliptic. 
Otherwise hypoellipticity may fail: take $Y_0=\partial_{x_1}$ and all $Y_i$, $i\geq 1$, not depending on $x_1$: then $(Y_0,Y_1,\ldots,Y_n)$ satisfies the H\"ormander condition, but not $(\partial_t+Y_0,Y_1,\ldots,Y_n)$.} than in Appendix \ref{sec:uniform_local_subelliptic_estimates}, because we have excluded $Y_0^\tau$.

For all $\chi_1,\chi_2\in C_c^\infty(U)$, for all positive real numbers $0<t_0<t_1$, for all $s,s'\in\R$, for all $(k,\alpha,\beta)\in\N\times\N^d\times\N^d$, we have
\begin{equation}\label{cor_heat_uniform_e}
\left\Vert (\chi_1)_x (\chi_2)_y \partial_1^k \partial_2^\alpha \partial_3^\beta \, e^\tau(\cdot,\cdot,\cdot) \right\Vert_{L^\infty((t_0,t_1)\times\R^d\times\R^d)}    \leq   \Cst(\chi_1,\chi_2,t_0,t_1,k,\alpha,\beta)
\qquad \forall \tau\in\mathcal{K}
\end{equation}
and
$$
\Vert \chi_1 e^{t L^\tau}\chi_2 f\Vert_{H^s(\R^d)}\leq \Cst(\chi_1,\chi_2,t_0,t_1,s,s') \Vert f\Vert_{H^{s'}(\R^d)}
\qquad\forall t\in[t_0,t_1]\quad \forall f\in C_c^\infty(\R^d)\quad \forall \tau\in\mathcal{K}.
$$
\end{corollary}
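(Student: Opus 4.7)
The plan is to apply Theorem \ref{thm_subelliptic_uniform} to the parabolic operator $P^\tau := \partial_t - L^\tau$ (or rather to $-P^\tau$, which fits the form $\sum (Y_i^\tau)^2 + (Y_0^\tau-\partial_t) - \mathbb{V}^\tau$ to which Theorem \ref{thm_subelliptic_uniform} applies directly) in the variables $(t,x)\in\mathbb{R}^{d+1}$, and symmetrically to its adjoint in $(t,y)$. The key observation is that under the \emph{strong} Hörmander condition of the corollary, the vector fields $Y_1^\tau,\ldots,Y_p^\tau,\, Y_0^\tau-\partial_t$ on $\mathbb{R}^{d+1}$ satisfy the uniform \emph{weak} Hörmander condition, with degree of nonholonomy bounded by $r+1$: the brackets of the $Y_i^\tau$ alone span $T_x\mathbb{R}^d$ uniformly, and the additional field then supplies $\partial_t$. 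Thus Theorem \ref{thm_subelliptic_uniform} yields, with a uniform gain $\sigma>0$ and uniform constants in $\tau\in\mathcal{K}$, local parabolic subelliptic estimates on $(0,+\infty)\times\mathbb{R}^d$.

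First I would establish a rough uniform bound on the kernel, following the device used in the proof of Lemma \ref{lem_rough_uniform}. Pick a smooth positive weight $a$ (e.g.\ $a(x)=\langle x\rangle^{-N}$) and an integer $p$ large enough so that $\Lambda_a^{-p}:=a\,(\mathrm{id}-\triangle_R)^{-p/2}$ is Hilbert--Schmidt on $L^2(\mathbb{R}^d)$. The uniform semigroup estimate \eqref{uniformestimatesemigroup0} then forces $\|\Lambda_a^{-p}\,e^{tL^\tau}\|_{HS}\leq \Cst(t_1)$ uniformly on $[0,t_1]\times\mathcal{K}$; since this Hilbert--Schmidt norm equals $\|(\Lambda_a^{-p})_x\,e^\tau(t,\cdot,\cdot)\|_{L^2_{x,y}}$, it yields a uniform bound on $e^\tau$ in $L^\infty_t H^{-p}_{x,\mathrm{loc}} L^2_y$. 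The same argument on the right gives a symmetric bound, and combining the two provides a uniform bound of $e^\tau$ in some negative Sobolev space in all three variables on any slab $(-t_1,t_1)\times\mathbb{R}^d\times\mathbb{R}^d$ (extending by zero for $t<0$, as in the proof of Theorem \ref{theo:localheat}).

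Next I would bootstrap using the iterated subelliptic estimate \eqref{thm_subelliptic_uniform_cor}. By hypoellipticity of $P^\tau$ (a consequence of the parabolic uniform weak Hörmander condition), $e^\tau$ is smooth on $(0,+\infty)\times\mathbb{R}^d\times\mathbb{R}^d$, and it satisfies $P^\tau_{(t,x)}e^\tau=0$ and $(P^\tau)^*_{(t,y)}e^\tau=0$ in the sense of distributions. Applying \eqref{thm_subelliptic_uniform_cor} to $-P^\tau$ in the $(t,x)$ block with nested cutoffs supported near $[t_0,t_1]\times\mathrm{supp}(\chi_1)$, treating $y$ as a parameter, gains $\sigma$ derivatives in $(t,x)$ at each step with constants uniform in $\tau$ and $y$; alternating this with the analogous estimate for $-(P^\tau)^*$ in $(t,y)$ and iterating finitely many times yields uniform $H^k$ bounds on $\zeta(t,x,y)\,e^\tau$ in all three variables, for any $k\in\mathbb{N}$. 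Sobolev embedding then delivers the uniform pointwise bound \eqref{cor_heat_uniform_e} on any derivative $\partial_1^k\partial_2^\alpha\partial_3^\beta e^\tau$ over $[t_0,t_1]\times\mathrm{supp}(\chi_1)\times\mathrm{supp}(\chi_2)$.

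The second assertion is then immediate: $\chi_1 e^{tL^\tau}\chi_2$ is an integral operator whose kernel $\chi_1(x)\chi_2(y)\,e^\tau(t,x,y)$ is smooth, compactly supported in $(x,y)$, and has every derivative bounded uniformly in $t\in[t_0,t_1]$ and $\tau\in\mathcal{K}$, hence it maps $H^{s'}(\mathbb{R}^d)$ continuously to $H^s(\mathbb{R}^d)$ for any $s,s'\in\mathbb{R}$ with uniform norm. The main obstacle in this plan is the first step: the uniform $L^2$-operator bound \eqref{uniformestimatesemigroup0} alone says nothing about the \emph{kernel} of $e^{tL^\tau}$, and one must combine it with the Hilbert--Schmidt multiplier trick and the parabolic uniform weak Hörmander condition to extract a uniform distributional bound on $e^\tau$ that can feed the subelliptic bootstrap; once such a rough bound is in hand, the remainder of the argument is a mechanical iteration of Theorem \ref{thm_subelliptic_uniform}.
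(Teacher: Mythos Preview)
Your proposal is correct and follows essentially the same strategy as the paper: initialize with the Hilbert--Schmidt trick (exactly as in the paper's Lemma~\ref{lemma_init}), bootstrap via uniform parabolic subelliptic estimates, and conclude by Sobolev embedding and the Hilbert--Schmidt bound for the smoothing estimate on the semigroup.

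The only organizational difference is in the bootstrap step. The paper packages both equations satisfied by $e^\tau$ into a single operator
\[
P^\tau = (L^\tau)_x + (L^\tau)^*_{x'} - 2\,\partial_t
\]
on $\mathbb{R}^{1+2d}$, checks the (weak) H\"ormander condition for the associated family of vector fields on $\mathbb{R}^{1+2d}$, and applies Theorem~\ref{thm_subelliptic_uniform} once in all three variables simultaneously. You instead work with the two separate parabolic operators $\partial_t-(L^\tau)_x$ and $\partial_t-(L^\tau)^*_y$ on $\mathbb{R}^{1+d}$ and alternate, treating the remaining spatial variable as a parameter. Both routes are valid; the paper's single-operator bootstrap is a bit more economical, while your alternating scheme avoids having to exhibit a H\"ormander sum-of-squares form for the combined operator. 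One small point of precision: when you say ``treating $y$ as a parameter \ldots\ with constants uniform in $y$'', what is actually needed (and what works) is the $L^2_y$-valued version of the subelliptic estimate rather than a pointwise-in-$y$ statement, since the initialization only gives $L^2_y$ control; this extension is standard.
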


The corollary says that the family $(e^\tau)_{\tau\in\mathcal{K}}$ is bounded in $C^{\infty}((0,+\infty)\times\R^n\times\R^n)$, uniformly with respect to $\tau$ (for the Fr\'echet topology defined by seminorms on an exhaustive sequence of compact subsets of $(0,+\infty)\times\R^n\times\R^n$).
It also states that, with the above notations, $\chi_1 e^{t L^\tau} \chi_2 \in L(H^{s'}(\R^d),H^s(\R^d))$ for every $t>0$, where the norm of this operator can be bounded by a positive constant depending on $\chi_1$ and $\chi_2$ but not depending on $\tau$.

\begin{proof}
Defining on $\R\times\R^d\times\R^d$ the vector fields 
$$
\tilde Y_0^\tau=\begin{pmatrix} -2\\ Y_0^\tau\\ (Y_0^\tau)^*\end{pmatrix},\qquad
\tilde Y_i^\tau=\begin{pmatrix} 0 \\ Y_i^\tau\\ (Y_i^\tau)^*\end{pmatrix},\quad i=1,\ldots,p,
$$
we consider the operator 
$$
P^\tau = (L^\tau)_x + (L^\tau)^*_{x'} - 2 \partial_t=\sum_{i=1}^p (\tilde Y_i^\tau)^2+\tilde Y_0^\tau - \mathbb{V}^\tau 
$$
on $L^2(\R\times\R^d\times\R^d)$, where $(L^\tau)_x = L^\tau\otimes\mathrm{id}$ and $(L^\tau)^*_{x'}=\mathrm{id}\otimes (L^\tau)^*$ are differential operators acting on functions $g=g(x,x')$.
The uniform weak H\"ormander condition is satisfied on $U$ for the $p+1$ vector fields $(\tilde Y_0^\tau,\tilde Y_1^\tau,\ldots,\tilde Y_p^\tau)$ on $\R^{1+2d}$. 
Applying Theorem \ref{thm_subelliptic_uniform} to the heat kernel $e^\tau$, which satisfies $P^\tau e^\tau=0$ on $(0,+\infty)\times U\times U$, 
we obtain that $\Vert\zeta e^\tau\Vert_{H^{s+\sigma}(\R^{1+2d})} \leq\Cst(s,\zeta,\zeta')\Vert \zeta' e^\tau\Vert_{H^s(\R^{1+2d})}$, and by recurrence, 
$$
\Vert\zeta e^\tau\Vert_{H^{s+k\sigma}(\R^{1+2d})} \leq \Cst(s,k,\zeta,\zeta') \Vert \zeta' e^\tau\Vert_{H^s(\R^{1+2d})} \qquad\forall\tau\in\mathcal{K}\quad \forall k\in\N^* \quad \forall s\in\R
$$
for all smooth functions $\zeta$ and $\zeta'$ on $(0,+\infty)\times U\times U$ with compact support, satisfying $\zeta'=1$ on the support of $\zeta$. In what follows, we take $\zeta=\chi_0\otimes\chi_1\otimes\chi_2$ and $\zeta'=\chi_0'\otimes\chi_1'\otimes\chi_2'$.
In order to obtain \eqref{cor_heat_uniform_e}, we need an initialization, which is given in the following lemma.

\begin{lemma}\label{lemma_init}
There exists $s\in\R$ such that $\Vert (\chi_0')_t (\chi_1')_x (\chi_2')_{x'} e^\tau\Vert_{H^s(\R^{1+2d})}<+\infty$.
\end{lemma}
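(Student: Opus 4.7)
The approach is a Hilbert--Schmidt regularization argument, in the spirit of Lemma \ref{lem_rough_uniform}. Using the semigroup estimate \eqref{uniformestimatesemigroup0}, one has $\sup_{(t,\tau) \in \supp(\chi_0') \times \mathcal{K}} \Vert e^{tL^\tau}\Vert_{L(L^2(\R^d))} \leq \Cst$. To convert this operator bound into a kernel bound, I would choose $\chi_1'',\chi_2'' \in C_c^\infty(\R^d)$ with $\chi_1'' \equiv 1$ on a neighborhood of $\supp(\chi_1')$ (and similarly for $\chi_2''$), fix an \emph{even} integer $p>d/2$, and set $\Lambda = (1-\Delta)^{1/2}$. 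Since the Bessel kernel $G_p$ of $\Lambda^{-p}$ lies in $L^2(\R^d)$ as soon as $p>d/2$, the kernels $\chi_1''(x) G_p(x-y)$ and $G_p(x-y)\chi_2''(y)$ are square integrable on $\R^{2d}$, so both $\chi_1''\Lambda^{-p}$ and $\Lambda^{-p}\chi_2''$ are Hilbert--Schmidt on $L^2(\R^d)$. The composition $\chi_1''\Lambda^{-p} e^{tL^\tau}\Lambda^{-p}\chi_2''$ is then trace class (HS $\cdot$ bounded $\cdot$ HS), and in particular Hilbert--Schmidt, uniformly in $(t,\tau)$. Its Schwartz kernel is $\chi_1''(x)\chi_2''(x')\,\Lambda_x^{-p}\Lambda_{x'}^{-p} e^\tau(t,x,x')$, so this function lies in $L^2(\R^{2d})$ with a uniform bound.

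Next I would invert the regularization at the distributional level, exploiting the fact that for $p$ even, $\Lambda^p = (1-\Delta)^{p/2}$ is a genuine differential operator of order $p$. A direct Leibniz expansion gives the pseudolocal identity $\chi_1'(x)\,\Lambda_x^p\bigl((1-\chi_1''(x))h\bigr) = 0$: each surviving term either contains a factor of $(1-\chi_1'')$, which vanishes on $\supp(\chi_1')$, or a derivative of $(1-\chi_1'')$, which vanishes on a neighborhood of $\supp(\chi_1')$ since $\chi_1'' \equiv 1$ there. Writing $e^\tau = \Lambda_x^p \Lambda_{x'}^p (\Lambda_x^{-p}\Lambda_{x'}^{-p} e^\tau)$ and applying this identity in both variables yields
\begin{equation*}
\chi_1'(x)\chi_2'(x')\,e^\tau(t,x,x') = \chi_1'(x)\chi_2'(x')\,\Lambda_x^p \Lambda_{x'}^p \bigl[\chi_1''(x)\chi_2''(x')\,\Lambda_x^{-p}\Lambda_{x'}^{-p} e^\tau(t,x,x')\bigr].
\end{equation*}
Since $\Lambda_x^p\Lambda_{x'}^p$ is an isometry from $L^2(\R^{2d})$ onto $H^{-p}_x H^{-p}_{x'}$ and multiplication by the smooth compactly supported function $\chi_1'(x)\chi_2'(x')$ is bounded on $H^{-p}_x H^{-p}_{x'}$, the $L^2$ kernel bound of the previous paragraph transfers to $\sup_{(t,\tau)}\Vert\chi_1'(x)\chi_2'(x') e^\tau(t,\cdot,\cdot)\Vert_{H^{-p}_x H^{-p}_{x'}} \leq \Cst$.

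Finally, multiplying by $\chi_0'(t)$ and integrating over its compact $t$-support yields $(\chi_0')_t(\chi_1')_x(\chi_2')_{x'} e^\tau \in L^2_t H^{-p}_x H^{-p}_{x'}$, uniformly in $\tau$. The elementary Fourier inequality
\begin{equation*}
(1+|\xi_t|^2+|\xi_x|^2+|\xi_{x'}|^2)^{-2p} \leq (1+|\xi_x|^2)^{-p}(1+|\xi_{x'}|^2)^{-p}
\end{equation*}
provides the continuous embedding $L^2_t H^{-p}_x H^{-p}_{x'} \hookrightarrow H^{-2p}(\R^{1+2d})$, so the lemma holds with $s=-2p$. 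The main technicality is the distributional inversion step; choosing $p$ even is the decisive trick that makes $\Lambda^p$ a honest differential operator, so that the pseudolocal identity follows from a Leibniz computation and no non-trivial pseudodifferential calculus is needed.
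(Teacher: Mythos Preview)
Your proof is correct and follows essentially the same Hilbert--Schmidt regularization strategy as the paper: convert the uniform operator bound \eqref{uniformestimatesemigroup0} into a kernel bound by composing with a Hilbert--Schmidt smoothing operator, then read off membership in a negative Sobolev space. The paper regularizes on one side only, using the weighted operator $\Lambda'_{-m,-m}u(x)=\langle x\rangle^{-m}\Lambda^{-m}u(x)$ (which is Hilbert--Schmidt on $L^2(\R^d)$ for $m$ large) in place of your $\chi''_1\Lambda^{-p}$, and leaves the final passage to $H^s(\R^{1+2d})$ implicit; your two-sided regularization together with the even-$p$ Leibniz inversion makes that passage more explicit, but the underlying idea is the same.
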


\begin{proof}[Proof of Lemma \ref{lemma_init}.]
Recall that $\Lambda = (\mathrm{id}-\sum_{i=1}^d\partial_i^2)^{1/2}$. Given any $j\in\Z$, we define on $\R^d$ the elliptic pseudo-differential operator $\Lambda'_{j,j}$ of order $j$ by $\Lambda'_{j,j} u(x) = \langle x\rangle^j \Lambda^j u(x)$ for every $u\in C^\infty(\R^d)$, where $\langle x\rangle = (1+\Vert x\Vert_2^2)^{1/2}$ (Japanese bracket), with $\Vert\cdot\Vert_2$ the Euclidean norm in $\R^d$. 
There exists $m\in\N^*$ such that $\Lambda'_{-m,-m}$ is Hilbert-Schmidt (as an operator on $L^2(\R^d)$), i.e., $\left\Vert \Lambda'_{-m,-m} \right\Vert_{HS}<+\infty$. 
Indeed, the Schwartz kernel of $\Lambda'_{-m,-m}$ is
$$
[\Lambda'_{-m,-m}](x,y) = \frac{1}{(2\pi)^n}\frac{1}{(1+\Vert x\Vert_2^2)^{m/2}} \int \frac{e^{i(x-y).\xi}}{(1+\Vert \xi\Vert_2^2)^{m/2}}\, d\xi  
$$
which is the product of $(1+\Vert x\Vert_2^2)^{-m/2}$ and of the Fourier transform taken at $y-x$ of the function $\xi\mapsto (1+\Vert \xi\Vert_2^2)^{-m/2}$, and it is therefore square-integrable when $m$ is chosen large enough.
Using the general facts (concerning Schwartz kernels) recalled in Appendix \ref{appendix_Schwartz} with $A = e^{t L^\tau}$, $T_1=\Lambda'_{-m,-m} \chi_1'$ and $T_2 = \chi_2'$, we obtain that
\begin{multline*}
\left\Vert (\chi_0)_t(\Lambda'_{-m,-m})_x(\chi_1')_x(\chi_2')_{x'}e^\tau \right\Vert_{L^2(\R^{1+2d})}
= \left\Vert (\chi_0)_t \Lambda'_{-m,-m} \chi_1' e^{tL^\tau}\chi_2' \right\Vert_{HS} \\
\leq \left\Vert \Lambda'_{-m,-m} \right\Vert_{HS} \left\Vert (\chi_0)_t e^{t L^\tau}\right\Vert_{L(L^2(\R^d))}
\leq \Cst(t_0,t_1)
\end{multline*}
when $\supp(\chi_0)\subset[t_0,t_1]$, where we have used the uniform estimate \eqref{uniformestimatesemigroup0} to derive the latter inequality.
\end{proof}

We have therefore proved \eqref{cor_heat_uniform_e}. To get the estimate on the semigroup, it suffices to write that, for all $j,k\in\Z$,
$$
\left\Vert \Lambda^j \chi_1 e^{tL^\tau}\chi_2\Lambda^k\right\Vert_{L(L^2(\R^d))} \leq \left\Vert \Lambda^j \chi_1 e^{tL^\tau}\chi_2\Lambda^k\right\Vert_{HS} = \left\Vert \Lambda^j_x(\chi_1)_x \Lambda^k_{x'} (\chi_2)_{x'} e^\tau(t,\cdot,\cdot)\right\Vert_{L^2(\R^d\times\R^d)}
$$
and then to apply \eqref{cor_heat_uniform_e}.
\end{proof}

\subsection{Global estimates}\label{sec:global_estimates}
In the sequel, we use the notation $\langle x\rangle = (1+\Vert x\Vert_2^2)^{1/2}$ for $x\in\R^d$ (Japanese bracket), where $\Vert x\Vert_2$ is the Euclidean norm of $x$.
Throughout the section, we assume that there exists $n\in\N^*$ such that the function $\mathbb{V}^\tau$ and all coefficients of the vector fields $Y_0^\tau,Y_1^\tau,\ldots,Y_p^\tau$, as well as all their derivatives, are bounded by multiples of $\langle x\rangle^n$.
In other words, we assume that the growth at infinity of the vector fields $Y_i^\tau$ and of $\mathbb{V}^\tau$, as well as their derivatives, is at most polynomial.

The objective of this section is to establish global subelliptic estimates and global smoothing properties of the corresponding heat semigroup and heat kernel.
We follow \cite{EckmannHairer}, where alternative global subelliptic H\"ormander estimates can be found (see \cite[Theorems 3.1 and 4.1]{EckmannHairer}) but are not sufficient for our needs.

Given any $\alpha\in\R$ and $\beta\in\R$, we define on $L^2(\R^d)$ the elliptic selfadjoint pseudo-differential operator $\Lambda_{\alpha,\beta}=\frac{1}{2}(\Lambda_{\alpha,\beta}'+(\Lambda_{\alpha,\beta}')^*)$ of order $\alpha$, with 
\begin{equation*}
\Lambda_{\alpha,\beta}' u(x) = \langle x\rangle^\beta \Big( \mathrm{id}-\sum_{i=1}^d\partial_{x_i}^2 \Big)^{\alpha/2} u(x) = \langle x\rangle^\beta \Lambda^\alpha u(x) .
\end{equation*}
and we define the weighted Hilbert space $H^\alpha_\beta(\R^d)$ as the completion of $C_c^\infty(\R^d)$ for the norm 
$$
\Vert u\Vert_{H^\alpha_\beta(\R^d)} = \Vert \Lambda_{\alpha,\beta} u\Vert_{L^2(\R^n)} .
$$
The inner product is $\langle u,v\rangle_{H^\alpha_\beta(\R^d)} = \langle \Lambda_{\alpha,\beta}u,\Lambda_{\alpha,\beta}v\rangle_{L^2(\R^n)} $.
We have the following properties (see also \cite{EckmannHairer}):
\begin{itemize}
\item $\Lambda_{\alpha,\beta} \in L(H^{\alpha'}_{\beta'}(\R^d), H^{\alpha'-\alpha}_{\beta'-\beta}(\R^d))$.
\item We have the continuous embedding $H^{\alpha'}_{\beta'}(\R^d) \hookrightarrow H^{\alpha}_{\beta}(\R^d)$ whenever $\alpha'\geq\alpha$ and $\beta'\geq\beta$, with a compact embedding if both inequalities are strict. 
\item The dual of $H^{\alpha}_{\beta}(\R^d)$ with respect to the pivot space $L^2(\R^d)=H^0_0(\R^d)$ is $H^{-\alpha}_{-\beta}(\R^d)$.
\end{itemize}

We denote by $(Y_i^\tau)_{i\geq 0}$ the family of vector fields consisting of the vector fields $Y_0^\tau,Y_1^\tau,\ldots,Y_p^\tau$ completed with all their successive Lie brackets.

\subsubsection{Uniform global subelliptic estimates}\label{sec:uniform_global_subelliptic_estimates}
\begin{definition}
Following \cite{EckmannHairer}, we say that the \emph{uniform polynomial weak H\"ormander condition} is satisfied if there exist nonnegative integers $N_0$ and $N_1$ such that
\begin{equation}\label{UPH_nondegen}
\Vert y\Vert_2^2 \leq \Cst \langle x\rangle^{2N_0} \sum_{i=0}^{N_1} \langle Y_i^\tau(x),y\rangle^2
\qquad \forall(x,y)\in\R^d\times\R^d\qquad\forall\tau\in\mathcal{K}.
\end{equation}
\end{definition}

\begin{theorem}\label{thm_subelliptic_uniform_global}
Under the uniform polynomial weak H\"ormander condition, there exist $\sigma>0$ and $N\in\N$ such that
$$
\Vert u\Vert_{H^{s+\sigma}_\beta(\R^d)} \leq \Cst(s,\beta) \left( \Vert L^\tau u\Vert_{H^s_{\beta+N}(\R^d)} + \Vert u\Vert_{H^s_{\beta+N}(\R^d)} \right) 
\qquad \forall s,\beta\in\R \qquad \forall u\in C_c^\infty(\R^d).
$$
\end{theorem}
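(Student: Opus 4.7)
The strategy is to run the Kohn-style argument outlined in the proof of Theorem \ref{thm_subelliptic_uniform}, but in the scale of weighted Sobolev spaces $H^\alpha_\beta(\R^d)$. The polynomial weight $\langle x\rangle^{\beta}$ plays the role that compactly supported cut-offs $\zeta,\zeta'$ played in the local estimate, while the uniform polynomial weak Hörmander condition \eqref{UPH_nondegen} replaces the pointwise Hörmander condition and provides quantitative control with a polynomial loss $\langle x\rangle^{2N_0}$. Throughout, I track both the gain in differential regularity (a power $1/2^k$ of $\Lambda$) and the loss in the polynomial weight (a fixed finite power of $\langle x\rangle$), so that at the end I obtain an estimate with the prescribed form.

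First I would reduce to the case $s=0$ and $\beta=0$ of the inequality, and then recover the general case by conjugation: the operator $\Lambda_{s,\beta}$ is an elliptic pseudo-differential operator of order $s$ with coefficients of polynomial type, so commuting it with $L^\tau$ produces (lower-order) pseudo-differential operators of polynomial type whose contribution is absorbed by increasing the weight from $\beta$ to $\beta+N$. The basic estimate is the weighted integration-by-parts inequality
\[
\sum_{j=1}^p \Vert Y_j^\tau u\Vert_{L^2_{\beta'}(\R^d)}^2 \leq \Cst(\beta') \left( \Vert L^\tau u\Vert_{L^2_{\beta'+N_2}(\R^d)}^2 + \Vert u\Vert_{L^2_{\beta'+N_2}(\R^d)}^2 \right)
\]
for $u\in C_c^\infty(\R^d)$, which follows from $\langle L^\tau u, \langle x\rangle^{2\beta'} u\rangle$ after moving half of each $Y_j^\tau$ to the other side and absorbing the commutator $[Y_j^\tau,\langle x\rangle^{2\beta'}]$, since the coefficients of $Y_j^\tau$ grow at most polynomially, uniformly in $\tau$. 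Next, following \cite{Kohn1973,HelfferNier}, I define for each $\sigma>0$ the class $\mathcal{P}_\sigma$ of zero-order pseudo-differential operators $P$ for which there exists $N\in\N$ with
\[
\Vert Pu\Vert_{H^\sigma_{\beta}(\R^d)} \leq \Cst(\sigma,\beta) \left( \Vert L^\tau u\Vert_{L^2_{\beta+N}(\R^d)} + \Vert u\Vert_{L^2_{\beta+N}(\R^d)} \right)
\]
for every $\beta\in\R$, every $u\in C_c^\infty(\R^d)$, and every $\tau\in\mathcal{K}$ (the integer $N$ is allowed to depend on $P$ and to grow at each step). I then verify the four Kohn properties: stability of $\mathcal{P}_\sigma$ by adjoint for $\sigma\leq 1/2$; left/right ideal property inside zero-order pseudo-differential operators; membership $Y_j^\tau\Lambda^{-1}\in\mathcal{P}_{1/2}$ obtained from the basic estimate above; and the commutator rule $P\in\mathcal{P}_\sigma \Rightarrow [Y_j^\tau,P]\in\mathcal{P}_{\sigma/4}$. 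Each time a commutator is taken, polynomial coefficients appear, which is handled by increasing the weight $\beta+N$; this is consistent because the class is defined with an implicit polynomial loss.

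By iterating the commutator rule along the finite family of brackets of length $\leq N_1$ of the $Y_j^\tau$, and then invoking the uniform polynomial weak Hörmander condition \eqref{UPH_nondegen}, I obtain
\[
\langle x\rangle^{-N_0} \partial_{x_i} \Lambda^{-1} \in \mathcal{P}_{1/2^{2N_1-1}}, \qquad i=1,\ldots,d,
\]
and hence, multiplying by $\langle x\rangle^{N_0}$ and using the ideal property, $\mathrm{id}\in \mathcal{P}_{1/2^{2N_1-1}}$ with some final integer $N$ absorbing $N_0$ and all the polynomial losses accumulated in the finitely many bracket steps. This yields the desired estimate with $\sigma=1/2^{2N_1-1}$ and a suitable $N$.

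The main technical obstacle is the bookkeeping of polynomial weights through commutators: each commutator $[Y_j^\tau,P]$ introduces a factor whose coefficients have polynomial growth (uniformly in $\tau$, by the assumption on the $Y_j^\tau$), and each commutator $[\Lambda^{\sigma},\langle x\rangle^\beta]$ produces a remainder of lower symbolic order but with slightly worse polynomial behavior. The key point is that the loss remains \emph{finite} (a fixed integer $N$ depending only on $r$, $N_0$, $N_1$, $s$, $\beta$), because the Kohn iteration terminates in a bounded number of steps thanks to the uniform bound $N_1$ in \eqref{UPH_nondegen}, and because \eqref{UPH_nondegen} converts the pointwise control of $\partial_{x_i}$ into a weighted $L^2$ estimate with a single fixed polynomial factor $\langle x\rangle^{N_0}$. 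Uniformity in $\tau$ is automatic throughout, since all constants coming from the symbol calculus depend only on uniform bounds on the polynomial growth of the coefficients and their derivatives, which are built into the standing assumption on $(Y_0^\tau,\ldots,Y_p^\tau,\mathbb{V}^\tau)$.
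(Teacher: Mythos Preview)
Your approach is correct and arrives at the same conclusion, but the organization differs from the paper's. The paper does \emph{not} run Kohn's iteration directly in the weighted spaces $H^\alpha_\beta(\R^d)$. Instead it first introduces the global symbol classes $\mathcal{S}^{m,\delta}$ (polynomial growth in $x$) and the associated mapping properties on $H^\alpha_\beta$, and then reruns the \emph{local} Kohn proof of Theorem~\ref{thm_subelliptic_uniform} on each unit ball $B(x,1)$ while tracking how the constants depend on the center $x$; these constants turn out to be $\leq \Cst\,\langle x\rangle^{N}$. A partition of unity subordinate to the cover by unit balls (following \cite{EckmannHairer}) then converts the family of local estimates with polynomially growing constants into the single global weighted estimate, first for $s=\beta=0$, and finally for general $s,\beta$ by commuting with $\Lambda_{s,\beta}$.

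Your route---defining $\mathcal{P}_\sigma$ with a built-in polynomial loss and verifying Kohn's four steps globally---also works, but it implicitly relies on the same symbol calculus $\mathcal{S}^{m,\delta}$ that the paper makes explicit (this is what underlies your sentence about commutators producing ``remainders of lower symbolic order but with slightly worse polynomial behavior''). The paper's local-plus-partition route has the advantage of recycling Theorem~\ref{thm_subelliptic_uniform} almost verbatim and isolating the globalization in one step; your direct route is more self-contained but requires re-checking each of Kohn's steps in the weighted calculus. One small correction: you write that $N$ may depend on $s,\beta$, but the theorem asserts a single $N$ independent of $s,\beta$ (only $\Cst(s,\beta)$ varies); the paper notes explicitly that after commuting with $\Lambda_{s,\beta}$ one may need to enlarge $N$, but it can still be taken uniform in $s,\beta$.
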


For every $k\in\Z$, for every $\beta\in\R$, we define the Hilbert space $\mathcal{D}^\tau_{k,\beta}$ as the completion of $C_c^\infty(\R^d)$ for the norm 
$$
\Vert u\Vert_{\mathcal{D}^\tau_{k,\beta}} = \Vert (C\,\mathrm{id} - L^\tau - (L^\tau)^*)^k u\Vert_{H^0_\beta(\R^d)} 
$$
where $C>0$ is a fixed constant such that $(C\,\mathrm{id} - L^\tau - (L^\tau)^*)^k$ is a positive selfadjoint operator on $L^2(\R^d)$ (of maximal domain $\mathcal{D}^\tau_{k,0}$).
When $\beta=0$, we denote simply $\mathcal{D}^\tau_k=\mathcal{D}^\tau_{k,\beta}$.
For $k=1$, the set $\mathcal{D}^\tau_1=\{ u\in L^2(\R^d)\ \mid\ L^\tau u\in L^2(\R^d)\}$ is the maximal domain of the operator $L^\tau$ on $L^2(\R^d)$.

Note that the weight $\beta$ in the iterated domains will be used only in the proof of Corollary \ref{cor_heat_uniform_global} further.

By iteration, Theorem \ref{thm_subelliptic_uniform_global} implies that 
\begin{multline*}
\Vert u\Vert_{H^{s+k\sigma}_{\beta-kN}(\R^d)} \leq \Cst(s,k,\beta) \left( \Vert (L^\tau)^k u\Vert_{H^s_\beta(\R^d)} + \cdots + \Vert L^\tau u\Vert_{H^s_\beta(\R^d)} + \Vert u\Vert_{H^s_\beta(\R^d)} \right)  \\
\forall s,\beta\in\R\qquad \forall k\in\N^* \qquad \forall u\in C^\infty(\R^d) \qquad \forall\tau\in\mathcal{K}
\end{multline*}
and therefore we have the continuous embeddings
\begin{equation}\label{inclusions_global}
H^{2k}_{\beta+2kN}(\R^d) \hookrightarrow \mathcal{D}^\tau_{k,\beta} \hookrightarrow H^{k\sigma}_{\beta-kN}(\R^d)
\qquad\forall k\in\N\qquad\forall\beta\in\R\qquad \forall\tau\in\mathcal{K}
\end{equation}
with constants depending on $k$ but not on $\tau$. By duality, we have
\begin{equation}\label{inclusions_global_neg}
H^{-k\sigma}_{\beta+kN}(\R^d) \hookrightarrow \mathcal{D}^\tau_{-k,\beta} \hookrightarrow H^{-2k}_{\beta-2kN}(\R^d) \qquad\forall k\in\N\qquad\forall\beta\in\R\qquad \forall\tau\in\mathcal{K} .
\end{equation}

\begin{remark}\label{rem_variant_thm_subelliptic_uniform_global}
Actually, by bracketting (see the sketch of proof of the theorem hereafter), we obtain the following slight variant of Theorem \ref{thm_subelliptic_uniform_global}: for all smooth functions $\zeta$ and $\zeta'$ on $\R^n$, which are globally bounded as well as all their derivatives, satisfying $\zeta'=1$ on the support of $\zeta$, we have
\begin{multline*}
\Vert \zeta u\Vert_{H^{s+\sigma}_{\beta-N}(\R^d)} \leq \Cst(s,\beta,\zeta,\zeta') \left( \Vert \zeta' L^\tau u\Vert_{H^s_\beta(\R^d)} + \Vert \zeta' u\Vert_{H^s_\beta(\R^d)} \right)  \\
\forall s,\beta\in\R \qquad \forall u\in C^\infty(\R^d) \qquad \forall\tau\in\mathcal{K} .
\end{multline*}
Note that the functions $\zeta$ and $\zeta'$ are not assumed here to be of compact support. This fact will be useful in Appendix \ref{sec:globalregheat} to localize in time only (but not in space).
\end{remark}

\begin{proof}[Proof of Theorem \ref{thm_subelliptic_uniform_global}.]
For every $m\in\R$ and every $\delta\in\R$, we define the following class of global symbols $\mathcal{S}^{m,\delta}$, by considering their growth in $x$: a symbol $a:\R^d\times\R^d\setminus \{0\}\rightarrow \C$ (of order $m$) is a smooth function having an asymptotic expansion $a\sim \sum _{j=0}^{+\infty} a_{m-j}$, where $a_{m-j}$ is smooth and such that, for any $k\in\N$, the function $a_k=a-\sum _{j=0}^{k-1}a_{m-j}$ satisfies
$$
\vert \partial_x^\alpha \partial_\xi^\beta a_k (x,\xi)\vert \leq  \Cst(\alpha,\beta) \langle x\rangle^{\delta-\vert\alpha\vert} (1+\Vert\xi\Vert_2)^{m-k-\vert\beta\vert}  \qquad\forall \alpha,\beta\in\N^d.
$$
We denote by $\Psi^{m,\delta}$ the set of pseudo-differential operators on $\R^d$ whose symbol belongs to $\mathcal{S}^{m,\delta}$. We have the following useful property: if $a\in\mathcal{S}^{m,\delta}$ then $\mathrm{Op}(a) \in L(H^\alpha_\beta(\R^d),H^{\alpha-m}_{\beta-\delta}(\R^d))$. Here, $\mathrm{Op}$ is the usual left (for instance) quantization operator.

Now, if we consider in $\mathcal{S}^{m,\delta}$ a family of symbols $(a^\tau)_{\tau\in\mathcal{K}}$, with the above constants $\Cst(\alpha,\beta)$ being uniform with respect to $\tau$, then we have the uniform estimate
$$
\Vert \mathrm{Op}(a^\tau)\Vert_{L\big(H^\alpha_\beta(\R^d),H^{\alpha-m}_{\beta-\delta}(\R^d)\big)}\leq \Cst(\alpha,\beta) \qquad\forall \tau\in\mathcal{K}.
$$
Thanks to this general remark, we obtain that $Y^\tau_i \in L(H^\alpha_\beta(\R^d),H^{\alpha-1}_{\beta-N}(\R^d))$, that $\Lambda^{-1}Y^\tau_i \in L(H^\alpha_\beta(\R^d),H^\alpha_{\beta-N}(\R^d))$ and that $[Y^\tau_i,\Lambda^\gamma] \in L(H^\alpha_\beta(\R^d),H^{\alpha-\gamma}_{\beta-N}(\R^d))$, with uniform constants in the estimates (see also \cite[Lemma 3.2]{EckmannHairer} for other useful properties of the spaces $H^\alpha_\beta(\R^d)$).

These preliminaries being done, we follow the steps of the proof of Theorem \ref{thm_subelliptic_uniform},  keeping track of constants. It is then straightforward to establish that
\begin{equation*}
\begin{split}
& \sum_{j=1}^p \Vert Y_j^\tau u\Vert_{L^2(\R^d)}^2 \leq \Cst \left( \left\vert\langle L^\tau u,u\rangle_{L^2(\R^d)}\right\vert + \langle x\rangle^{2N} \Vert u\Vert_{L^2(\R^d)}^2 \right)  \\
& \Vert u\Vert_{H^\sigma(\R^d)} \leq \Cst \langle x\rangle^N \left( \Vert L^\tau u\Vert_{L^2(\R^d)} +  \Vert u\Vert_{L^2(\R^d)} \right) \\
& \qquad\qquad\qquad\qquad\qquad\qquad\qquad\qquad \forall u\in C_c^\infty(B(x,1))\qquad \forall x\in\R^d \qquad\forall \tau\in\mathcal{K}
\end{split}
\end{equation*}
where $N\in\N$ is a sufficiently large integer. 
Then, noting that, roughly, $\Vert u\Vert_{H^\alpha_\beta(\R^d)} \simeq \langle x\rangle^\beta\Vert u\Vert_{H^\alpha(\R^d)}$ for every $u\in C_c^\infty(B(x,1))$ and for all $\alpha,\beta\in\R$, using a partition of unity (see \cite[Proof of Theorem 4.1]{EckmannHairer} for details), we obtain
$$
\Vert u\Vert_{H^\sigma(\R^d)} \leq \Cst  \left( \Vert L^\tau u\Vert_{H^0_N(\R^d)} +  \Vert u\Vert_{H^0_N(\R^d)} \right) \\
\qquad \forall u\in C_c^\infty(\R^d) \qquad\forall \tau\in\mathcal{K} .
$$
Using the brackets $[\Lambda_{s,\beta},Y_i^\tau]$, $[\Lambda_{s,\beta},L^\tau]$, increasing the integer $N$ if necessary (but $N$ remains uniform with respect to $s$ and $\beta$), we then obtain
\begin{equation*}
\sum_{j=1}^p \Vert Y_j^\tau u\Vert_{H^s_\beta(\R^d)}^2 \leq \Cst(s,\beta) \left( \left\vert\langle L^\tau u,u\rangle_{H^s_\beta(\R^d)}\right\vert + \Vert u\Vert_{H^s_{\beta+N}(\R^d)}^2 \right)  
\qquad \forall u\in C_c^\infty(\R^d) \qquad\forall \tau\in\mathcal{K} 
\end{equation*}
and we then establish the global estimate of Theorem \ref{thm_subelliptic_uniform_global}. We do not provide any details.
\end{proof}

\begin{remark}\label{rem_polynomial_Hormander}
It is interesting to note that the uniform polynomial weak H\"ormander condition \eqref{UPH_nondegen} is satisfied for any $m$-tuple $(Y_0,Y_1,\ldots,Y_p)$ of polynomial vector fields that satisfy the weak H\"ormander condition at every point with a uniform degree of nonholonomy. 

Indeed, defining as above the family $(Y_i)_{i\geq 0}$ by completing the $(p+1)$-tuple $(Y_0,\ldots,Y_p)$ with their iterated Lie brackets, by the assumption, there exists a large enough integer $N_1$ such that the span of the finite family $(Y_i)_{0\leq i\leq N_1}$ is equal to $\R^n$ at every point. Defining the matrix
$$
P(x) = \begin{pmatrix}
Y_0(x) & Y_1(x) & \cdots & Y_{N_1}(x)
\end{pmatrix} \qquad \forall x\in\R^n,
$$
we have to prove that there exists $N_0\in\N$ such that
\begin{equation}\label{coerciv_N0}
y^\top P(x) P(x)^\top y \geq \Cst \langle x\rangle^{-2N_0} \Vert y\Vert_2^2\qquad \forall x,y\in\R^n.
\end{equation}
By \cite[Lemma 2]{Ho-58}, it is known that, for every polynomial function $Q$ on $\R^n$ such that $Q(x)>0$ for every $x\in\R^n$, there exists an integer $N\in\N$ such that $Q(x)\geq \frac{\Cst}{\langle x\rangle^{2N}}$. In other words, the decay at infinity of any polynomial is polynomial, it cannot be arbitrarily small. This non-obvious fact follows from a \L{}ojasiewicz inequality combined with an inversion argument.
The existence of $N_0$ such that \eqref{coerciv_N0} is satisfied follows.
\end{remark}

\subsubsection{Application: uniform global smoothing property of heat semigroups}\label{sec:globalregheat}
We say that the \emph{uniform polynomial strong H\"ormander assumption} is satisfied if there exist nonnegative integers $N_0$ and $N_1$ such that
\begin{equation}\label{UPH_nondegen_1}
\Vert y\Vert_2^2 \leq \Cst\, \langle x\rangle^{2N_0} \sum_{i=1}^{N_1} \langle Y_i^\tau(x),y\rangle^2
\qquad \forall (x,y)\in\R^d\times\R^d\qquad\forall \tau\in\mathcal{K} .
\end{equation}
Compared with \eqref{UPH_nondegen}, in \eqref{UPH_nondegen_1} we have excluded the vector field $Y_0^\tau$: the sum starts at $i=1$.
By a similar argument as the one elaborated in Remark \ref{rem_polynomial_Hormander}, note that \eqref{UPH_nondegen_1} is satisfied for polynomial vector fields satisfying the strong H\"ormander condition at every point, with a uniform degree of nonholonomy.

Throughout this section, we assume that the operator $L^\tau: D(L^\tau)=\mathcal{D}^\tau_1\rightarrow L^2(\R^d)$ generates a strongly continuous semigroup $(e^{tL^\tau})_{t\geq 0}$, for every $\tau\in\mathcal{K}$, satisfying the uniform estimate: for all positive real numbers $t_0<t_1$, we have 
\begin{equation}\label{13:05}
\Vert e^{tL^\tau}\Vert_{L(L^2(\R^d))}\leq \Cst(t_0,t_1) \qquad\forall t\in[t_0,t_1] .
\end{equation}

We denote by $e^\tau$ 
the heat kernel associated with $L^\tau$ for the Lebesgue measure on $\R^d$.
The function $e^\tau$ is defined on $(0,+\infty)\times\R^d\times\R^d$ and depends on three variables $(t,x,y)$. In what follows, the notation $\partial_1$ (resp., $\partial_2$, $\partial_3$) denotes the partial derivative with respect to $t$ (resp., to $x$, to $y$).

\begin{corollary}\label{cor_heat_uniform_global}
Under the uniform polynomial strong H\"ormander assumption, there exist $s_0>0$ and $k_0\in\N^*$ such that, for all $s\geq s_0$ and $s'\geq s_0$, for all positive real numbers $0<t_0<t_1$,
$$
\Vert (\Lambda_{s,-k_0s})_x (\Lambda_{s',-k_0s'})_{y}\, e^\tau(\cdot,\cdot,\cdot) \Vert_{L^2((t_0,t_1)\times\R^d\times\R^d)}\leq \Cst(s,s',t_0,t_1) \qquad \forall \tau\in\mathcal{K}
$$
and
$$
\Vert e^{t L^\tau} \Vert_{L\big( H^{-s'}_{k_0s'}(\R^d), H^{s}_{-k_0s}(\R^d) \big)}
\leq \Cst(s,s',t_0,t_1) 
\qquad \forall t\in[t_0,t_1]\qquad \forall \tau\in\mathcal{K} 
$$
\end{corollary}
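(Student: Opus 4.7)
My plan is to adapt the proof of Corollary~\ref{cor_heat_uniform}, replacing the local subelliptic estimates by the global weighted ones of Theorem~\ref{thm_subelliptic_uniform_global} and, crucially, performing the bootstrap \emph{separately} in the $x$- and $y$-variables so as to obtain polynomial weights that are linear in $s$ in $x$ and linear in $s'$ in $y$. As initialization, for integers $m,k>d/2$ the operator $\Lambda'_{-m,-k}: u\mapsto \langle x\rangle^{-k}\Lambda^{-m} u$ is Hilbert--Schmidt on $L^2(\R^d)$, and the uniform estimate \eqref{13:05} gives
$$
\bigl\Vert \Lambda'_{-m,-k}\, e^{tL^\tau}\bigr\Vert_{HS} \leq \bigl\Vert \Lambda'_{-m,-k}\bigr\Vert_{HS}\, \bigl\Vert e^{tL^\tau}\bigr\Vert_{L(L^2(\R^d))} \leq \Cst(t_0,t_1), \qquad t\in[t_0,t_1],\ \tau\in\mathcal{K}.
$$
The same estimate for $e^{tL^\tau}\,\Lambda'_{-m,-k}$ follows by transposition, since the adjoint semigroup $e^{t(L^\tau)^*}$ obeys the same uniform $L^2$-bound. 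Combining both sides furnishes
$$
\bigl\Vert (\Lambda'_{-m,-k})_x (\Lambda'_{-m,-k})_y\, e^\tau(t,\cdot,\cdot)\bigr\Vert_{L^2(\R^d\times\R^d)} \leq \Cst(t_0,t_1)
$$
as a starting point.

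Next, I will perform the bootstrap in $x$. For each fixed $y\in\R^d$, the function $(t,x)\mapsto e^\tau(t,x,y)$ is annihilated by $(\partial_t-L^\tau)_x$ on $(0,+\infty)\times\R^d$. Viewed in $(t,x)$-variables, $\partial_t-L^\tau$ is a parameter-dependent H\"ormander operator whose vector fields are $Y_1^\tau,\ldots,Y_p^\tau$ together with the drift $\partial_t-Y_0^\tau$: these satisfy the uniform polynomial weak H\"ormander condition on $\R^{1+d}$ because the $Y_i^\tau$ satisfy the uniform polynomial strong condition on $\R^d$ by hypothesis, and $\partial_t-Y_0^\tau$ supplies the $\partial_t$-direction. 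Iteratively applying the cut-off-in-$t$ variant of Theorem~\ref{thm_subelliptic_uniform_global} stated in Remark~\ref{rem_variant_thm_subelliptic_uniform_global} upgrades $H^\alpha_\beta(\R^{1+d}_{t,x})$-regularity to $H^{\alpha+\sigma}_{\beta-N}(\R^{1+d}_{t,x})$ at each step, with $\sigma$ and $N$ independent of $y$ and of $\tau$. Squaring and integrating the resulting pointwise-in-$y$ estimate against the $y$-weight from the initialization yields, for every $s\geq s_0$ and for some $k_0\in\N$ depending only on $\sigma,N$,
$$
\bigl\Vert \chi_0(t) (\Lambda_{s,-k_0 s})_x (\Lambda'_{-m,-k})_y\, e^\tau \bigr\Vert_{L^2(\R^{1+2d})} \leq \Cst(s,t_0,t_1).
$$
Performing the same argument in the $y$-variable --- using that $(\partial_t-(L^\tau)^*)_y e^\tau=0$, that $(\Lambda_{s,-k_0 s})_x$ and $(\partial_t-(L^\tau)^*)_y$ commute because they act on disjoint variables, and that the adjoint vector fields $(Y_i^\tau)^*$ have the same principal parts as $-Y_i^\tau$ and thus satisfy the uniform polynomial strong H\"ormander condition --- produces the first stated estimate, taking $s_0$ large enough to absorb the initial weights $m$ and $k$ remaining on each side after each bootstrap.

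The operator-norm estimate is then read off from the kernel bound by the general identity of Appendix~\ref{appendix_Schwartz}: the Schwartz kernel of $\Lambda_{s,-k_0 s}\, e^{tL^\tau}\, \Lambda_{s',-k_0 s'}$ is precisely $(\Lambda_{s,-k_0 s})_x (\Lambda_{s',-k_0 s'})_y\, e^\tau(t,\cdot,\cdot)$, so its Hilbert--Schmidt norm equals the $L^2$-norm just bounded (after localizing in $t$ at a single time using \eqref{13:05} and the semigroup property). Since $\Vert u\Vert_{H^{-s'}_{k_0 s'}}$ is equivalent to $\Vert \Lambda_{-s',k_0 s'} u\Vert_{L^2}$, and since the Hilbert--Schmidt norm dominates the operator norm, the second inequality follows.

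\textbf{Main obstacle.} The central technical point is to organize the bootstrap so that the subelliptic gain and the associated polynomial weight loss are tracked in $x$ and $y$ \emph{independently}, producing the separated linear weights $-k_0 s$ and $-k_0 s'$. A direct application of the joint estimate on $(t,x,y)$ (as in the proof of Corollary~\ref{cor_heat_uniform}) would only yield a weight in $\langle(t,x,y)\rangle$, which cannot be disentangled into the sharper tensor-product form $\langle x\rangle^{-k_0 s}\langle y\rangle^{-k_0 s'}$ without loss. The variable-by-variable bootstrap above avoids this defect, but requires verifying that the iteration in one variable is truly uniform with respect to the other (fixed) variable, and that the $\Lambda_x$- and $\Lambda_y$-factors produced at successive stages commute with the differential operators invoked; this bookkeeping is the main difficulty.
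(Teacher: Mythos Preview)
Your approach is correct and genuinely differs from the paper's. The paper does \emph{not} bootstrap separately in $x$ and $y$; it proceeds exactly as in Corollary~\ref{cor_heat_uniform}, working with the joint operator $P^\tau=(L^\tau)_x+(L^\tau)^*_y-2\partial_t$ on $\R^{1+2d}$ and applying the global weighted estimate of Remark~\ref{rem_variant_thm_subelliptic_uniform_global} directly to $e^\tau$ as a function of $(t,x,y)$, with cutoffs $\zeta=(\chi_0)_t$, $\zeta'=(\chi_0')_t$ in the time variable only. After the same Hilbert--Schmidt initialization you use (giving $\Vert(\chi_0')_t e^\tau\Vert_{H^{-m}_{-m}(\R^{1+2d})}\leq\Cst$), the iterated joint estimate yields $\Vert(\chi_0)_t e^\tau\Vert_{H^{-m+k\sigma}_{-m-kN}(\R^{1+2d})}\leq\Cst(k,t_0,t_1)$, and the paper simply says ``the result follows.''

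Your worry about disentangling the joint weight is legitimate: from an $H^\alpha_\beta(\R^{1+2d})$ bound one extracts $(\Lambda_{s,-k_0 s})_x(\Lambda_{s',-k_0 s'})_y$ control via $\langle(x,y)\rangle^{|\beta|}\leq\langle x\rangle^{|\beta|}\langle y\rangle^{|\beta|}$, which gives equal exponents in $x$ and $y$; a single $k_0$ valid for all pairs $(s,s')$ with unbounded ratio $s/s'$ does not follow immediately from the paper's terse conclusion. Your variable-by-variable scheme avoids this by design, at the price of the bookkeeping you describe (commuting $(\Lambda_{s,-k_0 s})_x$ past the $y$-bootstrap, extending the subelliptic inequality beyond $C^\infty$ by density). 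In practice the distinction is harmless for the paper: the corollary is only invoked through property~\ref{globsmootheps}, where the localization $\chi_1$ kills the need for a separated $x$-weight, and for that the joint argument is already enough. But your route does deliver the statement exactly as written.
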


This result is a global smoothing property for hypoelliptic heat kernels in Sobolev spaces with polynomial weight.

\begin{proof}
We follows the same lines as in the proof of Corollary \ref{cor_heat_uniform}, by considering the operator $P^\tau$.
It follows from Remark \ref{rem_variant_thm_subelliptic_uniform_global} that, for all $s,\beta\in\R$, for every $k\in\N$, we have $\Vert \zeta e^\tau\Vert_{H^{s+k\sigma}_{\beta-kN}(\R^{1+2d})} \leq \Cst(s,k,\beta,\zeta,\zeta') \Vert \zeta' e^\tau\Vert_{H^{s}_\beta(\R^{1+2d})}$, for all smooth functions $\zeta$ and $\zeta'$ on $(0,+\infty)\times\R^d\times\R^d$ satisfying $\zeta'=1$ on the support of $\zeta$, and such that $\zeta$ and $\zeta'$ and all their derivatives are bounded by constants. 
We choose $\zeta=(\chi_0)_t$ and $\zeta'=(\chi_0')_t$ only depending on $t$, in order to localize in time over $t\in[t_0,t_1]$.

To initialize the bootstrap argument, we first observe that, like in the proof of Corollary \ref{cor_heat_uniform}, using \eqref{13:05}, there exists $m\in\N^*$ such that $\Vert (\chi_0')_t (\Lambda_{-m,-m})_x e^\tau\Vert_{L^2(\R^{1+2d})}\leq\Cst(t_0,t_1)$ (assuming that $\supp(\chi_0')\subset[t_0,t_1]$), hence $\Vert (\chi_0')_t e^\tau\Vert_{H^{-m}_{-m}(\R^{1+2d})}\leq\Cst(t_0,t_1)$.
Therefore, taking $s=\beta=-m$, we get that $\Vert (\chi_0)_t e^\tau\Vert_{H^{-m+k\sigma}_{-m-kN}(\R^{1+2d})}\leq \Cst(k,t_0,t_1)$ for every $k\in\N^*$. The result follows.
\end{proof}

\section{Global smoothing properties of sR heat semigroups}
\label{sec:global_sR}
This section can be read independently of the rest.

\subsection{Global smoothing properties in iterated domains}\label{sec:global_estimates_sR}
Throughout this section, we consider a selfadjoint sR Laplacian $\trianglesR=-\sum_{i=1}^mX_i^*X_i$ (see Section \ref{sec_sRLaplacian}) on $L^2(\R^n)$, where $X_1,\ldots,X_m$ are smooth vector fields on $\R^n$, satisfying the H\"ormander condition at every point of $\R^n$. We denote by $(e^{t\trianglesR})_{t\geq 0}$ the associated sR heat semigroup, and by $e$ the sR heat kernel.

We show how to use the Kannai transform to establish global smoothing properties of the sR heat semigroup in the scale of Sobolev spaces associated with $\trianglesR$ (iterated domains $D((\mathrm{id}-\trianglesR)^j)$, for $j\in\Z$). This technique is well known (see \cite{CheegerGromovTaylor_1982}) and, conveniently combined with complex analysis (e.g., Phragmen-Lindel\"of principle, see \cite{CoulhonSikora_PLMS2008}), leads to exponential estimates of heat kernels. However, we recall some of the precise arguments and we give statements that we have not been able to find in this form in the existing literature.

Since the Kannai transform is based on the spectral theorem, it requires to consider a selfadjoint operator. This is why, in this section, we only consider selfadjoint sR Laplacians.

\subsubsection{Rough global smoothing properties in iterated domains}
Starting from the inequality $(1+\lambda^2)^k e^{-t\lambda^2} \leq \frac{\Cst(k,t_1)}{t^k}$ for all $\lambda\geq 0$, $t_1>0$, $t\in(0,t_1]$ and $k\in\Z$, we infer from the spectral theorem applied to the selfadjoint operator $\trianglesR$ that
$\Vert (\mathrm{id}-\trianglesR)^k e^{t\trianglesR}\Vert_{L(L^2(\R^n))} \leq \frac{\Cst(k,t_1)}{t^k}$.
Hence, given any $t>0$ and any $j,k\in\Z$, the operator $e^{t\trianglesR}$ is bounded as an operator from $\mathcal{D}_j=D((\mathrm{id}-\trianglesR)^{j})$ to $\mathcal{D}_k=D((\mathrm{id}-\trianglesR)^{k})$, and we obtain the following result.

\begin{lemma}
Given any $t>0$ and any $j,k\in\Z$, the operator $e^{t\trianglesR}$ is bounded as an operator from $\mathcal{D}_j=D((\mathrm{id}-\trianglesR)^{j})$ to $\mathcal{D}_k=D((\mathrm{id}-\trianglesR)^{k})$, and
\begin{equation*}
\Vert e^{t\trianglesR} \Vert_{L( \mathcal{D}_j, \mathcal{D}_k)} \leq 
\Bigg\{\begin{array}{ll}
\displaystyle\Cst(k-j,t_1)\frac{1}{t^{2(k-j)}} \qquad & \textrm{if}\ k\geq j , \\[2mm]
\displaystyle\Cst(k) \qquad & \textrm{if}\ k\leq j ,
\end{array}
\qquad \forall t_1>0\qquad\forall t\in(0,t_1] .
\end{equation*}
\end{lemma}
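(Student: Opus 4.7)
The result is essentially a direct consequence of the functional calculus for the nonpositive selfadjoint operator $\trianglesR$, and the authors have already sketched most of the argument in the remarks preceding the statement. My plan is simply to turn that sketch into a complete proof.

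First I would recall that $-\trianglesR$ is a nonnegative selfadjoint operator on $L^2(\R^n)$, so $\mathrm{id}-\trianglesR$ is a positive selfadjoint operator with spectrum contained in $[1,+\infty)$. In particular, for every $j\in\Z$, the operator $(\mathrm{id}-\trianglesR)^j$ is well-defined via the spectral theorem, and by definition of $\mathcal{D}_j$ it is a unitary isomorphism from $\mathcal{D}_j$ (endowed with the norm $\Vert u\Vert_{\mathcal{D}_j}=\Vert(\mathrm{id}-\trianglesR)^j u\Vert_{L^2}$) onto $L^2(\R^n)$.

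Second, since every power of $\mathrm{id}-\trianglesR$ and the semigroup $e^{t\trianglesR}$ are all continuous functions of the same selfadjoint operator, they commute in the sense of functional calculus. Consequently, for $u\in\mathcal{D}_j$,
\begin{equation*}
\Vert e^{t\trianglesR}u\Vert_{\mathcal{D}_k}
= \Vert (\mathrm{id}-\trianglesR)^{k-j} e^{t\trianglesR} (\mathrm{id}-\trianglesR)^{j} u\Vert_{L^2}
\leq \Vert (\mathrm{id}-\trianglesR)^{k-j} e^{t\trianglesR}\Vert_{L(L^2)}\, \Vert u\Vert_{\mathcal{D}_j},
\end{equation*}
which reduces the lemma to bounding the operator norm on the right-hand side for every integer $\ell=k-j$ (positive, negative, or zero).

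Third, by the spectral theorem, this operator norm equals the supremum of $(1+\mu)^{\ell} e^{-t\mu}$ over $\mu\in\sigma(-\trianglesR)\subset[0,+\infty)$. If $\ell\leq 0$, then $(1+\mu)^{\ell}\leq 1$ and the supremum is bounded by $1$, uniformly in $t>0$, which handles the case $k\leq j$. If $\ell>0$, a one-variable calculus computation shows that the function $\mu\mapsto(1+\mu)^\ell e^{-t\mu}$ on $[0,+\infty)$ is maximized at $\mu_\star=\max(0,\ell/t - 1)$, yielding the bound $(1+\mu_\star)^\ell e^{-t\mu_\star}\leq \Cst(\ell)\,t^{-\ell}\,e^{t}\leq \Cst(\ell,t_1)\,t^{-\ell}$ for all $t\in(0,t_1]$. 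Plugging this into the previous display yields the required estimate in both cases.

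There is no real obstacle here: the argument is purely spectral and uses nothing beyond selfadjointness of $\trianglesR$ and elementary optimization of $(1+\mu)^\ell e^{-t\mu}$. The only point to be mildly careful about is verifying that $(\mathrm{id}-\trianglesR)^{k-j} e^{t\trianglesR}$ and $(\mathrm{id}-\trianglesR)^{j}$ commute and that the intermediate quantity $(\mathrm{id}-\trianglesR)^{j}u\in L^2$ when $u\in\mathcal{D}_j$, both of which are built into the definition of the iterated domains.
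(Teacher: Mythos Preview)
Your proposal is correct and follows essentially the same spectral-theoretic approach as the paper: both reduce to bounding $\sup_{\mu\geq 0}(1+\mu)^{k-j}e^{-t\mu}$ via the functional calculus for the nonpositive selfadjoint operator $\trianglesR$. In fact your bound $\Cst(k-j,t_1)\,t^{-(k-j)}$ is sharper than the stated $t^{-2(k-j)}$ (and agrees with the paper's own preceding sketch, which also gives $t^{-(k-j)}$); the weaker exponent in the lemma statement appears to be a harmless slip, and your estimate certainly implies it.
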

This property is completely general and is even true without any H\"ormander condition on the vector fields $X_i$: it is satisfied for any selfadjoint nonpositive operator.

We establish in Section \ref{sec:global_estimates_sR} much finer global smoothing properties, thanks to the Kannai transform that we recall in the next section.

\subsubsection{Kannai transform}
Hereafter, we recall a classical argument developed in \cite{CheegerGromovTaylor_1982}, consisting of obtaining estimates of the heat semigroup (in the Riemannian case), via the Kannai transform, 
Here, we are in the subelliptic case and we consider a sR Laplacian $\trianglesR$ on $\R^n$: this is a nonpositive selfadjoint operator on $L^2(\R^n)$, of domain $D(\trianglesR)$.
We define the sR wave operator $\cos\left(t\sqrt{-\trianglesR}\right)$ on $L^2(\R^n)$ as follows: $u(t,\cdot)=\cos\left(t\sqrt{-\trianglesR}\right)\delta_y$ is the solution to
\begin{equation*}
\begin{split}
& (\partial_{tt}-\trianglesR)u=0, \\
& u(0,\cdot)=\delta_y,\ \partial_t u(0,\cdot)=0.
\end{split}
\end{equation*}
For every $t\in\R$ we have
\begin{equation}\label{cosinf1}
\left\Vert \cos\left(t\sqrt{-\trianglesR}\right)\right\Vert_{L(L^2(\R^n))}\leq 1 .
\end{equation}
Given any smooth even real-valued function $F\in L^2(\R)$, its Fourier transform is defined by
$\hat F(s) = \int_\R F(\lambda) \cos(\lambda s)\, ds$
and we have
$$
F(\lambda) = \frac{1}{2\pi}\int_\R \hat F(s) \cos(\lambda s)\, ds = \frac{1}{\pi}\int_0^{+\infty} \hat F(s) \cos(\lambda s)\, ds .
$$
As a consequence of the spectral theorem, we have
$$
F\left(\sqrt{-\trianglesR}\right) = \frac{1}{2\pi}\int_\R \hat F(s) \cos\left(s\sqrt{-\trianglesR}\right)\, ds = \frac{1}{\pi}\int_0^{+\infty} \hat F(s) \cos\left(s\sqrt{-\trianglesR}\right) ds .
$$
In particular, fixing some arbitrary $t\geq 0$ and taking $F(\lambda)=e^{-t\lambda^2}$, we have $\hat F(s) =  \sqrt{\frac{\pi}{t}} e^{-s^2/4t}$ and thus
$$
e^{-t\lambda^2} = \frac{1}{2\sqrt{\pi t}} \int_\R e^{-s^2/4t} \cos(s\lambda)\, ds = \frac{1}{\sqrt{\pi t}} \int_0^{+\infty} e^{-s^2/4t} \cos(s\lambda)\, ds
$$
for every $\lambda\in\R$, and therefore
$$
e^{t\trianglesR} = \frac{1}{2\sqrt{\pi t}} \int_\R e^{-s^2/4t} \cos\left(s\sqrt{-\trianglesR}\right)\, ds = \frac{1}{\sqrt{\pi t}} \int_0^{+\infty} e^{-s^2/4t} \cos\left(s\sqrt{-\trianglesR}\right) ds
$$
for every $t>0$. This formula is usually called transmutation, or Kannai transform (see \cite{Kannai}).

We also have
$$
e^{-t(1+\lambda^2)} = \frac{1}{2\sqrt{\pi t}} \int_\R e^{-t-s^2/4t} \cos(s\lambda)\, ds = \frac{1}{\sqrt{\pi t}} \int_0^{+\infty} e^{-t-s^2/4t} \cos(s\lambda)\, ds .
$$
Derivating $k$ times with respect to $t$, for some $k\in\N$, one gets
$$
(1+\lambda^2)^ke^{-t(1+\lambda^2)} = \int_0^{+\infty} \frac{d^k}{dt^k}\left( \frac{1}{\sqrt{\pi t}} e^{-t-s^2/4t}\right) \cos(s\lambda)\, ds 
$$
i.e.,
$$
(1+\lambda^2)^ke^{-t\lambda^2} = \int_0^{+\infty} \frac{P_{2k}(s,t)}{t^{2k+1/2}} e^{-s^2/4t} \cos(s\lambda)\, ds 
$$
where $P_{2k}$ is a polynomial of the two variables $s$ and $t$, of degree $2k$ (actually, depending on $s^2$ and of $t$), and thus
\begin{equation}\label{fourierjk}
(\mathrm{id}-\trianglesR)^ke^{t\trianglesR}
= \int_0^{+\infty} \frac{P_{2k}(s,t)}{t^{2k+1/2}} e^{-s^2/4t} \cos\left(s\sqrt{-\trianglesR}\right) ds \qquad
\forall t>0\qquad \forall k\in\N\qquad\forall j\in\Z .
\end{equation}

\subsubsection{Exponential estimates, using the finite speed propagation property}
Let us establish an additional estimate, by using the finite speed propagation property for the sR wave equation. The finite speed propagation property for sR wave equations has been established in \cite{Me-84}; actually, it follows from the finite speed propagation property for usual Riemannian wave equations, by considering a Riemannian $\varepsilon$-regularization of the sR Laplacian (see \cite{Ge}) and then passing to the limit, using the fact that the $\varepsilon$-regularized Riemannian distance converges to the sR distance as $\varepsilon$ tends to $0$ (see also \cite[Section 1.4.D]{Gromov} and \cite[Appendix A4]{Varopoulos}).

Let $x\in\R^n$ and let $0<r_1<r_2$ be arbitrary. In what follows, we denote by $\BsR(x,r_i)$ the sR ball in $\R^n$ of center $x$ and of radius $r_i$, for the sR distance $\dsR$ associated with $\trianglesR$ (we assume that the H\"ormander condition is satisfied).

Given any smooth function $f$ on $\R^n$ such that $\supp(f)\subset\BsR(x,r_1)$, let us estimate $\Vert (\mathrm{id}-\trianglesR)^{k}e^{t\trianglesR} f\Vert_{L^2(\R^n\setminus \BsR(x,r_2))}$. 
Since $\supp(f)\subset \BsR(x,r_1)$, we have, by the finite speed propagation property, 
$$
\supp\left( \cos\left(s\sqrt{-\trianglesR}\right) f\right) \subset \BsR(x,r_1+s)\qquad\forall s\geq 0.
$$
In other words, a minimal time $s\geq r_2-r_1$ is required in order to transport information from $\BsR(x,r_1)$ to $\R^n\setminus \BsR(x,r_2)$.
Therefore, using \eqref{cosinf1} and \eqref{fourierjk} we get that
\begin{equation}\label{estimate_glob}
\begin{split}
\left\Vert (\mathrm{id}-\trianglesR)^{k}e^{t\trianglesR} f\right\Vert_{L^2(\R^n\setminus \BsR(x,r_2))}
&= \left\Vert \int_{r_2-r_1}^{+\infty} \frac{P_{2k}(s,t)}{t^{2k+1/2}} e^{-s^2/4t} \cos\left(s\sqrt{-\trianglesR}\right)f\, ds \right\Vert_{L^2(\R^n)} \\
&\leq \int_{r_2-r_1}^{+\infty} \frac{\vert P_{2k}(s,t)\vert}{t^{2k+1/2}} e^{-s^2/4t}\, ds\, \Vert f\Vert_{L^2(\R^n)} \\
&\leq \Cst(k)\frac{1+t^{2k}}{t^{2k}} (1+(r_2-r_1)^{2k}) e^{-(r_2-r_1)^2/4t} \Vert f\Vert_{L^2(\R^n)} \\[2mm]
\forall t>0 \qquad & \forall k\in\N\qquad \forall f\in C_c^\infty(\R^n) \ \textrm{s.t.}\ \supp(f)\subset \BsR(x,r_1) .
\end{split}
\end{equation}
Note that $r_2-r_1 = \dsR(\BsR(x,r_1),\R^n\setminus \BsR(x,r_2))$. To obtain the latter inequality, we have used the inequalities (proved by recurrence) 
\begin{multline*}
\int_a^{+\infty} e^{-s^2/4t}\, ds \leq \Cst \, \sqrt{t}\, e^{-a^2/4t}, \\
\int_a^{+\infty} s^j e^{-s^2/4t}\, ds \leq \Cst(j) t (1+t^{(j-1)/2})(1+a^{j-1}) e^{-a^2/4t}\qquad \forall j\in\N^*\qquad \forall a>0\qquad \forall t>0.
\end{multline*}

A consequence of this analysis is the following.

\begin{proposition}
Let $\chi$ and $\chi'$ be smooth functions on compact support on $\R^n$, such that $\chi'=1$ on $\supp(\chi)$. Then, for every $k\in\N$ and every $t>0$, the operator $(1-\chi') (\mathrm{id}-\trianglesR)^{k}e^{t\trianglesR} \chi$ is bounded on $L^2(\R^n)$, and
\begin{equation*}
\left\Vert (1-\chi') (\mathrm{id}-\trianglesR)^{k}e^{t\trianglesR} \chi \right\Vert_{L(L^2(\R^n))}
\leq \Cst(k,\chi,\chi')\frac{1+t^{2k}}{t^{2k}} e^{-\Cst(\chi,\chi')/t}  \qquad\forall t>0  .
\end{equation*}
The same property holds for the operator $(\mathrm{id}-\trianglesR)^{k}(1-\chi')e^{t\trianglesR} \chi$ and (taking the adjoint and by selfadjointness) for the operators $\chi (\mathrm{id}-\trianglesR)^{k}e^{t\trianglesR} (1-\chi')$ and $(\mathrm{id}-\trianglesR)^{k} \chi e^{t\trianglesR} (1-\chi')$.
\end{proposition}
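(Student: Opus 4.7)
The plan is to derive all four estimates from the single $L^{2}$--$L^{2}$ consequence of \eqref{estimate_glob} together with the Kannai representation \eqref{fourierjk} and finite speed propagation. First, since $\chi,\chi'\in C_{c}^{\infty}(\R^{n})$ with $\chi'\equiv 1$ on a neighbourhood of $\supp(\chi)$, the compact set $\supp(\chi)$ is at positive sR distance $d_{0}=\dsR(\supp(\chi),\supp(1-\chi'))>0$ from the closed set $\supp(1-\chi')$. Cover $\supp(\chi)$ by finitely many sR balls $\BsR(x_{i},d_{0}/4)$ and fix a subordinate partition of unity $(\psi_{i})$; each product $\psi_{i}\chi$ is supported in $\BsR(x_{i},d_{0}/4)$, and $\supp(1-\chi')$ sits in $\R^{n}\setminus\BsR(x_{i},d_{0}/2)$. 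Applying the already established estimate \eqref{estimate_glob} with $r_{1}=d_{0}/4$, $r_{2}=d_{0}/2$, to the function $\psi_{i}\chi f$, then summing over $i$, we obtain directly
\[
\bigl\Vert (1-\chi')(\mathrm{id}-\trianglesR)^{k}e^{t\trianglesR}\chi \bigr\Vert_{L(L^{2})}\leq \Cst(k,\chi,\chi')\frac{1+t^{2k}}{t^{2k}}e^{-d_{0}^{2}/(16t)},
\]
which is the first case. Equivalently, one can avoid the partition of unity by inserting \eqref{fourierjk} directly: since $(1-\chi')\cos(s\sqrt{-\trianglesR})\chi f$ vanishes for $s<d_{0}$ by finite speed propagation, the representation reduces to $\int_{d_{0}}^{\infty}$ of an absolutely convergent integrand of $L^{2}$ norm $\leq \Cst\,\Vert f\Vert$, yielding the same exponential bound after elementary control of $\int_{d_{0}}^{\infty}s^{j}e^{-s^{2}/4t}ds$.

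The third operator $\chi(\mathrm{id}-\trianglesR)^{k}e^{t\trianglesR}(1-\chi')$ is the $L^{2}$-adjoint of the first, using that $(\mathrm{id}-\trianglesR)^{k}$ is selfadjoint and commutes with $e^{t\trianglesR}$, and that $\chi,\chi'$ are real-valued; the operator-norm bound transfers unchanged.

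The second and fourth operators require moving $(\mathrm{id}-\trianglesR)^{k}$ past a multiplication operator, which is where the only real technical work appears. Write
\[
(\mathrm{id}-\trianglesR)^{k}(1-\chi') = (1-\chi')(\mathrm{id}-\trianglesR)^{k} + R,
\]
where $R=-[(\mathrm{id}-\trianglesR)^{k},\chi']$ is, by Leibniz, a differential operator of order $\leq 2k-1$ whose coefficients are smooth and supported in $\supp(\nabla\chi')$, a compact set at positive sR distance $d_{1}>0$ from $\supp(\chi)$. Thus the operator $(\mathrm{id}-\trianglesR)^{k}(1-\chi')e^{t\trianglesR}\chi$ splits as $(1-\chi')(\mathrm{id}-\trianglesR)^{k}e^{t\trianglesR}\chi + Re^{t\trianglesR}\chi$: the first piece is operator (A), already estimated. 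For the remainder, pick $\tilde\chi\in C_{c}^{\infty}$ equal to $1$ on $\supp(\nabla\chi')$ and supported at sR distance $\geq d_{1}/2$ from $\supp(\chi)$, so that $R e^{t\trianglesR}\chi f = R\bigl(\tilde\chi e^{t\trianglesR}\chi f\bigr)$. Bounding $\Vert R u\Vert_{L^{2}}\lesssim \Vert\tilde\chi u\Vert_{H^{2k-1}}$ and using iterated local subelliptic estimates (Theorem \ref{thm_subelliptic_uniform}, in the form \eqref{thm_subelliptic_uniform_cor}) together with the case (A) applied to each $\tilde\chi'(\trianglesR)^{j}e^{t\trianglesR}\chi$, we deduce the exponential bound
\[
\Vert R e^{t\trianglesR}\chi f\Vert_{L^{2}} \leq \Cst(k) \frac{1+t^{2N(k)}}{t^{2N(k)}} e^{-d_{1}^{2}/(4t)} \Vert f\Vert,
\]
for some $N(k)$ depending on the subelliptic gain $\sigma$. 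Shrinking the exponential constant slightly absorbs the polynomial factor $t^{-2N(k)}$ into $t^{-2k}e^{-c/t}$, yielding the required form. The fourth operator is treated symmetrically by writing $(\mathrm{id}-\trianglesR)^{k}\chi = \chi(\mathrm{id}-\trianglesR)^{k} + R'$ with $\supp R'\subset\supp(\nabla\chi)\subset\supp\chi$, so that $R'$ is at sR distance $\geq d_{0}$ from $\supp(1-\chi')$, and reducing to operator (C) plus a remainder estimated by the same Sobolev-interpolation scheme.

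The main obstacle is precisely this last point: $(\mathrm{id}-\trianglesR)^{k}$ does not commute with the multiplication operators, so the bare identification of (B) and (D) with adjoints of (A) and (C) does not work; one must handle the commutator remainder and show it is exponentially small even though it involves an operator of order $2k-1$ rather than $2k$. The key observation that unlocks the proof is that the commutator coefficients are supported strictly away from the ``input'' cutoff, so that a second application of the Kannai--finite-propagation argument (after subelliptic conversion of Sobolev norms into iterated $\trianglesR$-norms) recovers the exponential decay, at the (harmless) cost of a larger polynomial prefactor that is reabsorbed into the exponential.
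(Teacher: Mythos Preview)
Your proof of the first operator is exactly the paper's argument: reduce via a partition of unity on $\supp(\chi)$ and apply the Kannai/finite-propagation estimate \eqref{estimate_glob}; you even note the equivalent direct route of truncating the Kannai integral at $s\geq d_0$. Your treatment of the third operator by adjointness also matches the paper.

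Where you go beyond the paper is in the second and fourth operators. The paper simply asserts that ``the same property holds'' for $(\mathrm{id}-\trianglesR)^{k}(1-\chi')e^{t\trianglesR}\chi$ without explanation, and claims the last two follow by adjointness. You are right to flag that $(\mathrm{id}-\trianglesR)^{k}$ does not commute with the cutoff, so neither a verbatim repetition of the first argument nor a bare adjoint identification suffices for these cases; in particular the adjoint of the second operator is $\chi e^{t\trianglesR}(1-\chi')(\mathrm{id}-\trianglesR)^{k}$, which is not the fourth. Your commutator decomposition---writing $(\mathrm{id}-\trianglesR)^{k}(1-\chi')=(1-\chi')(\mathrm{id}-\trianglesR)^{k}+R$ with $R$ of order $2k-1$ supported in $\supp(\nabla\chi')$, then localizing $R$ by a cutoff $\tilde\chi$ still separated from $\supp(\chi)$, converting the resulting $H^{2k-1}$ norm into iterated powers of $\trianglesR$ via the local subelliptic estimate \eqref{thm_subelliptic_uniform_cor}, and applying the first-operator bound to each piece---is a correct and clean way to close this gap. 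The absorption of the larger polynomial prefactor $t^{-2N(k)}$ into $t^{-2k}e^{-c/t}$ by shrinking the exponential constant is valid since $t^{-m}e^{-a/t}$ is bounded on $(0,\infty)$ for any $m,a>0$. Your proof is thus more complete than the paper's on this point.
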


\begin{proof}
Using a partition of unity, we reduce the proof to the case where $\supp(\chi)\subset\BsR(\bar x,r)$ and $\chi'=1$ on $\BsR(\bar x,2r)$ for some $\bar x\in\R^n$ and some $r>0$, so that $\supp(\chi')\subset\R^n\setminus\BsR(\bar x,2r)$. Then \eqref{estimate_glob} gives the conclusion.
\end{proof}

Note that all estimates that we have obtained are uniform with respect to $t>0$.


\subsection{Global smoothing properties for nilpotent sR Laplacians} 
\label{sec:global_estimates_sR_weight}
In this section, we consider the nilpotent sR Laplacian $\widehat{\triangle}^q$, associated with the nilpotentization $(\widehat{M}^q,\widehat{D}^q,\widehat{g}^q)$ of the sR structure $(M,D,g)$ at $q$.
Recall that $\widehat{M}^q$ is a homogeneous space of a Carnot group (see Section \ref{sec_def_nilpotentization}), but is not a Carnot group whenever $q$ is singular.

We will establish global smoothing properties for the semi-group $(e^{t\widehat{\triangle}^q})_{t\geq 0}$ in iterated domains and Sobolev spaces with polynomial weight.

Note that, since we deal in this section with polynomial vector fields, we could apply the smoothing properties established in Appendix \ref{sec:global_estimates} (more precisely, see Corollary \ref{cor_heat_uniform_global}) by deriving global subelliptic estimates in Sobolev spaces with polynomial weight. But here, in this specific context, the global smoothing properties that we are going to establish are much stronger.

\subsubsection{Reminders on exponential estimates}
There are various ways to establish exponential estimates for heat kernels. 
Such estimates have been proved in \cite{Varopoulos} by means of large deviation theory, using the probabilistic interpretation of the sR heat kernel (see also \cite{KusuokaStroock} for long time estimates). We also refer the reader to \cite{Sac-84} and \cite{Je-Sa-86} for estimates of the sR heat kernel and of its derivatives, established, however, on a compact manifold.

In \cite{CheegerGromovTaylor_1982} (see also \cite{Me-84}) the authors show how to combine the Kannai transform (i.e., the finite propagation speed of waves) with the Harnack principle and with a classical Moser iteration argument.
In \cite{Saloff-Coste_IMRN1992}, the author combines the Kannai transform with the Harnack parabolic principle in the hypoelliptic case.
In \cite{CoulhonSikora_PLMS2008}, the authors use an elegant Phragmen-Lindel\"of argument to obtain off-diagonal exponential estimates.

Finally, we quote the paper \cite{Maheux_JGA1998}, in which it is shown that all known exponential estimates remain valid as well when considering a sR Laplacian on a homogeneous space (thus, extending results of \cite{Varopoulos} that were established on Lie groups).

According to these references, denoting by $\widehat{e}^q$ the heat kernel of the sR Laplacian $\widehat{\triangle}^q=\sum_{i=1}^m (\widehat{X}_i^q)^2$, for every $\varepsilon>0$, for all $i_1,\ldots,i_s\in\{1,\ldots,m\}$ with $s\in\N^*$, we have
\begin{equation}\label{upper_estim_kernel}
\left\vert \partial_t^m X_q^I \widehat{e}^q(t,x,y) \right\vert \leq \frac{\Cst(\varepsilon,I)}{t^{m+\vert I\vert/2}} \frac{1}{\mathrm{Vol}(\widehat{B}^q(y,\sqrt{t}))} \exp \left( \frac{-\widehat{\mathrm{d}}^q(x,y)^2}{4(1+\varepsilon)t} \right) 
\qquad \forall t>0\qquad\forall x,y\in\widehat{M}^q\simeq\R^n
\end{equation}
where $X^I = \widehat{X}_{i_1}\cdots \widehat{X}_{i_s}$, $I=(i_1,\ldots,i_s)$ and $\vert I\vert=s$.
In other words, the heat kernel $\widehat{e}^q$ and its derivatives are exponentially decreasing off the diagonal.
Note that $\mathrm{Vol}(\widehat{B}^q(y,\sqrt{t}))=t^{\mathcal{Q}(y)/2}$.
By symmetry of the heat kernel, one can obtain the inequality \eqref{upper_estim_kernel} with $t^{\mathcal{Q}(y)/2}$ replaced by $t^{\mathcal{Q}(x)/2}$, or even by $t^{\mathcal{Q}(x)/4}t^{\mathcal{Q}(y)/4}$. Note anyway that all these inequalities are different, because $\mathcal{Q}(x)\neq \mathcal{Q}(y)$ in general, unless $\widehat{M}^q$ is a Carnot group (which is the case when $q$ is regular).

\subsubsection{Global smoothing properties}
As in Appendix \ref{sec:uniform_global_subelliptic_estimates}, given any $j\in\Z$ and any $\alpha\in\R$, we define $\widehat{\mathcal{D}}^q_{j,\alpha}$ as the completion of $C_c^\infty(\R^n)$ for the norm 
$$
\Vert u\Vert_{\widehat{\mathcal{D}}^q_{j,\alpha}} = \Vert (\mathrm{id} - \widehat{\triangle}^q)^j u\Vert_{H^0_\alpha(\R^n)} = \Vert \langle x\rangle^\alpha(\mathrm{id} - \widehat{\triangle}^q)^j u\Vert_{L^2(\R^n)} .
$$
Here, we recall that the Japanese bracket is defined by $\langle x\rangle = (1+\Vert x\Vert_2^2)^{1/2}$ where $\Vert x\Vert_2^2=x_1^2+\cdots+x_n^2$ is the Euclidean norm.

It is useful to define the \emph{sR Japanese bracket} by $\langle x\rangle_{\mathrm{sR}} = (1+\Vert x\Vert_{\mathrm{sR}}^2)^{1/2}$, where the sR pseudo-norm is defined by $\Vert x\Vert_{\mathrm{sR}} = \sum_{i=1}^n \vert x_i\vert^{1/w_i}$ for every $x=(x_1,\ldots,x_n)\in\R^n$ in privileged coordinates around $0$ (here, $w_i=w_i(q)$). Accordingly, we define $\widehat{\mathcal{D}}_{j,\alpha}^{q,\mathrm{sR}}$ as the completion of $C_c^\infty(\R^n)$ for the norm 
$$
\Vert u\Vert_{\widehat{\mathcal{D}}_{j,\alpha}^{q,\mathrm{sR}}} = \Vert \langle x\rangle_{\mathrm{sR}}^\alpha(\mathrm{id} - \widehat{\triangle}^q)^j u\Vert_{L^2(\R^n)} .
$$
Setting $r=w_n$ (degree of nonholonomy at $0$), we have the inequality
\begin{equation*}
\Cst\, \langle x\rangle^{1/r} \leq \langle x\rangle_{\mathrm{sR}} \leq \Cst\, \langle x\rangle\qquad \forall x\in\R^n
\end{equation*}
from which it follows that
\begin{equation}\label{equivDjalpha}
\Cst \Vert u\Vert_{\widehat{\mathcal{D}}^q_{j,\alpha/r}} \leq \Vert u\Vert_{\widehat{\mathcal{D}}_{j,\alpha}^{q,\mathrm{sR}}} \leq \Cst \Vert u\Vert_{\widehat{\mathcal{D}}^q_{j,\alpha}}\qquad\forall j\in\Z\qquad\forall\alpha>0\qquad\forall u\in C_c^\infty(\R^n).
\end{equation}

\begin{proposition}\label{prop_globreg_poidsfixe}
Given any $t>0$, any $j,k\in\Z$ and any real number $\beta\geq 1$, the operator $e^{t\widehat{\triangle}^q}$ is bounded as an operator from $\widehat{\mathcal{D}}^{q,\mathrm{sR}}_{j,\beta}$ to $\widehat{\mathcal{D}}^{q,\mathrm{sR}}_{k,\beta}$, 
and
\begin{equation}\label{estim_domainhat}
\Vert e^{t\widehat{\triangle}^q}\Vert_{L\big(\widehat{\mathcal{D}}_{j,\beta}^{q,\mathrm{sR}},\widehat{\mathcal{D}}_{k,\beta}^{q,\mathrm{sR}}\big)} \leq 
\left\{\begin{array}{ll}
\displaystyle \frac{\Cst(j,k,\beta,t_1)}{t^{k-j}} \qquad & \textrm{if}\ k\geq j , \\[2mm]
\displaystyle\Cst(k,\beta,t_1) \qquad &  \textrm{if}\ k\leq j ,
\end{array}\right.
\qquad \forall t\in(0,t_1]\qquad\forall t_1>0 .
\end{equation}
\end{proposition}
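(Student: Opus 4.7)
The plan is to reduce the two-parameter estimate \eqref{estim_domainhat} to a single integral-kernel estimate on weighted $L^2$, and then to establish that estimate by combining the off-diagonal Gaussian bounds \eqref{upper_estim_kernel} with a quasi-triangle inequality for the sR Japanese bracket. Write $A=\mathrm{id}-\widehat{\triangle}^q$, $P_t=e^{t\widehat{\triangle}^q}$, and let $w$ denote the multiplication operator by $\langle x\rangle_{\mathrm{sR}}^\beta$. Since $\widehat{\triangle}^q$ is selfadjoint and nonpositive, functional calculus gives that $A^j$ commutes with $P_t$ on $C_c^\infty(\R^n)$, so writing $v = w A^j u$ the target estimate reduces to showing, for every $m = k-j \in \Z$ and every $t\in(0,t_1]$,
\begin{equation*}
\bigl\| w\, A^{m}\, P_t\, w^{-1}\bigr\|_{L(L^2(\R^n))} \;\leq\; C(m,\beta,t_1)\,t^{-\max(m,0)}.
\end{equation*}

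First I would establish pointwise bounds on the Schwartz kernel $K_m(t,x,y)$ of $A^m P_t$. For $m\in\N$, expanding $A^m$ binomially and using $\partial_t^{\ell}\widehat{e}^q = (\widehat{\triangle}^q)^{\ell}\widehat{e}^q$, the exponential estimates \eqref{upper_estim_kernel} give
\begin{equation*}
|K_m(t,x,y)| \;\leq\; \frac{C}{t^{m}\,t^{\mathcal{Q}(q)/2}}\,\exp\!\left(-\frac{\widehat{d}^q(x,y)^2}{5t}\right),\qquad t\in(0,t_1].
\end{equation*}
For $m<0$ the Kannai formula \eqref{fourierjk}, combined with the $L^2$ contractivity of the sR wave group and with finite-speed propagation (as in the derivation of \eqref{estimate_glob}), yields an analogous estimate with no $t^{-|m|}$ factor.

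Next, I would absorb the weight into the kernel. Since the sR pseudo-norm $\|\cdot\|_{\mathrm{sR}}$ is equivalent to the function $x\mapsto \widehat{d}^q(0,x)$, the triangle inequality for $\widehat{d}^q$ yields $\langle x\rangle_{\mathrm{sR}} \leq C(\langle y\rangle_{\mathrm{sR}} + \widehat{d}^q(x,y))$, and hence, using $\langle y\rangle_{\mathrm{sR}}\geq 1$,
\begin{equation*}
\frac{w(x)}{w(y)} \;\leq\; C_\beta\,\bigl(1+\widehat{d}^q(x,y)\bigr)^{\beta}.
\end{equation*}
This polynomial factor is absorbed by the Gaussian decay at the cost of enlarging $5$ to, say, $6$ (since for $t\leq t_1$ one has $d^\beta e^{-d^2/30t_1}\leq C_{\beta,t_1}$), leading to
\begin{equation*}
|w(x) K_m(t,x,y) w(y)^{-1}| \;\leq\; \frac{C}{t^{\max(m,0)}\,t^{\mathcal{Q}(q)/2}}\,\exp\!\left(-\frac{\widehat{d}^q(x,y)^2}{6t}\right),
\end{equation*}
after which Schur's test reduces the problem to bounding $\sup_x\!\int \!t^{-\mathcal{Q}(q)/2}\exp(-\widehat{d}^q(x,y)^2/6t)\,dy$ and its symmetric counterpart uniformly in $t\in(0,t_1]$. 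Both integrals are essentially the total mass of a Gaussian at scale $\sqrt{t}$, which in the Carnot group case is immediately finite by the left-invariance and the dilation change of variables $z=\delta_{1/\sqrt{t}}(y^{-1}\!\cdot\! x)$.

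The main obstacle I anticipate is the singular case, where $\widehat{M}^q$ is only a homogeneous space of a Carnot group, $H_q$ is nontrivial, and the local Hausdorff dimension $\mathcal{Q}(x)$ may vary over $\widehat{M}^q$. In that setting the volume $\mathrm{Vol}(\widehat{B}^q(y,\sqrt t))$ is no longer translation-invariant, so one must either invoke the symmetric exponential estimate with normalization $\mathrm{Vol}(\widehat{B}^q(x,\sqrt t))^{1/2}\mathrm{Vol}(\widehat{B}^q(y,\sqrt t))^{1/2}$ (available via \cite{Maheux_JGA1998}) so that Schur's test closes without loss, or lift the argument to the enveloping Carnot group $\mathcal{G}_q$, prove the weighted boundedness there for the left-invariant heat operator, and descend via the quotient map $\mathcal{G}_q\to\mathcal{G}_q/H_q\simeq \widehat{M}^q$. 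Either route yields the desired estimate, and inserting it into the reduction of the first paragraph completes the proof of \eqref{estim_domainhat}.
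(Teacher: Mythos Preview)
Your approach coincides with the paper's in all essential points: reduce via commutativity of $A^j$ with $P_t$ to a weighted $L^2$ bound for $A^m P_t$, invoke the exponential kernel estimate \eqref{upper_estim_kernel}, absorb the weight ratio through the inequality $\langle x\rangle_{\mathrm{sR}}/\langle y\rangle_{\mathrm{sR}}\leq C\,(1+\widehat d^q(x,y))$ (which the paper derives exactly as you do, from the equivalence of $\|\cdot\|_{\mathrm{sR}}$ with $\widehat d^q(0,\cdot)$), and close by the Schur test. The one genuine methodological difference is the order of operations: the paper first establishes the bound at $t=1$ only, and then transports it to arbitrary $t\in(0,t_1]$ by the change of variables $x=\delta_{1/\sqrt t}(z)$, using the homogeneity relations $\delta_{1/\sqrt t}^*\,\widehat{\triangle}^q\,(\delta_{1/\sqrt t})_*=t\,\widehat{\triangle}^q$ and $\|\delta_{1/\sqrt t}(\cdot)\|_{\mathrm{sR}}=t^{-1/2}\|\cdot\|_{\mathrm{sR}}$. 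This two-step route dissolves the obstacle you anticipate: at $t=1$ the volume factor in \eqref{upper_estim_kernel} is a fixed function of $y$, the Schur integrals reduce to $\int_{\R^n} e^{-\widehat d^q(x,y)^2/c}\,dy$ with no $t$-scaling to track, and the subsequent dilation automatically produces the power $t^{-(k-j)}$ together with the constraint $t\leq t_1$. No lift to the enveloping Carnot group and no symmetric volume normalization are needed. One small caveat on your $m<0$ paragraph: the Kannai identity \eqref{fourierjk} is stated only for $k\in\N$ and does not directly yield a pointwise kernel bound for $A^mP_t$ when $m<0$; the paper, after reducing to $j=0$, simply treats $k\in\N$ and does not revisit the non-smoothing direction $k\leq j$ explicitly.
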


This result states a global smoothing property in the iterated domains with polynomial weight, by keeping the same weight $\beta\geq 1$. 
In particular, for $j=k$, we have
\begin{equation}\label{15:34}
\left\Vert e^{t\widehat{\triangle}^q} f\right\Vert_{\mathcal{D}_{k,\beta}^{\mathrm{sR}}}
\leq \Cst(k,\beta)  \left\Vert f\right\Vert_{\mathcal{D}_{k,\beta}^{\mathrm{sR}}} \qquad
\forall t\in[0,1] \qquad \forall k\in\Z\qquad\forall\beta\geq 1\qquad \forall f\in C_c^\infty(\R^n) .
\end{equation}
This implies that $(e^{t\widehat{\triangle}^q})_{t\geq 0}$ is a $C_0$ semigroup in the Hilbert space $\mathcal{D}_{k,\beta}^{\mathrm{sR}}$, for every $k\in\Z$ and every $\beta\geq 1$, which may not be a semigroup of contractions. Note that, for $k=0$, $\mathcal{D}_{k,\beta}^{\mathrm{sR}}$ is the space $L^2(\R^n)$ with polynomial weight $\langle x\rangle_{\mathrm{sR}}^\beta$ and indeed, for $\beta\geq 1$, $\widehat{\triangle}^q$ may not be dissipative in this Hilbert space.

\begin{proof}
Since $(\mathrm{id} - \widehat{\triangle}^q)^j e^{t\widehat{\triangle}^q} = e^{t\widehat{\triangle}^q} (\mathrm{id} - \widehat{\triangle}^q)^j$, it suffices to prove \eqref{estim_domainhat} for $j=0$. We proceed in two steps: we first establish \eqref{estim_domainhat} for $t=1$, and then we show how to obtain the estimate for every $t\in(0,t_1]$ by homogeneity.

Let $\beta\geq 1$ and $k\in\N$ be arbitrary. We follow and adapt a classical argument (Schur test): by the Cauchy-Schwarz inequality, we have
\begin{multline}\label{ineg_schur}
\left( \langle x\rangle_{\mathrm{sR}}^\beta (\mathrm{id} - \widehat{\triangle}^q)^k e^{\widehat{\triangle}^q} f(x) \right)^2
= \left( \int_{\R^n} \frac{\langle x\rangle_{\mathrm{sR}}^\beta}{\langle y\rangle_{\mathrm{sR}}^\beta} (\mathrm{id} - \widehat{\triangle}^q)_x^k\, \widehat{e}^q(1,x,y) \langle y\rangle_{\mathrm{sR}}^\beta f(y)\, dy \right)^2 \\
\leq \int_{\R^n} \left( \frac{\langle x\rangle_{\mathrm{sR}}}{\langle y\rangle_{\mathrm{sR}}} \right)^{2\beta} \left\vert (\mathrm{id} - \widehat{\triangle}^q)_x^k\, \widehat{e}^q(1,x,y) \right\vert dy   \int_{\R^n} \left\vert (\mathrm{id} - \widehat{\triangle}^q)_x^k\, \widehat{e}^q(1,x,y) \right\vert \langle y\rangle_{\mathrm{sR}}^{2\beta} f(y)^2\, dy 
\end{multline}
for every $f\in C^\infty_c(\R^n)$.

We claim that $\Cst\, \widehat{d}^q(0,x)\leq \Vert x\Vert_{\mathrm{sR}} \leq \Cst\, \widehat{d}^q(0,x)$ for every $x\in\R^n$ (see \cite[Lemma 2.1 p. 28]{Jean_2014}). Indeed, the distance $\widehat{d}^q(0,\cdot)$ is continuous (by the H\"ormander condition) and thus reaches its bounds on the compact set $\{x\in\R^n\ \mid\ \Vert x\Vert_{\mathrm{sR}}=1\}$; the inequality follows by homogeneity.
We infer that
\begin{equation}\label{inegcrochetjap}
\frac{\langle x\rangle_{\mathrm{sR}}}{\langle y\rangle_{\mathrm{sR}}} \leq \Cst \frac{1+\widehat{d}^q(0,x)}{1+\widehat{d}^q(0,y)} \leq \Cst (1+\widehat{d}^q(x,y)) \qquad\forall x,y\in\R^n.
\end{equation}
It follows from \eqref{upper_estim_kernel} and \eqref{inegcrochetjap} that
\begin{multline*}
\int_{\R^n} \left( \frac{\langle x\rangle_{\mathrm{sR}}}{\langle y\rangle_{\mathrm{sR}}} \right)^{2\beta} \left\vert (\mathrm{id} - \widehat{\triangle}^q)_x^k\, \widehat{e}^q(1,x,y) \right\vert dy
\leq \Cst(k) \int_{\R^n} (1+\widehat{d}^q(x,y))^{2\beta} e^{-\widehat{d}^q(x,y)^2/8} \, dy \\
\leq \Cst(k) \int_{\R^n} (1+\widehat{d}^q(x,y))^{2\beta} e^{-\widehat{d}^q(x,y)^2/8} \, dy 
\leq \Cst(k,\beta) \int_{\R^n} e^{-\widehat{d}^q(x,y)^2/16} \, dy =\Cst(k,\beta) 
\end{multline*}
and hence, plugging into \eqref{ineg_schur}, integrating with respect to $x$ and using \eqref{upper_estim_kernel} again, we obtain
\begin{multline}\label{estim_for_t1}
\big\Vert e^{\widehat{\triangle}^q} f \big\Vert_{\widehat{\mathcal{D}}^{q,\mathrm{sR}}_{k,\beta}}^2 
= \int_{\R^n} \left( \langle x\rangle_{\mathrm{sR}}^\beta (\mathrm{id} - \widehat{\triangle}^q)^k e^{\widehat{\triangle}^q} f(x) \right)^2 dx \\
\leq \Cst(k,\beta) \int_{\R^n} e^{-\widehat{d}^q(x,y)^2/8} \, dx   \int_{\R^n} \langle y\rangle_{\mathrm{sR}}^{2\beta} f(y)^2\, dy
= \Cst(k,\beta) \big\Vert f \big\Vert_{\widehat{\mathcal{D}}^{q,\mathrm{sR}}_{0,\beta}}^2 .
\end{multline}
We have obtained \eqref{estim_domainhat} for $t=1$.

To obtain \eqref{estim_domainhat} for every $t\in(0,t_1]$, we make the changes of variable $x=\varphi(z)$ and $y=\varphi(z)$ in \eqref{estim_for_t1}, with $\varphi=\delta_{1/\sqrt{t}}$. Noting that $\varphi^* \widehat{\triangle}^q \varphi_* = t \widehat{\triangle}^q$, $\varphi^* e^{\widehat{\triangle}^q} \varphi_* = e^{t \widehat{\triangle}^q}$, $\varphi^*dx=\vert\det(\varphi)\vert\, dx=\frac{1}{t^{\mathcal{Q}(q)/2}}\, dx$, and $\Vert\varphi(x)\Vert_{\mathrm{sR}} = \frac{1}{\sqrt{t}} \Vert x\Vert_{\mathrm{sR}}$, we get from \eqref{estim_for_t1} that
\begin{multline*}
\frac{1}{t^{\mathcal{Q}(q)/2}}\int_{\R^n} \left( \left(1+ \frac{1}{t} \Vert z\Vert_{\mathrm{sR}}^2 \right)^{2\beta} (\mathrm{id} - t\widehat{\triangle}^q)^k e^{t\widehat{\triangle}^q} (\varphi^*f)(z) \right)^2 dz \\
\leq \Cst(k,\beta) \frac{1}{t^{\mathcal{Q}(q)/2}} \int_{\R^n} \left(1+ \frac{1}{t} \Vert z\Vert_{\mathrm{sR}}^2 \right)^{2\beta} (\varphi^*f)(z)^2\, dz
\end{multline*}
from which \eqref{estim_domainhat} is easily inferred.
\end{proof}

Using \eqref{equivDjalpha} and the embeddings \eqref{inclusions_global} and \eqref{inclusions_global_neg}, we infer the following result. 

\begin{corollary}
Given any $t>0$, any $j,k\in\Z$ and any real number $\beta\geq 1$, the operator $e^{t\widehat{\triangle}^q}$ is bounded as an operator from $H^{j\sigma}_{\beta-jN}(\R^n)$ when $j\leq 0$ (resp., $H^{2j}_{\beta+2jN}(\R^n)$ when $j\geq 0$) to $H^{2k}_{\beta+2kN}(\R^n)$ when $k\leq 0$ (resp., $H^{k\sigma}_{\beta-kN}(\R^n)$ when $k\geq 0$).
Moreover, for all $t_0,t_1\in(0,+\infty)$, their norms are uniformly bounded with respect to $t\in[t_0,t_1]$.
\end{corollary}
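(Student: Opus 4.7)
The strategy is to sandwich $e^{t\widehat{\triangle}^q}$ between the Sobolev spaces with polynomial weight and the iterated domains $\widehat{\mathcal{D}}^{q,\mathrm{sR}}_{\cdot,\beta}$, on which Proposition \ref{prop_globreg_poidsfixe} provides smoothing while preserving the weight $\beta\geq 1$. The idea is that all the analytic content is already in Proposition \ref{prop_globreg_poidsfixe}; what remains is to translate it from the scale of iterated domains to the scale of weighted Sobolev spaces, using the embeddings \eqref{inclusions_global}--\eqref{inclusions_global_neg} (applied with $\tau=0$ to $\widehat{\triangle}^q$) and the equivalence \eqref{equivDjalpha} between $\widehat{\mathcal{D}}^q_{\cdot,\cdot}$ and $\widehat{\mathcal{D}}^{q,\mathrm{sR}}_{\cdot,\cdot}$.

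On the input side, first I would inject the weighted Sobolev space into $\widehat{\mathcal{D}}^{q,\mathrm{sR}}_{j,\beta}$. When $j\geq 0$, \eqref{inclusions_global} gives $H^{2j}_{\beta+2jN}(\R^n) \hookrightarrow \widehat{\mathcal{D}}^q_{j,\beta}$, and the right half of \eqref{equivDjalpha} yields $\widehat{\mathcal{D}}^q_{j,\beta} \hookrightarrow \widehat{\mathcal{D}}^{q,\mathrm{sR}}_{j,\beta}$. When $j\leq 0$, writing $j=-|j|$ and using \eqref{inclusions_global_neg} produces $H^{j\sigma}_{\beta-jN}(\R^n) \hookrightarrow \widehat{\mathcal{D}}^q_{j,\beta}$, from which the same step of \eqref{equivDjalpha} gives the inclusion into $\widehat{\mathcal{D}}^{q,\mathrm{sR}}_{j,\beta}$. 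Next I apply Proposition \ref{prop_globreg_poidsfixe}, which yields a bounded operator $e^{t\widehat{\triangle}^q}\colon \widehat{\mathcal{D}}^{q,\mathrm{sR}}_{j,\beta}\to \widehat{\mathcal{D}}^{q,\mathrm{sR}}_{k,\beta}$ with an operator norm controlled by $\Cst(j,k,\beta,t_1)/t^{(k-j)_+}$ for $t\in (0,t_1]$. Finally, on the output side, the left half of \eqref{equivDjalpha} gives $\widehat{\mathcal{D}}^{q,\mathrm{sR}}_{k,\beta}\hookrightarrow \widehat{\mathcal{D}}^q_{k,\beta/r(q)}$, and a further application of \eqref{inclusions_global} for $k\geq 0$, or \eqref{inclusions_global_neg} for $k\leq 0$, lands in the target Sobolev space of the statement (up to an innocuous renormalization of $\beta$ by $r(q)$, which may be absorbed into the constants $N$ and $\sigma$ defining the scale).

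Composing these three steps gives the desired continuity, and the uniformity with respect to $t\in[t_0,t_1]$ is inherited at once from the corresponding uniformity in Proposition \ref{prop_globreg_poidsfixe}. I do not anticipate any genuine obstacle: the proof is essentially a bookkeeping exercise threading the estimate of Proposition \ref{prop_globreg_poidsfixe} through two pairs of global embeddings. The only mild subtlety is the factor $r(q)$ picked up when going from $\widehat{\mathcal{D}}^{q,\mathrm{sR}}$ back to $\widehat{\mathcal{D}}^q$ via \eqref{equivDjalpha}; this is harmless because the statement allows an arbitrary weight $\beta\geq 1$, so one can freely rescale $\beta$ to match the form advertised in the corollary.
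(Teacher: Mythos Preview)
Your proposal is correct and follows exactly the approach indicated in the paper, which simply states that the corollary is inferred from Proposition~\ref{prop_globreg_poidsfixe} together with \eqref{equivDjalpha} and the embeddings \eqref{inclusions_global}--\eqref{inclusions_global_neg}. You have even spelled out the one subtlety (the factor $r(q)$ lost in the weight when passing from $\widehat{\mathcal{D}}^{q,\mathrm{sR}}$ back to $\widehat{\mathcal{D}}^q$) that the paper leaves implicit.
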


\begin{remark}
Given any $t>0$, it is not true that $e^{t\widehat{\triangle}^q}$ maps continuously any $H^\alpha_\beta(\R^n)$ to any $H^{\alpha'}_{\beta'}(\R^n)$. Indeed, otherwise, using Hilbert-Schmidt norms we would infer that its heat kernel $e(t,\cdot,\cdot)$ is bounded in the Schwartz space $\mathcal{S}(\R^n\times\R^n)$, which is not true around the diagonal.
\end{remark}

\paragraph{Acknowledgment.}
This work was supported by the grant ANR-15-CE40-0018 \emph{SRGI} (Sub-Riemannian Geometry and Interactions).

\small
\bibliographystyle{plain}
\bibliography{bib_sR}

\end{document}